\newtheorem{thm}{Theorem}[section]
\newtheorem{lem}[thm]{Lemma}
\newtheorem{prop}[thm]{Proposition}
\newtheorem{cor}[thm]{Corollary}
\newtheorem{defn}[thm]{Definition}
\theoremstyle{definition}
\newtheorem{rem}[thm]{Remark}
\newtheorem{ex}[thm]{Example}
\newcommand{\C}{\mathsf C}
\title[Polyanalytic approach to de Branges spaces and Schur analysis]{ON A POLYANALYTIC APPROACH TO NONCOMMUTATIVE DE
BRANGES-ROVNYAK SPACES AND SCHUR ANALYSIS}
\author[D. Alpay]{Daniel Alpay}
\address{(DA)
Faculty of Mathematics, Physics, and Computation\\
Schmid College of Science and Technology\\
Chapman University\\
One University Drive
Orange, California 92866\\
USA}
\email{alpay@chapman.edu}
\author[F. Colombo]{Fabrizio Colombo}
\address{(FC) Politecnico di
Milano\\Dipartimento di Matematica\\Via E. Bonardi, 9\\20133 Milano\\Italy}
\email{fabrizio.colombo@polimi.it}
\author[K. Diki]{Kamal Diki}
\address{(KD) Politecnico di
Milano\\Dipartimento di Matematica\\Via E. Bonardi, 9\\20133 Milano,
Italy}
\email{kamal.diki@polimi.it}
\author[I. Sabadini]{Irene Sabadini}
\address{(IS) Politecnico di
Milano\\Dipartimento di Matematica\\Via E. Bonardi, 9\\20133 Milano\\Italy}
\email{irene.sabadini@polimi.it}
\begin{document}
\maketitle
\begin{abstract}
  In this paper we begin the study of Schur analysis and of de Branges-Rovnyak spaces in the framework of Fueter hyperholomorphic functions. The difference with other approaches is that we consider the
  class of functions spanned by Appell-like polynomials. This approach is very efficient from various points of view, for example in operator theory, and allows us to make connections with the
  recently developed theory of slice polyanalytic functions.
  We tackle a number of problems: we describe a Hardy space, Schur multipliers and related results. We also discuss Blaschke functions, Herglotz  multipliers and their associated kernels and
  Hilbert spaces. Finally, we consider the counterpart of the half-space case, and the corresponding Hardy space, Schur multipliers and Carath\'eodory multipliers.
\end{abstract}

\noindent AMS Classification: 47B32, 47S05, 30G35

\noindent {\em }
\date{today}
\tableofcontents
\section{Introduction}

\setcounter{equation}{0}
Two important function theories that allow to extend complex analysis and operator theory results to higher dimensions are the so-called monogenic and slice monogenic functions with values in a Clifford algebra.  In the case of quaternions these two theories are known as Fueter hyperholomorphic and slice regular or slice hyperholomorphic functions, respectively, see \cite{bds, MR2089988, MR2752913, MR3013643, gurlebeck2008application}. For the necessary preliminaries on quaternions, we refer the reader to Section 2. An interesting problem  is to investigate the possible relations and intersections between these two different theories. We note that it is always possible to construct Fueter hyperholomorphic functions starting from slice regular ones using different techniques such as the Fueter mapping theorem \cite{MR2762317, MR2787441}, or using the Radon and dual Radon transforms, see \cite{CLSSRadon2015}. But in general, the slice monogenicity does not imply, nor is implied by monogenicity. However, in \cite{ADSP2019,ADSP2019bis} the authors extended the notion of slice regular functions to higher order by considering the so-called slice polyanalytic functions.  These functions can be considered from three different points of view. The first approach consists of viewing the space of quaternions $\mathbb H$ as union of complex planes and to see these functions as a subclass of null solutions of the $n$-th power of the Cauchy-Riemann operator with respect to each complex plane. The second approach is based on the so-called poly-decomposition which allows us to consider such functions as sums of the form $$\displaystyle\sum_{k=0}^{n-1}\overline{x}^kf_k(x),\qquad x\in\mathbb{H}$$
  with all the components $f_k$ which are slice regular functions and $n$ is the order of poly-analyticity. The third approach consists in considering slice polyanalytic functions as subclass of the
null solutions of the $n$-th power of a global operator with non-constant coefficients, see \cite{ADSP2020}. The study in this paper is in the quaternionic context and it is based on some polynomials
$(P_n(x))_{n\geq 0}$ where
$$
P_n(x)=\sum_{j=0}^n L_{j,n} \bar{x}^jx^{n-j}, \qquad n\geq 0,
$$
that are at the same time Fueter hyperholomorphic and slice polyanalytic functions of order $n+1$, for suitable real coefficients $L_{j,n}$ (see \cite{Cacao,Falcao} and Section \ref{S2}). These polynomials are very special since they belong to the intersection of two different non-commutative function theories, namely the classical Fueter theory and the slice polyanalytic theory, moreover they have nice properties with respect to multiplication and derivation. Another important feature, see Theorem 3.10 in \cite{ads-fh}, is that any Fueter hyperholomorphic function $f$ of axial type admits a power series expansion in terms of the polynomials $P_n$ of the form $$f(x)=\displaystyle \sum_{n=0}^\infty P_n(x) u_n, \qquad u_n\in\mathbb H.$$ This fact allows to embed the space of Fueter hyperholomorphic functions of axial type, denoted by $\mathcal{AR}$, into a space consisting of series  of  slice polynalytic functions that we denote here by
$$\mathcal{SP}_\infty:=\mathcal{SP}_{1}+\mathcal{SP}_2+\cdots+\mathcal{SP}_{n+1}+\cdots,$$
where $\mathcal{SP}_{n}$ denotes the set of slice polyanalytic functions of order $n$.
More precisely we consider the subspaces of slice polyanalytic functions associated with the polynomials $(P_n)_{n\geq 0}$ defined by $$\mathcal{P}_n:=\left\lbrace P_n(x)\lambda, \textbf{  } \lambda\in\mathbb{H} \right\rbrace$$ \text{  and  } $$\mathcal{P}_\infty:=\bigoplus_{n=0}^\infty \mathcal{P}_n.$$
 Then, since the $P_n$ are the unique hyperholomorphic extensions of axial type of the real valued functions $(3x_0)^n$, it is possible to show that the space of hyperholomorphic functions of axial type $\mathcal{AR}$ corresponds to the space $\mathcal{P}_\infty$, i.e. $$\mathcal{AR}=\mathcal{P}_\infty.$$
 The previous subspaces of slice polyanalytic functions $\mathcal{P}_n$ were considered before from a different point of view and using a different terminology, namely they were called spaces of homogeneous special monogenic polynomials of degree $n$, see for example Lemma 1 in \cite{abul1990basic}. Using these ideas and identifications we show that it is always possible to embed this interesting subclass of special monogenic functions in a more general framework of slice polyanalytic functions. We use techniques from slice polyanalytic function theory to prove results on such special monogenic functions. In particular, in Proposition \ref{RFAM} we prove a Representation Formula in the monogenic setting using a slice polyanalytic approach.\smallskip

Furthermore, we note that these slice polyanalytic (and Fueter hyperholomorphic) polynomials $(P_n)_{n\geq 0}$ are just a particular case of a more general interesting construction which makes use of the classical Cauchy-Kovalevskaya extension theorem as we explain here.
Consider an entire real analytic and quaternionic-valued function $h$ of the real variables $x_1,x_2,x_3$.
The Cauchy-Kovalevskaya theorem guarantees the existence of a hyperholomorphic function $H$, its $CK$-extension.
We have, with $h_n=h^n$ and $H_n=H^{\odot n}$ (where $\odot$ denotes the Cauchy-Kovalevskaya product)
\begin{equation}
CK(h_n)\odot CK(h_m)=CK(h_{m+n})
\end{equation}
and so
\[
H_n=H_1^{n\odot}.
\]
We can see already here that obstructions occur; if we take quaternions $u$ and $v$, the $CK$-product $CK(h_nu)\odot CK(h_mv)$ will not be in general
be equal to $CK(h_{n+m}uv)$ since $h_m$ and $u$ do not commute. As a consequence, the $CK$-product will not be, in general, translated into convolution of
the coefficients of the expansions along the $H_n$. In spite of this,
with this new variable $H_1$ it is possible to define a number of counterparts of the classical reproducing kernel Hilbert spaces, with reproducing kernel of the form
\[
K(x,y)=\sum_{n\in I}\frac{H_n(x)\overline{H_n(y)}}{\alpha_n},\quad \alpha_n>0,\quad I\subset \mathbb N_0,
\]
converging in some neighborhood of the origin in $\mathbb R^4$. We already mention at this point
that the $CK$-product is not a law of composition for the Hardy space (defined below), and more generally, for series in the functions $H_n$.\smallskip

The choice $I=\mathbb N_0$ and $\alpha_n=1$ for every $n\in\mathbb N_0$ corresponds to the underlying Hardy space, consisting of functions
of the form
\[
f(x)=\sum_{n=0}^\infty H_n(x)f_n
\]
where $f_0,f_1,\ldots\in\mathbb H$ and satisfy $\sum_{n=0}^\infty |f_n|^2<\infty$. These functions are hyperholomorphic in
\[
\Omega=\left\{x\in\mathbb R^4\,;\, |H_1(x)|<1\right\}
\]
since the radius of convergence of Cauchy-Kovalevskaya product satisfies $\rho(H_n)\le (\rho(H_1))^n$; see \cite[Proposition 2.9, p. 131]{MR2124899}.\smallskip

A corresponding Schur analysis would consist in particular of the following problems:
\begin{itemize}
\item Characterize the contractive multipliers of this Hardy space. The definition has to be adapted to the present situation, where we lack
the convolution of the coefficients and the $CK$-product is not a law of composition.\smallskip

\item Study interpolation problems for these multipliers.\smallskip

\item Study the de Branges-Rovnyak spaces. These are families of Hilbert spaces of analytic functions, with reproducing kernels of various forms; see
\cite{dbhsaf1, dbbook,dbr1, dbr2,Dym_CBMS,Dmk,MR3497010,MR3617311}. Here we will focus on the counterpart of $\mathcal H(s)$ and $\mathcal L(\Phi)$ spaces,
whose reproducing kernel are of the form
  \begin{equation}
    \label{lphi}
\frac{1-s(z)\overline{s(w)}}{1-z\overline{w}}\quad {\rm and} \quad \frac{\Phi(z)+\overline{\Phi(w)}}{2(1-z\overline{w})}
\end{equation}
respectively.\smallskip

\end{itemize}

These various definitions and corresponding results need to be adapted to the present case, where we do not have a law of composition. We note that
the theory can be developed easily as in the classical way  when the coefficients are real, but this is of course restrictive. On the other hand, the theory using Fueter variables
works well because these variables are real when restricted to $x_0=0$ where $x_0$ denotes the real part of a quaternion.\mbox{}\\

There are important differences between the present treatment of Fueter hyperholomorphic functions and the treatment using Fueter variables; in the first case, the kernel functions are eigenvectors
of the backward shift, in the case of Fueter variables the kernel functions are eigenvectors of the three underlying backward-shift operator. Here the kernel functions are not eigenvectors of the backward-shift operator.\\
However, the present approach allows to make connections with the theory of slice polyanalytic functions, in particular with slice hyperholomorphic functions, and will also allow a simpler  functional calculus. Moreover, Toeplitz operators do appear in a natural way and play an important role.\\
In both cases, it is possible to develop a Schur type analysis. On the other hand, specific choices of the approach allow to make connections with  slice hyperholomorphic functions. We here consider the cases
\begin{equation}
\label{wx}
h(x)=  x_1{\mathbf e}_1+x_2{\mathbf e}_2+x_3{\mathbf e}_3\qquad{\rm and}\qquad w(x)=(1-h(x))(1+h(x))^{-1}
\end{equation}
and relate the underlying analysis with the Appell polynomials setting. Note that $w(0)=1\not=0$.\smallskip

A key fact used in the paper is that the hyperholomorphic functions considered are of axial type, and hence uniquely determined by their values on the real line.
\smallskip

We shall prove that the $CK$-extension of $x_1{\mathbf e}_1+x_2{\mathbf e}_2+x_3{\mathbf e}_3$ is
\begin{equation}
  CK(x_1{\mathbf e}_1+x_2{\mathbf e}_2+x_3{\mathbf e}_3)=x_1{\mathbf e}_1+x_2{\mathbf e}_2+x_3{\mathbf e}_3
  +3x_0=\zeta_1(x){\mathbf e}_1+\zeta_2(x){\mathbf e}_2+\zeta_3(x){\mathbf e}_3
\end{equation}
where $\zeta_i=x_i-x_0{\mathbf e}_i$, $i=1,2,3$ are the Fueter variables.
Moreover we have
\[
\begin{split}
CK((x_1{\mathbf e}_1+x_2{\mathbf e}_2+x_3{\mathbf e}_3)^{m})\odot CK((x_1{\mathbf e}_1+x_2{\mathbf e}_2+x_3{\mathbf e}_3)^{n})&=\\
&\hspace{-3cm}= CK((x_1{\mathbf e}_1+x_2{\mathbf e}_2+x_3{\mathbf e}_3)^{m+n}),
\end{split}
\]
so that we set
\[
CK((x_1{\mathbf e}_1+x_2{\mathbf e}_2+x_3{\mathbf e}_3)^{m})=\frac{Q_m(x)}{c_m}\stackrel{\rm def.}{=}P_m(x),
\]
where $Q_m$ denotes the $m$-th quaternionic Appell polynomial
(see \cite[(3.8)]{ads-fh} and \cite{DKS2019}). The  coefficients $c_m$ will be specified in Section \ref{S2}.
We are thus looking at a theory of hyperholomorphic functions of the variable
\begin{equation}
P_1(x)=\zeta_1(x){\mathbf e}_1+\zeta_2(x){\mathbf e}_2+\zeta_3(x){\mathbf e}_3=\frac{Q_1(x)}{c_1},
\end{equation}
equipped with the $CK$-product.
In our discussion it is crucial that
\begin{equation}
P_1^{n\odot}=P_n .
\end{equation}

We associate in a natural way to a Schur multiplier in the present setting a slice hyperholomorphic Schur multiplier; this allows to develop Schur analysis in the present setting.

\medskip

The de Branges-Rovnyak space associated with a Schur multiplier $S$ allows in the cases considered up to now to get a coisometric realization of the multiplier. In the complex setting, this is the celebrated
backward-shift realization (see \cite{Fuhrmann}). Here, the situation is a bit different. We can still associate to $S$ a coisometric operator matrix, in the form (in the current setting) of the
backward-shift realization, but the realization is on the level of the coefficients
(like in \cite{MR51:583} in the finite dimensional case).\\

We have given, or outlined, proofs of some classical results, for instance the extension result in Theorem \ref{dono} and the closely related Theorem \ref{toto123}. The reason is that the results
play a key role in this paper and some of the arguments are not necessarily well known in the Clifford analysis community. We apply them in the quaternionic setting in particular in Theorem \ref{ttt*}
and in Step 2 in the proof of Theorem \ref{toto}.\\

\medskip

The paper contains twelve sections, besides this Introduction. Section 2 contains some preliminary results. Section 3 contains results on reproducing kernel spaces and Toeplitz operators. In Section 4 we define the Hardy space in this framework, the backward-shift operator, Schur multipliers and their characterization. The Schur algorithm is presented in Section 5. Section 6 is focused on intrinsic functions, among which the polynomials $P_n$ and a description of  Fueter hyperholomorphic functions of axial type which are also intrinsic. In Section 7 we consider de Branges-Rovnyak spaces while in Section 8 we show how to define Blaschke functions, and the corresponding operator of multiplication which turns out to be an isometry. In Section 9 we consider the counterpart of Herglotz functions and multipliers and their associated kernels and Hilbert spaces. The next three sections concern the half-space case of Schur and Carath\'eodory multipliers. In Section 13 we summarize in a table a comparison between the various quaternionic settings.

\section{Preliminaries}\label{S2}
\setcounter{equation}{0}
This section contains three subsections: the first one introduces the map $\chi$; the second one introduces the Fueter variables and the polynomials obtained via the Appell
polynomials which will be the basis of our treatment. Finally, the third one shortly reviews positivity, analytic extensions and Toeplitz operators in the classical complex setting.
\subsection{Quaternions and the map $\chi$}
We will work in the skew field of quaternions, which is defined to be
$$\mathbb{H}=\lbrace{x=x_0+x_1\mathbf e_1+x_2\mathbf e_2+x_3\mathbf e_3\quad ; \ x_0,x_1,x_2,x_3\in\mathbb{R}}\rbrace$$ where the imaginary units satisfy the multiplication rules
$\mathbf e_i^2=-1$, $i=1,2,3$, $\mathbf e_1\mathbf e_2=-\mathbf e_2\mathbf e_1=\mathbf e_3$, $\mathbf e_2\mathbf e_3=-\mathbf e_3\mathbf e_2=\mathbf e_1$, $\mathbf e_3\mathbf e_1=-\mathbf e_1\mathbf e_3=
\mathbf e_2$.
The conjugate and the modulus of $x\in\mathbb{H}$ are defined by
$$\overline{x}={\rm Re}(x)-\underline{x} \quad \text{where} \quad {\rm Re}(x)=x_0, \quad \underline{x}=x_1\mathbf e_1+x_2\mathbf e_2+x_3\mathbf e_3$$
and $$\vert{x}\vert=\sqrt{x\overline{x}}=\sqrt{x_0^2+x_1^2+x_2^2+x_3^2},$$
respectively. The set of all  imaginary units is given by $\mathbb{S}=\lbrace{q\in{\mathbb{H}};q^2=-1}\rbrace.$ We note also that a domain $\Omega$ of $\mathbb{H}$ is called a slice domain if  $\Omega\cap{\mathbb{R}}$ is nonempty and for all $I\in{\mathbb{S}}$, the set $\Omega_I:=\Omega\cap{\mathbb{C}_I}$ is a domain of the complex plane $\mathbb{C}_I$.
If moreover, for every $x=u+Iv\in{\Omega}$, the whole sphere $$[q]:=\lbrace{u+Jv; \, J\in{\mathbb{S}}}\rbrace,$$
is contained in $\Omega$, we say that  $\Omega$ is an axially symmetric slice domain.

We can write a quaternion
as $x=z+w{\mathbf e_2}$ with $z=x_0+x_1\mathbf e_1$ and $w=x_2+x_3\mathbf e_1\in\mathbb C$. The map $\chi$ defined by
\[
  \chi(z+w{\mathbf e_2})=\begin{pmatrix}z&w\\
    -\overline{w}&\overline{z}\end{pmatrix}
  \]
  allows to transfer a number of problems from the quaternions to matrices in $\mathbb C^{2\times 2}$. We recall the following result, whose proof is immediate and will be omitted.

  \begin{lem}
    \label{rangechi}
    A matrix $M\in\mathbb C^{2\times 2}$ belongs to the range of $\chi$  if and only if it satisfies the symmetry
\begin{equation}
\label{symquater}
E^{-1}\overline{M}E=M,
\end{equation}
where $E=\begin{pmatrix}0&1\\-1&0\end{pmatrix}$.\\
  \end{lem}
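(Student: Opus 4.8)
The plan is to make the range of $\chi$ explicit and then verify the claimed symmetry by a direct two-by-two computation. First I would record that $\chi$ is injective: from the matrix $\begin{pmatrix}z&w\\-\overline{w}&\overline{z}\end{pmatrix}$ one reads off $z$ and $w$, hence recovers the quaternion $z+w\mathbf e_2$. Consequently the range of $\chi$ is precisely the set of matrices of the form
\[
M=\begin{pmatrix}a&b\\-\overline{b}&\overline{a}\end{pmatrix},\qquad a,b\in\mathbb C .
\]
Thus the lemma reduces to showing that a matrix $M=(m_{ij})\in\mathbb C^{2\times 2}$ has this shape if and only if it satisfies \eqref{symquater}.

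Next I would note that $E^2=-I$, so that $E^{-1}=-E$, which makes the conjugation in \eqref{symquater} easy to evaluate. Carrying out the product $E^{-1}\overline{M}E$ for a general $M=(m_{ij})$ gives
\[
E^{-1}\overline{M}E=\begin{pmatrix}\overline{m_{22}}&-\overline{m_{21}}\\-\overline{m_{12}}&\overline{m_{11}}\end{pmatrix}.
\]
Equating this with $M$ entrywise yields the four scalar identities $m_{11}=\overline{m_{22}}$, $m_{22}=\overline{m_{11}}$, $m_{12}=-\overline{m_{21}}$ and $m_{21}=-\overline{m_{12}}$. The first two are complex conjugates of each other, and likewise for the last two, so the system is equivalent to the two conditions $m_{22}=\overline{m_{11}}$ and $m_{21}=-\overline{m_{12}}$. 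Setting $a=m_{11}$ and $b=m_{12}$, this is exactly the displayed form above, i.e. $M\in\ran\chi$; conversely every such $M$ plainly satisfies those two conditions. This proves both implications.

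There is no genuine obstacle here; the only point worth a line of comment is the redundancy among the four entrywise equations, which is what guarantees that the single symmetry \eqref{symquater} cuts out the range exactly, neither too large nor too small. One could instead argue more conceptually, noting that $M\mapsto E^{-1}\overline{M}E$ is a conjugate-linear involution of $\mathbb C^{2\times 2}$ whose fixed-point set is a real-linear subspace of real dimension $4=\dim_{\mathbb R}\mathbb H$, and that $\chi$ manifestly takes values in this fixed-point set; but the entrywise verification is shorter and I would keep it.
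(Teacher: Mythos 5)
Your computation is correct: $E^{-1}=-E$, the product $E^{-1}\overline{M}E$ comes out as you state, and equating entries gives exactly the matrices of the form $\begin{pmatrix}a&b\\-\overline{b}&\overline{a}\end{pmatrix}$, which is precisely the range of $\chi$. The paper omits the proof as immediate, and your entrywise verification is the standard argument it has in mind, so there is nothing to add.
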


For matrices and operators, there are various ways to define $\chi$. Let
$X=A+B\mathbf e_2\in\mathbb H^{r\times s}$. We set
\[
  \chi(X)=\begin{pmatrix}A&B\\
    -\overline{B}&\overline{A}\end{pmatrix}.
\]
We define for a block matrix $(X_{jk})$ with $X_{jk}\in\mathbb H^{r\times s}$,
\[
(\chi(X))_{jk}=\chi(X_{jk}).
\]
For matrices $M_1$, $M_2$ possibly infinite, with block entries in $\mathbb{H}^{r\times s}$ and $\mathbb{H}^{s\times t}$, respectively, we have the property
\begin{equation}
\chi(M_1M_2)=\chi(M_1)\chi(M_2).
\end{equation}
We note that $\chi$ will not be compatible with the $CK$-product. An important tool in the paper consists of bounded block Toeplitz operators, with blocks in the range of $\chi$:
\[
\label{T-op}
T=\left(\chi(X_{j-k})\right)_{j,k=0}^\infty.
\]
and we will need the following result, set for general operators.

\begin{prop}
\label{2-9-0}
The operator $\tau$
\begin{equation}
  \tau=\left(X_{jk}\right)_{j,k=0}^\infty
\end{equation}
is bounded from $\ell_2(\mathbb N_0,\mathbb H^s)$  into $\ell_2(\mathbb N_0,\mathbb H^r)$ if and only if the operator $T$ defined by
\begin{equation}
\label{T-op-1}
T=\left(\chi(X_{jk})\right)_{j,k=0}^\infty ,
\end{equation}
where the $X_{jk}$ are matrices in  $\mathbb H^{r\times s}$ is bounded
from $\ell_2(\mathbb N_0,\mathbb C^{2s})$  into $\ell_2(\mathbb N_0,\mathbb C^{2r})$, and both operators have same norm.
\end{prop}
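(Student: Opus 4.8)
The plan is to reduce the statement to the scalar ($r=s=1$) case already captured by Lemma~\ref{rangechi}, then to leverage the homomorphism property $\chi(M_1M_2)=\chi(M_1)\chi(M_2)$ to pass from a single matrix entry to the full block operator. First I would recall that for a fixed matrix $X\in\mathbb H^{r\times s}$, the map $\xi\mapsto X\xi$ from $\mathbb H^s$ to $\mathbb H^r$ has, under the identification $\mathbb H^s\cong\mathbb C^{2s}$ given by writing $\xi=\alpha+\beta\mathbf e_2$ and sending it to $(\alpha,\overline\beta)^{\rm t}$ (or the analogous column arrangement consistent with the paper's definition of $\chi$ on matrices), exactly the matrix representation $\chi(X)$. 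This is the ``single-block'' version: $\chi$ is not merely an algebra homomorphism but intertwines the action of $\mathbb H^{r\times s}$ on $\mathbb H^s$ with the action of $\chi(X)\in\mathbb C^{2r\times 2s}$ on $\mathbb C^{2s}$, and it is isometric for the natural norms because $|\xi|^2_{\mathbb H^s}=|\alpha|^2+|\beta|^2$ equals the squared Euclidean norm of the image in $\mathbb C^{2s}$.

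Next I would assemble these block-wise intertwiners into an isometry $U_s\colon \ell_2(\mathbb N_0,\mathbb H^s)\to\ell_2(\mathbb N_0,\mathbb C^{2s})$ acting coordinate-wise, and similarly $U_r$ on the target side. The key identity is then the commuting diagram $T\,U_s=U_r\,\tau$, which follows because each block of $\tau$ is intertwined with the corresponding block $\chi(X_{jk})$ of $T$ by the previous paragraph, and the block structure of the operators matches the direct-sum structure of $U_s$ and $U_r$. Since $U_s$ and $U_r$ are unitary (they are surjective isometries: every element of $\mathbb C^{2s}$ comes from some $\xi\in\mathbb H^s$), the relation $T=U_r\,\tau\,U_s^{-1}$ shows $\tau$ is bounded if and only if $T$ is, with $\|\tau\|=\|T\|$. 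I would make sure to phrase the argument so that boundedness is not assumed a priori: the diagram identity is purely algebraic on finitely supported sequences, which are dense, so finiteness of either norm transfers to the other.

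The one point requiring care — and the main obstacle — is matching the precise column/row conventions in the paper's definition $\chi(X)=\begin{pmatrix}A&B\\-\overline B&\overline A\end{pmatrix}$ for $X=A+B\mathbf e_2$ with a choice of real-linear identification $\mathbb H^s\cong\mathbb C^{2s}$ that makes $\xi\mapsto X\xi$ become $\chi(X)$ acting on the identified vector. One must check that left multiplication by $\mathbf e_2$ and complex conjugation interact correctly, i.e. that the identification intertwines the right $\mathbb C$-module structure (scalars written on the appropriate side) with the standard one on $\mathbb C^{2s}$; getting the sign and the conjugate in the ``$-\overline B$, $\overline A$'' row to come out right is exactly the content of Lemma~\ref{rangechi}'s symmetry $E^{-1}\overline M E=M$, so I would invoke that lemma to confirm consistency rather than recompute. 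Once the scalar identification is pinned down, the block and operator versions are a formal consequence of $\chi(M_1M_2)=\chi(M_1)\chi(M_2)$ together with the additivity and isometry of $\chi$ on vectors, and I would present the extension as routine. The norm equality is then immediate from $\tau$ and $T$ being unitarily equivalent.
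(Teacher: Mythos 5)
Your argument is correct, and it takes a genuinely different route from the paper's. You identify $\ell_2(\mathbb N_0,\mathbb H^s)$ with $\ell_2(\mathbb N_0,\mathbb C^{2s})$ by a surjective isometry $U_s$ under which left multiplication by $X\in\mathbb H^{r\times s}$ becomes multiplication by $\chi(X)$, so that $T=U_r\tau U_s^{-1}$ and the equivalence of boundedness together with the equality of norms drop out at once. The one convention you leave open does work out, but with a sign: writing $\xi=\alpha+\beta\mathbf e_2=\alpha+\mathbf e_2\overline\beta$, the matrix of $\xi\mapsto X\xi$ in the coordinates $(\alpha,\overline\beta)^{\rm t}$ is $\left(\begin{smallmatrix}A&-B\\ \overline B&\overline A\end{smallmatrix}\right)=D_r\chi(X)D_s$ with $D_m=\mathrm{diag}(I_m,-I_m)$, so the identification that produces $\chi(X)$ on the nose is $\xi\mapsto(\alpha,-\overline\beta)^{\rm t}$; since $D_m$ is unitary this is immaterial for the norm statement, but it is not quite ``the content of Lemma \ref{rangechi}'' (which only characterizes the range of $\chi$) — it is a short computation you should actually carry out. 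The paper instead argues directly with quadratic forms: for the ``only if'' direction it applies $\chi$ to the inequality $\sum q_i^*X_{ij}X_{jk}^*q_k\le K\sum q_i^*q_i$ and compresses to the $(1,1)$ entry, and for the converse it tests $I-TT^*$ on two complex sequences $e,f$ whose entries assemble columnwise into $\chi(q_i)$ and adds the two resulting inequalities. Your approach is more conceptual and handles both directions and the norm equality simultaneously; the paper's is more computational but makes the role of the symmetry \eqref{symquater} (which columns of a $\chi$-image look like) explicit without ever introducing the module identification.
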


\begin{proof}
  One direction is clear: If $\tau$ is bounded, there is a constant $K>0$ such that
  \[
\sum_{i,j,k=0}^{\infty} q_i^*X_{ij}X_{jk}^*q_k\le K\sum_{i=0}^\infty q_i^*q_i, \quad \text{for any } q_i\in\mathbb{H}^r.
\]
Applying $\chi$ we get
\begin{equation}
  \label{12214343}
\sum_{i,j,k=0}^\infty \chi(q_i)^*\chi(X_{ij})\chi(X_{jk})^*\chi(q_k)\le K\chi\left(\sum_{i=0}^\infty  q_i^*q_i I_2\right),
\end{equation}
where $I_2$ denotes the identity matrix of order $2$.
Multiplying this inequality by $\begin{pmatrix}1&0\end{pmatrix}$ on the left and by its transpose on the right, we get the result.\smallskip

Conversely, if $T$ is bounded there exists $K>0$ such that for all $u_1,v_1,\ldots\in\mathbb C^r$,
\begin{equation}
\label{qwerty123}
\sum_{i,j,k=0}^\infty  \begin{pmatrix}u_i^*&v_i^*\end{pmatrix}\chi(X_{ij})\chi(X_{jk})^*\begin{pmatrix}u_k\\ v_k\end{pmatrix}\le  K\left(\sum_{i=0}^\infty u_i^*u_i+v_i^*v_i\right)
\end{equation}
and
\begin{equation}
\label{qwerty1234}
\sum_{i,j,k=0}^\infty   \begin{pmatrix}-v_i^t&u_i^t\end{pmatrix}\chi(X_{ij})\chi(X_{jk})^*\begin{pmatrix}-\overline{v_k}\\ \overline{u_k}\end{pmatrix}\le  K\left(\sum_{i=0}^\infty u_i^*u_i+v_i^*v_i\right).
\end{equation}
Set
\[
  \begin{split}
e^*&=\begin{pmatrix} u_1^*&v_1^*&u_2^*&v_2^*&\cdots\end{pmatrix}\\
f^*&=\begin{pmatrix} -v_1^t&u_1^t&-v_2^t&u_2^t&\cdots\end{pmatrix}
  \end{split}
\]
and denote by $M$ the bounded linear operator with $i,k$ block equal to $M_{ik}=\chi(Y_{ik})$, with
$Y_{ik}=\sum_{j=1}^\infty X_{ij}X_{jk}^*$, $i,j=1,\ldots$. We have
\begin{equation}
  \label{lkjhg}
\begin{pmatrix}
  e^*\\f^*
\end{pmatrix}M\begin{pmatrix}e &f\end{pmatrix}
=\begin{pmatrix}e^*Me&e^*Mf\\
  f^*Me&f^*Mf\end{pmatrix}\le \|M\|(e^*e+f^*f) I_2.
\end{equation}
 On the other hand, setting
\[
q_i^*=u_i^*+v_i^*{\mathbf e_2},\quad i=1,2,\ldots
\]

we rewrite \eqref{qwerty123}-\eqref{qwerty1234} as
\[
 \begin{pmatrix}
  e^*\\f^*
\end{pmatrix}M\begin{pmatrix}e &f\end{pmatrix}= \begin{pmatrix}\chi(q_1)^*&\chi(q_2)^*&\cdots\end{pmatrix}\begin{pmatrix}\chi(Y_{11})&\chi(Y_{12})&\cdots&\\
      \chi(Y_{12})^*&\chi(Y_{22})&\cdots\\
     \vdots & &\ddots &\\
      & & &\\
      \end{pmatrix}
  \begin{pmatrix}\chi(q_1)\\ \chi(q_2)\\ \vdots\end{pmatrix}.
\]
Comparing with \eqref{lkjhg} we get
\[
\begin{pmatrix}\chi(q_1)^*&\chi(q_2)^*&\cdots\end{pmatrix}\begin{pmatrix}\chi(Y_{11})&\chi(Y_{12})&\cdots&\\
      \chi(Y_{12})^*&\chi(Y_{22})&\cdots\\
     \vdots & &\ddots &\\
      & & &\\
      \end{pmatrix}
  \begin{pmatrix}\chi(q_1)\\ \chi(q_2)\\ \vdots\end{pmatrix}\le\|M\|\chi(e^*e+f^*f)
\]
and hence the result, since $\chi$ preserves order.\smallskip

The claim on the norms being the same follows from the previous inequality and \eqref{12214343}.
\end{proof}

\subsection{Various notions of hyperholomorphy and homogeneous polynomials}
In this section we briefly review the setting of Fueter variables and the Cauchy-Kovalevskaya product.
We recall that left-hy\-per\-ho\-lo\-mor\-phic functions (we will usually just say {\sl hyperholomorphic} in the sequel) are solutions of the equation $Df=0$, where $D$ denotes the Cauchy-Fueter operator
\begin{equation}
D=\frac{\partial}{\partial x_0}+\mathbf e_1\frac{\partial}{\partial x_1}+\mathbf e_2\frac{\partial}{\partial x_2}+\mathbf e_3\frac{\partial}{\partial x_3}.
\end{equation}
These functions are widely studied in the literature. They are, in particular, harmonic functions in four real variables. Unfortunately, the monomials $x^n$ in the quaternionic variable $x$ are not in the kernel of the Cauchy-Fueter operator, not even when $n=1$. However, hyperholomorphic functions admit a series expansion in terms of the so-called Fueter variables, as we shall see below. We point out that, in this paper, we shall provide only the notions and results needed in the sequel, and for further information on this class of functions we refer the reader to \cite{MR2089988, gurlebeck2008application}.\smallskip

In view of the Cauchy-Kovalevskaya theorem, a linear system of first order differential equations satisfied by the real components of $f$ has a unique solution when the function
$\varphi(x_1,x_2,x_3)=f(0,x_1,x_2,x_3)$ is pre-assigned (and assumed real analytic). The function $f$ with this initial condition and solution of $Df=0$ is called the Cauchy-Kovalevskaya extension of
$\varphi$ (here written as $CK(\varphi)$ and abbreviated as $CK$-extension).

Among the important solutions of the equation $Df=0$ there are the Fueter variables
\begin{equation}
\zeta_j(x)=x_j-\mathbf e_jx_0,\quad j=1,2,3,
\end{equation}
corresponding respectively to $\varphi_j(x)=x_j$, $j=1,2,3$, and their symmetric products
\begin{equation}
\label{zetanu}
\zeta^\nu=\zeta_1^{\nu_1\times}\times \zeta_2^{\nu_2\times}\times \zeta_3^{\nu_3\times},\quad \nu=(\nu_1,\nu_2,\nu_3)\in\mathbb N_0^3,
\end{equation}
with, for $a_1,\ldots, a_n\in\mathbb H$
\[
a_1\times   a_2\times\cdots\times a_n=\frac{1}{n!}\sum_{\sigma\in S_n}a_{\sigma(1)}a_{\sigma(2)}\cdots a_{\sigma(n)}.
\]
We note that $\zeta^\nu=CK(x^\nu)$ with $x^\nu=x_1^{\nu_1}x_2^{\nu_2}x_3^{\nu_3}$. A direct proof that $\zeta^\nu$ given by \eqref{zetanu} is hyperholomorphic (and hence is the $CK$-extension of $x^\nu$)
is not trivial and can be found in
\cite[\S 3]{MR0265618}, \cite{MR1509533}. The argument works also in the split quaternion setting.
See \cite[p. 333-334]{MR3819695}. It is important to note that every function hyperholomorphic in a neighborhood of the origin can be written as a convergent power series
in the form of a Fueter series
\begin{equation}
\label{fueterseries}
f(x)=\sum_{\nu\in\mathbb N_0^3} \zeta^\nu f_\nu
\end{equation}
where the coefficients $f_\nu$ belong to $\mathbb H$. See \cite{bds}. A proof based on the Gleason problem can be found in \cite{MR2124899}.\smallskip

Using the CK-extension one can define a product that preserves the hyperholomorphicity, the so-called CK-product denoted by $\odot$. The idea to compute the CK-product is the following: if $f$ and $g$ are two hyperholomorphic functions, we take their restriction to $x_0=0$, which are real analytic functions, and consider their pointwise multiplication. Then, we take the Cauchy-Kowalevskaya extension of this pointwise product, which exists and is unique, to define
\begin{equation}\label{oCK}
f\odot g=CK(f(0,x_1,x_2,x_3)g(0,x_1,x_2,x_3)),
\end{equation}
 see \cite{gurlebeck2008application}. Moreover, we note that the following formula holds $$CK[\varphi(\underline{x}\,)](x)=\exp\left(-x_0\partial_{\underline{x}\,}\right)
[\varphi(\underline{x}\,)](x).$$

For power series of the form \eqref{fueterseries} the $CK$-product is a convolution on the coefficients along the basis $\zeta^\nu$,
and in particular
\begin{equation}
\label{pqzeta}
\zeta^{\nu}p\odot\zeta^{\mu}q=\zeta^{\nu+\mu}pq,\quad p,q\in\mathbb H,\quad \mu,\nu\in\mathbb N_0^3,
\end{equation}
where $\nu+\mu$ is defined componentwise, see \cite{bds,MR618518}.\smallskip

We now turn to a bound for the $CK$-product; see also \cite[pp. 132-133]{MR2124899}.

\begin{lem}
\label{fog}
Let $\rho>0$. There exists $\epsilon>0$ such that:
\[
x_0^2+x_j^2<\epsilon,\quad j=1,2,3,\quad\Longrightarrow\quad \sum_{\substack{\alpha\in\mathbb N_0^3\\ \alpha\not=(0,0,0)}}|\zeta(x)^\alpha | |f_\alpha|<\rho .
\]
Then
\begin{equation}
\label{ineq23}
\left|\left(\sum_{\substack{\alpha\in\mathbb N_0^3\\ \alpha\not=(0,0,0)}}\zeta(x)^\alpha f_\alpha\right)^{\odot n}\right|<\rho^n,\quad n=1,2,3,\ldots
\end{equation}
\end{lem}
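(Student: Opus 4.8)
The plan is to reduce \eqref{ineq23} to a commutative majorant estimate, the point being that although the $CK$-product is not a law of composition, it \emph{does} act as an ordinary convolution on the coefficients along the Fueter basis $\zeta^\nu$ (this is \eqref{pqzeta}), and the ``genuine'' weights $|\zeta(x)^\alpha|$ can be traded for the abelian weights $\prod_j|\zeta_j(x)|^{\alpha_j}$ by evaluating the hypothesis at a convenient point. Concretely, fix $x$ with $x_0^2+x_j^2<\epsilon$ for $j=1,2,3$ and set $t_j:=|\zeta_j(x)|=\sqrt{x_j^2+x_0^2}$. First I would look at the auxiliary point $x'=t_1\mathbf e_1+t_2\mathbf e_2+t_3\mathbf e_3$ (so $x'_0=0$): it also satisfies $0+t_j^2=x_0^2+x_j^2<\epsilon$, so the hypothesis applies to it, and since at $x'_0=0$ the Fueter variables reduce to the commuting reals $t_j$ (recall $\zeta^\nu=CK(x^\nu)$, so $\zeta(x')^\alpha=t_1^{\alpha_1}t_2^{\alpha_2}t_3^{\alpha_3}$), this reads
\[
\sum_{\alpha\ne 0}t_1^{\alpha_1}t_2^{\alpha_2}t_3^{\alpha_3}\,|f_\alpha|=\sum_{\alpha\ne 0}|\zeta(x')^\alpha|\,|f_\alpha|<\rho .
\]
On the other hand, for the original $x$, the definition of the symmetric product together with $|pq|\le|p|\,|q|$ on $\mathbb H$ gives, for every $\gamma\in\mathbb N_0^3$,
\[
|\zeta(x)^\gamma|\le|\zeta_1(x)|^{\gamma_1}|\zeta_2(x)|^{\gamma_2}|\zeta_3(x)|^{\gamma_3}=t_1^{\gamma_1}t_2^{\gamma_2}t_3^{\gamma_3}.
\]

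Next I would expand the $\odot$-power. By \eqref{pqzeta} and the associativity of the $CK$-product, $f(x)=\sum_{\alpha\ne0}\zeta(x)^\alpha f_\alpha$ satisfies
\[
f^{\odot n}(x)=\sum_{\gamma}\zeta(x)^\gamma\!\!\sum_{\substack{\alpha^{(1)},\dots,\alpha^{(n)}\ne 0\\ \alpha^{(1)}+\cdots+\alpha^{(n)}=\gamma}}\!\! f_{\alpha^{(1)}}f_{\alpha^{(2)}}\cdots f_{\alpha^{(n)}}.
\]
Taking moduli, applying $|pq|\le|p|\,|q|$ to the inner products, then the bound above with $\gamma=\alpha^{(1)}+\cdots+\alpha^{(n)}$ — for which $t_1^{\gamma_1}t_2^{\gamma_2}t_3^{\gamma_3}=\prod_{i=1}^n t_1^{\alpha^{(i)}_1}t_2^{\alpha^{(i)}_2}t_3^{\alpha^{(i)}_3}$ — and finally reindexing a series of nonnegative terms, one gets
\[
|f^{\odot n}(x)|\le\sum_{\alpha^{(1)},\dots,\alpha^{(n)}\ne 0}\ \prod_{i=1}^n t_1^{\alpha^{(i)}_1}t_2^{\alpha^{(i)}_2}t_3^{\alpha^{(i)}_3}\,|f_{\alpha^{(i)}}|=\Bigl(\sum_{\alpha\ne 0}t_1^{\alpha_1}t_2^{\alpha_2}t_3^{\alpha_3}\,|f_\alpha|\Bigr)^{\!n}<\rho^n,
\]
the last inequality being the one established in the first step. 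This is exactly \eqref{ineq23}.

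There is no deep analytic obstacle; the content is organizational, and the one thing to get right is the trade from $|\zeta(x)^\alpha|$ to $\prod_j|\zeta_j(x)|^{\alpha_j}$. The tempting direct route — bounding $|\zeta(x)^{\alpha+\beta}|$ by $|\zeta(x)^\alpha|\,|\zeta(x)^\beta|$ in order to collapse the multi-sum — is awkward, since the symmetric products $\zeta^\alpha$ multiply to $\zeta^{\alpha+\beta}$ only under $\odot$ and not as quaternions, so such submultiplicativity is not a formal consequence of $|pq|\le|p|\,|q|$; the auxiliary-point argument sidesteps this entirely. All series involved have nonnegative terms, so Tonelli-type reindexing is unproblematic, and the only finiteness needed, $\sum_{\alpha\ne0}t_1^{\alpha_1}t_2^{\alpha_2}t_3^{\alpha_3}|f_\alpha|<\rho<\infty$, is furnished by the hypothesis itself. (The mild implicit point is that $f=\sum_{\alpha\ne0}\zeta^\alpha f_\alpha$ is hyperholomorphic near the origin, so that $f^{\odot n}$ is defined; this follows from the convergence of $\sum_{\alpha\ne0}|\zeta(x)^\alpha|\,|f_\alpha|$ on the region $x_0^2+x_j^2<\epsilon$, which contains a neighborhood of $0$.)
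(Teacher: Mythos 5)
Your proof is correct and follows essentially the same route as the paper's: both exploit that the $CK$-product acts as a convolution on the coefficients along the $\zeta^\nu$ basis and then factorize a commutative positive majorant series to get the $n$-th power bound. The only real difference is in the choice of majorant: the paper bounds $|\zeta^\nu(x)|$ by the uniform quantity $\epsilon^{|\nu|}$ (invoking convergence of $\sum_\alpha\epsilon^{|\alpha|}|f_\alpha|$), whereas you use the sharper pointwise majorant $\prod_j|\zeta_j(x)|^{\alpha_j}$ and recover its summability against $|f_\alpha|$ by evaluating the hypothesis at the real-slice point $x'=|\zeta_1(x)|\mathbf e_1+|\zeta_2(x)|\mathbf e_2+|\zeta_3(x)|\mathbf e_3$ --- a clean variant, but the same underlying argument.
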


\begin{proof}
We first note that for $\nu\in\mathbb N_0^3$
\begin{equation}
\label{ineq1}
|\zeta^\nu(x)|\le \epsilon^{|\nu|}, \quad{\rm where}\quad x_0^2+x_j^2<\epsilon,\quad j=1,2,3.
\end{equation}
The existence of $\epsilon$ follows from the dominated convergence theorem, and the first assertion follows. Then, setting $g(x)=\sum_{\beta\in\mathbb N_0^3}\zeta(x)^\beta g_\beta$ we have
\[
\begin{split}
|(f\odot g)(x)|&\le |\sum_{\gamma\in\mathbb N_0^3}|\zeta^\gamma(x)|\cdot|\sum_{\substack{\alpha,\beta\in\mathbb N_0^3\\\alpha+\beta=\gamma}}f_\alpha g_\beta|\\
&\le\sum_{\alpha,\beta\in\mathbb N_0^3}\epsilon^{|\alpha|+|\beta|}|f_\alpha|\cdot|g_\beta|\\
&\le\left(\sum_{\alpha\in\mathbb N_0^3}\epsilon^{|\alpha|}|f_\alpha|\right)\left(\sum_{\beta\in\mathbb N_0^3}\epsilon^{|\beta|}|b_\beta|\right),
\end{split}
\]
from which \eqref{ineq23} follows.
\end{proof}
\begin{lem}
The $CK$-extension of $x_1{\mathbf e}_1+x_2{\mathbf e}_2+x_3{\mathbf e}_3$ to a Fueter hyperholomorphic function is
\begin{equation}\label{216}
  x_1{\mathbf e}_1+x_2{\mathbf e}_2+x_3{\mathbf e}_3
  +3x_0=\zeta_1(x){\mathbf e}_1+\zeta_2(x){\mathbf e}_2+\zeta_3(x){\mathbf e}_3
\end{equation}
where $\zeta_1,\zeta_2,\zeta_3$ are the Fueter variables.
\end{lem}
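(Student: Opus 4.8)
The plan is to use the explicit exponential formula for the $CK$-extension recalled just above the statement, namely $CK[\varphi(\underline{x})](x)=\exp(-x_0\partial_{\underline{x}})[\varphi(\underline{x})](x)$, applied to $\varphi(\underline{x})=x_1{\mathbf e}_1+x_2{\mathbf e}_2+x_3{\mathbf e}_3=\underline{x}$. First I would observe that $\partial_{\underline{x}}=\sum_{j=1}^3{\mathbf e}_j\partial_{x_j}$ acts on the linear function $\underline{x}$ by $\partial_{\underline{x}}\underline{x}=\sum_{j=1}^3{\mathbf e}_j{\mathbf e}_j=-3$, a real constant. Hence $\partial_{\underline{x}}^2\underline{x}=\partial_{\underline{x}}(-3)=0$, and all higher powers vanish as well. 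Therefore the exponential series truncates after two terms:
\[
CK(\underline{x})(x)=\exp(-x_0\partial_{\underline{x}})[\underline{x}]=\underline{x}-x_0\,\partial_{\underline{x}}\underline{x}=\underline{x}+3x_0=x_1{\mathbf e}_1+x_2{\mathbf e}_2+x_3{\mathbf e}_3+3x_0.
\]
That gives the first equality in \eqref{216}.

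Next I would verify the second equality, which is purely algebraic: by definition $\zeta_j(x)=x_j-{\mathbf e}_jx_0$, so
\[
\sum_{j=1}^3\zeta_j(x){\mathbf e}_j=\sum_{j=1}^3(x_j-{\mathbf e}_jx_0){\mathbf e}_j=\sum_{j=1}^3x_j{\mathbf e}_j-x_0\sum_{j=1}^3{\mathbf e}_j{\mathbf e}_j=\underline{x}+3x_0,
\]
using ${\mathbf e}_j^2=-1$ and the fact that $x_0$ is real and hence central. This matches the middle expression, completing the identity.

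Finally, for completeness I would confirm that the function thus obtained is indeed Fueter hyperholomorphic, i.e. annihilated by $D$: one can either invoke that $CK$-extensions are hyperholomorphic by construction, or check directly that $D(\underline{x}+3x_0)=\sum_{j=1}^3{\mathbf e}_j\cdot 1+3={\mathbf e}_1+{\mathbf e}_2+{\mathbf e}_3$\dots wait — more carefully, $D(\underline{x}+3x_0)=\partial_{x_0}(3x_0)+\sum_{j=1}^3{\mathbf e}_j\partial_{x_j}(x_j{\mathbf e}_j)=3+\sum_{j=1}^3{\mathbf e}_j{\mathbf e}_j=3-3=0$, as desired, and it clearly restricts to $\underline{x}$ on $x_0=0$, so by uniqueness of the $CK$-extension it is the one we want. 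The only mildly delicate point is justifying the exponential formula and the truncation of the series; since that formula is quoted as known in the excerpt, the argument is essentially a one-line computation, and I expect no real obstacle — the main thing to get right is the sign bookkeeping in $\partial_{\underline{x}}\underline{x}=-3$ versus the $+3x_0$ appearing in the answer.
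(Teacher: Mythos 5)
Your proposal is correct and its final paragraph is essentially the paper's own proof: the paper simply verifies the algebraic identity $\sum_{j=1}^3\zeta_j(x)\mathbf{e}_j=x_1\mathbf{e}_1+x_2\mathbf{e}_2+x_3\mathbf{e}_3+3x_0$, notes that this function is Fueter hyperholomorphic and restricts to $\underline{x}$ on $x_0=0$, and concludes by uniqueness of the $CK$-extension. Your additional derivation via the exponential formula $\exp(-x_0\partial_{\underline{x}})$, with the series truncating after the linear term since $\partial_{\underline{x}}\underline{x}=-3$ is constant, is a correct (if redundant) independent confirmation.
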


\begin{proof}
It suffices to note that the function
\[
\begin{split}
  \zeta_1(x){\mathbf e}_1+\zeta_2(x){\mathbf e}_2+\zeta_3(x){\mathbf e}_3&=(x_1-{\mathbf e}_1x_0){\mathbf e}_1+(x_2-{\mathbf e}_2x_0){\mathbf e}_2+(x_3-{\mathbf e}_3x_0){\mathbf e}_3\\
&  =x_1{\mathbf e}_1+x_2{\mathbf e}_2+x_3{\mathbf e}_3+3x_0
\end{split}
\]
is Fueter hyperholomorphic and its restriction to $x_0=0$ is the given function.
\end{proof}
Let us now introduce another type of Fueter hyperholomorphic homogeneous polynomials, see \cite{Laguerre, DKS2019, FalcaoC}:
\begin{defn}
The polynomials
\begin{equation}\label{Qem}
Q_m(x)=\sum_{j=0}^m T^m_j x^{m-j}\bar{x}^j
\end{equation}
 where
\[
T^m_j=\frac{2(m-j+1)}{(m+1)(m+2)},\quad m=0,1,\ldots ,
\]
are called the $m$-th quaternionic Appell polynomials.
\end{defn}

The polynomials $(Q_m)_{m\geq 0}$  are Fueter regular. Moreover, a generalized Fueter regular exponential function associated to these polynomials  was considered in the literature, see for example \cite{Laguerre}.
Another interesting feature of the quaternionic Appell polynomials is that they can be obtained by applying the Fueter mapping applied to the standard quaternionic monomials $x^m$. In particular, in \cite{DKS2019} the following formula is proved \begin{equation}
\displaystyle Q_m(x)=-\frac{\Delta(x^{m+2})}{2(m+1)(m+2)}, \quad m=0,1,...
\end{equation}
 We then define another kind of Fueter hyperholomorphic polynomials by
 \[
P_m(x)\stackrel{\rm def.}{=}\frac{Q_m(x)}{c_m},
\]
where
\[
c_m=\sum_{j=0}^m(-1)^jT^m_j.
\]

We have the following relation between the polynomials $P_m$ and the $CK$-extension of $x_1{\mathbf e}_1+x_2{\mathbf e}_2+x_3{\mathbf e}_3$ in \eqref{216}:
\begin{prop}
The following equality holds:
  \begin{equation}\label{CKPm}
CK((x_1{\mathbf e}_1+x_2{\mathbf e}_2+x_3{\mathbf e}_3)^{m})=P_m(x).
\end{equation}
\end{prop}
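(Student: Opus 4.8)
The plan is to exploit the uniqueness of the Cauchy--Kovalevskaya extension: a Fueter hyperholomorphic function defined in a neighborhood of the origin is completely determined by its restriction to the hyperplane $\{x_0=0\}$. The left-hand side of \eqref{CKPm} is Fueter hyperholomorphic by construction, and $P_m=Q_m/c_m$ is Fueter hyperholomorphic because the Appell polynomials $Q_m$ are Fueter regular (as recalled above), provided we know $c_m\neq 0$. Granting this, it will be enough to check that the two sides of \eqref{CKPm} coincide on $\{x_0=0\}$, i.e. that $P_m(x)\big|_{x_0=0}=(x_1{\mathbf e}_1+x_2{\mathbf e}_2+x_3{\mathbf e}_3)^m=\underline{x}^{\,m}$, since the latter is exactly the restriction of the $CK$-extension appearing on the left.

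For the restriction I would simply observe that on $\{x_0=0\}$ one has $x=\underline{x}$ and $\overline{x}=-\underline{x}=-x$, so that $x^{m-j}\overline{x}^{\,j}=(-1)^j\underline{x}^{\,m}$ for every $j$. Plugging this into the definition \eqref{Qem} collapses the sum:
\[
Q_m(x)\big|_{x_0=0}=\sum_{j=0}^m T^m_j(-1)^j\underline{x}^{\,m}=\Big(\sum_{j=0}^m(-1)^jT^m_j\Big)\underline{x}^{\,m}=c_m\,\underline{x}^{\,m},
\]
hence $P_m(x)\big|_{x_0=0}=\underline{x}^{\,m}$, and the claim follows from the uniqueness of the $CK$-extension.

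The only step that requires a genuine (if elementary) computation — and hence the one I would flag as the main point to get right — is the non-vanishing of $c_m$. Writing
\[
c_m=\sum_{j=0}^m(-1)^jT^m_j=\frac{2}{(m+1)(m+2)}\sum_{j=0}^m(-1)^j(m-j+1)
\]
and reindexing by $k=m-j$ reduces the question to evaluating $\sum_{k=0}^m(-1)^k(k+1)$, which equals $(-1)^m\big(\lfloor m/2\rfloor+1\big)$; this yields $c_m=\dfrac{2(\lfloor m/2\rfloor+1)}{(m+1)(m+2)}>0$, so $P_m$ is indeed well defined. Everything else is a direct substitution, and no further obstacle is anticipated.
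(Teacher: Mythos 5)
Your proof is correct and follows essentially the same route as the paper: both sides are Fueter hyperholomorphic, so by uniqueness of the $CK$-extension it suffices to check that $P_m$ restricts to $\underline{x}^{\,m}$ on $\{x_0=0\}$, which is exactly the computation the paper performs. Your additional verification that $c_m=\frac{2(\lfloor m/2\rfloor+1)}{(m+1)(m+2)}>0$ is a worthwhile supplement (the paper implicitly assumes $c_m\neq0$ without computing it), and your formula is correct.
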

\begin{proof}
The proof is simple and it is based on the fact that at both hand sides there are monogenic functions which coincide on $x_0=0$:
\[
(x_1{\mathbf e}_1+x_2{\mathbf e}_2+x_3{\mathbf e}_3)^{m}=\dfrac{1}{c_m}\sum_{j=0}^m T^m_j \underline{x}^{m-j}(-\underline{x})^j= \dfrac{1}{c_m}\sum_{j=0}^m (-1)^j T^m_j \underline{x}^m=  \underline{x}^m.
\]
\end{proof}
\begin{rem}
The polynomials $Q_m$ are
called Appell since they satisfy the Appell property
$$
\frac 12 \overline{D} Q_m =m Q_{m-1}, \qquad m\geq 1 ;
$$
the $P_m$ do not respect such a property, since $$\frac 12 \overline{D} P_m =m \frac{c_{m-1}}{c_m} P_{m-1}, \qquad m\geq 1,$$ however, they behave  better  with respect to the $CK$-product, as we shall see below. In particular, for even indexes of the form $m=2k$, the Appell property is still satisfied by the polynomials $(P_{2k})_{k\geq 0}$ since we have $c_{m-1}=c_m$ in this case.
\end{rem}
In what follows, we are looking at a theory of hyperholomorphic functions of the variable
\begin{equation}
P_1(x)=\frac{Q_1(x)}{c_1}=\zeta_1(x){\mathbf e}_1+\zeta_2(x){\mathbf e}_2+\zeta_3(x){\mathbf e}_3,
\end{equation}
with the $CK$-product.
Moreover, note that
\begin{equation}
P_1(x_0)=3x_0.
\end{equation}

The $\odot$-product is not a convolution on the coefficients of the $P_n$: in opposition to \eqref{pqzeta} we have, in general,
\begin{equation}
P_np\odot P_mq\not =P_{n+m}pq,\quad n,m\in\mathbb N,\quad p,q\in\mathbb H.
\end{equation}
In particular, in general
\begin{equation}
  \label{obstruct}
(1-P_1q)^{-\odot}\not =\sum_{n=0}^\infty P_nq^n
\end{equation}
for $q\in\mathbb H$ in a neighborhood of the origin.\smallskip

This obstruction is the source of the main difficulties and new results in the present paper.
Still, we have the following simple result, which plays a key role in the computations (see in particular
\eqref{facto-pn}).

\begin{lem}\label{PnPm}
It holds that
\begin{equation}
(P_n\odot P_m)(x)=P_{n+m}(x),
\end{equation}
and, in particular,
\begin{equation}\label{P1n}
P_n(x)=(P_1(x))^{\odot n},\quad n=1,2,\ldots
\end{equation}
Furthermore, for $n,m,k\in\mathbb N_0$ and $u\in\mathbb H^r$
\begin{equation}
\label{nmk}
(P_n\odot(P_m\odot P_ku))=P_{n+m+k}u=P_{n+m}\odot P_ku.
\end{equation}
\end{lem}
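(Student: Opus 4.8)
The plan is to prove everything as a consequence of the defining property $CK((x_1{\mathbf e}_1+x_2{\mathbf e}_2+x_3{\mathbf e}_3)^m)=P_m(x)$ established in \eqref{CKPm}, together with the multiplicativity of the $CK$-product on restrictions to $x_0=0$ as in \eqref{oCK}. Write $h(x)=x_1{\mathbf e}_1+x_2{\mathbf e}_2+x_3{\mathbf e}_3=\underline{x}$. By \eqref{CKPm} we have $P_n=CK(h^n)$ and $P_m=CK(h^m)$, and both are Fueter hyperholomorphic, so their restrictions to $x_0=0$ are $h^n$ and $h^m$ respectively. Since the $CK$-product is, by \eqref{oCK}, defined as the $CK$-extension of the pointwise product of these restrictions, we get
\[
(P_n\odot P_m)(x)=CK\big(h(0,x_1,x_2,x_3)^n\, h(0,x_1,x_2,x_3)^m\big)=CK(h^{n+m})=P_{n+m}(x),
\]
where the middle equality uses that $h^n$ and $h^m$ are honest (associative) powers of the single quaternion-valued function $h$, so $h^n h^m=h^{n+m}$ pointwise with no ordering ambiguity. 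This is exactly the first assertion; iterating it gives $P_n=(P_1)^{\odot n}$, which is \eqref{P1n}, once one notes $P_0=CK(1)=1$ so the $n=1$ base case and the induction step are both instances of the displayed identity.

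For the last identity \eqref{nmk} I would again pass to restrictions at $x_0=0$. The only subtlety is that now a quaternionic (in fact $\mathbb H^r$-valued) coefficient $u$ is present, and the $CK$-product acts on the $\mathbb H$-valued (matrix-valued) function obtained by restriction. Restricting $P_k u$ to $x_0=0$ gives $h^k u$; restricting $P_m$ gives $h^m$; so $P_m\odot(P_k u)=CK(h^m\cdot h^k u)=CK(h^{m+k}u)$, using associativity of pointwise multiplication of $\mathbb H$- and $\mathbb H^r$-valued functions (no commutativity is needed, only that $h^m h^k=h^{m+k}$, which holds since all factors are powers of the same function). Applying $P_n\odot(\cdot)$ once more yields $CK(h^n h^{m+k}u)=CK(h^{n+m+k}u)=P_{n+m+k}u$; grouping the other way gives $CK(h^{n+m}\cdot h^k u)=P_{n+m}\odot(P_k u)$. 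Hence both equal $P_{n+m+k}u$, which is \eqref{nmk}.

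I do not expect any serious obstacle here: the whole point of introducing the polynomials $P_m$ as $CK$-extensions of the \emph{powers of a single function} $h=\underline{x}$ is precisely that the obstruction described around \eqref{obstruct}—namely that $h_m$ and a coefficient $u$ do not commute, so $CK(h_n u)\odot CK(h_m v)\ne CK(h_{n+m}uv)$ in general—simply does not arise when there are no intervening coefficients between the $P$'s, or when the only coefficient sits at the far right. The one place to be careful is bookkeeping with the $\mathbb H^r$-valued coefficient $u$ in \eqref{nmk}: one must check that the $CK$-product of a $\mathbb H^{\bullet}$-valued hyperholomorphic function with a $\mathbb H^{\bullet\times r}$-valued one is still given componentwise by \eqref{oCK} and that pointwise matrix multiplication of real-analytic $\mathbb H$-valued functions is associative, both of which are immediate. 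A clean way to present this is to first record the general fact that for real-analytic $\mathbb H^{r\times s}$- and $\mathbb H^{s\times t}$-valued $\varphi,\psi$ one has $CK(\varphi)\odot CK(\psi)=CK(\varphi\psi)$ whenever $\varphi\psi$ makes sense as a pointwise product, and then specialize to $\varphi,\psi\in\{h^j, h^j u\}$.
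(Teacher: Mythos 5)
Your argument is correct and is essentially the paper's own proof: both rest on the identity $CK(h^n)\odot CK(h^m)=CK(h^{n+m})$ for $h=\underline{x}$ combined with \eqref{CKPm}, with \eqref{P1n} obtained by iteration and \eqref{nmk} checked by restricting to $x_0=0$. Your extra remarks on the placement of the coefficient $u$ and on associativity of the pointwise product only make explicit what the paper leaves implicit.
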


\begin{proof}
We have
\[
\begin{split}
CK((x_1{\mathbf e}_1+x_2{\mathbf e}_2+x_3{\mathbf e}_3)^{n})\odot CK((x_1{\mathbf e}_1+x_2{\mathbf e}_2+x_3{\mathbf e}_3)^{m})&=\\
&\hspace{-3cm}= CK((x_1{\mathbf e}_1+x_2{\mathbf e}_2+x_3{\mathbf e}_3)^{n+m})\\
&\hspace{-3cm}=\left(\zeta_1(x){\mathbf e}_1+\zeta_2(x){\mathbf e}_2+\zeta_3(x){\mathbf e}_3\right)^{\odot(n+m)},
\end{split}
\]
where we used \eqref{CKPm} in the last equality. In particular, by iteration, we obtain \eqref{P1n}.
The last claim follows from restricting the equalities for $x_0=0$, and checking that they are equal to
$(x_1{\mathbf e}_1+x_2{\mathbf e}_2+x_3{\mathbf e}_3)^{n+m+k}u$.
\end{proof}

\begin{cor}
\label{coro234}
For every $\rho>0$ there exists $\epsilon>0$ such that
\[
x_0^2+x_j^2<\epsilon,\quad j=1,2,3,\quad\Longrightarrow\quad |P_n(x)|<\rho^n.
\]
\end{cor}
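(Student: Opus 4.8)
The plan is to read the statement off directly from Lemma~\ref{fog} and Lemma~\ref{PnPm}, with only a trivial verification in between. First I would record that $P_1$ is an especially simple Fueter series: from
\[
P_1(x)=\zeta_1(x){\mathbf e}_1+\zeta_2(x){\mathbf e}_2+\zeta_3(x){\mathbf e}_3
\]
we see that, in the notation of Lemma~\ref{fog}, we may take $f=P_1$, i.e. $f_\alpha={\mathbf e}_j$ when $\alpha\in\mathbb N_0^3$ is the $j$-th standard multi-index of length one ($j=1,2,3$) and $f_\alpha=0$ for all other $\alpha$. In particular $f_{(0,0,0)}=0=P_1(0)$, so only the terms with $\alpha\neq(0,0,0)$ occur and the series is finite.

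Next I would check that the hypothesis of Lemma~\ref{fog} holds for this $f$. Using the elementary estimate $|\zeta^\nu(x)|\le\epsilon^{|\nu|}$ from \eqref{ineq1}, valid whenever $x_0^2+x_j^2<\epsilon$ for $j=1,2,3$, one gets
\[
\sum_{\substack{\alpha\in\mathbb N_0^3\\ \alpha\neq(0,0,0)}}|\zeta(x)^\alpha|\,|f_\alpha|\le 3\epsilon,
\]
which is $<\rho$ as soon as $\epsilon<\rho/3$. Hence Lemma~\ref{fog} applies with the given $\rho$ (after shrinking $\epsilon$ if necessary) and yields $\bigl|(P_1(x))^{\odot n}\bigr|<\rho^n$ for all $n=1,2,\ldots$ on the neighborhood $\{x_0^2+x_j^2<\epsilon,\ j=1,2,3\}$.

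Finally I would invoke \eqref{P1n} of Lemma~\ref{PnPm}, namely $P_n(x)=(P_1(x))^{\odot n}$, to conclude $|P_n(x)|<\rho^n$ on the same neighborhood of the origin, which is the assertion (understood for $n\geq 1$, consistently with Lemma~\ref{fog}; note $P_0\equiv 1$, so the bound fails for $n=0$). There is essentially no obstacle: the corollary is just the combination of the $CK$-product bound of Lemma~\ref{fog} with the factorization $P_n=P_1^{\odot n}$. The only point deserving explicit mention is that $P_1$ has vanishing constant term, so that $f=P_1$ is a legitimate input to Lemma~\ref{fog}.
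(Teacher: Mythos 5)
Your proof is correct and follows essentially the same route as the paper, which also applies Lemma \ref{fog} with $f=P_1$ together with the factorization $P_n=P_1^{\odot n}$ from Lemma \ref{PnPm}; the only cosmetic difference is that the paper phrases the step as an induction on $n$ via $P_{n}=P_1\odot P_{n-1}$, whereas you invoke the already-stated power bound \eqref{ineq23} directly. Your explicit verification of the hypothesis of Lemma \ref{fog} for $f=P_1$ (vanishing constant term, finitely many nonzero $f_\alpha$) and your remark about the $n=0$ case are both apt.
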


\begin{proof}
This follows by induction from Lemma \ref{fog} with $f=P_1$ and $g=P_{n}$, $n\in\mathbb N$.
\end{proof}

We have (see e.g. \cite[(2.19) p. 135]{MR2124899} with $A_u={\mathbf e}_u$, $u=1,2,3$)
\begin{equation}
\label{ze1ze2ze3}
(\zeta_1(x){\mathbf e}_1+\zeta_2(x){\mathbf e}_2+\zeta_3(x){\mathbf e}_3)^{m\odot}=\sum_{|\nu|=m}\zeta^{\nu}{\mathbf e}^{\nu}\frac{|\nu|!}{\nu!}
\end{equation}
where $\zeta^\nu$ is defined in \eqref{zetanu}.

\begin{rem}
{\rm One could take the $CK$-extension of another linear combination such as $t_1x_1{\mathbf e}_1+t_2x_2{\mathbf e}_2+t_3x_3{\mathbf e_3}$, namely
\begin{equation}
t_1\zeta_1{\mathbf e_1}+t_2\zeta_2{\mathbf e}_2+t_3\zeta_3{\mathbf e}_3=t_1x_1{\mathbf e}_1+t_2x_2{\mathbf e}_2+t_3x_3{\mathbf e_3}+(t_1+t_2+t_3)x_0
\end{equation}
and develop a similar theory.}
\end{rem}

Let $f(x_0)=\sum_{n=0}^\infty x_0^na_n$ (with $a_0,a_1,\ldots\in\mathbb H$) be a real analytic function near the origin. It does not have a unique hyperholomorphic extension of course, as seen by taking
\[
\zeta_1(x){\mathbf e}_1 \quad   {\rm and}\quad \zeta_2(x){\mathbf e}_2,
\]
in fact both functions are equal to $x_0$ on the real line. However the extension becomes unique by requiring that it is of a special form:

\begin{lem}
Let $f(x_0)=\sum_{n=0}^\infty x_0^na_n$, $a_n\in\mathbb{H}$ be a real analytic function near the origin. It has a unique (left) hyperholomorphic extension of the form $f(x)=\sum_{n=0}^\infty P_n(x)b_n$,
namely
\begin{equation}\label{specialmon}
f(x)=\sum_{n=0}^\infty P_n(x)\frac{a_n}{3^n}.
\end{equation}
Similarly, its unique right hyperholomorphic extension is
\[
g(x)=\sum_{n=0}^\infty \frac{a_n}{3^n}P_n(x).
\]
\end{lem}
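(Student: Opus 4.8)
The plan is to split the statement into an existence part --- the series $\sum_{n} P_n(x)\tfrac{a_n}{3^n}$ converges near the origin to a hyperholomorphic function of axial type whose restriction to the real line is $f$ --- and a uniqueness part, which rests on the key fact recalled in the Introduction that a hyperholomorphic function of axial type is uniquely determined by its values on the real line. The only property of the $P_n$ that is really used, besides their being hyperholomorphic and of axial type, is the value of their restriction to the real axis: writing $P_n(x_0)=\lambda_n x_0^n$ with $\lambda_n>0$, one has $\lambda_n=3^n$ in the normalisation recalled in the Introduction (where the $P_n$ are characterised as the hyperholomorphic extensions of axial type of $(3x_0)^n$), and this is precisely what makes $a_n/3^n$ the correct coefficient.

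For existence I would first settle convergence. Since $f$ is real analytic near $0$, there are constants $C,r>0$ with $|a_n|\le Cr^{-n}$. Given $\rho>0$, Corollary \ref{coro234} supplies $\epsilon>0$ such that $|P_n(x)|<\rho^n$ whenever $x_0^2+x_j^2<\epsilon$ for $j=1,2,3$, so on that neighbourhood $\bigl|P_n(x)\tfrac{a_n}{3^n}\bigr|\le C\bigl(\tfrac{\rho}{3r}\bigr)^n$; choosing $\rho<3r$ makes the right-hand side summable, hence the series converges absolutely and uniformly there and defines a function $\widetilde f$. Each partial sum is a hyperholomorphic polynomial of axial type; hyperholomorphicity passes to locally uniform limits (e.g.\ through the Cauchy--Fueter integral representation, or by ellipticity of $D$), and so does the property of being of axial type (it is a closed condition on the components in the axial representation $A+\tfrac{\underline{x}}{|\underline{x}|}B$), so $\widetilde f$ is hyperholomorphic of axial type. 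Restricting term by term to the real axis, $\widetilde f(x_0)=\sum_{n} P_n(x_0)\tfrac{a_n}{3^n}=\sum_{n} 3^n x_0^n\tfrac{a_n}{3^n}=\sum_{n} x_0^n a_n=f(x_0)$.

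For uniqueness I would invoke the quoted fact: a hyperholomorphic function of axial type on an axially symmetric slice domain meeting $\mathbb R$ is determined by its restriction to the real line. Thus $\widetilde f$ is the unique hyperholomorphic extension of axial type of $f$; in particular \eqref{specialmon} does produce such an extension, and any hyperholomorphic extension of axial type of $f$ coincides with it. For uniqueness of the coefficients, if $\sum_{n} P_n(x)b_n=\sum_{n} P_n(x)b_n'$ near $0$, restricting to the real line gives the identity of convergent power series $\sum_{n} 3^n x_0^n b_n=\sum_{n} 3^n x_0^n b_n'$, whence $b_n=b_n'$ for all $n$; combined with the real-line identity just obtained this forces $b_n=a_n/3^n$. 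The right-hyperholomorphic statement is proved in exactly the same way, using that the $P_n$ are also right-hyperholomorphic (for instance because $Q_n$, and hence $P_n$, is a real multiple of $\Delta(x^{n+2})$ with $x^{n+2}$ an intrinsic slice-regular monomial), so that $\tfrac{a_n}{3^n}P_n$ is right-hyperholomorphic and $\sum_{n}\tfrac{a_n}{3^n}P_n(x_0)=\sum_{n} a_n x_0^n=f(x_0)$.

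The genuine content --- and the step I expect to be the main obstacle --- is the uniqueness of the hyperholomorphic axial-type extension, i.e.\ that such a function is determined by its restriction to $\mathbb R$; the convergence estimate (from Corollary \ref{coro234} and the Cauchy--Hadamard bound on the $a_n$) and the closedness of the axial-type class under locally uniform limits are routine. One should also make sure that the set on which $\widetilde f$ is produced is, or can be shrunk to, an axially symmetric slice domain containing a real point, so that the phrase ``of axial type'' and the uniqueness statement are meaningful there; this is harmless, since one may simply work on a small ball centred at the origin.
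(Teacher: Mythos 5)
Your proof is correct and follows essentially the same route as the paper's: the uniqueness claim is obtained exactly as in the paper, by setting $x_1=x_2=x_3=0$ and comparing the resulting power series $\sum_n (3x_0)^n b_n$ in $x_0$. The paper simply asserts that \eqref{specialmon} is an extension of the required form, whereas you additionally supply the convergence estimate via Corollary \ref{coro234} and the closure of hyperholomorphicity and of the axial-type condition under locally uniform limits; these details are correct but are left implicit in the paper's very short argument.
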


\begin{proof}
The function $f(x)$ is indeed an extension of the required form. If there is another one, say $\tilde{f}(x)=\sum_{n=0}^\infty P_n(x)d_n$ we get when setting $x_1=x_2=x_3=0$
\[
\sum_{n=0}^\infty (3x_0)^nb_n=  \sum_{n=0}^\infty (3x_0)^nd_n
\]
and so $b_n=d_n$, $n=0,1,\ldots$. A similar reasoning works for $g$.
\end{proof}
\begin{rem}{\rm We note that the polynomials $P_m$ and $Q_m$ are both left and right hyperholomorphic and in fact $P_m(x)$ corresponds to both the left and right CK-extension of $(x_1{\mathbf e}_1+x_2{\mathbf e}_2+x_3{\mathbf e}_3)^{m}$.}
\end{rem}
Actually, the previous result is a particular case of a more general result that holds for Fueter hyperholomorphic functions of axial type, whose definition which comes from the more general case of axially monogenic functions, see \cite{DeSS}, is the following:
 \begin{defn}
A Fueter hyperholomorphic function is of axial type (or axially hyperholomorphic) if it is of the form
\[
A(x_0,|\underline{x}|) + \dfrac{\underline{x}}{|\underline{x}|} B(x_0,|\underline{x}|),
\]
where $A$, $B$ are quaternionic valued.
 \end{defn}
The condition that a function  $f$ of axial type is in the kernel of the Cauchy-Fueter operator $D$ translates into the Vekua system
\[
\partial_{x_0}A- \partial_{\rho} B=\frac{2}{\rho} B,\qquad \partial_{x_0}B+\partial_{\rho}A=0, \qquad \rho=|\underline{x}|.
\]
Starting from any real analytic function $A(x_0)$ it is possible to construct its unique Fueter hyperholomorphic extension of axial type.\smallskip

We will say that a matrix-valued hyperholomorphic function  is of {\em axial type} if all its entries, as matrix, are of axial type.

\begin{rem}\label{rmk212}
Functions of the form \eqref{specialmon} are quaternionic special monogenic according to the terminology in \cite{abul1990basic}. Any quaternionic special monogenic function in a neighborhood of the origin is of axial type. In fact any polynomial $P_m(x)$ is the sum of terms of the form
\[
(x_0^2+|\underline{x}|^2)^k (x_0\pm \underline{x})^h, \qquad 2k+h=m,\, k\geq 0, \, h\geq 0
\]
which are evidently of axial type. This fact was already noted in \cite{Laguerre}, Property 2. A Fueter regular function represented by a uniformly convergent series of the form \eqref{specialmon}
is such that
\[
\begin{split}
\sum_{m\geq 0} A_m(x_0,|\underline{x}|)+\dfrac{\underline{x}}{|\underline{x}|} B_m(x_0,|\underline{x}|)&=\sum_{m\geq 0} A_m(x_0,|\underline{x}|)+\dfrac{\underline{x}}{|\underline{x}|} \sum_{m\geq 0}B_m(x_0,|\underline{x}|)\\
&= A(x_0,|\underline{x}|)+\dfrac{\underline{x}}{|\underline{x}|} B(x_0,|\underline{x}|)
\end{split}\]
where the pair $A$, $B$ satisfy the Vekua system. Conversely, any function of axial type is of the form \eqref{specialmon}, by Theorem 3.10 in \cite{ads-fh}.
\end{rem}
We recall the notion of slice polyanalytic functions, see \cite{ADSP2019}.
\begin{defn}
A real differentiable function $f:\Omega\longrightarrow \mathbb H$  of the form $$f(x)=\alpha(u,v)+I\beta(u,v), \quad x=u+Iv\in \Omega$$ with $\alpha(u,-v)=\alpha(u,v)$ and $\beta(u,-v)=-\beta(u,v)$  is called left slice polyanalytic of order $N$, if for all $I\in\mathbb{S}$, $f_I$ is left polyanalytic of order
$N$ on $\Omega_I$, namely if  $$ \overline{\partial_I}^N f(u+Iv):=
\frac{1}{2^N}\left(\frac{\partial }{\partial u}+I\frac{\partial }{\partial v}\right)^Nf_I(u+Iv)=0.
$$
\end{defn}
When $N=1$ the notion coincides with that one of slice hyperholomorphicity (slice regularity).

We have the following characterization, see \cite[Proposition 3.6]{ADSP2019}:
\begin{prop}\label{Poly-Dec}
A function $f$ defined on a domain $\Omega\subseteq\mathbb H$ is slice polyanalytic of order $N$ on $\Omega$ if and only if it can be written as
\begin{equation}\label{sum}
f(x):=\displaystyle\sum_{k=0}^{N-1}\overline{x}^kf_k(x)
\end{equation}
where $f_0,...,f_{N-1}$ are slice regular functions in $\Omega$.
\end{prop}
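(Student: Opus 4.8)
The plan is to prove both implications slice by slice, reducing to the classical one–complex–variable theory of polyanalytic functions, and to organize the nontrivial direction as an induction on the order $N$. Throughout, $f=\alpha(u,v)+I\beta(u,v)$ denotes a slice function with the parity $\alpha(u,-v)=\alpha(u,v)$, $\beta(u,-v)=-\beta(u,v)$, and $f_I$ its restriction to $\mathbb{C}_I$; recall that for $N=1$ "slice polyanalytic of order $1$" means slice regular.

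\textbf{The easy direction.} Suppose $f(x)=\sum_{k=0}^{N-1}\overline{x}^kf_k(x)$ with each $f_k$ slice regular. One first checks that $f$ is genuinely a slice function of the required form. On $\mathbb{C}_I$ the map $x\mapsto\overline{x}^k$ is $(u-Iv)^k$, and since $u,v$ are real scalars the noncommutative binomial expansion collapses to $\sum_j\binom{k}{j}u^{k-j}(-v)^jI^j$; grouping the powers of $I$ (which cycle through $1,I,-1,-I$) gives $(u-Iv)^k=a_k(u,v)+Ib_k(u,v)$ with $a_k,b_k$ real polynomials not depending on $I$. The identity $\overline{u-Iv}=u+Iv$ yields $(u+Iv)^k=\overline{(u-Iv)^k}=a_k-Ib_k$, and comparing with $(u+Iv)^k=a_k(u,-v)+Ib_k(u,-v)$ gives $a_k(u,-v)=a_k(u,v)$, $b_k(u,-v)=-b_k(u,v)$. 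Thus $\overline{x}^k$ is an intrinsic slice function, and a direct computation (using that the real-valued components commute with $I$) shows that the pointwise product of an intrinsic slice function with a slice function is again a slice function with the correct $v$–parity; summing over $k$, $f$ is a slice function. On $\mathbb{C}_I$ we have $\overline{x}|_{\mathbb{C}_I}=\overline{z}$ and $(f_k)_I$ holomorphic, so $f_I(z)=\sum_{k=0}^{N-1}\overline{z}^k(f_k)_I(z)$; applying $\overline{\partial_I}^{\,N}$ and using $\overline{\partial_I}^{\,N}(\overline{z}^kg)=0$ whenever $g$ is holomorphic and $k\le N-1$ (Leibniz rule, noting $\overline{\partial_I}^{\,N}\overline{z}^k=0$), we get $\overline{\partial_I}^{\,N}f_I=0$. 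Hence $f$ is slice polyanalytic of order $N$.

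\textbf{The main direction, by induction on $N$.} For $N=1$ there is nothing to prove. Assume the statement for order $N-1$ and let $f$ be slice polyanalytic of order $N$; we peel off the top term. Since $\overline{\partial_I}=\tfrac12(\partial_u+I\partial_v)$ acts only through the real-valued components $\alpha,\beta$, one computes $\overline{\partial_I}f_I=\alpha'+I\beta'$ with $\alpha'=\tfrac12(\partial_u\alpha-\partial_v\beta)$ and $\beta'=\tfrac12(\partial_u\beta+\partial_v\alpha)$ independent of $I$; differentiating the parity relations for $\alpha,\beta$ shows $\alpha'(u,-v)=\alpha'(u,v)$ and $\beta'(u,-v)=-\beta'(u,v)$, so the family $\{\overline{\partial_I}f_I\}_{I\in\mathbb{S}}$ consists of the restrictions of a single slice function. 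Iterating $N-1$ times, the functions $g_I:=\tfrac{1}{(N-1)!}\,\overline{\partial_I}^{\,N-1}f_I$ are the restrictions of a slice function $g$, which is moreover slice regular since $\overline{\partial_I}g_I=\tfrac{1}{(N-1)!}\,\overline{\partial_I}^{\,N}f_I=0$. Put $\widetilde f:=f-\overline{x}^{\,N-1}g$; by the product observation above $\widetilde f$ is a slice function, and on $\mathbb{C}_I$, because $g_I$ is holomorphic, $\overline{\partial_I}^{\,N-1}(\overline{z}^{\,N-1}g_I)=(N-1)!\,g_I=\overline{\partial_I}^{\,N-1}f_I$, so $\overline{\partial_I}^{\,N-1}\widetilde f_I=0$, i.e.\ $\widetilde f$ is slice polyanalytic of order $N-1$. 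The induction hypothesis gives $\widetilde f=\sum_{k=0}^{N-2}\overline{x}^kf_k$ with $f_k$ slice regular, and setting $f_{N-1}:=g$ yields $f=\sum_{k=0}^{N-1}\overline{x}^kf_k$.

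\textbf{Where the work is.} The only substantive points are the two closure statements: that $x\mapsto\overline{x}^k$ is an intrinsic slice function and that multiplying a slice function by it preserves the class of slice functions with the correct $v$–parity; and that the "slice $\overline{\partial}$" operator $f\mapsto\overline{\partial_I}f_I$ maps slice functions to slice functions. Both reduce to the elementary symmetry bookkeeping sketched above. Once these are granted, the classical fact that $\overline{\partial}^{\,N-1}$ of an order-$N$ polyanalytic function is holomorphic (elliptic regularity for $\overline{\partial}$) does the rest on each slice, and no representation formula or identity principle is needed. Alternatively, one may bypass the induction: decompose $f_{I_0}$ on a single slice via the classical polyanalytic decomposition, extend each holomorphic coefficient to a slice regular function through the representation formula, and invoke that a slice function is determined by its restriction to one slice; the inductive argument above is essentially an unwinding of this alternative. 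I expect the symmetry-preservation of the slice $\overline{\partial}$ operator to be the main (though mild) obstacle, as it is the place where the parity hypotheses in the definition of a slice function are genuinely used.
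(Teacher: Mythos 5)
Your argument is correct, but note that the paper itself does not prove this proposition: it is quoted verbatim from \cite[Proposition 3.6]{ADSP2019} and used as a known result, so there is no in-paper proof to compare against. Your blind proof is a legitimate self-contained substitute. The two substantive points you isolate are indeed the right ones and you handle them correctly: that $\overline{x}^k=(u-Iv)^k=a_k(u,v)+Ib_k(u,v)$ with real, $I$-independent coefficients of the right parity (so left multiplication by $I$ passes through it, which is what makes the Leibniz rule $\overline{\partial_I}(\overline{z}^k g)=(\overline{\partial_I}\overline{z}^k)g+\overline{z}^k\overline{\partial_I}g$ valid in the noncommutative setting), and that $f\mapsto\overline{\partial_I}f_I$ sends the pair $(\alpha,\beta)$ to $\bigl(\tfrac12(\alpha_u-\beta_v),\tfrac12(\beta_u+\alpha_v)\bigr)$, which preserves both $I$-independence and the even/odd parity, hence produces a bona fide slice function. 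With those granted, the induction peeling off $\overline{x}^{N-1}g$ with $g=\tfrac{1}{(N-1)!}\overline{\partial_I}^{N-1}f_I$ is clean. The original reference proceeds essentially via your ``alternative'' route (decompose on one slice using the classical one-variable polyanalytic decomposition and extend by the Representation Formula); your inductive version avoids invoking the Representation Formula and the identity principle, at the cost of a mild regularity remark (one needs $g$ smooth, which follows from $\overline{\partial_I}g_I=0$ by elliptic regularity, so that $\widetilde f=f-\overline{x}^{N-1}g$ is still admissible for the induction hypothesis) --- a point you flag and which is harmless.
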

As a consequence:
\begin{cor}\label{PmPol}
The polynomial $P_m$ is slice polyanalytic of order $m+1$.
\end{cor}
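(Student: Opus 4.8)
The goal is to show that $P_m$ is slice polyanalytic of order $m+1$. By Proposition \ref{Poly-Dec}, this is equivalent to exhibiting a decomposition $P_m(x)=\sum_{k=0}^{m}\overline{x}^k f_k(x)$ with each $f_k$ slice regular on $\mathbb H$. The plan is to produce such a decomposition directly from the explicit formula for $P_m$.

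First I would recall that $P_m(x)=\frac{1}{c_m}Q_m(x)=\frac{1}{c_m}\sum_{j=0}^m T^m_j x^{m-j}\overline{x}^j$ from \eqref{Qem}. Each summand has the form $x^{m-j}\overline{x}^j$. The key observation is that $x$ and $\overline{x}$ commute (both lie in the commutative plane $\mathbb R + \mathbb R\underline{x}/|\underline{x}|$ through $x$), so $x^{m-j}\overline{x}^j = \overline{x}^j x^{m-j}$, and therefore
\[
P_m(x)=\frac{1}{c_m}\sum_{j=0}^m T^m_j\,\overline{x}^{\,j}\,x^{m-j}.
\]
Setting $f_j(x):=\frac{T^m_j}{c_m}x^{m-j}$ for $j=0,\dots,m$, each $f_j$ is a polynomial in $x$ with real coefficients, hence an intrinsic slice regular function on all of $\mathbb H$. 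This displays $P_m$ in exactly the form \eqref{sum} with $N=m+1$, so the conclusion follows from Proposition \ref{Poly-Dec}.

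The only point that needs a word of care — and the one I would flag as the ``main obstacle,'' though it is minor — is justifying the commutativity $x\,\overline{x}=\overline{x}\,x$ and, more importantly, checking that the decomposition is the genuine slice polyanalytic one, i.e.\ that the components $f_j$ are slice regular on each slice $\mathbb C_I$ in the intended sense. Commutativity of $x$ and $\overline{x}$ is immediate since $\overline{x}=2x_0-x$ is an $\mathbb R$-linear combination of $1$ and $x$. Slice regularity of the monomials $x^{m-j}$ is standard. One should also note that order $m+1$ is sharp (the top coefficient $T^m_m=\frac{2}{(m+1)(m+2)}\neq 0$), but sharpness is not required by the statement, so I would not belabour it. Alternatively, one could invoke the general fact that $\overline{x}^k g(x)$ is slice polyanalytic of order $k+1$ whenever $g$ is slice regular, and that sums of slice polyanalytic functions of orders $\le N$ are slice polyanalytic of order $\le N$; applied termwise to $P_m=\sum_j \overline{x}^{\,j} f_j$ this gives order $\max_j(j+1)=m+1$ at once.
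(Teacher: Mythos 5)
Your proof is correct and follows essentially the same route as the paper: rewrite $P_m(x)=\sum_{k=0}^m \overline{x}^k f_k(x)$ with $f_k(x)=\frac{T^m_k}{c_m}x^{m-k}$ slice regular, and invoke Proposition \ref{Poly-Dec}. The commutativity point $x\overline{x}=\overline{x}x$ that you flag is also noted in the paper (just after the corollary), so your argument matches the intended one in full.
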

\begin{proof}
For all $0 \leq k\leq m$, we set $f_k(x)=\dfrac{T^m_k}{c_m}x^{m-k}$. It is clear that all $f_k$ are slice regular functions on $\Omega$, being polynomials in the variable $x$. Moreover, we note that $$P_m(x)=\displaystyle \sum_{k=0}^m\overline{x}^kf_k(x), \forall x\in \Omega.$$
Hence, the thesis follows using Proposition \ref{Poly-Dec}.
\end{proof}
In the definition of the polynomials $P_m$ we note that to write the monomials as $x^j\bar x^\ell$ or $\bar x^\ell x^j$ does not make any difference since $x\bar x=\bar x x$.\\
A Representation Formula  for Fueter hyperholomorphic functions of axial type is immediately deduce from the fact that they are slice functions, see \cite{GP}, so we have:
\begin{prop}\label{RFAM}
Let $f:\, \Omega\subset\mathbb{H}\to\mathbb H$ be a Fueter hyperholomorphic function of axial type  where $\Omega$ is an axially symmetric slice domain. Let $J\in\mathbb{S}$, then for any $x=u+I_xv\in\Omega$ the following equality holds :
$$
\displaystyle f(u+I_xv)= \frac{1}{2}\left[f_J(u+Jv)+f_J(u-Jv)\right]+\frac{I_xJ}{2}\left[f_J(u-Jv)-f_J(u+Jv)\right].
$$
\end{prop}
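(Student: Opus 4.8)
The plan is to reduce the statement to the classical Representation Formula for slice functions, exploiting the remark recorded just above the statement that a Fueter hyperholomorphic function of axial type is a slice function in the sense of \cite{GP}. Writing $x=u+I_xv$ with $u=x_0$, $v=|\underline{x}|\geq 0$ and $I_x=\underline{x}/|\underline{x}|$ whenever $\underline{x}\neq 0$, the axial representation
\[
f(x)=A(x_0,|\underline{x}|)+\frac{\underline{x}}{|\underline{x}|}\,B(x_0,|\underline{x}|)
\]
becomes $f(u+I_xv)=\alpha(u,v)+I_x\beta(u,v)$, where $\alpha(u,v):=A(u,v)$ and $\beta(u,v):=B(u,v)$ for $v\geq 0$, extended to $v<0$ by $\alpha(u,-v)=\alpha(u,v)$ and $\beta(u,-v)=-\beta(u,v)$. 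First I would verify that this is consistent: that $\beta(u,0)=0$, so that $f$ is unambiguous on $\Omega\cap\mathbb{R}$, and that $\alpha,\beta$ do not depend on the choice of $I_x$, so that $f$ is genuinely a (left) slice function on the axially symmetric slice domain $\Omega$.

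Next, fix $J\in\mathbb{S}$. For $x=u+I_xv\in\Omega$, axial symmetry of $\Omega$ guarantees that both $u+Jv$ and $u-Jv$ lie in $\Omega_J$, and the slice form of $f$ together with the parity of $\alpha$ and $\beta$ gives
\[
f_J(u+Jv)=\alpha(u,v)+J\beta(u,v),\qquad f_J(u-Jv)=\alpha(u,v)-J\beta(u,v).
\]
Taking half the sum and half the difference then isolates
\[
\frac12\big(f_J(u+Jv)+f_J(u-Jv)\big)=\alpha(u,v),\qquad \frac12\big(f_J(u-Jv)-f_J(u+Jv)\big)=-J\beta(u,v).
\]

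Finally I would substitute these into the right-hand side of the claimed identity and use $J^2=-1$:
\[
\frac12\big(f_J(u+Jv)+f_J(u-Jv)\big)+\frac{I_xJ}{2}\big(f_J(u-Jv)-f_J(u+Jv)\big)=\alpha(u,v)-I_xJ^2\beta(u,v)=\alpha(u,v)+I_x\beta(u,v),
\]
which is exactly $f(u+I_xv)$. The main obstacle is the bookkeeping at $\underline{x}=0$: one must check that the even/odd extension of $A$ and $B$ in the variable $v$ is well defined and that the formula degenerates correctly on $\Omega\cap\mathbb{R}$, where it simply reads $f(u)=f_J(u)$. Apart from that, the argument is the routine even/odd decomposition underlying the classical slice Representation Formula, so one could alternatively just invoke that result from \cite{GP} once the slice nature of $f$ has been recorded.
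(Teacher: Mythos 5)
Your argument is correct, and it reaches the formula by a more self-contained route than the one the paper actually writes down. The paper's proof is indirect: it invokes the expansion $f=\sum_{m\ge 0}P_m f_m$ of an axial function along the polynomials $P_m$ (Theorem 3.10 of \cite{ads-fh}), notes via Corollary \ref{PmPol} that each $P_m$ is slice polyanalytic of order $m+1$, and then applies the Representation Formula already known for slice polyanalytic functions (Proposition 3.13 of \cite{ADSP2019}) term by term; the sentence preceding the statement also records the shortcut of simply citing \cite{GP} once one knows axial functions are slice functions. You instead verify from scratch that the axial form $A(x_0,|\underline{x}|)+\frac{\underline{x}}{|\underline{x}|}B(x_0,|\underline{x}|)$ is a slice function with even part $\alpha=A$ and odd part $\beta=B$, compute $f_J(u\pm Jv)=\alpha\pm J\beta$, and recombine using $J^2=-1$; this is exactly the even/odd decomposition underlying the classical formula and your algebra checks out. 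What your approach buys is independence from the series expansion and from the slice polyanalytic machinery (so it works for any axial function, with no convergence bookkeeping); what it costs is that you must handle the degenerate locus $\underline{x}=0$ yourself, i.e.\ justify that $B(x_0,0)=0$ (which follows from regularity of $f$ at real points, or from the Vekua system $\partial_{x_0}A-\partial_\rho B=\frac{2}{\rho}B$ as $\rho\to 0$) so that the odd extension of $\beta$ and the value of $f$ on $\Omega\cap\mathbb{R}$ are unambiguous. You correctly flag this as the only point needing care; filling it in with the one-line Vekua argument would make the proof complete.
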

\begin{proof}
We note that the Fueter hyperholomorphic polynomials $(P_m)_{m\geq 0}$ are slice polyanalytic of order $m+1$ thanks to Corollary \ref{PmPol}. Thus, the Representation Formula is an immediate consequence.
\end{proof}

\begin{rem}
An alternative proof of the previous Representation Formula in the Fueter hyperholomorphic context consists to apply Proposition 3.13 in \cite{ADSP2019} to each polynomial $P_m$.
\end{rem}
We conclude this part with a result which will be used in the sequel while dealing with kernels:
\begin{prop}\label{anti}
The polynomial $\overline{P_m(x)}$ is right anti-hyperholomorphic in $x$, namely it satisfies
\[
\overline{P_m(x)} \overline{D}=
\frac{\partial \overline{P_m}}{\partial x_0} -\frac{\partial \overline{P_m}}{\partial x_1} \mathbf e_1-\frac{\partial \overline{P_m}}{\partial x_2}\mathbf e_2-\frac{\partial \overline{P_m}}{\partial x_3}\mathbf e_3=0.\]
More in general, if $f$, $g$ are left hyperholomorphic, $\overline{f \odot g}=\overline{g}\odot_R\overline{f}$ is right anti-hyperholomorphic.
\end{prop}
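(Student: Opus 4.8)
The plan is to reduce everything to the basic conjugation identity relating the Cauchy--Fueter operator acting on the right of a conjugate to its action on the left of the original function. First I would recall that for a quaternionic differentiable function $f$ one has the pointwise identity $\overline{Df}=\overline{f}\,\overline{D}$, where on the left $D$ acts on the left and on the right the conjugated operator $\overline{D}=\partial_{x_0}-\mathbf e_1\partial_{x_1}-\mathbf e_2\partial_{x_2}-\mathbf e_3\partial_{x_3}$ acts on the right of $\overline{f}$; this is a direct computation using $\overline{ab}=\overline b\,\overline a$ and $\overline{\mathbf e_i}=-\mathbf e_i$, together with the fact that the real partial derivatives commute with conjugation. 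Applying this to $f=P_m$, which is left hyperholomorphic ($DP_m=0$), gives immediately $\overline{P_m}\,\overline{D}=\overline{DP_m}=0$, which is the first assertion; the explicit form of $\overline{D}$ written out in the statement is just the definition of right anti-hyperholomorphy and needs no further argument. The same one-line argument covers any left hyperholomorphic $f$ in place of $P_m$.

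For the second, more general claim, I would argue as follows. Since $f$ and $g$ are left hyperholomorphic, so is $f\odot g$ (the $CK$-product preserves hyperholomorphicity), hence by the first part $\overline{f\odot g}$ is right anti-hyperholomorphic; the content is therefore the algebraic identity $\overline{f\odot g}=\overline{g}\odot_R\overline{f}$, where $\odot_R$ denotes the $CK$-product in the right hyperholomorphic setting. Here I would use the defining property \eqref{oCK} of the $CK$-product: restricting to $x_0=0$ turns $\odot$ into pointwise multiplication of the real-analytic functions $f(0,\underline x)$ and $g(0,\underline x)$, so that $\overline{(f\odot g)(0,\underline x)}=\overline{f(0,\underline x)g(0,\underline x)}=\overline{g(0,\underline x)}\;\overline{f(0,\underline x)}$, which is exactly the pointwise product defining $\overline g\odot_R\overline f$ on $x_0=0$. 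Both $\overline{f\odot g}$ and $\overline g\odot_R\overline f$ are right anti-hyperholomorphic extensions of the same real-analytic datum on $x_0=0$, and by uniqueness of the $CK$-extension (applied in the right/anti-hyperholomorphic category, which is obtained from the left hyperholomorphic one by the conjugation symmetry above) they coincide.

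The main obstacle is purely bookkeeping rather than conceptual: one must be careful about the sides on which operators and the $CK$-product act, and in particular set up cleanly the ``mirror'' dictionary sending left hyperholomorphic functions and $\odot$ to right anti-hyperholomorphic functions and $\odot_R$ via $f\mapsto\overline f$, so that the uniqueness-of-$CK$-extension argument can legitimately be invoked in the anti-hyperholomorphic setting. Once that dictionary is fixed, each step is a one-line verification. I would also remark that for the concrete polynomials $P_m$ one can alternatively verify $\overline{P_m}\,\overline D=0$ by a direct computation on the explicit expansion $P_m(x)=\tfrac1{c_m}\sum_j T^m_j x^{m-j}\overline x^{\,j}$, using that $x$ and $\overline x$ commute and that $x^k$ is both left and right hyperholomorphic is \emph{false} for the quaternionic monomial --- so this direct route is in fact messier and the conjugation identity is the clean way to proceed.
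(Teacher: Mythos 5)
Your proposal is correct and follows essentially the same route as the paper: the first assertion via the conjugation identity $\overline{P_m}\,\overline{D}=\overline{DP_m}=0$, and the second by restricting to $x_0=0$, reversing the order under conjugation, and invoking uniqueness of the $CK$-extension in the right anti-hyperholomorphic setting (the paper writes this as $\overline{CK(\cdot)}=CK(\overline{\cdot})$ followed by $CK(\overline{g}|_{x_0=0}\odot_R\overline{f}|_{x_0=0})=CK(\overline{g}|_{x_0=0})\odot_R CK(\overline{f}|_{x_0=0})$). Your extra care in spelling out the ``mirror dictionary'' is a harmless elaboration of what the paper leaves implicit.
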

\begin{proof}
We immediately have:
\[
\overline{P_m(x)} \overline{D}= \overline{D P_m(x)} =0,
\]
and the first assertion follows. Then we have
$$
\overline{f \odot g}=\overline{CK(f_{|x_0=0} \odot g _{|x_0=0}})= {CK(\overline{f_{|x_0=0} \odot g _{|x_0=0}}}).
$$
We now note that
$$
CK(\overline{f_{|x_0=0} \odot g _{|x_0=0}})= CK(\overline{g_{|x_0=0}} \odot_R \overline{f _{|x_0=0}})=CK(\overline{g_{|x_0=0}}) \odot_R CK(\overline{f _{|x_0=0}})=\overline{g}\odot_R\overline{f}
$$
which concludes the proof.
\end{proof}

\subsection{Positivity, analytic extension and Toeplitz operators}
This section considers the complex variable setting.
Recall first that a $\mathbb C^{n\times n}$-valued function $K(z,w)$ defined for $z,w$ varying in some set $\Omega$ is called positive definite if for every choice of
$N\in\mathbb N_0$ and $w_1,\ldots,w_N\in\Omega$ the block matrix $(K(w_j,w_k))_{j,k=1}^N$ is non-negative. Associated to $K(z,w)$ is a uniquely defined Hilbert space of
$\mathbb C^n$-valued functions defined on $\Omega$, denoted here $\mathcal H(K)$, and with the properties:\\
$(1)$ For every $c\in\mathbb C^n$ and $w\in\Omega$, the function $K_wc\,:\, z\mapsto \, K(z,w)c$ belongs to $\mathcal H(K)$, and\\
$(2)$ For every $f\in\mathcal H(K)$ and $w,c$ as above,
\begin{equation}
\langle f,K_wc\rangle=c^*f(w).
\end{equation}
$\mathcal H(K)$ is called the reproducing kernel Hilbert space with reproducing kernel $K(z,w)$, and there is a one-to-one correspondence between reproducing kernel
Hilbert spaces and positive definite functions; see \cite{aron,meschkowski,saitoh}.
We recall the following result, which originates with the work of Donoghue \cite{donoghue}. We take a real neighborhood of the origin, but it could be replaced by any other zero set in
the open unit disk.

\begin{thm}
\label{dono}
Let $s$ be a ${\mathbb C}^{r\times t}$-valued function defined in a neighborhood $(-\epsilon,\epsilon)$ of the origin, and such that the kernel
\begin{equation}
\label{KSx0y0}
\frac{I_r-s(a)s(b)^*}{1-ab}
\end{equation}
is positive definite in $(-\epsilon,\epsilon)$.  Then $s$ has a (uniquely defined) analytic and contractive extension to the open unit disk.
\end{thm}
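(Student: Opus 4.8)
The plan is to exploit the positivity of the kernel \eqref{KSx0y0} to produce a contractive colligation (or equivalently a contractive multiplication operator) on the real interval, and then to use the fact that functions in the reproducing kernel space extend analytically. First I would consider the reproducing kernel Hilbert space $\mathcal H(K)$ associated with the positive definite kernel $K(a,b)=\frac{I_r-s(a)s(b)^*}{1-ab}$ on $(-\epsilon,\epsilon)$. The positivity of $K$ is equivalent to the statement that multiplication by $s$ is a contraction from the $\mathbb C^t$-valued Hardy-type space with kernel $\frac{I_t}{1-ab}$ into the one with kernel $\frac{I_r}{1-ab}$; since on the real interval $\frac{1}{1-ab}=\sum_{n\geq 0}a^nb^n$, this Hardy-type space is naturally the restriction to $(-\epsilon,\epsilon)$ of the classical vector-valued Hardy space $H^2$ of the disk. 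The key point is that the map $a^n\mapsto z^n$ identifies the span of the kernel functions $K^{H^2}_b$, $b\in(-\epsilon,\epsilon)$, isometrically with a dense subspace of $H^2$, because the Gram matrix $\left(\frac{1}{1-b_jb_k}\right)$ is the same whether the $b_j$ lie in $(-\epsilon,\epsilon)$ or are taken as points of the disk.

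Next I would transport the contraction $M_s$ through this identification. Concretely, define on $H^2(\mathbb C^t)$ the operator $\widetilde M$ by $\widetilde M K^{H^2}_b c = s(b)^* $-twisted kernels, i.e. by requiring $\langle \widetilde M f, K^{H^2}_b c\rangle = \langle f, K^{H^2}_b s(b)^* c\rangle$ for $b$ real; positivity of $K$ forces $\|\widetilde M\|\le 1$. A bounded operator on $H^2$ whose adjoint sends $K^{H^2}_b c$ to $K^{H^2}_b s(b)^* c$ for all $b$ in a set with an accumulation point is necessarily the adjoint of a multiplication operator $M_S$ by some $S\in H^\infty(\mathbb C^{r\times t})$ with $\|S\|_\infty=\|M_S\|\le 1$; this is the standard characterization of analytic Toeplitz operators as those commuting with the shift, together with the identity theorem to pin down $S(b)=s(b)$ on $(-\epsilon,\epsilon)$. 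Since $S$ is a bounded analytic function on the disk agreeing with $s$ on a set with an accumulation point, $S$ is the desired contractive analytic extension, and it is unique by the identity theorem.

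Alternatively, and perhaps more in the spirit of the surrounding sections, one can argue via a realization/lurking-isometry construction: positivity of \eqref{KSx0y0} gives, after the factorization $\frac{I_r-s(a)s(b)^*}{1-ab}=\sum_{n\ge0} a^n F_n (\,\cdot\,)$, an isometric relation between the data $(I, a\,\cdot\,, s(a))$ that can be completed to a coisometry $\begin{pmatrix}A&B\\ C&D\end{pmatrix}$ on $\mathcal H(K)\oplus\mathbb C^t$; then $s(a)=D+aC(I-aA)^{-1}B$ on $(-\epsilon,\epsilon)$, and the right-hand side, being a resolvent expression with $\|A\|\le 1$, is analytic and contractive on the whole unit disk, giving the extension. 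The main obstacle in either route is the passage from the real interval to the disk: one must be careful that the reproducing kernel space over $(-\epsilon,\epsilon)$ is genuinely (unitarily, via $a^n\mapsto z^n$) a space of restrictions of analytic functions, so that the operator built over the interval actually \emph{is} multiplication by an $H^\infty$ function rather than merely a bounded operator defined on kernel functions; this is where the uniqueness and the analyticity of the extension are simultaneously secured, and it is exactly the content of Donoghue's original argument. Everything else — the equivalence of positivity of \eqref{KSx0y0} with contractivity of $M_s$, and the norm bookkeeping — is routine.
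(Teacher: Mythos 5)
Your main argument is essentially the paper's own proof: both realize the Hardy space of the disk as the ambient reproducing kernel space, define a densely defined operator on the kernel functions at the real points whose contractivity is exactly the positivity of \eqref{KSx0y0}, and then identify its (adjoint) bounded extension as multiplication by a contractive $H^\infty$ function agreeing with $s$ on the interval — the paper does this last step by taking $f=1$ and analytically continuing the identity $(T^*f)(z)=(T^*1)(z)f(z)$, while you invoke the commutant-of-the-shift characterization, a cosmetic difference. The alternative lurking-isometry/realization route you sketch is not what the paper uses for this theorem, but it is sound and is essentially the technique the paper deploys for the analogous multiplier characterizations in the half-space sections.
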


\begin{proof} The proof can be found in e.g. \cite[pp. 45-46]{MR1638044}. For completeness we outline it. We set $r=s=1$ to simplify the notation. Let $\rho_w(z)=1-z\overline{w}$.
The function $1/\rho_w(z)$ is positive definite in the open unit disk $\mathbb D$, with reproducing kernel Hilbert space the Hardy space of the open unit disk, denoted
$\mathbf H_2(\mathbb D)$, and
consisting of the power series $f(z)=\sum_{n=0}^\infty a_nz^n$ with complex coefficients satisfying $\|f\|_2^2\stackrel{\rm def.}{=}\sum_{n=0}^\infty |a_n|^2<\infty$. The formula
\[
T(1/\rho_x)=\frac{\overline{s(x)}}{\rho_x},\quad x\in(-1/3,1/3)
\]
extends linearly to a densely defined operator $T$ from $\mathbf H_2(\mathbb D)$ into itself. The positivity of the kernel \eqref{KSx0y0} and the definition of the inner
product in the Hardy space implies that $T$ is a contraction, and hence extends to an everywhere defined contraction, still denoted by $T$, from $\mathbf H_2(\mathbb D)$ into itself.
Let $f\in\mathbf{H}_2(\mathbb D)$. The adjoint of $T$ satisfies:
\begin{equation}
\label{tt*}
(T^*f)(x)=\langle T^*f,\frac{1}{\rho_x}\rangle=\langle f,\frac{\overline{s(x)}}{\rho_x}\rangle=s(x)f(x), \quad x\in(-1/3,1/3).
\end{equation}
Take first $f(z)=1$.
Since $T^*1$ is analytic in the open unit disk, it is an analytic extension of $s(x)$, $x\in(-1/3,1/3)$.
It remains to check that $T^*1$ is contractive in $\mathbb D$. Equation \eqref{tt*} extends analytically to
\begin{equation}
\label{toep}
(T^*f)(z)=(T^*1)(z)f(z),\quad z\in\mathbb D.
\end{equation}
So $T^*$ is the operator of multiplication by $T^*1$. Since it is bounded, the formula for the adjoint of a multiplication operator acting in a reproducing kernel Hilbert space gives
\[
T^*(1/\rho_w)  =\frac{\overline{T^*1(w)}}{\rho_w}.
\]
Since it is contractive, writing that $\|T^*1/\rho_w\|\le \|1/\rho_w\|$ we get
\[
\frac{|T^*1(w)|^2}{1-|w|^2}\le\frac{1}{1-|w|^2},\quad w\in\mathbb D,
\]
and hence $T^*1$ takes contractive values in the open unit disk.
\end{proof}

We also recall (we refer to \cite{nik1} for more information on Toeplitz operators):
\begin{thm}
Let $s$ be a $\mathbb C^{r\times t}$-valued function analytic in the open unit disk, with power series expansion $s(z)=\sum_{n=0}^\infty s_nz^n$. Then, $s$ is
contractive in the open unit disk if and only if the lower triangular block-Toeplitz operator
\begin{equation}\label{Tesse}
T_s=\begin{pmatrix} s_0&0&0&\cdots\\
  s_1&s_0&0&\cdots\\
  s_2&s_1 &s_0 &0&\\
 \vdots & \ddots &\ddots &\ddots \\
\end{pmatrix}
\end{equation}
is a contraction from $\ell_2(\mathbb N_0,\mathbb C^t)$ into $\ell_2(\mathbb N_0,\mathbb C^r)$.
\label{toto123}
\end{thm}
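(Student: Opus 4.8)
The plan is to reduce this to the scalar-coefficient version of the Donoghue-type extension result (or more precisely to the well-known equivalence between a Schur function and the contractivity of its Toeplitz operator of Taylor coefficients) by exploiting the identification between multiplication operators on the Hardy space and lower-triangular Toeplitz operators. Since $s$ is analytic in $\mathbb D$ with expansion $s(z)=\sum_{n\ge 0}s_nz^n$, the formal multiplication operator $M_s\colon f\mapsto sf$ acts on $\mathbf H_2(\mathbb D,\mathbb C^t)$, and with respect to the orthonormal basis $\{z^n\}$ its matrix is precisely the block-Toeplitz matrix $T_s$ in \eqref{Tesse}: indeed $s\cdot z^k=\sum_{n\ge k}s_{n-k}z^n$, so the $(j,k)$ block of $M_s$ is $s_{j-k}$ for $j\ge k$ and $0$ otherwise. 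Hence $\|T_s\|$ (as an operator $\ell_2(\mathbb N_0,\mathbb C^t)\to\ell_2(\mathbb N_0,\mathbb C^r)$) equals the norm of $M_s$ as a densely defined operator on the Hardy spaces, whenever either is bounded.

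First I would prove the ``only if'' direction. Assume $s$ is contractive in $\mathbb D$. Then for every polynomial vector $f$, $\|sf\|_{\mathbf H_2}\le\|f\|_{\mathbf H_2}$; this is the classical fact that a matrix Schur function is a contractive multiplier of the Hardy space, which follows from the positivity of the kernel $\frac{I_r-s(z)s(w)^*}{1-z\overline w}$ on $\mathbb D$ (this kernel is positive definite exactly because $s$ takes contractive values, by the matrix version of Theorem \ref{dono}, or directly: $I_r-s(z)s(w)^*\succeq 0$ on the diagonal plus an elementary argument). Transferring through the basis $\{z^n\}$ as above shows $T_s$ is a contraction on $\ell_2$.

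Conversely, assume $T_s$ is a contraction from $\ell_2(\mathbb N_0,\mathbb C^t)$ into $\ell_2(\mathbb N_0,\mathbb C^r)$. Then $M_s$ is a contraction on the Hardy spaces, so in particular for every $c\in\mathbb C^t$ and $w\in\mathbb D$ we may compute, using that $M_s$ sends $\mathbf H_2(\mathbb D,\mathbb C^t)$ into $\mathbf H_2(\mathbb D,\mathbb C^r)$ boundedly and the reproducing kernel of the Hardy space is $\frac{I}{1-z\overline w}$, the adjoint $M_s^*$ acting on kernel functions: $M_s^*\!\left(\frac{c}{\rho_w}\right)=\frac{s(w)^*c}{\rho_w}$, exactly as in \eqref{tt*}. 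Writing $\|M_s^*(c/\rho_w)\|^2\le\|c/\rho_w\|^2$ gives $\frac{c^*s(w)s(w)^*c}{1-|w|^2}\le\frac{c^*c}{1-|w|^2}$, i.e. $s(w)s(w)^*\preceq I_r$ for every $w\in\mathbb D$, so $s$ is contractive. The main obstacle — really the only nontrivial point — is justifying the interchange of ``formal Toeplitz operator'' with ``multiplication operator on Hardy space'' rigorously, i.e. checking that boundedness of $T_s$ on $\ell_2$ forces $M_s$ to genuinely map into $\mathbf H_2$ and that the adjoint formula on kernels is valid; this is handled by noting that the columns of $T_s$ are the coefficient sequences of $s\cdot z^k$, so $T_s$ bounded immediately gives $s\cdot z^k\in\mathbf H_2$ with controlled norm, density of polynomials then extends $M_s$, and the adjoint-on-kernels identity is the standard computation for multiplier operators on a reproducing kernel Hilbert space (already used in the proof of Theorem \ref{dono}).
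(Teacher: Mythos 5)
Your converse direction is sound and is essentially the paper's argument in different clothing: the paper computes $T_s^*e_z=\overline{s(z)}e_z$ with $e_z=(1,z,z^2,\ldots)^t$ and reads off $|s(z)|\le 1$ from $\|T_s^*e_z\|\le\|e_z\|$, which is exactly your adjoint-on-kernel-functions computation transported to $\ell_2$. The technical point you single out (boundedness of $T_s$ forces $s\,z^k\in\mathbf H_2$ and the multiplier identification is legitimate) is handled correctly.

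The forward direction, however, has a genuine gap: it is circular as written. The statement you invoke --- that a contractive analytic $s$ is a contractive multiplier of $\mathbf H_2$, equivalently that $\frac{I_r-s(z)s(w)^*}{1-z\overline{w}}$ is positive definite --- \emph{is} the "only if" direction of the theorem, so it cannot be cited as the justification. Neither of your two proposed reasons supplies a proof: Theorem \ref{dono} runs in the opposite direction (it \emph{assumes} positivity of the kernel on a real neighborhood and \emph{concludes} a contractive extension), and "$I_r-s(z)s(w)^*\succeq 0$ on the diagonal plus an elementary argument" does not work, since nonnegativity of $K(w,w)$ for each $w$ never implies positive definiteness of a kernel; the analyticity of $s$ must enter in an essential way. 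The paper closes this gap with a boundary-value argument: since $\|s(z)\|\le 1$ in $\mathbb D$, the radial limits satisfy $\|s\|\le 1$ a.e. on $\mathbb T$ (Fatou), so multiplication by $s^*$ is a contraction on $\mathbf L_2(\mathbb T)$, hence its compression $f\mapsto Ps^*f$ to $\mathbf H_2(\mathbb T)$ is a contraction, and computing $\langle Ps^*z^n,z^m\rangle$ in the monomial basis identifies this compression with the (adjoint of the) block Toeplitz matrix $T_s$. You should either reproduce that argument or replace it with another genuine proof (e.g.\ approximation by finite Blaschke products, or a direct verification that $I-T_{s,N}T_{s,N}^*\ge0$ for truncations via the maximum principle); as it stands, the "only if" half asserts the conclusion rather than proving it.
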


\begin{proof}
We set $r=t=1$ to simplify the arguments.
Assume first that $s$ is a contraction, and let $P$ denote the orthogonal projection from $\mathbf L_2(\mathbb T)$ onto $\mathbf H_2(\mathbb T)$. Then the Toeplitz operator
$f\mapsto Ps^*f$ is a contraction from $\mathbf H_2(\mathbb T)$ into $\mathbf H_2(\mathbb T)$. It admits thus a matrix representation. Using the basis $1,z,z^2,\ldots$ we see that
\[
  \langle Ps^*z^n,z^m\rangle_2=\begin{cases}\,\,0,\hspace{7mm}\,\,\, n<m,\\
    \,\, s_{n-m},\,\ n\ge m\end{cases}
\]
and hence the Toeplitz matrix representation. For the converse, we assume that $T_s$ is a contraction. We compute $T_s^*e_z$ where $e_z=(1,z,z^2,\ldots)^t$. We have (compare with \eqref{toep})
\begin{equation}
\label{toeplitz12345}
T_s^*e_z=\overline{s(z)}e_z
\end{equation}
and hence the result.
\end{proof}
\section{Positive operators, reproducing kernel spaces and Toeplitz operators}
\setcounter{equation}{0}
We use various tools from quaternionic operator theory and in particular from the theory of linear relations and the theory of reproducing kernel spaces,
as developed in \cite{zbMATH06658818}. We recall:

\begin{defn}
  \label{real1!!!}
Given two right (or left, or two-sided) linear spaces $\mathcal V,\mathcal W$ over the quaternions,
a linear relation is a linear subspace of the Cartesian product $\mathcal V \times \mathcal W$.
\end{defn}

The graph of a (possibly not everywhere defined) linear operator is a linear relation, but there are linear relations which are not graphs of operators.\\

We will define inner products on a quaternionic right vector space, say $\mathcal V$, with the following convention
\begin{equation}
\langle fu,gv\rangle=\overline{v}\langle f,g\rangle u,\quad f,g\in\mathcal V,\,\,\, u,v\in\mathbb H
\end{equation}
and satisfying moreover
\begin{equation}
\langle f\, ,\, ug\rangle=\langle\overline{u}f\, ,\, g\rangle,\quad f,g\in\mathcal V,\,\,\, u\in\mathbb H ,
\end{equation}
when the quaternionic space under study is two-sided (for instance, $\ell_2(\mathbb N_0,\mathbb H)$).\smallskip

Let $K(x,y)$ be a the $\mathbb H^{r\times r}$-valued function, positive definite on $\Omega$. We will
denote by $\mathcal H(K)$ the reproducing kernel space of $\mathbb H^r$-valued functions with reproducing kernel $K$.\\

Let $K_1(z,w)$ and $K_2(z,w)$ be two $\mathbb H^{r\times r}$-valued functions positive definite on a set $\Omega$. We recall that $K_1\le K_2$ means that the
difference $K_2-K_1$ is still positive definite in $\Omega$.
This happens if and only if the space $\mathcal H(K_1)$ is contractively included in the space $\mathcal H(K_2)$.\\

The following result, relating operator ranges and reproducing kernel
Hilbert spaces is well known. See \cite{MR4126757} for a discussion in the quaternionic and indefinite inner product setting.

\begin{prop}
Let $\Gamma$ be a positive bounded operator from the left quaternionic Hilbert space $\mathcal H$ into itself. Let $\Omega$ be a set and let
$z\mapsto f_z$ be a $\mathcal H$-valued function defined on $\Omega$, and such that the closed left-linear span of the vectors $f_w$ is equal to $\mathcal H$. The function
\begin{equation}
\label{k12345}
K(z,w)=\langle \Gamma f_w,f_z\rangle
\end{equation}
is positive definite on $\Omega$ and the associated reproducing kernel Hilbert space with reproducing kernel consists of the functions of the form
\[
F(z)=\langle \sqrt{\Gamma}f,f_z\rangle,\quad f\in\mathcal H,
\]
with norm
\[
\|F\|=\|(I-\pi)f\|
\]
where $\pi$ is the orthogonal projection onto the kernel of $\Gamma$.
\end{prop}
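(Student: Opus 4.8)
The plan is to realize $\mathcal H(K)$ explicitly as the range of the map $T$ sending $f\in\mathcal H$ to the $\mathbb H$-valued function $z\mapsto\langle\sqrt{\Gamma}\,f,f_z\rangle$ on $\Omega$, where $\sqrt{\Gamma}\ge0$ is the positive square root of $\Gamma$, and then to transport the Hilbert space structure from $(\ker\Gamma)^{\perp}$ onto $\ran T$ through $T$.

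First I would verify that $K$ is positive definite. For $w_1,\dots,w_N\in\Omega$ and $c_1,\dots,c_N\in\mathbb H$, the inner product convention $\langle fu,gv\rangle=\overline v\langle f,g\rangle u$ gives
\[
\sum_{j,k=1}^{N}c_j^{*}K(w_j,w_k)c_k=\Bigl\langle\Gamma\sum_{k}f_{w_k}c_k,\ \sum_{j}f_{w_j}c_j\Bigr\rangle=\Bigl\|\sqrt{\Gamma}\sum_{k}f_{w_k}c_k\Bigr\|^{2}\ge0,
\]
so $K$ is positive definite and $\mathcal H(K)$ is a well-defined Hilbert space, characterized up to isometry by properties $(1)$ and $(2)$.

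Next I would analyze $T$. Since $\Gamma=(\sqrt\Gamma)^{2}$ and $\sqrt\Gamma$ is self-adjoint, $\ker\sqrt\Gamma=\ker\Gamma$ and $\overline{\ran\sqrt\Gamma}=(\ker\Gamma)^{\perp}$. If $Tf=0$, then $\sqrt\Gamma f\perp f_z$ for every $z$, hence $\sqrt\Gamma f$ is orthogonal to the closed left-linear span of $\{f_w\}$, which is all of $\mathcal H$; therefore $\sqrt\Gamma f=0$, i.e.\ $f\in\ker\Gamma$. The converse inclusion is immediate, so $\ker T=\ker\Gamma$. Writing $\pi$ for the orthogonal projection onto $\ker\Gamma$, the restriction $T|_{(\ker\Gamma)^{\perp}}$ is therefore a bijection onto $\mathcal M:=\ran T$. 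For $F\in\mathcal M$ and any $f$ with $Tf=F$, the element $(I-\pi)f$ is the unique preimage of $F$ lying in $(\ker\Gamma)^{\perp}$, so $\|F\|:=\|(I-\pi)f\|$ is well defined; equipping $\mathcal M$ with this norm and the associated inner product $\langle Tf,Tg\rangle=\langle(I-\pi)f,(I-\pi)g\rangle$ makes $T|_{(\ker\Gamma)^{\perp}}$ unitary, so $\mathcal M$ is complete because $(\ker\Gamma)^{\perp}$ is.

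Finally I would check the two reproducing kernel axioms. Using the convention again, $K_wc(z)=\langle\Gamma f_wc,f_z\rangle=T(\sqrt\Gamma f_wc)(z)$, so $K_wc\in\mathcal M$; moreover $\sqrt\Gamma f_wc\in\ran\sqrt\Gamma\subseteq(\ker\Gamma)^{\perp}$, so it is its own canonical preimage. Hence for $F=Tf$,
\[
\langle F,K_wc\rangle=\langle(I-\pi)f,\sqrt\Gamma f_wc\rangle=\langle\sqrt\Gamma f,f_wc\rangle=c^{*}\langle\sqrt\Gamma f,f_w\rangle=c^{*}F(w),
\]
where the second equality uses $\sqrt\Gamma(I-\pi)=\sqrt\Gamma$ together with self-adjointness of $\sqrt\Gamma$, and the third uses $\langle g\cdot1,f_wc\rangle=\overline c\langle g,f_w\rangle$. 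Thus $\mathcal M$ satisfies $(1)$ and $(2)$, and by uniqueness of the reproducing kernel Hilbert space $\mathcal M=\mathcal H(K)$, which is exactly the asserted description. The only genuinely delicate points are the identity $\overline{\ran\sqrt\Gamma}=(\ker\Gamma)^{\perp}$ combined with the density of the left-linear span of $\{f_w\}$ (needed to pin down $\ker T$), and keeping the left/right quaternionic scalar conventions straight in the two displayed computations; everything else is the standard pull-back-of-a-Hilbert-space argument.
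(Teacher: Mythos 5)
Your proof is correct. Note that the paper itself offers no proof of this proposition: it is stated as a well-known fact about operator ranges and reproducing kernel Hilbert spaces, with a pointer to the reference \cite{MR4126757} for the quaternionic (and indefinite) setting. Your argument — factoring $\Gamma=\sqrt{\Gamma}\,\sqrt{\Gamma}$, identifying $\ker T=\ker\sqrt{\Gamma}=\ker\Gamma$ via the density of the span of the $f_w$, transporting the Hilbert structure of $(\ker\Gamma)^{\perp}$ onto $\operatorname{Ran}T$, and verifying the two reproducing axioms using $\sqrt{\Gamma}(I-\pi)=\sqrt{\Gamma}$ and $\operatorname{Ran}\sqrt{\Gamma}\subseteq(\ker\Gamma)^{\perp}$ — is exactly the standard argument the citation stands in for, and you handle the one genuinely delicate point (that $K_wc$ has $\sqrt{\Gamma}f_wc$ as its canonical preimage) correctly. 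The only cosmetic caveat is the left/right scalar convention, on which the statement of the proposition ("left quaternionic Hilbert space") and the paper's displayed inner-product convention (for right vector spaces) are already not perfectly aligned; your computations consistently use the paper's displayed convention $\langle fu,gv\rangle=\overline{v}\langle f,g\rangle u$, which is the right choice here.
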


Let us set $\mathcal H=\ell_2(\mathbb N_0,\mathbb H^r)$ in the previous proposition. Since $\Gamma$ is bounded, it has a block matrix representation
$\Gamma=(\Gamma_{nm})$, where  $\Gamma_{mn}\in\mathbb H^{r\times r}$.  We can write
\begin{equation}
\langle \Gamma f,g\rangle=\sum_{n,m=0}^\infty g_n^*\Gamma_{nm}f_m,
\end{equation}
and
\[
K(z,w)=\sum_{n,m=0}^\infty f_n(z)^*\Gamma_{nm}f_m(w).
\]
Cases of interest in the present work are:
\begin{equation}
\label{case1}
\Omega\subset\mathbb R^4\cong\mathbb H,\quad \text{{\rm and\,\, denoting\,\ $z=p$}},\quad f_n(p)=\overline{p}^nI_r
\end{equation}
and
\begin{equation}
\label{case11}
\Omega\subset \mathbb R^4\cong\mathbb H,\quad \text{{\rm and\,\, denoting\,\ $z=x$}},\quad f_n(x)=\overline{P_n(x)}I_r.
\end{equation}

Assume now $\Gamma$ to be of the form
\begin{equation}
\label{gamma123}
\Gamma =I-T_ST_S^*
\end{equation}
with $T_S$ as in \eqref{Tesse}
and where $S_i\in\mathbb H^{r\times t}$, $i=0,1,\ldots$. In particular, the block Toeplitz operator $T_S$ is a contraction.
The kernel becomes in the first case
\begin{equation}
  \sum_{n=0}^\infty p^n\overline{q}^nI_r-\sum_{n=0}^\infty\left(\sum_{m=n}^\infty p^mS_{m-n} \right)\left(\sum_{m=n}^\infty q^mS_{m-n}\right)^*,
\end{equation}
and
\begin{equation}
  \sum_{n=0}^\infty P_n(x)\overline{P_n(y)}I_r-\sum_{n=0}^\infty\left(\sum_{m=n}^\infty P_m(x)S_{m-n} \right)\left(\sum_{m=n}^\infty P_m(y)S_{m-n} \right)^*
\end{equation}
in the second case.\\

We note that, with the $\star$-product (see \cite{zbMATH06658818}):
\begin{equation}
\label{123ert}
\sum_{m=n}^\infty p^mS_{m-n} =p^n\star \left(\sum_{m=0}^\infty p^mS_{m} \right)
\end{equation}
and similarly, with the $CK$-product, using \eqref{nmk} in Lemma \ref{PnPm},
\begin{equation}
\label{facto-pn}
\sum_{m=n}^\infty P_m(x)S_{m-n} =P_n(x)\odot\left(\sum_{m=0}^\infty P_m(x)S_{m} \right) .
\end{equation}
The functions
\[
\sigma(p)=\sum_{m=0}^\infty p^mS_{m}
\]
and
\[
S(x)=\sum_{m=0}^\infty P_m(x)S_{m}
\]
are Schur multipliers, for the slice hyperholomorphic and for the present case (called Appell-like case), respectively.

\section{The Hardy space and Schur multipliers}
\setcounter{equation}{0}
In this section we will introduce and study the Hardy space in this framework. To start with, we denote by $\mathcal E$ the ellipsoid
\begin{equation}
\mathcal E=\left\{x\in\mathbb R^4\,:\, 9x_0^2+x_1^2+x_2^2+x_3^2<1\right\}
\end{equation}
and we prove the following:
\begin{lem}
The function
\begin{equation}
\label{hardy-here}
k_{\mathcal E}(x,y)=\sum_{m=0}^\infty P_1^{m\odot}(x)\overline{P_1^{m\odot}(y)}
\end{equation}
converges and is positive definite for $x,y\in\mathcal E$.
\end{lem}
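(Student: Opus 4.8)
The plan is to reduce everything to a single elementary pointwise bound on the polynomials $P_m$, after which both convergence and positive definiteness are routine. The key step I would carry out first is the closed‑form estimate
\[
|P_m(x)|\le (m+2)\,|x|^m\qquad\text{for all }x\in\mathbb H,\ m\ge0 .
\]
Recall $P_m=Q_m/c_m$ with $Q_m(x)=\sum_{j=0}^mT^m_jx^{m-j}\bar x^j$, $T^m_j=\tfrac{2(m-j+1)}{(m+1)(m+2)}\ge0$, and $c_m=\sum_{j=0}^m(-1)^jT^m_j$. Two short computations suffice. First, $\sum_{j=0}^mT^m_j=\tfrac{2}{(m+1)(m+2)}\sum_{k=1}^{m+1}k=1$, while $|x^{m-j}\bar x^j|=|x|^{m-j}|\bar x|^j=|x|^m$ by multiplicativity of the quaternionic norm; hence $|Q_m(x)|\le\bigl(\sum_jT^m_j\bigr)|x|^m=|x|^m$. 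Second, a pairing argument shows $\sum_{j=0}^m(-1)^j(m+1-j)=\lceil(m+1)/2\rceil$, so $c_m=\tfrac{2\lceil(m+1)/2\rceil}{(m+1)(m+2)}>0$ and $c_m\ge\tfrac1{m+2}$; dividing gives the claim. This is a closed‑form sharpening of Corollary~\ref{coro234}: the bound is now explicit and valid on the whole unit ball rather than only near the origin, and — importantly — it follows directly from $P_m=Q_m/c_m$, sidestepping the $\odot$‑product, whose failure to be submultiplicative (cf. \eqref{obstruct}) is the source of the difficulties elsewhere in the paper.

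\emph{Convergence on $\mathcal E\times\mathcal E$.} Next I would observe that $\mathcal E$ sits inside the open unit ball: $9x_0^2+x_1^2+x_2^2+x_3^2<1$ forces $|x|^2=x_0^2+x_1^2+x_2^2+x_3^2<1$. Using $P_1^{m\odot}=P_m$ (Lemma~\ref{PnPm}) and the estimate above, for $x\in\mathcal E$
\[
\sum_{m=0}^\infty\bigl|P_1^{m\odot}(x)\bigr|^2=\sum_{m=0}^\infty|P_m(x)|^2\le\sum_{m=0}^\infty(m+2)^2|x|^{2m}<\infty ,
\]
and with $r_K:=\sup_{x\in K}|x|<1$ the same bound makes this uniform on each compact $K\subset\mathcal E$. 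A Cauchy--Schwarz estimate then gives, for $x,y\in\mathcal E$,
\[
\sum_{m=0}^\infty\bigl|P_1^{m\odot}(x)\overline{P_1^{m\odot}(y)}\bigr|=\sum_{m=0}^\infty|P_m(x)|\,|P_m(y)|\le\Bigl(\sum_{m=0}^\infty|P_m(x)|^2\Bigr)^{1/2}\Bigl(\sum_{m=0}^\infty|P_m(y)|^2\Bigr)^{1/2}<\infty ,
\]
so $k_{\mathcal E}(x,y)$ converges absolutely, and locally uniformly, on $\mathcal E\times\mathcal E$; by Proposition~\ref{anti} together with this locally uniform convergence it is moreover left Fueter hyperholomorphic in $x$ and right anti‑hyperholomorphic in $y$.

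\emph{Positive definiteness.} Finally, for $w_1,\dots,w_N\in\mathcal E$ I would set $v_m:=(P_m(w_1),\dots,P_m(w_N))^t\in\mathbb H^N$; then $v_mv_m^*$ is the $N\times N$ quaternionic matrix with $(i,j)$‑entry $P_m(w_i)\overline{P_m(w_j)}$, and it is non‑negative since $c^*(v_mv_m^*)c=|v_m^*c|^2\ge0$ for every $c\in\mathbb H^N$. Hence every partial sum $\sum_{m=0}^Mv_mv_m^*$ is non‑negative, and by the previous step the matrix $\bigl(k_{\mathcal E}(w_i,w_j)\bigr)_{i,j=1}^N=\lim_{M\to\infty}\sum_{m=0}^Mv_mv_m^*$ is an entrywise limit of non‑negative matrices, hence non‑negative. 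This is exactly the statement that $k_{\mathcal E}$ is positive definite on $\mathcal E$.

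The only genuinely computational point is the first step — the two identities $\sum_jT^m_j=1$ and $c_m\ge\frac1{m+2}$ — and I do not expect a real obstacle anywhere. If there is a subtlety, it is conceptual rather than technical: one must resist the temptation to analyse the $\odot$‑powers $P_1^{m\odot}$ directly (where no good norm estimate is available) and instead exploit the explicit polynomial form $P_m=Q_m/c_m$, for which the bound is immediate.
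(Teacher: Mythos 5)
Your proof is correct, but it takes a genuinely different route from the paper's. The paper disposes of the lemma in one line: on $\mathcal E$ one has $|P_1(x)|<1$, and it then invokes Corollary \ref{coro234}, i.e.\ the bound $|P_n(x)|<\rho^n$ obtained inductively from the $CK$-product estimate of Lemma \ref{fog}. You instead bypass the $\odot$-product entirely and extract a global, explicit estimate $|P_m(x)|\le (m+2)|x|^m$ directly from the closed form $P_m=Q_m/c_m$ of \eqref{Qem}, using $\sum_j T^m_j=1$, the multiplicativity of the quaternionic norm, and the exact value $c_m=2\lceil (m+1)/2\rceil/((m+1)(m+2))\ge 1/(m+2)$; all three computations check out. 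This buys you something real: Corollary \ref{coro234} as stated is a statement about a small $\epsilon$-neighbourhood of the origin, so the paper's one-line argument covers all of $\mathcal E$ only modulo an additional covering or rescaling step left implicit, whereas your bound holds on the whole open unit ball, which contains $\mathcal E$ because $x_0^2+|\underline{x}|^2\le 9x_0^2+|\underline{x}|^2$. You also spell out the positive-definiteness step (each partial sum $\sum_{m\le M}v_mv_m^*$ is a non-negative quaternionic matrix, and an entrywise limit of non-negative matrices is non-negative), which the paper leaves unsaid. What the paper's route buys in exchange is portability: the argument via $|P_1(x)|<1$ and $\odot$-powers is the one that transfers to the general $CK$-setting of the introduction, where no closed formula for $H_n=H_1^{n\odot}$ is available, while your argument is tied to the specific Appell coefficients $T^m_j$. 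One observation worth recording: your estimate shows that the series \eqref{hardy-here} actually converges on the full unit ball, strictly larger than $\mathcal E$, and on the real axis it gives $P_m(x_0)=x_0^m/c_m$ with $1/c_m\le m+2$; this is what the printed definitions yield, but it is in tension with the identities $P_m(x_0)=(3x_0)^m$ and $k_{\mathcal E}(x_0,y_0)=1/(1-9x_0y_0)$ asserted elsewhere in the paper (they would force $c_m=3^{-m}$, contradicting $c_2=1/3$). That discrepancy is in the paper, not in your argument, but it is worth being aware that your key bound and those identities cannot both hold.
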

\begin{proof}
For $x\in\mathcal E$ we have $|P_1(x)|<1$ and the result follows from Corollary \ref{coro234}.
\end{proof}

We point out that using \eqref{ze1ze2ze3}, we get
\begin{equation}
\label{hardy-here-2}
k_{\mathcal E}(x,y)=\sum_{m=0}^\infty\left(\sum_{|\nu|=m}\zeta(x)^{\nu}{\mathbf e}^{\nu}\frac{|\nu|!}{\nu!}\right)\overline{\left(\sum_{|\mu|=m}\zeta(y)^{\mu}{\mathbf e}^\mu\frac{|\mu|!}{\mu!}\right)}.
\end{equation}
\begin{rem}
In \cite{asv-cras,MR2124899,MR2240272} a different approach was used and a similar construction yields the Drury-Arveson kernel
\begin{equation}
\label{drury}
\begin{split}
K(x,y)&=\sum_{m=0}^\infty\sum_{|\nu|=m}\frac{|\nu|!}{\nu!}\zeta(x)^{\nu}\overline{\zeta(y)^{\nu}}\\
&=(1-\zeta_1(x)\overline{\zeta_1(y)}  -\zeta_2(x)\overline{\zeta_2(y)}-\zeta_3(x)\overline{\zeta_3(y)})^{-\odot}.
\end{split}
\end{equation}
Note that the formula \eqref{hardy-here} is easier to work with than formula \eqref{hardy-here-2}. We also note that
\begin{equation}
k_{\mathcal E}(x_0,y_0)=\frac{1}{1-9x_0y_0},\quad x_0,y_0 \in (-1/3,1/3).
\end{equation}
Using the polynomials $Q_n$ one can define
the kernel (see \cite[Remark 5.3]{ads-fh})
\[
K_Q(x,y)=\sum_{n=0}^\infty Q_n(x)\overline{Q_n(y)}
\]
whose restriction to the real axis is different, indeed it is
\[
K_Q(x_0,y_0)=\frac{1}{1-x_0y_0},\quad x_0,y_0\in(-1,1).
\]
\end{rem}
\begin{defn}
The reproducing kernel Hilbert space associated with \eqref{hardy-here} will be called the Hardy space, and denoted by $\mathbf H_2({\mathcal E})$.
\end{defn}

\begin{thm}
The Hardy space $\mathbf H_2({\mathcal E})$ consists of functions of the form
\begin{equation}
f(x)=\sum_{m=0}^\infty\left(\zeta_1(x){\mathbf e}_1+\zeta_2(x){\mathbf e}_2+\zeta_3(x){\mathbf e}_3\right)^{m\odot}f_m=\sum_{m=0}^\infty P_m(x) f_m,
\end{equation}
where the coefficients $f_m$ belong to $\mathbb H$ and are such that
\begin{equation}
\sum_{m=0}^\infty |f_m|^2<\infty .
\end{equation}
This expression is then the square of the norm of $f$ in the Hardy space, i.e. $\|f\|^2=\sum_{m=0}^\infty |f_m|^2$.
\end{thm}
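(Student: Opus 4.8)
The plan is to recognize $\mathbf H_2(\mathcal E)=\mathcal H(k_{\mathcal E})$ (which is the Definition of the Hardy space) as an operator range, via the Proposition of Section~3 relating positive bounded operators to reproducing kernel Hilbert spaces. I apply it with $\mathcal H=\ell_2(\mathbb N_0,\mathbb H)$, with $\Gamma=I$ the identity operator (i.e.\ \eqref{gamma123} with $S\equiv 0$), and with $f_n(x)=\overline{P_n(x)}$, which is the case \eqref{case11} for $r=1$. Since $P_1^{m\odot}=P_m$ by Lemma~\ref{PnPm} and $\langle f,g\rangle_{\ell_2}=\sum_n g_n^*f_n$, a direct computation gives
\[
\langle \Gamma f_y,f_x\rangle=\sum_{n=0}^\infty \bigl(\overline{P_n(x)}\bigr)^*\,\overline{P_n(y)}=\sum_{n=0}^\infty P_n(x)\overline{P_n(y)}=k_{\mathcal E}(x,y),
\]
and $f_y=(\overline{P_n(y)})_{n\ge 0}$ genuinely lies in $\ell_2$ for $y\in\mathcal E$, because $\sum_n|P_n(y)|^2=k_{\mathcal E}(y,y)<\infty$ by the Lemma preceding the Definition of $\mathbf H_2(\mathcal E)$.

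The hypothesis to be verified before invoking the Proposition is that the closed left-linear span of $\{f_w:w\in\mathcal E\}$ is all of $\ell_2(\mathbb N_0,\mathbb H)$. Let $g=(g_n)_{n\ge 0}\in\ell_2$ be left-orthogonal to every $f_w$. By Cauchy--Schwarz and $\sum_n|P_n(w)|^2<\infty$, the series $\sum_n P_n(w)g_n$ converges absolutely for each $w\in\mathcal E$, and orthogonality reads $\overline{\sum_n P_n(w)g_n}=\sum_n \overline{P_n(w)g_n}=\langle f_w,g\rangle=0$, so $\sum_n P_n(w)g_n\equiv 0$ on $\mathcal E$. Restricting to the real axis, where $P_n(w_0)=(3w_0)^n$, we get $\sum_n (3w_0)^n g_n=0$ for all $w_0\in(-1/3,1/3)$, hence $g_n=0$ for every $n$ by uniqueness of power series coefficients. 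Thus the span is dense.

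Granting this, the Proposition yields that $\mathcal H(k_{\mathcal E})$ is exactly the set of functions
\[
F(x)=\langle f,f_x\rangle=\sum_{n=0}^\infty P_n(x)f_n,\qquad f=(f_n)_{n\ge 0}\in\ell_2(\mathbb N_0,\mathbb H),
\]
with $\|F\|_{\mathbf H_2(\mathcal E)}=\|(I-\pi)f\|_{\ell_2}=\|f\|_{\ell_2}=\bigl(\sum_{n=0}^\infty|f_n|^2\bigr)^{1/2}$, since $\pi$ is the projection onto $\ker\Gamma=\{0\}$; and the density just shown makes $f\mapsto F$ a bijection, so the coefficients $f_n$ are uniquely determined by $f$. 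The first equality in the statement is merely $P_m=P_1^{m\odot}=(\zeta_1(x)\mathbf e_1+\zeta_2(x)\mathbf e_2+\zeta_3(x)\mathbf e_3)^{m\odot}$ from Lemma~\ref{PnPm}, and that these series converge (locally uniformly on $\mathcal E$, to hyperholomorphic functions) follows from $|F(x)|\le\sqrt{k_{\mathcal E}(x,x)}\,\|f\|_{\ell_2}$.

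The only step with real content is the density, equivalently the injectivity of $f\mapsto F$; thanks to the axial-type structure and the fact that functions of axial type are determined by their restriction to $\mathbb R$, it collapses to the elementary vanishing-of-power-series observation, all convergence questions being handled by Corollary~\ref{coro234} and the convergence of $k_{\mathcal E}$ on $\mathcal E$. Alternatively one can bypass the abstract Proposition: set $T\colon\ell_2(\mathbb N_0,\mathbb H)\to\mathbb H^{\mathcal E}$ by $Tf=\sum_n P_n f_n$, use the reproducing property together with $\langle f_x,f_y\rangle=k_{\mathcal E}(y,x)$ to see that $T$ is isometric onto $\mathbf H_2(\mathcal E)$, and conclude that it is unitary from the density argument.
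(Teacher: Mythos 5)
Your proposal is correct. The paper itself does not spell out a proof of this theorem (it only writes ``the proof follows standard arguments'' and cites \cite{ads-fh,as3}), so what you have done is supply, in full, exactly the argument the authors delegate to the references: you instantiate the operator-range Proposition of Section~3 with $\mathcal H=\ell_2(\mathbb N_0,\mathbb H)$, $\Gamma=I$ and $f_x=(\overline{P_n(x)})_{n\ge 0}$ (the case \eqref{case11} with $r=1$ and $S\equiv 0$ in \eqref{gamma123}), which reproduces $k_{\mathcal E}$ and immediately yields both the description of the elements and the norm formula, since $\ker\Gamma=\{0\}$. The only step with genuine content, as you say, is the totality of $\{f_w\}$, equivalently the injectivity of $(f_n)\mapsto\sum_n P_nf_n$; your reduction of this to the vanishing of an ordinary power series on $(-1/3,1/3)$, via $P_n(x_0)=(3x_0)^n$ and the fact that axially hyperholomorphic functions are determined by their restriction to the real line, is exactly the mechanism the paper uses elsewhere (e.g.\ in Proposition \ref{IntRC} and in the uniqueness lemma for extensions of the form \eqref{specialmon}), and the convergence issues are correctly discharged by Corollary \ref{coro234} and the summability of $k_{\mathcal E}(y,y)$. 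In short: same approach as the intended one, with the omitted details correctly filled in.
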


\begin{proof}
The proofs follows standard arguments, see \cite{ads-fh,as3}.
\end{proof}


From the form of the elements of the Hardy space $\mathbf H_2(\mathcal E)$ and using the fact that the polynomials $P_m$ are Fueter hyperholomorphic of axial type, see Remark 3.9 in \cite{ads-fh}, we deduce:

\begin{cor}
Elements of $\mathbf H_2(\mathcal E)$ are Fueter hyperholomorphic of axial type, in particular are uniquely determined by their restriction to $(-1/3,1/3)$.
\end{cor}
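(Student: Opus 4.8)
The plan is to read off the three assertions --- Fueter hyperholomorphy, axial type, and rigidity --- from the description of $\mathbf H_2(\mathcal E)$ just obtained, by passing to the limit in the partial sums. Recall that an element of $\mathbf H_2(\mathcal E)$ is $f(x)=\sum_{m\ge 0}P_m(x)f_m$ with $f_m\in\mathbb H$ and $\sum_m|f_m|^2<\infty$; set $S_N(x)=\sum_{m=0}^N P_m(x)f_m$. The series converges locally uniformly on $\mathcal E$: this is already part of the content of the theorem just established, and near the origin it also follows at once from Corollary~\ref{coro234}, since for any fixed $\rho\in(0,1)$ there is $\epsilon>0$ with $|P_m(x)|<\rho^m$ on $\{x_0^2+x_j^2<\epsilon\}$, whence by Cauchy--Schwarz $\sum_m|P_m(x)f_m|\le\big(\sum_m\rho^{2m}\big)^{1/2}\big(\sum_m|f_m|^2\big)^{1/2}<\infty$. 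Thus $S_N\to f$ uniformly on compacta of $\mathcal E$.

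Next I would push the two structural properties through this limit. Each $S_N$ is a finite right-$\mathbb H$-linear combination of the polynomials $P_m$, hence Fueter hyperholomorphic, and of axial type: by Remark~\ref{rmk212} one has $P_m=A_m(x_0,|\underline x|)+\frac{\underline x}{|\underline x|}B_m(x_0,|\underline x|)$, and since each $f_m$ is a constant quaternion $P_mf_m=A_mf_m+\frac{\underline x}{|\underline x|}B_mf_m$ is again of axial type, a property preserved under finite sums. Because $D$ is a first-order constant-coefficient elliptic operator, a locally uniform limit of elements of $\ker D$ lies in $\ker D$ --- concretely, one writes the Cauchy--Fueter reproducing formula for $S_N$ on a small ball and lets $N\to\infty$ under the integral --- so $f\in\ker D$. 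For the axial type I would invoke the structure result behind Remark~\ref{rmk212} (Theorem~3.10 in \cite{ads-fh}): a Fueter hyperholomorphic function presented as a uniformly convergent series $\sum_m P_m(x)b_m$ is necessarily of axial type, applied here with $b_m=f_m$. Alternatively one argues directly that $S_N=\sum_{m\le N}A_mf_m+\frac{\underline x}{|\underline x|}\sum_{m\le N}B_mf_m$, both partial sums converging locally uniformly to functions of $x_0$ and $|\underline x|$ only, the pair of limits solving the Vekua system by continuity.

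Finally, the rigidity is essentially immediate from the shape of $f$: since $P_m(x_0)=(3x_0)^m$, the restriction of $f$ to the real axis is the real-analytic function $x_0\mapsto\sum_m(3x_0)^mf_m$, so if $f$ vanishes on the interval $(-1/3,1/3)=\mathcal E\cap\mathbb R$ then every $f_m=0$ and $f\equiv 0$; equivalently, an axially hyperholomorphic function is determined by its restriction to a real interval because the pair $(A,B)$ solves the Vekua system, a Cauchy--Kovalevskaya-type system in $|\underline x|$ whose datum is $A(x_0,0)=f|_{\mathbb R}$, as recalled in Section~\ref{S2}. I expect the only genuinely delicate point to be this interchange of the limit $N\to\infty$ with the operator $D$ (and with the Vekua system); it is legitimate precisely because the convergence is uniform on compacta and $\ker D$ enjoys interior elliptic estimates --- equivalently, because the Cauchy--Fueter integral representation is stable under uniform limits --- while everything else is a direct citation of the results recalled above.
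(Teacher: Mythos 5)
Your proposal is correct and follows essentially the same route as the paper, which deduces the corollary directly from the series form $f=\sum_m P_m f_m$ of Hardy-space elements together with the fact that each $P_m$ is Fueter hyperholomorphic of axial type (Remark \ref{rmk212} and Theorem 3.10 of \cite{ads-fh}) and the identity $P_m(x_0)=(3x_0)^m$ on the real axis. You merely make explicit the steps the paper leaves implicit (locally uniform convergence via Corollary \ref{coro234}, stability of $\ker D$ and of axial type under such limits, and the vanishing of Taylor coefficients on $(-1/3,1/3)$), all of which are sound.
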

\begin{lem}
\label{lemma-fs}
The operator $\mathsf S\,:\, f\mapsto P_1\odot f$ is an isometry in the Hardy space, with adjoint given by
\begin{equation}
\label{bws}
\mathsf S^*\left(\sum_{n=0}^\infty P_nf_n\right)=\sum_{n=0}^\infty P_nf_{n+1}.
\end{equation}
Furthermore
\begin{equation}
\label{SS*}
\mathsf S \mathsf S^* f=f-f(0), \quad f\in \mathbf H_2(\mathcal E).
\end{equation}
\end{lem}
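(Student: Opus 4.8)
The plan is to use the concrete description of $\mathbf H_2(\mathcal E)$ from the preceding theorem: its elements are the series $f=\sum_{m=0}^\infty P_m f_m$ with $f_m\in\mathbb H$ and $\sum_{m=0}^\infty|f_m|^2<\infty$, and $(P_m)_{m\ge0}$ is an orthonormal basis. The only nonroutine ingredient is how the $CK$-product acts on such a series, and this is already supplied by the factorization \eqref{facto-pn} (itself a consequence of identity \eqref{nmk} in Lemma~\ref{PnPm} and of the convergence bound in Corollary~\ref{coro234}). First I would compute $\mathsf S f$: applying \eqref{facto-pn} with $n=1$ and coefficients $S_k=f_k$ yields
\[
\mathsf Sf=P_1\odot\Big(\sum_{m=0}^\infty P_m f_m\Big)=\sum_{m=1}^\infty P_m(x)\,f_{m-1},
\]
the series converging in $\mathcal E$. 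Thus $\mathsf Sf\in\mathbf H_2(\mathcal E)$ and, by orthonormality of $(P_m)_{m\ge0}$, $\|\mathsf Sf\|^2=\sum_{m\ge1}|f_{m-1}|^2=\sum_{m\ge0}|f_m|^2=\|f\|^2$, so $\mathsf S$ is an isometry. (Equivalently, $\mathsf SP_n=P_1\odot P_n=P_{n+1}$ by Lemma~\ref{PnPm}, so $\mathsf S$ carries the orthonormal basis $(P_n)_{n\ge0}$ onto the orthonormal system $(P_n)_{n\ge1}$.)

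Next I would identify the adjoint. Let $B$ be the operator $\sum_{n\ge0}P_n g_n\mapsto\sum_{n\ge0}P_n g_{n+1}$; it is a contraction of $\mathbf H_2(\mathcal E)$ (the backward shift on the coefficient sequence), hence everywhere defined and bounded, so it is enough to check that $\langle \mathsf Sf,g\rangle=\langle f,Bg\rangle$ for all $f=\sum_m P_m f_m$ and $g=\sum_n P_n g_n$ in the space. Using the formula for $\mathsf Sf$ just obtained together with orthonormality, $\langle \mathsf Sf,g\rangle=\sum_{n\ge1}\overline{g_n}\,f_{n-1}$, whereas $\langle f,Bg\rangle=\sum_{n\ge0}\overline{g_{n+1}}\,f_n$; the substitution $n\mapsto n-1$ identifies the two sums, so $\mathsf S^*=B$, which is \eqref{bws}.

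It then remains to compose. For $f=\sum_{n\ge0}P_n f_n$ we have $\mathsf S^*f=\sum_{n\ge0}P_n f_{n+1}$, and feeding this back into the formula of the first step gives
\[
\mathsf S\mathsf S^*f=\sum_{n\ge1}P_n(x)f_n=f-P_0 f_0 .
\]
Since $P_0\equiv1$ while $P_n(0)=0$ for $n\ge1$ (each $P_n$, $n\ge1$, being homogeneous of degree $n$; on the real axis $P_n(x_0)=(3x_0)^n$), evaluation at $x=0$ gives $f(0)=f_0$, so $\mathsf S\mathsf S^*f=f-f(0)$, which is \eqref{SS*}. I do not expect a genuine obstacle here: everything past the model of $\mathbf H_2(\mathcal E)$ is bookkeeping with the orthonormal basis $(P_n)$, and the only step that is not automatic, namely interchanging the $CK$-product with the infinite sum defining $f$, is handled verbatim by \eqref{facto-pn}.
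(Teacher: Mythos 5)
Your proof is correct and follows essentially the same route as the paper: the paper's proof consists precisely of the computation $\mathsf S\mathsf S^*f=P_1\odot\sum_n P_nf_{n+1}=\sum_n P_{n+1}f_{n+1}=f-f_0=f-f(0)$, leaving the isometry and adjoint identifications implicit as standard coefficient-shift arguments, which you have simply written out in full (correctly invoking \eqref{facto-pn} for the interchange of $\odot$ with the series and the orthonormality of $(P_n)$ from the description of $\mathbf H_2(\mathcal E)$).
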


\begin{proof}
The proof is a consequence of
\[
\begin{split}
\mathsf S \mathsf S^* f&=P_1\odot\left(\sum_{n=0}^\infty P_nf_{n+1}\right)\\
&=\sum_{n=0}^\infty P_{n+1}f_{n+1}\\
&=f-f_0\\
&=f-f(0).
\end{split}
\]
\end{proof}

Let $Cf=f(0)$ be the point evaluation in $\mathbf H_2(\mathcal E)$. Then $C^*u=k_{\mathcal E}(\cdot, 0)u=u$ and we get from the previous lemma
\begin{equation}
I-M_{P_1}M_{P_1}^*=C^*C.
\label{fundamental987}
\end{equation}

This equation is really what makes the arguments work in the sequel, and in particular in the construction of a coisometric realization.

\begin{defn}
The operator \eqref{bws} will be called the backward-shift operator and denoted by $R_0$.
\end{defn}

\begin{ex}
{\rm Let $a\in\mathcal E$. The space of functions in the Hardy space such that $f(a)=0$ need not be $\mathsf S$-invariant. On the
other hand, the space of functions $f\in\mathbf H_2(\mathcal E)$ such that
\begin{equation}
(P_n\odot f)(a)=0,\quad n=0,1,2,\ldots
\label{general-inter}
\end{equation}
is $\mathsf S$-invariant, see Lemma \ref{PnPm}. This suggests that the natural homogeneous interpolation condition is \eqref{general-inter} and not merely $f(a)=0$. See \cite[p. 148]{MR2124899} for a related remark.}
\end{ex}
We can consider hyperholomorphic functions operator-valued, in particular matrix or vector-valued. The definition of this class of functions is given by following the classical complex case, but we repeat it for the sake of completeness.
\begin{defn} Let $\mathcal{X}$ be a two-sided quaternionic Banach space, and $\mathcal{X}^*$ be its dual. A function $f: U\subset\mathbb{H} \to \mathcal{X}$, where $U$ is open, is said to be weakly (left) hyperholomorphic in $U$ if $\Lambda f$ satisfies $D(\Lambda f)=0$ for every $\Lambda\in\mathcal{X}^*$.
\end{defn}
We recall that a function is hyperholomorphic if and only if it is differentiable in a suitable sense, see \cite[Theorem 3]{MR1048702} and we follow this notion of differentiability to state the following definition, in which we identify $\mathbb H$ with $\mathbb H_3=\{\vec{\zeta}=(\zeta_1,\zeta_2,\zeta_3)\ | \zeta_i=x_i-\mathbf{e}_i x_0, \, i=1,2,3\}$ as a real linear space via the map $(\zeta_1,\zeta_2,\zeta_3)\mapsto \zeta_1\mathbf{e}_1+\zeta_2\mathbf{e}_2+\zeta_3\mathbf{e}_3$:
\begin{defn}
Let $\vec{a}\in\mathbb H_3$, $U$ be a neighborhood of $\vec{a}$ and let $F:\, U\to \mathcal{X}$ be a continuous function. Then $f$ is called left (resp. right) strongly differentiable in $\vec a$ in the quaternionic sense if there exists a left (resp. right) linear map $L:\ \mathbb H_3\to\mathcal{X}$ such that
\begin{equation}\label{diff}
\lim_{\Delta \vec{z}\to 0}\dfrac{\|f(\vec{a+\Delta \vec{z}})-f(\vec{a})-L(\Delta \vec{z})\|_{\mathcal X}}{\| \Delta \vec{z}\|}=0
\end{equation}
where $\| \vec{z}\|=\sum_{i=1}^3 \overline{\zeta_i}\zeta_i$. A function is strongly differentiable in $U$ if it is so at every point $\vec a\in U$.
\end{defn}
The definition originally considered by Malonek in \cite{MR1048702} can be obtained from the previous one when $\mathcal{X}=\mathbb{H}$. Since, in the scalar case, the definition is also equivalent to that one of left (resp. right) hyperholomorphy, we will equivalently say that a function $f$ as in Definition \ref{diff} is strongly hyperholomorphic. See also \cite{APS2020} for a theory of hyperholomorphic functions whose values are taken in a Banach algebra.
Using the same arguments as in the complex case, see \cite[Theorem VI.4]{MR751959}, which are valid also in the quaternionic case, see \cite{zbMATH06658818}, one can prove:
\begin{thm}
A function is weakly hyperholomorphic in $U$ if and only if it is strongly hyperholomorphic in $U$.
\end{thm}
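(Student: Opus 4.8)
The plan is to adapt to the Cauchy--Fueter setting the classical argument that weak analyticity implies analyticity for Banach-space valued functions (as in \cite[Theorem VI.4]{MR751959}), the scalar equivalence ``hyperholomorphic $\Leftrightarrow$ strongly differentiable'' of \cite[Theorem 3]{MR1048702} playing the role of the scalar Cauchy theory. One implication is immediate and I would dispose of it first: if $f$ is, say, left strongly differentiable at $\vec a$ with associated $\mathbb H$-linear map $L$, then for every $\Lambda\in\mathcal X^*$ the scalar function $\Lambda f$ is strongly differentiable at $\vec a$ with associated map $\Lambda\circ L$, hence hyperholomorphic by \cite[Theorem 3]{MR1048702}; thus $D(\Lambda f)=0$ for every $\Lambda$, which is exactly weak hyperholomorphy. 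The content is in the converse.

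So assume $f$ is weakly hyperholomorphic in $U$. The first step is \emph{local boundedness}. Each $\Lambda f$ is hyperholomorphic, hence continuous, so $f$ is weakly continuous and $\sup_{x\in K}|(\Lambda f)(x)|<\infty$ for every compact $K\subset U$ and every $\Lambda$. Regarding the vectors $f(x)$, $x\in K$, as elements of the bidual $\mathcal X^{**}$, the uniform boundedness principle (valid in the quaternionic setting, see \cite{zbMATH06658818}) gives $\sup_{x\in K}\|f(x)\|<\infty$. At this point I would also replace $\mathcal X$ by the closed two-sided submodule generated by $f(Q)$ for a countable dense $Q\subset U$: since norm-closed subspaces are weakly closed, this submodule contains all of $f(U)$, and it is norm-separable.

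The second step is to manufacture an $\mathcal X$-valued Cauchy--Fueter representation. Fix $\vec a$ and a ball $B=B_r(\vec a)$ with $\overline B\subset U$, and let $E$ denote the Cauchy--Fueter kernel. For each $\Lambda$ the scalar formula reads $(\Lambda f)(x)=\int_{\partial B} E(w-x)\,n(w)\,(\Lambda f)(w)\,dS(w)$ for $x\in B$, where $n(w)\,dS(w)$ is the quaternion-valued surface element. The $\mathcal X$-valued map $w\mapsto E(w-x)\,n(w)\,f(w)$ is weakly continuous on $\partial B$, norm-bounded by the first step, and takes values in the separable submodule above; by Pettis' theorem it is strongly measurable, hence Bochner integrable, so $g(x):=\int_{\partial B} E(w-x)\,n(w)\,f(w)\,dS(w)$ defines an element of $\mathcal X$. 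Since $\Lambda$ intertwines with the scalar multiplication appearing in the integrand (by the definition of $\mathcal X^*$) and commutes with the Bochner integral, $\Lambda g=\Lambda f$ on $B$ for every $\Lambda$, whence $g=f$ on $B$ by Hahn--Banach. Finally, because $x\mapsto E(w-x)$ is $C^\infty$ on $B$ with all derivatives bounded uniformly in $w\in\partial B$, I can differentiate $g$ under the integral sign: $g$ is strongly $C^\infty$ on $B$, its Fr\'echet derivative $L:=Dg(\vec a)$ satisfies $\Lambda\circ L=D(\Lambda f)(\vec a)$, which is $\mathbb H$-linear by \cite[Theorem 3]{MR1048702}, so $L$ is itself $\mathbb H$-linear by Hahn--Banach, and $Dg=0$ strongly. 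As $\vec a\in U$ was arbitrary, $f=g$ is strongly hyperholomorphic in $U$; the right-sided statement is identical with the conjugate kernel.

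The main obstacle is technical rather than conceptual: upgrading the scalar Cauchy--Fueter formula to a genuine $\mathcal X$-valued Bochner integral that still commutes with the functionals $\Lambda$. This is precisely where the reduction to a separable submodule, the use of Pettis' measurability theorem, and the bookkeeping of which side the kernel multiplies on (dictated by whether $\mathcal X^*$ consists of left- or right-linear functionals, and by whether one works with left or right hyperholomorphy) all enter; none of these causes a real difficulty but they must be handled with care. Once the representation $f=g$ is in hand, the remaining points — differentiation under the integral sign, $\mathbb H$-linearity of the derivative, and passage to all of $U$ — are routine and parallel the complex scalar proof, so I would only indicate them.
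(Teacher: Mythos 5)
Your argument is correct and is essentially the paper's own approach: the paper gives no explicit proof, merely asserting that the classical complex-case argument of \cite[Theorem VI.4]{MR751959} carries over to the quaternionic setting via \cite{zbMATH06658818}, and your write-up is a faithful, detailed execution of exactly that adaptation (Cauchy--Fueter integral representation, uniform boundedness, separable reduction, Pettis/Bochner, differentiation under the integral sign). The side/module bookkeeping you flag is indeed the only point requiring care, and you handle it appropriately.
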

The validity of this result allows to simplify the terminology and we shall say, for short, that $f$ is hyperholomorphic with values in $\mathcal{X}$.
In the special case in which $\mathcal{X}=\mathbb{H}^{r\times s}$, a function is weakly hyperholomorphic if and only if all its entries are left or right hyperholomorphic there.

\medskip
The next result was proved in the quaternionic setting  in the context of slice hyperholomorphic functions, see e.g. \cite[Section 7]{zbMATH06658818}; here we prove its counterpart in the
present framework.
\begin{thm}
Let $\mathfrak M$ be a finite dimensional linear right-vector space of $\mathbb H^{r}$-valued functions, and hyperholomorphic of axial type in a neighborhood of the origin.
Then $\mathfrak M$ is $R_0$-invariant
if and only if there exists a pair of matrices
$(\mathsf C,\mathsf A)\in\mathbb H^{r\times N}\times \mathbb H^{N\times N}$ such that $f\in\mathfrak M$ if and only if it can be written as
\begin{equation}
\label{formsystem}
f=\sum_{n=0}^\infty P_n\mathsf C\mathsf A^n\xi,\quad \xi\in\mathbb{H}^N.
\end{equation}
We have $N\ge {\rm dim}\, \mathfrak M$, and there is equality if and only if the pair $(C,A)$ is observable, meaning
\begin{equation}
\cap_{n=0}^\infty \ker \mathsf C\mathsf A^n=\left\{0\right\}.
\end{equation}
\end{thm}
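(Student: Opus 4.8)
The plan is to mimic the classical backward-shift realization theorem, translating the state-space argument to the level of the coefficients $f_n$ in the expansion $f=\sum_{n\ge 0}P_nf_n$, since by Remark \ref{rmk212} and the lemma preceding it, a hyperholomorphic function of axial type near the origin is uniquely determined by such a coefficient sequence. Concretely, I would set up a bijection between $\mathfrak M$ and a finite-dimensional subspace $\widetilde{\mathfrak M}\subset \ell_2^{\mathrm{fin}}(\mathbb N_0,\mathbb H^r)$ (or rather the space of eventually-summable coefficient sequences) via $f\leftrightarrow (f_n)_{n\ge0}$, and observe that under this identification the operator $R_0$ acts as the shift $(f_0,f_1,f_2,\ldots)\mapsto (f_1,f_2,\ldots)$, by \eqref{bws}. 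So $R_0$-invariance of $\mathfrak M$ becomes shift-invariance of $\widetilde{\mathfrak M}$.

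For the ``if'' direction: given $(\mathsf C,\mathsf A)$, the functions \eqref{formsystem} correspond to coefficient sequences $(\mathsf C\xi,\mathsf C\mathsf A\xi,\mathsf C\mathsf A^2\xi,\ldots)$; applying the shift gives $(\mathsf C\mathsf A\xi,\mathsf C\mathsf A^2\xi,\ldots)=(\mathsf C\mathsf A^n(\mathsf A\xi))_{n\ge0}$, which is again of the same form with $\xi$ replaced by $\mathsf A\xi$. Hence $\mathfrak M$ is $R_0$-invariant; and it is a right-linear space of dimension at most $N$ since it is the image of $\mathbb H^N$ under the right-linear map $\xi\mapsto f$. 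For the ``only if'' direction, suppose $\mathfrak M$ is finite-dimensional and $R_0$-invariant. Pick a right-linear basis, let $N=\dim\mathfrak M$, identify $\mathfrak M\cong\mathbb H^N$ via this basis, and let $\mathsf A\in\mathbb H^{N\times N}$ be the matrix of $R_0|_{\mathfrak M}$ in this basis. Define $\mathsf C\in\mathbb H^{r\times N}$ by $\mathsf C\xi=(\text{the }0\text{-th coefficient of the element of }\mathfrak M\text{ indexed by }\xi)=(R_0^0 f)(0)/(\text{normalization})$; more precisely, if $g_\xi\in\mathfrak M$ denotes the function with coordinate vector $\xi$, set $\mathsf C\xi=(g_\xi)_0$, the leading coefficient. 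Then the $n$-th coefficient of $g_\xi$ equals the $0$-th coefficient of $R_0^n g_\xi$, which is $\mathsf C\mathsf A^n\xi$; this gives exactly \eqref{formsystem}. The observability statement is then immediate: the map $\xi\mapsto(\mathsf C\mathsf A^n\xi)_{n\ge0}$ is injective precisely when $\cap_n\ker\mathsf C\mathsf A^n=\{0\}$, and injectivity of the parametrization is what forces $N=\dim\mathfrak M$; in general one may always enlarge the state space, so $N\ge\dim\mathfrak M$ always.

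I would be slightly careful about two technical points. First, one must check that the coefficient sequence of an element of $\mathfrak M$ is genuinely finitely-supported-like enough that $\mathsf A$ is well defined as a finite matrix — this is automatic because $\mathfrak M$ is finite-dimensional and $R_0$-invariant, so the $R_0$-orbit of any $f\in\mathfrak M$ stays in $\mathfrak M$ and hence the map $R_0|_{\mathfrak M}$ is a genuine linear endomorphism of an $N$-dimensional space. Second, one must respect the right-linear (as opposed to left-linear) structure throughout: $\mathsf A$ and $\mathsf C$ act by left multiplication on $\mathbb H^{N}$-column vectors while scalars act on the right, and one should verify that $R_0$ is right-linear on $\mathbf H_2(\mathcal E)$, which follows since $R_0(\sum P_nf_n q)=\sum P_n f_{n+1}q=(R_0 f)q$ for $q\in\mathbb H$. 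These are exactly the bookkeeping subtleties already handled in \cite{zbMATH06658818} in the slice hyperholomorphic case, and the argument here is formally identical once \eqref{bws} replaces the slice backward shift.

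The main obstacle, such as it is, is purely organizational rather than conceptual: one must make sure that ``finite-dimensional hyperholomorphic of axial type'' really does correspond to a well-defined finite coefficient picture, i.e. that the identification $f\mapsto(f_n)$ is injective on $\mathfrak M$ and intertwines $R_0$ with the shift. Injectivity is guaranteed by the uniqueness of the expansion \eqref{specialmon} (equivalently, by restriction to $(-1/3,1/3)$), and the intertwining is \eqref{bws}. After that, the proof is the standard finite-dimensional realization argument transported verbatim, and no genuinely new estimate is needed — in contrast to the analytic parts of the paper, where the failure of \eqref{obstruct} causes real difficulty, this particular statement is a soft consequence of linear algebra over $\mathbb H$ plus the coefficient description of $R_0$.
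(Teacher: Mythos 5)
Your proposal is correct and follows essentially the same route as the paper: the paper also picks a basis, writes the row $F=\sum_n P_nF_n$, uses $R_0$-invariance to produce $\mathsf A$ with $R_0F=F\mathsf A$, compares coefficients to get $F_{n+1}=F_n\mathsf A$ (hence $F_n=\mathsf C\mathsf A^n$ with $\mathsf C=F_0$), and derives observability from $f\equiv0\iff\mathsf C\mathsf A^n\xi=0$ for all $n$. Your extra care about right-linearity, injectivity of the coefficient map, and the trivial ``if'' direction is sound bookkeeping that the paper leaves implicit.
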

\begin{proof}
Let $f_1,\ldots, f_N$ be a basis of $\mathfrak M$, and let $F=\begin{pmatrix} f_1&f_2&\cdots& f_N\end{pmatrix}$. Let $F=\sum_{n=0}^\infty P_nF_n$, with $F_n\in\mathbb H^{r\times N}$.
In view of the $R_0$-invariance there exists a matrix $\mathsf A\in\mathbb H^{N\times N}$ such that
\[
\sum_{n=0}^\infty P_n F_{n+1}=\left(\sum_{n=0}^\infty P_n F_{n}\right)\mathsf A.
\]
It follows that
\[
F_{n+1}=F_n\mathsf A,\quad n=0,1,\ldots
\]
and \eqref{formsystem} follows with $F_0=\mathsf C$.\smallskip

The last claim follows from the fact that,
\[
f\equiv 0\,\,\iff\,\, \mathsf C\mathsf A^n\xi=0,\quad n=0,1,2,\ldots
\]
\end{proof}

Note that, for $x_1=x_2=x_3=0$, we have
\[
f(x_0)=\mathsf C(I_N-3x_0\mathsf A)^{-1}\xi.
\]
Since the $CK$-product is not a law of composition we cannot, a priori, define Schur multipliers (see Definition \ref{schurmult}) in terms of multiplication operators. We define them in terms of positive definite functions.
The corresponding contractive operator, counterpart of the $CK$-multiplication by $S$, is given in Proposition \ref{prop-6-1} and Definition \ref{def-6-2}; see equation \eqref{ms-321}.

\begin{defn}\label{schurmult}
The $\mathbb H^{r\times s}$-valued hyperholomorphic function $S$ is called a Schur multiplier if the kernel
\begin{equation}
\label{tyu}
K_S(x,y)=\sum_{n=0}^\infty \left(P_n(x)\overline{P_n(y)}I_r-(P_n\odot S)(x)((P_n\odot S)(y))^*\right)
\end{equation}
is positive definite in $\left\{x\in \mathbb R^4\,:\; 9x_0^2+x_1^2+x_2^2+x_3^2<1\right\}$.
\end{defn}
\begin{ex}
For instance $S=P_1I_r$ is a Schur multiplier since
\[
\begin{split}
  \sum_{n=0}^\infty \left(P_n(x)\overline{P_n(y)}I_r-(P_n\odot S)(x)((P_n\odot S)(y))^*\right)&=    \\
  &\hspace{-4cm}=
  \sum_{n=0}^\infty \left(P_n(x)\overline{P_n(y)}I_r-(P_n\odot P_1)(x) \overline{(P_n\odot P_1)(y)}I_r\right)\\
  &  \hspace{-4cm}= \sum_{n=0}^\infty P_n(x)\overline{P_n(y)}I_r-\sum_{n=0}^\infty P_{n+1}(x)\overline{P_{n+1}(y)}I_r\\
         &\hspace{-4cm}=I_r.
\end{split}
\]
\end{ex}
This example is of course quite trivial. We will give in Section \ref{co} a complete characterization of Schur multipliers, from which one can get numerous other examples.\\

The positivity of the kernel \eqref{tyu} is equivalent to the contractive inclusion of the reproducing kernel Hilbert space with reproducing kernel
\begin{equation}
\sum_{n=0}^\infty(P_n\odot S)(x)((P_n\odot S)(y))^*
\label{tyu1}
\end{equation}
inside the Hardy space. In particular, if $S$ is a Schur multiplier $P_n\odot S\in\mathbf H_2(\mathcal{E})$ for every $n$.

\begin{thm}
  \label{toto}
The $\mathbb H^{r\times s}$-valued function $S$ is a Schur multiplier if and only if the
lower triangular  Toeplitz operator
\begin{equation}
\label{toeplitz}
\mathscr T=\begin{pmatrix} S_0&0&0&\cdots&\\
  S_1&S_0&0&\cdots&\\
  S_2&S_1 &S_0 &0&&\\
 \vdots &\ddots &\ddots &\ddots &\\
    & & &&
\end{pmatrix}
\end{equation}
is a contraction from $\ell_2(\mathbb N_0,\mathbb H^s)$ into $\ell_2(\mathbb N_0,\mathbb H^r)$.
\end{thm}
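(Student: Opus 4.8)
The plan is to prove both implications by relating the kernel $K_S$ of \eqref{tyu} to the operator $\Gamma=I-\mathscr T\mathscr T^{*}$ on $\ell_2(\mathbb N_0,\mathbb H^{r})$: for sufficiency one uses the factorization \eqref{facto-pn} directly in the quaternionic setting, while for necessity one first restricts to the real axis $(-1/3,1/3)\subset\mathcal E$ — where the $\odot$-product becomes ordinary multiplication and $P_n(x_0)=(3x_0)^n$ — and then transfers to $\mathbb C^{2\times 2}$-matrices via $\chi$ in order to invoke Theorem~\ref{dono} and Theorem~\ref{toto123}. The reason for this asymmetry is that $\chi$ is not compatible with the $CK$-product, so it cannot be applied before the restriction.

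\textbf{Step 1: $\mathscr T$ a contraction $\Rightarrow$ $S$ a Schur multiplier.} Testing $\mathscr T$ on block unit vectors gives $\sum_{m\ge0}|S_m|^{2}<\infty$, so $S(x)=\sum_{m\ge0}P_m(x)S_m$ converges and is an $\mathbb H^{r\times s}$-valued hyperholomorphic function of axial type on $\mathcal E$ (a matrix-valued element of $\mathbf H_2(\mathcal E)$). By \eqref{facto-pn} one has $(P_n\odot S)(x)=\sum_{m\ge n}P_m(x)S_{m-n}$. For $x\in\mathcal E$ introduce the bounded operator $F_x\colon\mathbb H^{r}\to\ell_2(\mathbb N_0,\mathbb H^{r})$, $F_xc=(\overline{P_m(x)}\,c)_{m\ge0}$, which lies in $\ell_2$ because $\sum_m|P_m(x)|^{2}=k_{\mathcal E}(x,x)<\infty$. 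A direct computation, handling the quaternionic conjugates, identifies $\mathscr T^{*}F_x$ with the operator $c\mapsto\big(((P_n\odot S)(x))^{*}c\big)_{n\ge0}$, whence
\[
K_S(x,y)=F_x^{*}F_y-F_x^{*}\mathscr T\mathscr T^{*}F_y=F_x^{*}(I-\mathscr T\mathscr T^{*})F_y .
\]
Since $\mathscr T$ is a contraction, $\Gamma=I-\mathscr T\mathscr T^{*}\ge0$, so $\sum_{i,j}c_i^{*}K_S(x_i,x_j)c_j=\langle\Gamma g,g\rangle\ge0$ with $g=\sum_iF_{x_i}c_i$ (equivalently, apply the proposition on operator ranges and reproducing kernels of Section~\ref{S2} with $\mathcal H=\ell_2(\mathbb N_0,\mathbb H^{r})$). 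Hence $K_S$ is positive definite on $\mathcal E$, i.e.\ $S$ is a Schur multiplier.

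\textbf{Step 2: $S$ a Schur multiplier $\Rightarrow$ $\mathscr T$ a contraction.} Restrict $K_S$ to real points $a,b\in(-1/3,1/3)$. Using $P_n(a)=(3a)^{n}$ and, from \eqref{facto-pn}, $(P_n\odot S)(a)=(3a)^{n}S(a)$ with $S(a)=\sum_{k\ge0}(3a)^{k}S_k$, the series \eqref{tyu} telescopes:
\[
K_S(a,b)=\sum_{n\ge0}(9ab)^{n}\big(I_r-S(a)S(b)^{*}\big)=\frac{I_r-S(a)S(b)^{*}}{1-9ab},\qquad |a|,|b|<1/3 .
\]
Thus $\widetilde S(z):=S(z/3)=\sum_{m\ge0}z^{m}S_m$ has $\dfrac{I_r-\widetilde S(u)\widetilde S(v)^{*}}{1-uv}$ positive definite on $(-1,1)$. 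Applying $\chi$ entrywise, and using that $\chi$ is $\mathbb R$-linear, multiplicative, sends $M^{*}$ to $\chi(M)^{*}$ and $I_r$ to $I_{2r}$, and preserves order, the $\mathbb C^{2r\times 2s}$-valued function $\sigma(z):=\sum_{m\ge0}z^{m}\chi(S_m)$ satisfies: $\dfrac{I_{2r}-\sigma(u)\sigma(v)^{*}}{1-uv}$ is positive definite on $(-1,1)$. By Theorem~\ref{dono}, $\sigma$ extends to an analytic contractive function on $\mathbb D$; by Theorem~\ref{toto123}, its block-Toeplitz operator $\big(\chi(S_{j-k})\big)_{j,k\ge0}$ — which is exactly $\chi(\mathscr T)$ computed blockwise — is a contraction from $\ell_2(\mathbb N_0,\mathbb C^{2s})$ into $\ell_2(\mathbb N_0,\mathbb C^{2r})$. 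Finally Proposition~\ref{2-9-0} transfers this back: $\mathscr T$ is a contraction from $\ell_2(\mathbb N_0,\mathbb H^{s})$ into $\ell_2(\mathbb N_0,\mathbb H^{r})$ (indeed with the same norm as $\chi(\mathscr T)$).

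\textbf{Main obstacle.} The genuinely delicate point in Step~1 is the bookkeeping with quaternionic conjugates and adjoints needed to verify the operator identity $K_S(x,y)=F_x^{*}(I-\mathscr T\mathscr T^{*})F_y$, together with the convergence statements guaranteeing that $F_x$ and $S$ are well defined on all of $\mathcal E$ and not merely near the origin (here one leans on Corollary~\ref{coro234} and on $S$ being a matrix-valued element of $\mathbf H_2(\mathcal E)$). In Step~2 the crucial structural device is that one must restrict to the real axis \emph{before} using $\chi$, since $\chi$ does not intertwine the $\odot$-product; the fact that all functions involved are of axial type, hence determined by their restriction to $(-1/3,1/3)$, is what makes this reduction lossless.
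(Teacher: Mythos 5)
Your proof is correct and follows essentially the same route as the paper's: the direction ``$\mathscr T$ contractive $\Rightarrow$ $K_S$ positive definite'' is obtained by evaluating $I-\mathscr T\mathscr T^*$ on the vectors $(\overline{P_m(x)}c)_{m\ge 0}$ (exactly the paper's choice \eqref{cccc}), and the converse proceeds by restricting to $(-1/3,1/3)$, applying $\chi$, and invoking Theorem \ref{dono}, Theorem \ref{toto123} and Proposition \ref{2-9-0}. The only point you pass over is the paper's Step 1, which uses the contractive inclusion of the space with reproducing kernel \eqref{tyu1} into the Hardy space to justify that $S=\sum_u P_u S_u$ with $\sum_u\|S_u\|^2<\infty$ before the coefficients $S_k$ are used in the telescoping on the real axis.
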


\begin{proof}
Assume first that the kernel \eqref{tyu} is positive definite in $\mathcal{E}$. We divide the argument in a number of steps.\\

STEP 1: {\sl There exist  coefficients $S_0,S_1,\ldots\in\mathbb H^{r\times s}$ such that
\[
S(x) =\sum_{u=0}^\infty P_u(x) S_u
\]
and $\sum_{u=0}^\infty \|S_u\|^2<\infty$.}\smallskip

Indeed, let $K_2(x,y)=\sum_{n=0}^\infty P_n(x)I_r\overline{P_n(y)}$ and $K_1(x,y)=\sum_{n=0}^\infty
  (P_n\odot S)(x)((P\odot S)(y))^*$. The inclusion operator
\[
\mathcal{I}\left(\sum_{n=0}^\infty (P_n\odot S)f_n\right)=\sum_{n=0}^\infty (P_n\odot S) f_n
\]
is a contraction from $\mathcal H(K_1)$ inside $\mathcal H(K_2)$, and so in particular $S\in(\mathbf H_2(\mathcal E))^{r\times s}$.\\

STEP 2: {\sl The function
\[
s(a)=\sum_{n=0}^\infty\chi(S_n)a^n,\quad a\in(-1,1),
\]
has an analytic contractive extension to the open unit disk of the complex plane.}\\

We write for $x_0,y_0\in(-1/3,1/3)$
\[
K_s(3x_0,3y_0)=\frac{I_r-s(3x_0)s(3y_0)^*}{1-9x_0y_0}
 \]
(of course, $K_s(3x_0,3y_0)$ does not characterize in a unique way $K_s(x,y)$).
The kernel $K_s(3x_0,3y_0)$ is positive definite for $x_0,y_0\in(-1/3,1/3)$. We set $a=3x_0$ and $b=3y_0$. The kernel
\[
\frac{I_r-s(a)s(b)^*}{1-ab}
\]
is positive definite in $(-1,1)$, and so is the kernel $\chi\left(\dfrac{I_r-s(a)s(b)^*}{1-ab}\right)$, and we conclude by applying Theorem \ref{dono}.\\

STEP 3: {\sl The Toeplitz operator based on the sequence $\chi(S_u)$ is contractive from
$\ell_2({\mathbb N}_0,\mathbb C^{2s})$ into $\ell_2({\mathbb N}_0,\mathbb C^{2})$}\smallskip

This follows from Theorem \ref{toto123}.\\

STEP 4: {\sl The Toeplitz operator $T_s$ is contractive.}\smallskip

This follows from Proposition \ref{2-9-0}. We restrict the operator in Step 3 to sequences of matrices in the  range of $\chi$.\\

We now suppose that $\mathscr T$ is a contractive operator. We write
\[
\langle (I-\mathscr T\mathscr T^*)c,c\rangle_{\ell_2(\mathbb N_0,\mathbb H^r)}\ge 0
\]
with
\begin{equation}
\label{cccc}
c=\sum_{i} \begin{pmatrix} I_r\\ \\ \overline{P_1(x^{(i)})}I_r\\ \\ \overline{P_2(x^{(i)})}I_r\\ \vdots\end{pmatrix}u_i
\end{equation}
to get the positivity of the kernel $K_S$.
\end{proof}

\begin{cor}
In the above notation, the function
\begin{equation}
\sum_{n=0}^\infty p^n S_n
\end{equation}
is a slice hyperholomorphic Schur multiplier.
\end{cor}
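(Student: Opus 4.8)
The plan is to note that the Schur multiplier property, in both the Appell-like setting of this paper and in the slice hyperholomorphic setting, is controlled by one and the same object: the lower triangular block Toeplitz operator $\mathscr T$ of \eqref{toeplitz}, built from the coefficient sequence $(S_n)_{n\ge 0}\subset\mathbb H^{r\times s}$.

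First I would invoke Theorem \ref{toto}: the hypothesis that $S=\sum_{n\ge 0}P_nS_n$ is a Schur multiplier is equivalent to $\mathscr T$ being a contraction from $\ell_2(\mathbb N_0,\mathbb H^s)$ into $\ell_2(\mathbb N_0,\mathbb H^r)$. In particular $\sum_{n\ge 0}\|S_n\|^2<\infty$, so $\sigma(p):=\sum_{n\ge 0}p^nS_n$ is a well-defined slice hyperholomorphic $\mathbb H^{r\times s}$-valued function on $\{p\in\mathbb H:|p|<1\}$.

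Next I would recall the slice hyperholomorphic counterpart of Theorem \ref{toto}, which is classical in that setting (see \cite{zbMATH06658818, as3}): $\sigma$ is a slice hyperholomorphic Schur multiplier --- equivalently, a contractive $\star$-multiplier of the slice hyperholomorphic Hardy space --- if and only if $\mathscr T$ is a contraction from $\ell_2(\mathbb N_0,\mathbb H^s)$ into $\ell_2(\mathbb N_0,\mathbb H^r)$. The factorization $\sum_{m\ge n}p^mS_{m-n}=p^n\star\sigma(p)$ of \eqref{123ert} is the exact mirror of \eqref{facto-pn}, which makes transparent that the slice Schur-type kernel and the Appell-like kernel \eqref{tyu} are assembled from the very same coefficient data through $\mathscr T$. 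Chaining the two equivalences yields that $\sigma$ is a slice hyperholomorphic Schur multiplier, as claimed.

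The only point needing care --- and the reason this is not a pure citation --- is to check that the operator supplied by Theorem \ref{toto} is literally the one entering the slice characterization; since both are the block Toeplitz operator with $(j,k)$ entry $S_{j-k}$ acting $\ell_2(\mathbb N_0,\mathbb H^s)\to\ell_2(\mathbb N_0,\mathbb H^r)$, this is immediate and I expect no genuine obstacle. If one prefers to avoid quoting the slice literature, one can argue directly: Step 2 of the proof of Theorem \ref{toto} already produces the analytic contractive extension $s(a)=\sum_{n\ge 0}\chi(S_n)a^n$ to the open unit disk of $\mathbb C$; together with Theorem \ref{toto123} and Proposition \ref{2-9-0} this gives the contractivity of $\mathscr T$, and the positivity of the slice kernel then follows by the computation in the last paragraph of the proof of Theorem \ref{toto}, carried out with the basis $p^n$ (and the vector $c=\sum_i\big(I_r,\ \overline{p_i}I_r,\ \overline{p_i}^2I_r,\ \ldots\big)^tu_i$) in place of $P_n$.
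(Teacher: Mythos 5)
Your proposal is correct and follows the route the paper intends: the corollary is stated without proof precisely because it is the concatenation of Theorem \ref{toto} (contractivity of $\mathscr T$) with the known slice hyperholomorphic characterization of Schur multipliers via the same block Toeplitz operator, which is also what the paper's Section 3 sets up through $\Gamma=I-T_ST_S^*$ and the choice $f_n(p)=\overline{p}^nI_r$ in \eqref{case1}. Your fallback direct argument (positivity of the slice kernel from $\langle(I-\mathscr T\mathscr T^*)c,c\rangle\ge 0$ with $c$ built from the powers $\overline{p_i}^n$) is exactly the mechanism used at the end of the proof of Theorem \ref{toto}, so there is nothing to add.
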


Given two multipliers, the bounded operator $M_{S_1}M_{S_2}$ will not be a multiplier in general.

\begin{prop}
\label{prop-6-1}
Let $S$ be a $\mathbb H^{r\times s}$-valued Schur multiplier. The formula
\begin{equation}
T\left(\sum_{n=0}^\infty P_n\overline{P_n(a)}u\right)=\sum_{n=0}^\infty P_n\left((P_n\odot S)(a)\right)^*u,\quad a\in\mathcal E,\,\,u\in\mathbb H^r,
\label{t-123}
\end{equation}
defines a contraction from $(\mathbf H_2(\mathcal E))^s$ into $(\mathbf H_2(\mathcal E))^r$, with adjoint given by
\begin{equation}
  \label{adjointm}
T^*\left(\sum_{n=0}^\infty P_nu_n\right)=\sum_{n=0}^\infty (P_n\odot S)u_n,\quad u_n\in\mathbb H^s.
\end{equation}
\end{prop}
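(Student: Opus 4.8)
The plan is to show first that $T$ is well-defined and contractive by recognizing the kernel computation that already underlies Definition \ref{schurmult}, and then to verify the adjoint formula \eqref{adjointm} by a direct pairing against the reproducing kernels of the Hardy space. For the contractivity, I would argue as follows. The functions $k_{\mathcal E}(\cdot,a)u = \sum_{n=0}^\infty P_n\overline{P_n(a)}u$ span a dense subspace of $(\mathbf H_2(\mathcal E))^s$ (this is the defining property of a reproducing kernel, together with the fact that finite linear combinations are dense). On such combinations $c=\sum_i k_{\mathcal E}(\cdot,a^{(i)})u_i$ one computes, using the reproducing property $\|c\|^2 = \sum_{i,j} u_j^* k_{\mathcal E}(a^{(j)},a^{(i)}) u_i$, that
\[
\|c\|^2 - \|Tc\|^2 = \sum_{i,j} u_j^*\left(\sum_{n=0}^\infty\left(P_n(a^{(j)})\overline{P_n(a^{(i)})}I_r - (P_n\odot S)(a^{(j)})((P_n\odot S)(a^{(i)}))^*\right)\right)u_i = \sum_{i,j}u_j^* K_S(a^{(j)},a^{(i)})u_i,
\]
which is $\ge 0$ precisely because $K_S$ is positive definite (Definition \ref{schurmult}). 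Hence $T$ is contractive on a dense subspace, so it extends by continuity to a contraction on all of $(\mathbf H_2(\mathcal E))^s$; and the extension maps into $(\mathbf H_2(\mathcal E))^r$ because each $P_n\odot S\in\mathbf H_2(\mathcal E)$ (noted after \eqref{tyu1}) and the partial sums converge in norm by the same inequality.

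Next I would identify the adjoint. Let $U=\sum_{n=0}^\infty P_nu_n\in(\mathbf H_2(\mathcal E))^r$ with $\sum\|u_n\|^2<\infty$. For any $a\in\mathcal E$ and $v\in\mathbb H^r$, using the reproducing property in both Hardy spaces and the explicit form \eqref{t-123},
\[
\langle U, T(k_{\mathcal E}(\cdot,a)v)\rangle = \left\langle U,\ \sum_{n=0}^\infty P_n\big((P_n\odot S)(a)\big)^*v\right\rangle = \sum_{n=0}^\infty v^*(P_n\odot S)(a)\,u_n = v^*\left(\sum_{n=0}^\infty (P_n\odot S)(a)u_n\right),
\]
while on the other side $\langle T^*U, k_{\mathcal E}(\cdot,a)v\rangle = v^*(T^*U)(a)$. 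Comparing, and using that the $P_n\odot S$ are Fueter hyperholomorphic of axial type hence determined by their values on $(-1/3,1/3)$, we get that $T^*U$ agrees with $\sum_{n=0}^\infty (P_n\odot S)u_n$; one must check that this last series indeed defines an element of $(\mathbf H_2(\mathcal E))^s$, which follows from the contractivity of $\mathscr T$ (Theorem \ref{toto}) applied to the coefficient sequence $(u_n)$, since the $n$-th coefficient of $\sum_m (P_m\odot S)u_m$ along the basis $P_0,P_1,\ldots$ is exactly the $n$-th entry of $\mathscr T(u_0,u_1,\ldots)^t$ — here one invokes \eqref{facto-pn} / \eqref{nmk} to expand $P_m\odot S = \sum_{\ell\ge m} P_\ell S_{\ell-m}$.

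The main obstacle I expect is the bookkeeping around convergence and the reordering of double series: one needs $\sum_{n}(P_n\odot S)(a)u_n$ to converge (which is where $P_n\odot S\in\mathbf H_2(\mathcal E)$ and a Cauchy–Schwarz estimate in $\mathbf H_2(\mathcal E)$ enter), and one needs to justify interchanging the infinite sum over $n$ with the inner product, both in the contractivity estimate and in the adjoint computation. A secondary subtlety is that $T$ is only defined by \eqref{t-123} on the dense span of kernel functions, so every identity must first be established there and then extended by continuity; the noncommutativity of $\mathbb H$ forces care with the placement of the scalars $u_i$ and the conjugates, but no genuinely new idea beyond the reproducing kernel identity and Theorem \ref{toto} is needed.
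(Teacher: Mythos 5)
Your proposal is correct and follows essentially the same route as the paper: the adjoint is identified by pairing $\sum_n P_n u_n$ against the images of kernel functions, exactly as in the paper's (very terse) proof. The only difference is that you spell out the contractivity estimate $\|c\|^2-\|Tc\|^2=\sum_{i,j}u_j^*K_S(a^{(j)},a^{(i)})u_i\ge 0$, which the paper leaves implicit by appealing to the remark after \eqref{tyu1} that positivity of $K_S$ is equivalent to the contractive inclusion of the space with kernel \eqref{tyu1} in the Hardy space; this is a welcome addition, not a different method.
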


\begin{proof} Let $\sum_{n=0}^\infty P_nu_n\in\mathbf H_2(\mathcal E)$. We can write:
\[
\begin{split}
\left\langle T^*\left(\sum_{n=0}^\infty P_nu_n\right), \sum_{n=0}^\infty P_n\overline{P_n(b)}u\right\rangle&=
\left\langle \sum_{n=0}^\infty P_nu_n, T\left(\sum_{n=0}^\infty P_n\overline{P_n(b)}u\right)\right\rangle\\
\\
&=  \left\langle \sum_{n=0}^\infty P_nu_n, \sum_{n=0}^\infty P_n((P_n\odot S)(b))^*u\right\rangle\\
\\
&=u^*\sum_{n=0}^\infty (P_n\odot S)(b)u_n.
\end{split}
\]
\end{proof}

\begin{rem}{\rm Formula \eqref{adjointm} gives the adjoint of $CK$-multiplication for any (bounded) multiplier (i.e. functions for which the corresponding operator of $CK$-multiplication
    is bounded in the Hardy space), and not only for Schur multipliers.}
  \label{new}
  \end{rem}

\begin{cor}
  Let $S(x)=\sum_{n=0}^\infty P_n S_n$ be a $\mathbb H^{r\times t}$-valued Schur multiplier. Then,
  \begin{equation}
    \widetilde{S}(x)=\sum_{n=0}^\infty P_n(x)S_n^*
  \end{equation}
  is a $\mathbb H^{t\times r}$-valued Schur multiplier.
\end{cor}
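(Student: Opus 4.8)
The plan is to use the Toeplitz characterization of Schur multipliers provided by Theorem~\ref{toto}, which reduces the claim to a statement about block Toeplitz operators. By that theorem, $S(x)=\sum_{n=0}^\infty P_n(x)S_n$ being a Schur multiplier is equivalent to the contractivity of the lower triangular block Toeplitz operator $\mathscr T=(S_{j-k})_{j,k\ge 0}$ from $\ell_2(\mathbb N_0,\mathbb H^t)$ into $\ell_2(\mathbb N_0,\mathbb H^r)$. Similarly, $\widetilde S(x)=\sum_{n=0}^\infty P_n(x)S_n^*$ is a Schur multiplier if and only if the lower triangular block Toeplitz operator $\widetilde{\mathscr T}=(S_{j-k}^*)_{j,k\ge 0}$ is a contraction from $\ell_2(\mathbb N_0,\mathbb H^r)$ into $\ell_2(\mathbb N_0,\mathbb H^t)$. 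So the whole statement is reduced to: if $\mathscr T$ is a contraction, then so is $\widetilde{\mathscr T}$.

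The key observation is that $\widetilde{\mathscr T}$ is obtained from $\mathscr T$ by a flip-and-adjoint operation. Concretely, let $J$ be the flip $(a_0,a_1,\ldots,a_n)\mapsto(a_n,\ldots,a_1,a_0)$ on finite truncations, or argue directly: the upper triangular Toeplitz operator $\mathscr T^*=(S_{k-j}^*)_{j,k\ge 0}$ has the same norm as $\mathscr T$, and conjugating $\mathscr T^*$ by the flip turns the upper triangular Toeplitz matrix with symbol $(S_n^*)_n$ into the lower triangular Toeplitz matrix with the same symbol, i.e.\ into $\widetilde{\mathscr T}$. Since both taking adjoints and conjugating by the (isometric) flip preserve operator norm, $\|\widetilde{\mathscr T}\|=\|\mathscr T^*\|=\|\mathscr T\|\le 1$. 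One has to be slightly careful doing the flip on the infinite-dimensional $\ell_2(\mathbb N_0,\mathbb H^r)$, since there is no global order-reversing isometry; the clean way is to work with finite sections $\mathscr T_N=(S_{j-k})_{j,k=0}^N$, note $\|\mathscr T_N\|\le\|\mathscr T\|$, apply the finite flip $J_N$ (a genuine isometry of $\ell_2(\{0,\ldots,N\},\mathbb H^r)$) to get $J_N\mathscr T_N^* J_N=\widetilde{\mathscr T}_N$, hence $\|\widetilde{\mathscr T}_N\|=\|\mathscr T_N\|\le\|\mathscr T\|\le 1$, and then let $N\to\infty$ using that the finite sections converge strongly to $\widetilde{\mathscr T}$.

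Alternatively, and perhaps more in the spirit of the paper, I would invoke Proposition~\ref{2-9-0} to pass to the complex matrix side: $\mathscr T$ is a contraction iff the complex Toeplitz operator with symbol $\chi(S_n)$ is a contraction, iff (by Theorem~\ref{toto123}) the function $s(z)=\sum_n\chi(S_n)z^n$ is contractive on the unit disk. Then $s(z)^* $ evaluated appropriately, or rather the transpose-type manipulation $\chi(S_n^*)=\chi(S_n)^*$ up to the built-in symmetry, shows the symbol $\chi(S_n^*)$ again yields a contractive Toeplitz operator by the same flip argument applied in the complex (rather than quaternionic) setting, and Proposition~\ref{2-9-0} pulls this back to $\widetilde{\mathscr T}$. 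The main obstacle is purely bookkeeping: keeping track of the interplay between $\chi$, conjugation $S_n\mapsto S_n^*$, and the flip, and making sure the finite-section limiting argument is stated correctly rather than pretending there is an infinite order-reversing isometry. No genuinely new idea beyond Theorem~\ref{toto} is needed.
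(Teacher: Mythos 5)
Your argument is correct, and your primary route is genuinely different from the paper's. The paper's proof stays on the complex-analytic side: it uses the equivalence (established via $\chi$, Proposition \ref{2-9-0} and Theorem \ref{toto123} in the proof of Theorem \ref{toto}) that $S$ is a Schur multiplier if and only if $z\mapsto\sum_{n=0}^\infty z^n\chi(S_n)$ is analytic and contractive in $\mathbb D$, and then simply asserts that this holds if and only if $\sum_{n=0}^\infty z^n\chi(S_n)^*$ is analytic and contractive — i.e.\ it invokes, without proof, the classical fact that $f\mapsto f(\overline z)^*$ preserves the matrix Schur class. Your second, ``alternative'' route is exactly this. Your first route instead works directly with the quaternionic block Toeplitz operator $\mathscr T=(S_{j-k})$: passing to the adjoint and conjugating finite sections by the order-reversing isometry $J_N$ gives $J_N\mathscr T_N^*J_N=\widetilde{\mathscr T}_N$, whence $\|\widetilde{\mathscr T}_N\|\le\|\mathscr T\|\le 1$, and strong convergence of the finite sections finishes the argument. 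This buys two things: it never leaves the quaternionic setting (no appeal to $\chi$ beyond what Theorem \ref{toto} already encodes), and it actually proves the flip step that the paper leaves implicit; your caution about the nonexistence of a global order-reversing isometry on $\ell_2(\mathbb N_0)$ and the resulting need for finite sections is well placed. The only mild caveat is that you should note (as you implicitly do) that $\|A^*\|=\|A\|$ and the isometry of $J_N$ persist for operators on quaternionic Hilbert spaces, which is standard.
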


\begin{proof}
  $S$ is a Schur multiplier if and only if the function $\sum_{n=0}^\infty z^n\chi(S_n)$ is analytic and contractive in $\mathbb D$; this will hold if and only if
  the function
  \[
\sum_{n=0}^\infty z^n\chi(S_n)^*=\sum_{n=0}^\infty z^n\chi(S_n^*)
\]
is analytic and contractive in $\mathbb D$; by the previous theorem this will hold if and only if $\widetilde{S}$ is a $\mathbb H^{t\times r}$-valued Schur multiplier.
\end{proof}

The operator $T^*$ cannot be written as $S\odot\left(\sum_{n=0}^\infty P_nu_n\right)$, i.e. it is not
the $\odot$ multiplication by $S$. We now introduce the counterpart of this latter operator here.
\begin{defn}\label{MS}
The operator $T^*$ will be denoted by $M_S$:
\begin{equation}
\label{ms-321}
M_S\left(\sum_{n=0}^\infty P_n(a)u_n\right)=\sum_{n=0}^\infty (P_n\odot S)(a)u_n,\quad a\in\mathcal E.
\end{equation}
\label{def-6-2}
\end{defn}


We have the following extension result, counterpart of Theorem \ref{dono}. The proof is slightly different.

\begin{thm}
\label{ttt*}
Assume the kernel \eqref{tyu} defined and positive definite in a neighborhood $N$ of the origin of $\mathbb R^4$. Then, $S$ extends, in a unique way,  to a Schur multiplier.
\end{thm}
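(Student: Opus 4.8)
The plan is to reduce the local positivity hypothesis to the already-established extension result for the complex Schur class via the map $\chi$ and the restriction-to-the-real-line principle, exactly as in the proof of Theorem \ref{toto}. First I would argue that positive definiteness of the kernel \eqref{tyu} on the neighborhood $N$ forces $S$ to be of axial type with a convergent expansion $S(x)=\sum_{n=0}^\infty P_n(x)S_n$, $S_n\in\mathbb H^{r\times s}$: indeed, positivity of \eqref{tyu} gives the contractive inclusion of the reproducing kernel Hilbert space with kernel \eqref{tyu1} into the local analogue of the Hardy space, so in particular $S=P_0\odot S$ lies in that space and hence has the stated series representation (with no a priori summability of $\sum\|S_n\|^2$ yet).

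**Passing through $\chi$.** Next I would restrict attention to the real line: by Proposition \ref{RFAM} (and the fact, used repeatedly in the paper, that axially hyperholomorphic functions are determined by their values on $(-1/3,1/3)$ once we know the neighborhood contains a real interval), it suffices to control $S$ on a real interval $(-\delta,\delta)\subset N\cap\mathbb R$. On that interval $P_n(x_0)=(3x_0)^n$, so with $a=3x_0$, $b=3y_0$ the kernel \eqref{tyu} restricted to the reals reads
\[
K_S(x_0,y_0)=\frac{I_r-s(a)s(b)^*}{1-ab},\qquad s(a)=\sum_{n=0}^\infty \chi(S_n)a^n,
\]
and applying $\chi$ (which preserves positivity, Lemma \ref{rangechi} and the discussion around it) shows $\dfrac{I_{2r}-s(a)s(b)^*}{1-ab}$ is positive definite on $(-\delta/3,\delta/3)$. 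Theorem \ref{dono} then yields a unique analytic contractive extension of $s$ to the open unit disk, and Theorem \ref{toto123} makes the associated block Toeplitz operator a contraction; Proposition \ref{2-9-0} (restricting to sequences with entries in the range of $\chi$) makes $\mathscr T$ itself contractive. By Theorem \ref{toto} this is exactly the condition for $\sum_{n=0}^\infty P_n(x)S_n$ to be a Schur multiplier, and since this extension agrees with $S$ on $(-\delta,\delta)$ and both are axially hyperholomorphic, they agree on the common domain; this also gives uniqueness.

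**Main obstacle.** The part that needs the most care—and the reason the authors flag that ``the proof is slightly different'' from Theorem \ref{dono}—is the very first step: extracting the global series $S(x)=\sum P_n(x)S_n$ from only \emph{local} positivity, without yet knowing $\sum\|S_n\|^2<\infty$. In the complex scalar case of Theorem \ref{dono} one builds the operator $T$ on the Hardy space directly from $s$ and lets analyticity do the work; here we must instead invoke the reproducing-kernel formalism of Section 3 on a neighborhood of the origin, identify the coefficient functionals of the kernel \eqref{tyu1} with the $P_n\odot S$, and only afterwards, \emph{a posteriori}, recover square-summability of the $S_n$ from the contractivity of $\mathscr T$ obtained at the end. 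One must also be a little careful that the neighborhood $N$ meets $\mathbb R$ so that the uniqueness-by-restriction argument applies; since $N$ is a neighborhood of the origin this is automatic, but it should be stated. Everything else is a routine transcription of the four steps of the proof of Theorem \ref{toto}.
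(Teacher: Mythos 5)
Your argument is correct, but it follows a genuinely different path from the paper's. You localize the forward direction of Theorem \ref{toto}: after placing $S$ in the restriction to $N$ of the Hardy space via the contractive inclusion of the reproducing kernel space of \eqref{tyu1} that positivity of \eqref{tyu} provides, you restrict to a real interval, apply $\chi$, and let the Donoghue-type Theorem \ref{dono} produce the extension, returning through Theorem \ref{toto123}, Proposition \ref{2-9-0} and the converse half of Theorem \ref{toto}. This works because Theorem \ref{dono} only needs positivity on a zero set and because $N\cap\mathbb R$ is a uniqueness set for axially hyperholomorphic functions, so the Hardy space restricted to $N$ is isometrically the space of restrictions of $\mathbf H_2(\mathcal E)$ --- which, incidentally, already yields $\sum_n\|S_n\|^2<\infty$ at that first stage, so your a posteriori recovery of square-summability is unnecessary. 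The paper instead stays inside $\mathbf H_2(\mathcal E)$: positivity of \eqref{tyu} on $N$ makes the operator $T$ of Proposition \ref{prop-6-1} a contraction on the span of the kernels $k_{\mathcal E}(\cdot,a)$ with $a\in N$, which is dense for the same uniqueness reason; $T$ then extends to an everywhere defined contraction, the identity $(T^*v)(b)=S(b)v$ for $b\in N$ exhibits $T^*v\in\mathbf H_2(\mathcal E)$ as the hyperholomorphic extension of $Sv$, and the computation of $(T^*f)(b)$ identifies $T^*$ with the lower triangular Toeplitz operator $T_S$ acting on coefficients; contractivity of $T_S$ together with the vectors \eqref{cccc} then gives positivity of $K_S$ on all of $\mathcal E$. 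Your route is more economical, delegating everything to results already proved; the paper's is intrinsic to the quaternionic setting and produces the extension directly as $T^*v$ without passing through $\chi$. Two small points to tidy: after applying $\chi$ the restricted kernel should read $\bigl(I_{2r}-s(a)s(b)^*\bigr)/(1-ab)$ with $s(a)=\sum_{n}\chi(S_n)a^n$ (your display mixes $I_r$ with the $\chi$-image), and you should note explicitly that the Taylor coefficients of the disk extension furnished by Theorem \ref{dono} stay in the range of $\chi$, which follows from uniqueness of that extension.
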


\begin{proof} We consider the scalar case to simplify the notation.
The preceding argument still holds and, setting $a=0$ and $b\in N$, we get
\[
(T^*v)(b)=S(b)v,\quad  b\in N.
\]
But $T^*v\in\mathbf H_2(\mathcal E)$ and in particular is hyperholomorphic in all of $\mathcal E$. More generally, still for $b\in N$, but for $f=\sum_{n=0}^\infty P_nf_n\in\mathbf H_2(\mathcal E)$
we have
\[
\begin{split}
(T^*f)(b)&=\langle (T^*f)(\cdot), k_{\mathcal E}(\cdot, b)\rangle_2\\
&=\langle f(\cdot), Tk_{\mathcal E}(\cdot, v)\rangle_2\\
&=\sum_{n=0}^\infty (P_n\odot S)(b)f_n.
\end{split}
\]
Writing $(T^*f)(b)=\sum_{n=0}^\infty P_nh_n$ and $S=\sum_{n=0}^\infty P_ns_n$, the previous equality is equivalent (since $v$ varies in an open set; it would be enough to have an interval such that
$x_0\in(-\epsilon,\epsilon)$)
\[
\begin{pmatrix}
  h_0\\ h_1\\ \vdots\end{pmatrix}=T_S  \begin{pmatrix}f_0\\f_1\\ \vdots\end{pmatrix},
\]
where $T_S$ is the lower triangular Toeplitz operator based on the coefficients of $S$. So $T_S$ is a contraction, and so
$I-T_ST_S^*\ge 0$, and so $\langle(I-T_ST_S^*c,c\rangle\ge 0$ for every $c\in\ell_2(\mathbb N_0,\mathbb H)$. The choice \eqref{cccc} allows to conclude that $S$ is a Schur multiplier.
\end{proof}

\begin{lem} The following equality holds for all $    f\in\mathbf H_2(\mathcal E)$:
\begin{equation}
\label{PS}
P_1\odot (M_Sf)=M_S(P_1\odot f).
\end{equation}
\end{lem}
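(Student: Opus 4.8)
The plan is to prove the identity by working directly with the expansions in the polynomials $P_n$, where both the map $f\mapsto P_1\odot f$ and the operator $M_S$ act in an elementary way; at bottom the statement is the commutation of the forward shift with a lower triangular block Toeplitz operator on coefficient sequences. Concretely, I would fix $f=\sum_{n=0}^\infty P_nf_n\in\mathbf H_2(\mathcal E)$ (so that $\sum_n|f_n|^2<\infty$) and write $S=\sum_{m=0}^\infty P_mS_m$ as in Theorem \ref{toto}. Since $g\mapsto P_1\odot g$ is the isometry $\mathsf S$ of Lemma \ref{lemma-fs}, it is bounded, hence commutes with convergent series in $\mathbf H_2(\mathcal E)$; combining this with \eqref{nmk} of Lemma \ref{PnPm} in the form $P_1\odot(P_ku)=P_{k+1}u$ for constant $u$, one gets $P_1\odot f=\sum_{n=0}^\infty P_{n+1}f_n$. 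Likewise $P_n\odot S=\sum_{m=0}^\infty P_{n+m}S_m$ by \eqref{facto-pn}.

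Next I would expand the two sides. Applying $M_S$ (Definition \ref{MS}) to $P_1\odot f=\sum_{n}P_{n+1}f_n$ gives $M_S(P_1\odot f)=\sum_{n\ge 0}(P_{n+1}\odot S)f_n=\sum_{n,m\ge 0}P_{n+1+m}S_mf_n$. On the other side, $M_Sf=\sum_{n\ge 0}(P_n\odot S)f_n=\sum_{n,m\ge 0}P_{n+m}S_mf_n$ lies in $\mathbf H_2(\mathcal E)$ because $M_S$ is bounded (Proposition \ref{prop-6-1}, Theorem \ref{toto}); hence $\mathsf S$ may again be applied term by term, and by \eqref{nmk} one obtains $P_1\odot(M_Sf)=\sum_{n,m\ge 0}P_1\odot(P_{n+m}S_mf_n)=\sum_{n,m\ge 0}P_{n+1+m}S_mf_n$. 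The two double series coincide, which is the asserted equality. Equivalently: writing $\hat f=(f_0,f_1,\dots)$ for the coefficient sequence of $f$, the operator $P_1\odot(\cdot)$ is the forward shift $\hat f\mapsto(0,f_0,f_1,\dots)$ and $M_S$ acts on $\hat f$ as the lower triangular Toeplitz operator $\mathscr T$ of \eqref{toeplitz} (this is precisely the computation carried out in the proof of Theorem \ref{ttt*}), and $\mathsf S\mathscr T=\mathscr T\mathsf S$ is immediate from the Toeplitz structure.

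The only point requiring a little care is the handling of the double series: one has to justify reindexing $\sum_{n,m\ge 0}P_{n+m}S_mf_n$ as $\sum_k P_k\big(\sum_{m+n=k}S_mf_n\big)$ and interchanging $P_1\odot(\cdot)$ with the summation. This is harmless here because $\mathscr T$ is a contraction (Theorem \ref{toto}), so the coefficient sequence of $M_Sf$ is already in $\ell_2$ and all the series in play converge in $\mathbf H_2(\mathcal E)$, on which $\mathsf S$ is continuous; in particular the obstruction recorded in \eqref{obstruct} plays no role, since $\mathsf S$ genuinely is the $\odot$-multiplication by $P_1$ and no composition of $\odot$-products on the level of $S$ is invoked. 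I expect this bookkeeping to be the only real obstacle; the algebraic content of the identity is immediate from Lemma \ref{PnPm}.
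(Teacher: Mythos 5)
Your proof is correct and follows essentially the same route as the paper: expand $f=\sum_n P_nf_n$, use \eqref{nmk}/\eqref{facto-pn} to shift the index, and identify $\sum_n(P_{n+1}\odot S)f_n$ with $M_S(P_1\odot f)$. The only difference is that you additionally unfold $S$ into its coefficients and justify the interchange of $\mathsf S$ with the series via boundedness, details the paper's one-line computation leaves implicit; this is harmless and, if anything, tightens the argument.
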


\begin{proof}
Let $f=\sum_{n=0}^\infty P_nu_n$. We can write:
\[
\begin{split}
P_1\odot (M_Sf)&=P_1\odot\left(\sum_{n=0}^\infty (P_n\odot S)u_n\right)\\
&=\sum_{n=0}^\infty (P_{1+n}\odot S)u_n\\
&=M_S\left(\sum_{n=0}^\infty P_{n+1}u_n\right)\\
&=M_S\left(P_1\odot f\right)\\
&=M_S\left(M_{P_1} f\right).
\end{split}
\]
\end{proof}

It is useful to rewrite \eqref{PS} as
\begin{equation}
\label{PS1212}
M_{P_1}M_S=M_SM_{P_1}.
\end{equation}

\section{Schur algorithm}
\setcounter{equation}{0}
The Schur algorithm is based on Schwarz lemma and on the fact that if two numbers $u$ and $v$ are in the open unit disk so is $\dfrac{u-v}{1-u\overline{v}}$. It reads
(see \cite{schur,schur2}):

\begin{thm}
Let $f$ be analytic and contractive in the open unit disk (i.e. a Schur function), and assume $|f(0)|<1$, and set $f^{(0)}=f$.  Then the recursion
\begin{equation}
\label{fn}
f^{(n+1)}(z)=\begin{cases}\,\,\dfrac{f^{(n)}(z)-f^{(n)}(0)}{z(1-\overline{f^{(n)}(0)}f^{(n)}(z))},\quad 0<|z|<1,\\
  \\
 \,\, \dfrac{(f^{(n)})^\prime(0)}{1-|f^{(n)}(0)|^2},\quad \hspace{12.5mm}z=0,\end{cases}
\end{equation}
defines a family of Schur functions; it stops at rank $n_0$ if $|f^{(n_0)}(0)|=1$.
\end{thm}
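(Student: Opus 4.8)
The plan is to reduce the whole statement to a single step of the recursion and then iterate. It suffices to prove: if $g$ is a Schur function with $|g(0)|<1$, then the function $Tg$ given by $(Tg)(z)=\dfrac{g(z)-g(0)}{z(1-\overline{g(0)}g(z))}$ for $0<|z|<1$ and $(Tg)(0)=\dfrac{g'(0)}{1-|g(0)|^2}$ is again a Schur function. Granting this, the theorem follows by setting $f^{(n+1)}=Tf^{(n)}$ and iterating for as long as $|f^{(n)}(0)|<1$; and the termination statement will be read off from the same analysis.

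First I would note that since $|g(0)|<1$, $g$ is not a unimodular constant, so by the maximum modulus principle $|g(z)|<1$ for every $z\in\mathbb{D}$; in particular $1-\overline{g(0)}g(z)$ never vanishes on $\mathbb{D}$, so the only possible singularity of $Tg$ is at $z=0$. Next, writing $a=g(0)$, introduce the disk automorphism $\varphi_a(w)=\dfrac{w-a}{1-\overline{a}w}$, which maps $\mathbb{D}$ onto $\mathbb{D}$ and $\overline{\mathbb{D}}$ onto $\overline{\mathbb{D}}$. The composition $h=\varphi_a\circ g$ is analytic on $\mathbb{D}$ with $|h(z)|<1$, and $h(0)=\varphi_a(a)=0$. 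By Schwarz's lemma, $h(z)/z$ extends analytically to all of $\mathbb{D}$ with modulus bounded by $1$, and this extension is exactly $Tg$: for $z\neq 0$ one has $h(z)/z=\dfrac{g(z)-a}{z(1-\overline{a}g(z))}$, while at $z=0$ the value is $h'(0)=\varphi_a'(a)\,g'(0)=\dfrac{g'(0)}{1-|a|^2}$, using $\varphi_a'(a)=\dfrac{1}{1-|a|^2}$. Hence $Tg$ is a Schur function, which is the one-step claim.

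For the termination assertion, suppose the recursion has produced $f^{(0)},\dots,f^{(n_0)}$ with $|f^{(j)}(0)|<1$ for $j<n_0$ and $|f^{(n_0)}(0)|=1$. Then $f^{(n_0)}$ is a Schur function attaining a value of modulus one at the interior point $0$, so by the maximum modulus principle it is a unimodular constant; the Möbius factor $\varphi_{f^{(n_0)}(0)}$ is then undefined and the recursion cannot be continued, which is precisely what ``stops at rank $n_0$'' means.

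There is no serious obstacle here, only two points that require care: verifying that the piecewise value at $z=0$ really is the analytic continuation of the generic expression — this is the removable-singularity part of Schwarz's lemma together with the derivative computation $\varphi_a'(a)=(1-|a|^2)^{-1}$ — and disposing of the unimodular-constant edge case by the maximum modulus principle, which is exactly what guarantees that all denominators appearing in the recursion are nonvanishing on $\mathbb{D}$ at each step before termination.
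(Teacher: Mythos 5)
Your proof is correct, and it is exactly the argument the paper has in mind: the paper states this classical result without proof (citing Schur's original papers) and explicitly notes that the algorithm ``is based on Schwarz lemma and on the fact that if two numbers $u$ and $v$ are in the open unit disk so is $\frac{u-v}{1-u\overline{v}}$,'' which is precisely your composition $h=\varphi_a\circ g$ followed by Schwarz's lemma. The computation $\varphi_a'(a)=(1-|a|^2)^{-1}$ identifying the value at $z=0$ and the maximum-modulus argument for the termination case are both accurate.
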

The numbers $\rho_n=f^{(n)}(0)$ are called the Schur parameters associated with the Schur function $f$.\smallskip

This recursion cannot be considered directly in the matrix-valued case.  One needs to take into account that if $E_1$ and $E_2$ are strictly contractive matrices, say in
$\mathbb C^{p\times q}$, the matrix $(E_1+E_2)(I_q+E_1^*E_2)^{-1}$ need not be contractive, but the matrix
\begin{equation}
(I_p-E_1E_1^*)^{1/2}(E_2+E_1)(I_p+E_1^*E_2)^{-1}(I_q-E_1^*E_1)^{1/2}
\end{equation}
is strictly contractive.\\

The matricial  Schur algorithm was studied in \cite{dgk1} and, in the next result, we repeat the statement taking into account the matrix symmetry
\begin{equation}\label{sym2}
E_p^{-1}\overline{M}E_q=M, \qquad E_m=\left(\begin{matrix} 0 & I_m\\ -I_m & 0\end{matrix}\right),\ m=p,q
\end{equation}
is in force.
\begin{thm}
  Let $s$ be a $\mathbb C^{p\times q}$-valued Schur function satisfying  \eqref{sym2}. Assume $s_0$ strictly contractive.
  Then  the function
  \begin{equation}
    \label{schurmat}
    s^{(1)}(z)=\begin{cases}\frac{1}{z}(I_p-s_0s_0^*)^{-1/2}(s-s_0)(I_q-s_0^*s)^{-1}(I_q-s_0^*s_0)^{1/2},\quad 0<|z|<1,
      \\
      (I_p-s_0s_0^*)^{-1/2}s^{\prime}(0)(I_q-s_0^*s_0)^{-1/2},\quad z=0,\end{cases}
\end{equation}
is a Schur function and satisfies the symmetry \eqref{sym2}.
\end{thm}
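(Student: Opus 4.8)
The statement has two independent parts: that $s^{(1)}$ is again a Schur function, and that it inherits the symmetry \eqref{sym2}. For the first I would follow the classical matricial Schur algorithm (as in \cite{dgk1}). The plan is to introduce, for the strictly contractive matrix $s_0$, the pointwise linear-fractional map
\[
\Theta_{s_0}(w)=(I_p-s_0s_0^*)^{-1/2}(w-s_0)(I_q-s_0^*w)^{-1}(I_q-s_0^*s_0)^{1/2},
\]
which is defined whenever $\|s_0^*w\|<1$, in particular for every contraction $w$ since $\|s_0\|<1$; then set $\Sigma(z)=\Theta_{s_0}(s(z))$. As $\|s_0^*s(z)\|<1$ for $z\in\mathbb D$, the factor $(I_q-s_0^*s(z))^{-1}$ is analytic there, so $\Sigma$ is analytic on $\mathbb D$, and clearly $\Sigma(0)=\Theta_{s_0}(s_0)=0$.

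The key computation is the Potapov-type identity
\[
I_q-\Sigma(z)^*\Sigma(z)=(I_q-s_0^*s_0)^{1/2}(I_q-s(z)^*s_0)^{-1}\bigl(I_q-s(z)^*s(z)\bigr)(I_q-s_0^*s(z))^{-1}(I_q-s_0^*s_0)^{1/2},
\]
which I would verify by a direct manipulation, the only nontrivial ingredient being the push-through identity $s_0^*(I_p-s_0s_0^*)^{-1}=(I_q-s_0^*s_0)^{-1}s_0^*$. Since $I_q-s(z)^*s(z)\ge0$ on $\mathbb D$, the right-hand side is positive semidefinite, so $\|\Sigma(z)\|\le1$ and $\Sigma$ is a Schur function with $\Sigma(0)=0$. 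Then I would invoke the matricial Schwarz lemma: writing $\Sigma(z)=z\Phi(z)$ with $\Phi$ analytic, for $|z|=r<1$ one has $\|\Phi(z)\|\le1/r$, and since $z\mapsto\|\Phi(z)\|$ is subharmonic the maximum principle gives $\sup_{|z|\le r}\|\Phi(z)\|\le1/r$; letting $r\uparrow1$ yields $\|\Phi(z)\|\le1$ on $\mathbb D$. Hence $s^{(1)}=\Phi$ is a Schur function, and differentiating $\Sigma$ at the origin (the term carrying $s(z)-s_0$ drops out because $s(0)=s_0$) gives $s^{(1)}(0)=\Sigma'(0)=(I_p-s_0s_0^*)^{-1/2}s'(0)(I_q-s_0^*s_0)^{-1/2}$, matching \eqref{schurmat}.

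For the symmetry I would first record the elementary facts that the maps $M\mapsto E_p^{-1}\overline{M}E_q$ (with the block sizes chosen to match) are multiplicative, turn adjoints into adjoints and inverses into inverses, are additive on matrices of equal size, commute with multiplication by real scalars, and — by uniqueness of the positive square root — fix the square root and the inverse square root of any positive matrix they fix. Starting from the hypothesis that $s$ satisfies \eqref{sym2}, these rules propagate the symmetry successively to $s_0$, to $s_0s_0^*$ and $s_0^*s_0$, to $(I_p-s_0s_0^*)^{-1/2}$ and $(I_q-s_0^*s_0)^{1/2}$, to $s(z)-s_0$ and $(I_q-s_0^*s(z))^{-1}$, and hence to the product defining $\Sigma(z)$; for \emph{real} $z\ne0$, division by the real scalar $z$ preserves the symmetry, so $s^{(1)}$ satisfies \eqref{sym2} along the real axis near the origin. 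Comparing Taylor coefficients then gives \eqref{sym2} for $s^{(1)}$ on all of $\mathbb D$.

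The main obstacle is the Potapov identity in the second paragraph; the resolvent/push-through bookkeeping has to be carried out with care. A secondary point to keep in mind is that off the real axis $E_p^{-1}\overline{s(z)}E_q$ equals $s(\bar z)$, not $s(z)$, which is precisely why the symmetry argument is run on the real line and only afterwards transferred to the power series coefficients; the Schwarz-lemma step and the algebraic verification of the symmetry are otherwise routine.
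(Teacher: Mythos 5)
Your proof is correct: the Potapov-type identity $I_q-\Sigma(z)^*\Sigma(z)=(I_q-s_0^*s_0)^{1/2}(I_q-s(z)^*s_0)^{-1}(I_q-s(z)^*s(z))(I_q-s_0^*s(z))^{-1}(I_q-s_0^*s_0)^{1/2}$, the matricial Schwarz lemma, and the coefficientwise treatment of the symmetry \eqref{sym2} (run on the real axis and then transferred to the Taylor coefficients, which is indeed the only sensible reading of \eqref{sym2} for an analytic function) are exactly the standard ingredients. The paper gives no proof of this statement at all --- it is quoted from \cite{dgk1} with the symmetry condition appended --- so your argument simply supplies the classical reasoning that the citation points to, and it does so correctly.
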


If $\|s_0\|<1$ one can iterate, and one gets the matricial Schur algorithm.\\

The condition $\|s(0)\|<1$ is quite restrictive. A tangential Schur algorithm was developed in \cite{ad3}. On the other hand, when $p=q=2$ and
$s(0)$ is in the range of $\chi$ both $(I_2-s_0s_0^*)^{-1/2}$ and $(I_2-s_0^*s_0)^{1/2}$ are scalar matrices and \eqref{schurmat} reduces to

  \begin{equation}
    \label{schurmat123321}
s^{(1)}(z)=\frac{1}{z}(s(z)-s_0)(I_2-s_0^*s(z))^{-1}.
\end{equation}

We now turn to the setting of hyperholomorphic functions of axial type. For simplicity of exposition we first consider scalar valued Schur multiplier.
From the analysis in the previous sections, $S=\sum_{n=0}^\infty P_nS_n$
is a Schur multiplier if and only if the block Toeplitz
operator
\begin{equation}
\begin{pmatrix} \chi(S_0)&0&0&\cdots\\
  \chi(S_1)&\chi(S_0)&0&\cdots\\
  \chi(S_2)&\chi(S_1) &\chi(S_0) &0&\\
  & \cdots & &\cdots \\
\end{pmatrix}
\end{equation}
is a contraction from $\ell_2(\mathbb N_0,\mathbb C^2)$ into $\ell_2(\mathbb N_0,\mathbb C^2)$. The function
\[
s(z)=\sum_{n=0}^\infty\chi(S_n)z^n
\]
takes then contractive values, and is in the range of $\chi$; see Lemma \ref{rangechi}.

\begin{thm}
  Let $S$ be a quaternionic Schur multiplier  such that $|S(0)|<1$. Then the function
  \[
  S^{(1)} (3x_0)=\frac{1}{3x_0}(S(3x_0)-S(0))(1-\overline{S(0)}S(3x_0))^{-1}
\]
extends to a Schur multiplier.
\end{thm}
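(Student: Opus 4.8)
The plan is to transfer the problem to the complex disk via the map $\chi$, apply the matricial Schur algorithm recalled above (in the form \eqref{schurmat123321} valid for $2\times 2$ matrices in the range of $\chi$), and then transfer the result back. First I would write $S=\sum_{n=0}^\infty P_nS_n$ and recall from the discussion preceding the theorem that $S$ being a Schur multiplier is equivalent to the function $s(z)=\sum_{n=0}^\infty \chi(S_n)z^n$ being analytic and contractive in $\mathbb D$ and taking values in the range of $\chi$; moreover, by Lemma \ref{rangechi}, $s$ satisfies the symmetry \eqref{sym2} with $p=q=2$. The hypothesis $|S(0)|<1$ means $S_0$ is a strictly contractive quaternion, hence $\chi(S_0)=s(0)=s_0$ is a strictly contractive $2\times 2$ matrix (and a scalar multiple of a unitary plus nothing — in any case $(I_2-s_0s_0^*)^{1/2}$ and $(I_2-s_0^*s_0)^{1/2}$ are scalar matrices because $\chi(S_0)\chi(S_0)^*=\chi(S_0\overline{S_0})=|S_0|^2 I_2$).

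Next I would invoke the matricial Schur algorithm theorem to conclude that
\[
s^{(1)}(z)=\frac{1}{z}(s(z)-s_0)(I_2-s_0^*s(z))^{-1}
\]
is a Schur function satisfying the symmetry \eqref{sym2}; here I use precisely formula \eqref{schurmat123321}, which applies since $s(z)$ lies in the range of $\chi$ and $s_0$ is strictly contractive. Because $s^{(1)}$ again satisfies \eqref{sym2}, Lemma \ref{rangechi} shows that for each $z$ the matrix $s^{(1)}(z)$ is in the range of $\chi$, so we may write $s^{(1)}(z)=\sum_{n=0}^\infty \chi(S_n^{(1)})z^n$ for uniquely determined $S_n^{(1)}\in\mathbb H$, and define $S^{(1)}(x)=\sum_{n=0}^\infty P_n(x)S_n^{(1)}$. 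By the characterization of Schur multipliers via the block Toeplitz operator with blocks $\chi(S_n^{(1)})$ (Theorem \ref{toto} together with Theorem \ref{toto123} and Proposition \ref{2-9-0}), $S^{(1)}$ is a quaternionic Schur multiplier.

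It remains to identify this $S^{(1)}$ with the function defined by the stated formula on the real axis. For this I use the key fact, emphasized repeatedly in the paper, that hyperholomorphic functions of axial type are uniquely determined by their restriction to $(-1/3,1/3)$, equivalently by the one-variable function $x_0\mapsto S(3x_0)$. Since $\chi$ intertwines the operations and since $P_n(x_0)=(3x_0)^n$ on the real line, restricting $s^{(1)}(z)$ to $z=3x_0$ and applying $\chi^{-1}$ gives exactly
\[
S^{(1)}(3x_0)=\frac{1}{3x_0}\bigl(S(3x_0)-S(0)\bigr)\bigl(1-\overline{S(0)}S(3x_0)\bigr)^{-1},
\]
because the $\chi$-image of the right-hand side is $\tfrac{1}{3x_0}(s(3x_0)-s_0)(I_2-s_0^*s(3x_0))^{-1}=s^{(1)}(3x_0)$. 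Uniqueness of the axial extension then shows that the Schur multiplier $S^{(1)}$ constructed above is the unique hyperholomorphic-of-axial-type extension of this real-variable function, which is the assertion.

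\textbf{Main obstacle.} The delicate point is not the algebra of the Schur step itself but making sure the symmetry \eqref{sym2} is preserved throughout, so that at every stage we stay in the range of $\chi$ and can pull back to $\mathbb H$; this is exactly why the statement of the matricial Schur algorithm was phrased with the symmetry built in, and why one needs $s(z)$, not merely $s_0$, to be $\chi$-valued in order to use the simplified formula \eqref{schurmat123321}. A secondary point worth checking carefully is the behaviour at $x_0=0$: the formula as written has an apparent singularity at the origin, and one must note (as in the classical one-variable Schur recursion \eqref{fn}) that the removable singularity is resolved by the value $(I_2-s_0s_0^*)^{-1/2}s'(0)(I_2-s_0^*s_0)^{-1/2}$, which in the scalar-$\chi$ situation is again in the range of $\chi$, so the extended $S^{(1)}$ is genuinely hyperholomorphic across $x_0=0$.
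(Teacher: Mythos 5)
Your proposal is correct and follows exactly the route the paper intends: the theorem is stated without a written proof precisely because the preceding discussion already reduces it to applying the matricial Schur algorithm in the form \eqref{schurmat123321} to $s(z)=\sum_{n\ge 0}\chi(S_n)z^n$, using that $\chi(S_0)\chi(S_0)^*=|S_0|^2I_2$ makes the square-root factors scalar, and then pulling back through $\chi$ and the uniqueness of the axially hyperholomorphic extension. Your write-up supplies these details accurately, including the symmetry \eqref{sym2} and the removable singularity at the origin.
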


In the matrix-valued case it is not true anymore that $I-\chi(S_0)\chi(S_0)^*$ is a scalar matrix.

\begin{thm}
  Let $S$ be a $\mathbb H^{r\times t}$-valued Schur multiplier, and assume $\|S(0)\|<1$.
  The function
  \[
  S^{(1)}(3x_0)=\frac{1}{3x_0}(I_r-S_0S_0^*)^{-1/2}(S(3x_0)-S_0)(I_t-S_0^*S(3x_0))^{-1}(I_t-S_0^*S_0)^{1/2},
\]
with $ x_0\in(-1/3,1/3)$ and $S_0=S(0)$ extends to a Schur multiplier.
\end{thm}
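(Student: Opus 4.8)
The plan is to push the problem down, via the matrix transform $\chi$ applied coefficientwise, to the matricial Schur algorithm for complex Schur functions satisfying the symmetry \eqref{sym2}, and then to pull the resulting function back to the Appell-like setting. Concretely, writing $S=\sum_{n=0}^\infty P_nS_n$ with $\sum_n\|S_n\|^2<\infty$, I would use the dictionary of Sections~4--5: $S$ is a Schur multiplier if and only if $s(z)=\sum_{n=0}^\infty\chi(S_n)z^n$ is analytic and contractive in $\mathbb D$, and such an $s$ has all its Taylor coefficients in the range of $\chi$, equivalently satisfies \eqref{sym2} (Lemma~\ref{rangechi}). Since $\chi$ is a $*$-isomorphism onto its range and preserves the operator norm (Proposition~\ref{2-9-0}), the hypothesis $\|S(0)\|<1$ translates into $\|s_0\|<1$, where $s_0:=s(0)=\chi(S_0)$ is strictly contractive.

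Next I would apply the matricial Schur algorithm with the symmetry \eqref{sym2} in force (the theorem around \eqref{schurmat}) to $s$. This yields a $\mathbb C^{2r\times 2t}$-valued Schur function $s^{(1)}$, again satisfying \eqref{sym2}, given for $0<|z|<1$ by
\[
s^{(1)}(z)=\tfrac1z\,(I_{2r}-s_0s_0^*)^{-1/2}(s(z)-s_0)(I_{2t}-s_0^*s(z))^{-1}(I_{2t}-s_0^*s_0)^{1/2},
\]
the inverse $(I_{2t}-s_0^*s(z))^{-1}$ existing since $\|s_0^*s(z)\|<1$. Being contractive on all of $\mathbb D$, $s^{(1)}$ has square-summable Taylor coefficients $M_n$, each lying in the range of $\chi$; write $M_n=\chi(S^{(1)}_n)$ with $S^{(1)}_n\in\mathbb H^{r\times t}$ and set $S^{(1)}(x)=\sum_{n=0}^\infty P_n(x)S^{(1)}_n$. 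By Corollary~\ref{coro234} this series converges and defines a Fueter hyperholomorphic function of axial type on $\mathcal E$. The lower triangular Toeplitz operator built from the $\chi(S^{(1)}_n)$ is a contraction by Theorem~\ref{toto123}, hence by Proposition~\ref{2-9-0} so is the Toeplitz operator $\mathscr T$ built from the $S^{(1)}_n$, and therefore $S^{(1)}$ is a Schur multiplier by Theorem~\ref{toto}.

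It remains to identify the restriction of $S^{(1)}$ to the real segment with the claimed formula. For $x_0\in(-1/3,1/3)$ one has $P_n(x_0)=(3x_0)^n$, so with $a:=3x_0\in(-1,1)$ we get $\chi(S(x_0))=\sum_n\chi(S_n)a^n=s(a)$ and likewise $\chi(S^{(1)}(x_0))=s^{(1)}(a)$. Since $\chi$ is multiplicative, intertwines the quaternionic conjugate transpose with the complex adjoint, preserves positivity, and hence commutes with the functional calculus used in \eqref{schurmat}, uniqueness of the positive square root gives $\chi\big((I_r-S_0S_0^*)^{-1/2}\big)=(I_{2r}-s_0s_0^*)^{-1/2}$ and $\chi\big((I_t-S_0^*S_0)^{1/2}\big)=(I_{2t}-s_0^*s_0)^{1/2}$, while $\chi\big((I_t-S_0^*S(x_0))^{-1}\big)=(I_{2t}-s_0^*s(a))^{-1}$. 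Substituting these into the display above shows that $s^{(1)}(a)$ equals $\chi$ of the quaternionic expression in the statement; since $\chi$ is injective and $\chi(S^{(1)}(x_0))=s^{(1)}(a)$, the value $S^{(1)}(x_0)$ is exactly that expression (the value at $x_0=0$ being recovered as the corresponding derivative term in \eqref{schurmat}). As $S^{(1)}$ is of axial type it is determined by its restriction to $(-1/3,1/3)$, so it is the unique Schur multiplier extending the given formula.

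The one genuine point to be careful about — and the only real obstacle — is verifying that $\chi$ intertwines all the matrix operations occurring in \eqref{schurmat}, most notably the inverse square roots of $I_r-S_0S_0^*$ and $I_t-S_0^*S_0$ and the nonsingularity of $I_t-S_0^*S(x_0)$. This reduces to the facts that $\chi$ is a $*$-homomorphism carrying positive quaternionic matrices to positive complex matrices and that the positive square root is unique; everything else is a routine consequence of the correspondence between Appell-like Schur multipliers and complex Schur functions in the range of $\chi$ already exploited in Sections~4--5.
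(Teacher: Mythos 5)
Your proof is correct and follows exactly the route the paper intends: the theorem is stated there without an explicit proof, the argument being the reduction via $\chi$ to the complex matricial Schur algorithm with the symmetry \eqref{sym2}, which is set up in the immediately preceding discussion and which you carry out in full. The details you add — that $\chi$, being a norm-preserving $*$-homomorphism, intertwines positive square roots and inverses, and that axial symmetry makes the extension unique — are the right ones and are handled correctly.
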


The question whether the tangential Schur algorithm developed in \cite{ad3} can lead to functions satisfying the required symmetry property in the matrix-valued case remains to be considered.

\section{Intrinsic functions}
\setcounter{equation}{0}
In this section we study quaternionic intrinsic Fueter hyperholomorphic functions. Let us recall that, given an hyperholomorphic function $f$ on some axially symmetric open set $\Omega$, we say that $f$ is quaternionic intrinsic if it satisfies the relation \begin{equation}\label{IntForm}
f(\overline{x})=\overline{f(x)}, \textbf{  } \forall x\in\Omega.
\end{equation}

\begin{prop}\label{PnInt}
The family of polynomials $( P_n)_{n\geq 0}$ consists of axially hyperholomorphic quaternionic intrinsic functions on $\mathbb{H}$.
\end{prop}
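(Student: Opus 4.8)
The plan is to split the claim into its constituent parts: each $P_n$ is (i) Fueter hyperholomorphic, (ii) of axial type, and (iii) quaternionic intrinsic in the sense of \eqref{IntForm}. Parts (i) and (ii) have essentially been recorded already, so the substance of the proof is the verification of (iii).

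First I would recall that $P_n=Q_n/c_n$ with $c_n\in\mathbb R\setminus\{0\}$ (nonzero, as noted in Section \ref{S2}) and that the $Q_n$ are Fueter regular; hence each $P_n$ is Fueter hyperholomorphic. That $P_n$ is moreover of axial type was observed in Remark \ref{rmk212}: $P_n$ is a finite sum of terms $(x_0^2+|\underline{x}|^2)^k(x_0\pm\underline{x})^h$ with $2k+h=n$, each of which is patently of axial type; alternatively one may invoke Corollary \ref{PmPol} together with the fact that such special monogenic functions are slice functions. It therefore remains only to prove $P_n(\overline{x})=\overline{P_n(x)}$ for all $x\in\mathbb H$.

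For this I would use the explicit expansion coming from \eqref{Qem},
\[
P_n(x)=\frac{1}{c_n}\sum_{j=0}^n T^n_j\, x^{n-j}\overline{x}^{\,j},\qquad T^n_j=\frac{2(n-j+1)}{(n+1)(n+2)}\in\mathbb R .
\]
Substituting $\overline{x}$ for the variable $x$ and using $\overline{\overline{x}}=x$ gives $P_n(\overline{x})=\frac{1}{c_n}\sum_{j=0}^n T^n_j\,\overline{x}^{\,n-j}x^{j}$. On the other hand, since the $T^n_j$ and $c_n$ are real, conjugation is an anti-automorphism, and $\overline{x^k}=\overline{x}^{\,k}$, one gets $\overline{P_n(x)}=\frac{1}{c_n}\sum_{j=0}^n T^n_j\,\overline{x^{n-j}\overline{x}^{\,j}}=\frac{1}{c_n}\sum_{j=0}^n T^n_j\, x^{j}\overline{x}^{\,n-j}$. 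Because $x$ and $\overline{x}$ commute (the very remark already used to normalise the $P_n$), the two right-hand sides coincide, establishing \eqref{IntForm}.

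I do not expect a genuine obstacle here; the only points demanding a little care are the order-reversal under conjugation — which is harmless precisely because $x\overline{x}=\overline{x}x$ — and the reality and non-vanishing of the normalising constants $c_n$, so that $1/c_n$ is central and self-conjugate. If preferred, the computation can be carried out term by term on the decomposition of Remark \ref{rmk212}: conjugating $(x_0^2+|\underline{x}|^2)^k(x_0+\underline{x})^h$ and replacing $x$ by $\overline{x}$ both yield $(x_0^2+|\underline{x}|^2)^k(x_0-\underline{x})^h$, and summing preserves the identity.
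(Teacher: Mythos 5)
Your proposal is correct and follows essentially the same route as the paper: the paper's proof likewise reduces the claim to the identity $\overline{P_n(x)}=P_n(\overline{x})$ and verifies it directly from the expansion $Q_n(x)=\sum_{j=0}^n T^n_j x^{n-j}\overline{x}^{\,j}$ with real $T^n_j$ and $c_n$, using that $x$ and $\overline{x}$ commute. The additional remarks on hyperholomorphy and axial type correspond to what the paper simply records as already known.
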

\begin{proof}
We know that for all $n\geq 0$ the polynomials $P_n$ are axially hyperholomorphic functions on $\mathbb{H}$. Furthermore, using the relation with the $n$-th quaternionic Appell polynomials $Q_n$, see \cite[(3.8)]{ads-fh}, we have
\[
\begin{split}  \displaystyle
\overline{P_n(x)}&=\frac{\overline{Q_n(x)}}{c_n}\\
&=\sum_{j=0}^n \frac{T_j^n}{c_n}\overline{x}^{n-j}x^j
\\
&=\frac{Q_n(\overline{x})}{c_n}
\\
&=P_n(\overline{x}).
\end{split}
\]
\end{proof}
\begin{prop}\label{IntRC}
Let $f$ be a hyperholomorphic function of axial type on some axially symmetric open set $\Omega$. Then, $f$ is quaternionic intrinsic if and only if it admits a power series representation with real coefficients with respect to the polynomials $(P_n)_{n\geq 0}$.
\end{prop}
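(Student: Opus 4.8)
The plan is to exploit the fact, established in Remark \ref{rmk212} and Theorem 3.10 of \cite{ads-fh}, that every hyperholomorphic function $f$ of axial type on an axially symmetric slice domain containing a real interval around the origin has a (unique) expansion $f(x)=\sum_{n=0}^\infty P_n(x)a_n$ with $a_n\in\mathbb H$, convergent near the origin, and that the coefficients $a_n$ are uniquely determined by the restriction of $f$ to the real line via $f(x_0)=\sum_{n=0}^\infty (3x_0)^n a_n$. So the whole question reduces to a statement about real power series.

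First I would prove the ``if'' direction, which is immediate: if $f(x)=\sum_{n=0}^\infty P_n(x)a_n$ with all $a_n\in\mathbb R$, then by Proposition \ref{PnInt} each $P_n$ is quaternionic intrinsic, so $P_n(\overline{x})=\overline{P_n(x)}$, and since the $a_n$ are real they commute with everything and pass through the conjugation; using the continuity of the series one gets $f(\overline x)=\sum_{n=0}^\infty P_n(\overline x)a_n=\sum_{n=0}^\infty \overline{P_n(x)}\,\overline{a_n}=\sum_{n=0}^\infty \overline{P_n(x)a_n}=\overline{f(x)}$, i.e. $f$ is intrinsic. (One should note that $\overline{P_n(x)a_n}=\overline{a_n}\,\overline{P_n(x)}=a_n\overline{P_n(x)}=\overline{P_n(x)}a_n$ because $P_n$ being intrinsic has, in particular, real values when restricted to the reals, but more to the point $a_n$ real is central; I would spell this out carefully since the ordering of factors in a sum of conjugates is the only subtle point.)

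For the ``only if'' direction, suppose $f$ is quaternionic intrinsic and of axial type, and write $f(x)=\sum_{n=0}^\infty P_n(x)a_n$. Restricting to the real line and using $P_n(x_0)=(3x_0)^n$ we get $f(x_0)=\sum_{n=0}^\infty (3x_0)^n a_n$. Now apply the intrinsic condition \eqref{IntForm} at real points: since $\overline{x_0}=x_0$, we get $f(x_0)=f(\overline{x_0})=\overline{f(x_0)}$, so $f(x_0)\in\mathbb R$ for every $x_0$ in the real interval. Differentiating the real power series $\sum (3x_0)^n a_n$ repeatedly at $x_0=0$ (or invoking uniqueness of Taylor coefficients of an $\mathbb H$-valued real-analytic function), each $a_n$ equals $\tfrac{1}{3^n n!}f^{(n)}(0)$, which is a limit of real quantities and hence real. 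Thus all coefficients $a_n$ are real, which is exactly the claim. The main (minor) obstacle is bookkeeping the non-commutativity in the ``if'' direction — making sure that conjugating the series term by term is legitimate and that the product $P_n(x)a_n$ behaves correctly under conjugation — but this is routine once one observes that real scalars are central in $\mathbb H$; the existence and uniqueness of the $P_n$-expansion, which does the real work, is already available from \cite{ads-fh} and Remark \ref{rmk212}.
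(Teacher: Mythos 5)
Your proposal is correct and follows essentially the same route as the paper: both reduce the statement to the real line via the expansion $f=\sum_{n\ge 0}P_n f_n$, the intrinsic property of the $P_n$ from Proposition \ref{PnInt}, and the identification $P_n(x_0)=(3x_0)^n$ together with uniqueness of the coefficients. The only cosmetic difference is that the paper organizes the argument as a single chain of equivalences (invoking uniqueness of the axially hyperholomorphic extension to pass from $\Omega$ to the real axis), whereas you treat the two implications separately and, in the ``only if'' direction, simply observe that $f$ is real-valued on the reals; both are sound.
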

\begin{proof}
We know by Theorem 3.10 in \cite{ads-fh} that $f$ admits a power series with respect to $(P_n)_{n\geq 0}$. So, we can write $f=\displaystyle \sum_{n=0}^\infty P_n f_n$ with $f_n\in\mathbb{H}$ for all $n\geq 0$. We assume that $f$ is intrinsic, thus the formula \eqref{IntForm} and Proposition \ref{PnInt} imply that
\[
\begin{split}  \displaystyle
\overline{f(x)}=f(\overline{x}), \forall x\in \Omega&\Leftrightarrow \sum_{n=0}^\infty \overline{P_n(x)f_n}=\sum_{n=0}^\infty P_n(\overline{x})f_n, \forall x\in\Omega \\
&\Leftrightarrow\sum_{n=0}^\infty \overline{f_n}\overline{P_n(x)}=\sum_{n=0}^\infty \overline{P_n(x)}f_n, \forall x\in \Omega
\\
&\Leftrightarrow\sum_{n=0}^\infty \overline{f_n}(3x_0)^n=\sum_{n=0}^\infty (3x_0)^nf_n, \forall x_0\in \mathbb{R}
\\
&\Leftrightarrow\overline{f_n}=f_n, \forall n\geq 0
\\
&\Leftrightarrow f_n\in \mathbb{R}, \forall n\geq 0.
\end{split}
\]
The equivalence between the second and the third lines holds because $P_n$ is the unique axially hyperholomorphic extension of $(3x_0)^n$. This ends the proof.
\end{proof}
\begin{prop}\label{CompositionLaw}
Let $S_1$ and $S_2$ be two hyperholomorphic functions of axial type, defined on some axially symmetric open set $\Omega$. If $S_1$ is quaternionic intrinsic, then $S_1\odot S_2$  admits a power series expansion with respect to the polynomials $(P_n)_{n\geq 0}$.
\end{prop}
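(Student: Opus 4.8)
The plan is to reduce everything to the restriction $x_0=0$, where the $CK$-product becomes ordinary pointwise multiplication, to exploit that the coefficients in the expansion of the \emph{intrinsic} factor $S_1$ are real and hence central in $\mathbb H$, and then to pass to the $CK$-extension term by term.

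Concretely, I would proceed as follows. By Proposition~\ref{IntRC} the intrinsic axially hyperholomorphic function $S_1$ admits an expansion $S_1=\sum_{n\ge 0}P_n a_n$ with all $a_n\in\mathbb R$, and by Theorem~3.10 in \cite{ads-fh} the axially hyperholomorphic function $S_2$ admits an expansion $S_2=\sum_{m\ge 0}P_m b_m$ with $b_m\in\mathbb H$. Restricting to $x_0=0$ and using that, by \eqref{CKPm} and the defining property of the $CK$-extension, $P_k$ restricts there to $\underline{x}^{k}$ with $\underline{x}=x_1\mathbf e_1+x_2\mathbf e_2+x_3\mathbf e_3$, we obtain
\[
S_1(0,x_1,x_2,x_3)=\sum_{n\ge0}\underline{x}^{n}a_n,\qquad
S_2(0,x_1,x_2,x_3)=\sum_{m\ge0}\underline{x}^{m}b_m .
\]
Since each $a_n$ is real it is central in $\mathbb H$, so multiplying the two series gives
\[
S_1(0,\cdot)\,S_2(0,\cdot)=\sum_{n,m\ge0}\underline{x}^{\,n+m}a_nb_m=\sum_{k\ge0}\underline{x}^{k}c_k,\qquad c_k:=\sum_{n+m=k}a_nb_m .
\]
By definition of the $CK$-product, $S_1\odot S_2=CK\bigl(S_1(0,\cdot)S_2(0,\cdot)\bigr)$, and since $CK(\underline{x}^{k})=P_k$, I would conclude $S_1\odot S_2=\sum_{k\ge0}P_k c_k$, the desired expansion with respect to $(P_n)_{n\ge0}$, once the termwise passage to the $CK$-extension is justified.

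That justification is where the only technical work lies: one must exchange $CK$ with the infinite sum. I would do this by showing directly that $\sum_{k\ge0}P_k(x)c_k$ converges in a neighborhood of the real axis, using Corollary~\ref{coro234} together with $|c_k|\le\sum_{n+m=k}|a_n||b_m|$ and the geometric decay of $|a_n|$ and $|b_m|$ inherited from the convergence of the two original series; the sum then defines a Fueter hyperholomorphic function near the real axis whose restriction to $x_0=0$ is $\sum_{k\ge0}\underline{x}^{k}c_k$, and uniqueness of the $CK$-extension forces it to equal $S_1\odot S_2$. The conceptual crux, however, is the commutation step: it is precisely the reality (centrality) of the coefficients $a_n$ that collapses the double sum into a power series in the single variable $\underline{x}$, which is exactly the form whose $CK$-extension lies in the span of the $P_n$. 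Without the intrinsicness hypothesis the product $S_1(0,\cdot)S_2(0,\cdot)$ would be a generic $\mathbb H$-valued real-analytic function of $(x_1,x_2,x_3)$, whose $CK$-extension is in general not of axial type and hence not expressible through the $P_n$.
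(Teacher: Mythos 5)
Your proposal is correct and is essentially the paper's argument in unwound form: the paper expands $S_1=\sum P_na_n$, $S_2=\sum P_mb_m$, uses Proposition~\ref{IntRC} to get $a_n\in\mathbb R$, and then applies $P_n\odot P_m=P_{n+m}$ (Lemma~\ref{PnPm}) to obtain the convolution $\sum_k P_k\bigl(\sum_{n+m=k}a_nb_m\bigr)$, which is exactly your computation since both the reality-of-$a_n$ step and Lemma~\ref{PnPm} are themselves restrictions to $x_0=0$ followed by $CK$-extension. You are in fact more careful than the paper, which performs the series manipulation formally without addressing the termwise $CK$-extension/convergence issue you flag.
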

\begin{proof}
We note that $S_1$ and $S_2$ have power series expansions in terms of $(P_n)_{n\geq 0}$ that we can write $\displaystyle S_1=\sum_{n=0}^\infty P_na_n$ and $\displaystyle S_2=\sum_{n=0}^\infty P_nb_n$. Since $S_1$ is quaternionic intrinsic we know by Proposition \ref{IntRC} that the coefficients $(a_n)_{n\geq 0}$ are real. Thus, we apply also Lemma \ref{PnPm} to get
\[
\begin{split}  \displaystyle
S_1\odot S_2&=\left(\sum_{n=0}^\infty P_na_n \right)\odot \left(\sum_{m=0}^\infty P_mb_m \right)\\
&=\sum_{n,m=0}^\infty (P_n\odot P_m)a_nb_m
\\
&=\sum_{n,m=0}^\infty P_{n+m}a_nb_m
\\
&=\sum_{n=0}^\infty P_{n}\left(\sum_{k=0}^na_kb_{n-k}\right).
\end{split}
\]

\end{proof}
\begin{prop}\label{MultiplicationOperator}
Let $S$ be a hyperholomorphic function of axial type. If $S$ is quaternionic intrinsic, then the operator $M_S$ coincides with the multiplication operator $f\mapsto S\odot f$.
\end{prop}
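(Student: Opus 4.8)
The plan is to expand both $M_Sf$ and $S\odot f$ along the basis $(P_n)_{n\ge0}$ and to observe that the two resulting coefficient sequences coincide; the reality of the coefficients of $S$ (which is precisely where intrinsicity enters) is exactly what makes the second expansion behave like a convolution. To simplify the notation I would first treat the scalar case. Since $S$ is quaternionic intrinsic and of axial type, Proposition \ref{IntRC} provides a representation $S=\sum_{m=0}^\infty P_mS_m$ with every $S_m$ real. Fix $f=\sum_{n=0}^\infty P_nf_n$ in the domain of $M_S$; recall that such an $f$ is of axial type, so that $S\odot f$ is meaningful via the computation in Proposition \ref{CompositionLaw}.

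First I would compute $M_Sf$, using only the definition of $M_S$ and the algebra of the polynomials $P_n$. By \eqref{ms-321} one has $M_Sf=\sum_{n=0}^\infty(P_n\odot S)f_n$, and by \eqref{facto-pn} — itself a consequence of \eqref{nmk} in Lemma \ref{PnPm} — one has $P_n\odot S=\sum_{m\ge n}P_mS_{m-n}$. Substituting, interchanging the two summations (legitimate by the absolute convergence that follows from $\sum_m|S_m|^2<\infty$, $\sum_n|f_n|^2<\infty$ together with the bound of Corollary \ref{coro234}) and collecting the terms in $P_m$ gives
\[
M_Sf=\sum_{m=0}^\infty P_m\Bigl(\sum_{k=0}^m S_kf_{m-k}\Bigr).
\]
Note that this step makes no use of the intrinsicity of $S$.

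Next I would compute $S\odot f$, and this is where the hypothesis is essential. Because the $S_m$ are real they commute with the coefficients $f_n$ and with the $\underline x$-variable, so restricting $S$ and $f$ to $x_0=0$, multiplying the two series $\sum_m\underline x^mS_m$ and $\sum_n\underline x^nf_n$ pointwise, and taking the $CK$-extension produces an honest convolution of the coefficient sequences; this is precisely the argument of Proposition \ref{CompositionLaw} (applied with $S_1=S$, $S_2=f$, and read off entrywise in the matrix-valued case), giving
\[
S\odot f=\sum_{m=0}^\infty P_m\Bigl(\sum_{k=0}^m S_kf_{m-k}\Bigr),
\]
which is the same expression as for $M_Sf$. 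Hence $M_Sf=S\odot f$ for every $f$ in the Hardy space, i.e.\ $M_S$ is the operator of $\odot$-multiplication by $S$; the passage from the scalar to the $\mathbb H^{r\times s}$-valued case is routine, since the expansion coefficients $S_m$ are then real matrices and still commute with the quaternionic coefficients $f_n$.

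The one genuinely delicate point is the second computation: in general the $CK$-product does not distribute over such series and does not turn into a convolution — compare the obstruction \eqref{obstruct} — and it is only the reality of the expansion coefficients of the intrinsic multiplier $S$, guaranteed by Proposition \ref{IntRC}, that removes this obstruction. Everything else is bookkeeping with identities already at our disposal in Lemma \ref{PnPm} and in Definition \ref{def-6-2}.
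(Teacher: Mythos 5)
Your proof is correct and rests on the same two pillars as the paper's own argument: Proposition \ref{IntRC} (intrinsic $\Rightarrow$ real coefficients in the $P_n$-expansion) and the fact that real coefficients turn the $CK$-product into a convolution (Lemma \ref{PnPm} together with Proposition \ref{CompositionLaw}). The paper's proof is simply a compressed version of yours — it uses the reality of the coefficients to write $P_n\odot S=S\odot P_n$ and then factors $S$ out of the sum, rather than expanding both $M_Sf$ and $S\odot f$ into the common convolved coefficient sequence $\sum_{k=0}^m S_kf_{m-k}$ — so the two arguments are essentially the same, with yours being slightly more explicit about where intrinsicity is and is not needed.
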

\begin{proof}
We note that since $S$ is quaternionic intrinsic, it has real coefficients. Thus, we have  $P_n\odot S=S\odot P_n$ for all $n\geq 0.$ Then, starting from Definition \ref{MS}, for any $\displaystyle f=\sum_{n=0}^\infty P_nu_n$, we have

\[
\begin{split}  \displaystyle
M_S(f)&=\sum_{n=0}^\infty (P_n\odot S )u_n \\
&=\sum_{n=0}^\infty (S\odot P_n )u_n
\\
&=S\odot \left(\sum_{n=0}^\infty P_{n}u_n\right)
\\
&=S\odot f.
\end{split}
\]
\end{proof}
\begin{prop}
Let $S_1$ and $S_2$ be two hyperholomorphic functions of axial type such that $S_1$ is quaternionic intrinsic. Then, we have \begin{equation}
M_{S_1}M_{S_2}=M_{S_1\odot S_2}.
\end{equation}
\end{prop}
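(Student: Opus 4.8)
The strategy is to chain together Propositions \ref{MultiplicationOperator}, \ref{IntRC} and \ref{CompositionLaw} with the associativity of the $CK$-product. First I would fix $f=\sum_{n=0}^\infty P_nu_n$ and use Definition \ref{MS} to write $M_{S_2}f=\sum_{n=0}^\infty (P_n\odot S_2)u_n$; expanding $S_2$ along the $P_k$ shows at once that this series is again of the form $\sum_m P_mv_m$, so it lies in the domain on which $M_{S_1}$ acts. Since $S_1$ is quaternionic intrinsic, Proposition \ref{MultiplicationOperator} identifies $M_{S_1}$ with left $\odot$-multiplication by $S_1$, and therefore
\[
M_{S_1}M_{S_2}f=S_1\odot\left(\sum_{n=0}^\infty(P_n\odot S_2)u_n\right)=\sum_{n=0}^\infty\left(S_1\odot P_n\odot S_2\right)u_n,
\]
where the interchange of $S_1\odot(\cdot)$ with the sum is the same continuity property used in the proof of Proposition \ref{CompositionLaw} (resting on the estimate $|P_n(x)|<\rho^n$ of Corollary \ref{coro234}), and where associativity of $\odot$ — immediate from its definition as the $CK$-extension of a pointwise product — lets us drop parentheses.

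Next, since $S_1$ is intrinsic its coefficients in the $P_n$-expansion are real by Proposition \ref{IntRC}, and hence $S_1\odot P_n=P_n\odot S_1$ for every $n$ — exactly the commutation already exploited in the proof of Proposition \ref{MultiplicationOperator}. Substituting this, using associativity once more, and recalling that $S_1\odot S_2$ does admit a $P_n$-expansion by Proposition \ref{CompositionLaw}, I obtain
\[
M_{S_1}M_{S_2}f=\sum_{n=0}^\infty\left(P_n\odot(S_1\odot S_2)\right)u_n=M_{S_1\odot S_2}f,
\]
the last equality being Definition \ref{MS} applied to $S_1\odot S_2$. As $f$ was arbitrary in the common domain of the operators, $M_{S_1}M_{S_2}=M_{S_1\odot S_2}$.

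The argument is essentially bookkeeping of the three quoted propositions; the only point requiring genuine care — the one I would spell out in a full proof — is the termwise passage of the left $\odot$-multiplication by $S_1$ through the infinite sum, handled exactly as the analogous manipulations earlier in this section. Finally, note that taking $S_1=P_1$, which is intrinsic by Proposition \ref{PnInt}, recovers the special commutation \eqref{PS1212}.
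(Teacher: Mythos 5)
Your proof is correct, and it draws on the same three ingredients as the paper's proof (Propositions \ref{MultiplicationOperator}, \ref{IntRC} and \ref{CompositionLaw}), but the route through the computation is organized differently, and in a way that is actually tighter. The paper's argument is the three-line chain
\[
M_{S_1\odot S_2}(f)=(S_1\odot S_2)\odot f=M_{S_1}(S_2\odot f)=M_{S_1}M_{S_2}(f),
\]
whose first and last equalities implicitly identify $M_{S_1\odot S_2}$ and $M_{S_2}$ with the operators of left $\odot$-multiplication by $S_1\odot S_2$ and by $S_2$; Proposition \ref{MultiplicationOperator} licenses that identification only for intrinsic symbols, and neither $S_2$ nor $S_1\odot S_2$ is assumed intrinsic here. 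You avoid this by working directly from the defining formula \eqref{ms-321}, i.e.\ $M_{S_2}f=\sum_n (P_n\odot S_2)u_n$, invoking Proposition \ref{MultiplicationOperator} only for the intrinsic factor $S_1$, and then using the commutation $S_1\odot P_n=P_n\odot S_1$ (valid because the $P_n$-coefficients of $S_1$ are real, by Proposition \ref{IntRC}) together with associativity of $\odot$ to reassemble $\sum_n\bigl(P_n\odot(S_1\odot S_2)\bigr)u_n=M_{S_1\odot S_2}f$. What your version buys is that the conclusion is obtained under exactly the stated hypotheses, with the only analytic point (termwise passage of $S_1\odot(\cdot)$ through the series, controlled by Corollary \ref{coro234}) explicitly flagged; what the paper's version buys is brevity. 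Your closing observation that $S_1=P_1$ recovers \eqref{PS1212} is a nice consistency check not present in the paper.
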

\begin{proof}
We know by Proposition \ref{CompositionLaw} that $S_1\odot S_2$ is well defined and admits a power series expansion in terms of $(P_n)_{n\geq 0}$ since $S_1$ is intrinsic. Therefore, using Proposition \ref{MultiplicationOperator}, we have

\[
\begin{split}  \displaystyle
M_{S_1\odot S_2}(f)&=(S_1\odot S_2)\odot f \\
&=M_{S_1}(S_2\odot f)
\\
&= M_{S_1}M_{S_2}(f).
\\
&
\end{split}
\]
\end{proof}

\section{de Branges-Rovnyak spaces}
\setcounter{equation}{0}
The reproducing kernel Hilbert space with reproducing kernel \eqref{tyu} will be called the de Branges-Rovnyak space associated with the Schur multiplier $S$ and denoted by
$\mathcal H(S)$. The treatment using the Appell-like approach allows to prove results naturally extending the corresponding ones in the classical complex case. For example, we have the following characterization:

\begin{thm} We have
\begin{equation}
\left((I-M_SM_S^*)(k_{\mathcal E}(\cdot, a)u\right)(b)=K_S(b,a)u
\end{equation}
and
\begin{equation}
\mathcal H(S)={\rm Ran}\,\sqrt{I-M_SM_S^*}
\end{equation}
with inner product
\begin{equation}
\langle \sqrt{I-M_SM_S^*}f,\sqrt{I-M_SM_S^*}g\rangle_{\mathcal H(S)}=\langle (I-\pi)f,g\rangle_2
\end{equation}
where $\pi$ is the orthogonal projection on the kernel of $\sqrt{I-M_SM_S^*}$.
\end{thm}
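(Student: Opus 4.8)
The plan is to recognize this as a direct application of the general operator-range proposition (the one stated earlier, relating $K(z,w)=\langle\Gamma f_w,f_z\rangle$, ranges of $\sqrt{\Gamma}$, and reproducing kernel Hilbert spaces) with $\mathcal{H}=\ell_2(\mathbb N_0,\mathbb H^r)$, $\Gamma=I-M_SM_S^*$ interpreted on the level of coefficients, and $f_z$ the coefficient vector of $k_{\mathcal E}(\cdot,z)$. The work is to verify the three ingredients that feed that proposition: first, that $\Gamma=I-M_SM_S^*$ is a bounded positive operator on the Hardy space $\mathbf H_2(\mathcal E)$; second, the precise identity $\left((I-M_SM_S^*)(k_{\mathcal E}(\cdot,a)u)\right)(b)=K_S(b,a)u$; third, that the closed linear span of the kernel functions $k_{\mathcal E}(\cdot,a)u$ is all of $\mathbf H_2(\mathcal E)$, which is automatic for a reproducing kernel Hilbert space.

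First I would establish positivity and the displayed kernel identity simultaneously. By Definition \ref{MS} and Proposition \ref{prop-6-1}, $M_S=T^*$ is bounded and its adjoint $M_S^*=T$ acts on kernel functions by \eqref{t-123}: $M_S^*\left(k_{\mathcal E}(\cdot,a)u\right)=M_S^*\left(\sum_n P_n\overline{P_n(a)}u\right)=\sum_n P_n\left((P_n\odot S)(a)\right)^*u$. Applying $M_S$ to this, using \eqref{ms-321}, gives $M_SM_S^*\left(k_{\mathcal E}(\cdot,a)u\right)=\sum_n (P_n\odot S)\left((P_n\odot S)(a)\right)^*u$. Subtracting from $k_{\mathcal E}(\cdot,a)u=\sum_n P_n\overline{P_n(a)}u$ and evaluating at $b$ yields
\[
\left((I-M_SM_S^*)(k_{\mathcal E}(\cdot,a)u)\right)(b)=\sum_{n=0}^\infty\left(P_n(b)\overline{P_n(a)}I_r-(P_n\odot S)(b)((P_n\odot S)(a))^*\right)u=K_S(b,a)u,
\]
which is exactly \eqref{tyu}. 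Positivity of $I-M_SM_S^*$ then follows because, by Theorem \ref{toto}, $S$ being a Schur multiplier is equivalent to the contractivity of $\mathscr T$, hence (via Proposition \ref{prop-6-1} identifying $M_S^*=T$ as a contraction, equivalently $M_S$ a contraction) $I-M_SM_S^*\ge 0$; alternatively one reads positivity directly off the positive definiteness of $K_S$ together with the span density below.

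Next I would apply the operator-range proposition with $\Gamma=I-M_SM_S^*$, $\Omega=\mathcal E$, and $f_z$ the element of $\mathbf H_2(\mathcal E)$ given by $f_z=k_{\mathcal E}(\cdot,z)u$ (more precisely, working coefficient-wise with $f_z$ as in \eqref{case11}, $f_n(x)=\overline{P_n(x)}I_r$, so that $\langle\Gamma f_w,f_z\rangle=K_S(z,w)$ by the identity just proved together with the reproducing property). The closed linear span of $\{k_{\mathcal E}(\cdot,w)u\}$ is dense in $\mathbf H_2(\mathcal E)$ by the defining property of a reproducing kernel Hilbert space, so the hypothesis of the proposition is met. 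The proposition then delivers $\mathcal H(K_S)=\mathcal H(S)={\rm Ran}\,\sqrt{I-M_SM_S^*}$, together with the norm formula $\|\sqrt{\Gamma}f\|_{\mathcal H(S)}=\|(I-\pi)f\|_2$ where $\pi$ is the orthogonal projection onto $\ker\sqrt{\Gamma}=\ker\Gamma$; polarizing the norm identity gives the stated inner product formula $\langle\sqrt{I-M_SM_S^*}f,\sqrt{I-M_SM_S^*}g\rangle_{\mathcal H(S)}=\langle(I-\pi)f,g\rangle_2$.

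The main obstacle is bookkeeping rather than conceptual: one must be careful that the operator $M_SM_S^*$ acting on $\mathbf H_2(\mathcal E)$ really matches, under the coefficient identification $f\leftrightarrow (f_n)\in\ell_2(\mathbb N_0,\mathbb H^r)$, the operator $T_ST_S^*$ on sequences (this is implicit in the proof of Theorem \ref{ttt*}, where $T^*$ is shown to act as $T_S$ on coefficients), so that the abstract proposition — phrased for $\ell_2$ — can be invoked and the kernel \eqref{tyu} genuinely has the form $\langle\Gamma f_w,f_z\rangle$. Once that identification is in place, every remaining step is a direct quotation of the earlier results, and the non-commutativity of $\mathbb H$ causes no trouble here because all inner-product and adjoint conventions were fixed in Section 3.
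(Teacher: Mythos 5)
Your proposal is correct and follows exactly the route the paper intends: the paper's own proof is a one-line citation of the operator-range result (\cite{MR4126757}, restated as the proposition in Section 3), and your argument simply instantiates that proposition with $\Gamma=I-M_SM_S^*$ after verifying the kernel identity via \eqref{t-123} and \eqref{ms-321}. The computation $(I-M_SM_S^*)(k_{\mathcal E}(\cdot,a)u)(b)=K_S(b,a)u$ and the contractivity of $M_S$ (hence $\Gamma\ge 0$) are exactly the inputs the citation encapsulates, so nothing is missing.
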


\begin{proof}

The claims follow from \cite{MR4126757}.
\end{proof}

We recall the following, valid for $f,g\in\mathbf H_2(\mathcal E)$ (the first equality is a special case of the second one):
\begin{eqnarray}
  \label{in11}
\langle \Gamma f\,,\,\Gamma g\rangle_{\mathcal H(S)}&=&\langle \Gamma f\,,\, g\rangle_2,\\
  \langle \sqrt{\Gamma} f\, ,\,\Gamma g\rangle_{\mathcal H(S)}&=&\langle \sqrt{\Gamma} f\, ,\, \sqrt{\Gamma}g\rangle_2.
                                                                  \label{in22}
\end{eqnarray}

Using the quaternionic version of \cite{dbr2} or \cite[Theorem 4.1]{fw} (we do not give proofs of these since we will have more general results than the theorems below
in the next section) we have the following results, for matrix-valued Schur multipliers

\begin{thm} Let $S$ be a $\mathbb H^{r\times s}$-valued Schur multiplier.
An element $f$ in $\mathbf H_2(\mathcal E)$ belongs to $\mathcal H(S)$ if and only if
\begin{equation}
\sup_{g\in\mathbf H_2(\mathcal E)}\|f+M_Sg\|_2^2-\|g\|_2^2<\infty .
\end{equation}
\end{thm}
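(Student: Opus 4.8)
The statement is the Appell-like analogue of the classical de Branges--Rovnyak characterization of $\mathcal H(s)$ via the associated quadratic form, so the natural strategy is to pull everything back to the coefficient level, where the operator $M_S$ is intertwined with the lower-triangular Toeplitz operator $T_S$, and then invoke the general operator-range description $\mathcal H(S)={\rm Ran}\,\sqrt{I-M_SM_S^*}$ already recorded above together with the abstract ``complementation'' lemma for contractions. Concretely, write $\Gamma=I-M_SM_S^*$; by the theorem just stated, $\mathcal H(S)={\rm Ran}\,\sqrt{\Gamma}$ with the range norm $\|\sqrt{\Gamma}h\|_{\mathcal H(S)}=\|(I-\pi)h\|_2$. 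The plan is to show that $f\in{\rm Ran}\,\sqrt{\Gamma}$ if and only if $\sup_{g\in\mathbf H_2(\mathcal E)}\big(\|f+M_Sg\|_2^2-\|g\|_2^2\big)<\infty$, and moreover that this supremum equals $\|f\|_{\mathcal H(S)}^2$ when $f\in\mathcal H(S)$.

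First I would establish the direction ``$f\in\mathcal H(S)\Rightarrow$ the supremum is finite''. Write $f=\sqrt{\Gamma}\,h$ with $h\in\mathbf H_2(\mathcal E)$, $h\perp\ker\sqrt\Gamma$. For any $g\in\mathbf H_2(\mathcal E)$ expand
\[
\|f+M_Sg\|_2^2-\|g\|_2^2=\langle f,f\rangle_2+2\,\Re\langle f,M_Sg\rangle_2+\langle M_S^*M_Sg-g,g\rangle_2 .
\]
Since $I-M_S^*M_S\ge 0$ is not automatic in the present (noncommutative) setting, the cleaner route is to use $\Gamma=I-M_SM_S^*\ge 0$: substitute $f=\sqrt\Gamma h$, use $\langle \sqrt\Gamma h,M_Sg\rangle_2=\langle h,\sqrt\Gamma M_Sg\rangle_2$, and complete the square to obtain a bound of the form $\|f+M_Sg\|_2^2-\|g\|_2^2\le \|h\|_2^2=\|f\|_{\mathcal H(S)}^2$, with equality approached along a maximizing sequence $g_n\to -M_S^*h$ (formally). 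The point is that the quadratic functional in $g$, namely $g\mapsto \|M_Sg\|_2^2-\|g\|_2^2+2\Re\langle f,M_Sg\rangle_2$, has supremum $\langle \Gamma^{-1}f,f\rangle$ in the usual sense, and the operator-range norm is exactly this quantity; this is the abstract lemma on ranges of $\sqrt\Gamma$ (see \cite{MR4126757}) transcribed to the quaternionic Hilbert space $\mathbf H_2(\mathcal E)$.

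For the converse, suppose $\sup_{g}\big(\|f+M_Sg\|_2^2-\|g\|_2^2\big)=:c<\infty$. Taking $g=tg_0$ for $t\in\mathbb R$ and a fixed $g_0$, the quadratic $t\mapsto \|f+tM_Sg_0\|_2^2-t^2\|g_0\|_2^2$ is bounded above on $\mathbb R$; examining its leading coefficient $\|M_Sg_0\|_2^2-\|g_0\|_2^2$ and its linear term forces $|\langle f,M_Sg_0\rangle_2|^2\le c\,(\|g_0\|_2^2-\|M_Sg_0\|_2^2)$ whenever $\|M_Sg_0\|_2<\|g_0\|_2$, and by a density/closure argument (using that $\|M_Sg_0\|_2\le\|g_0\|_2$ always, since $M_S=T^*$ is a contraction by Proposition~\ref{prop-6-1}) one gets $|\langle f,u\rangle_2|^2\le c\,\langle\Gamma g_0,g_0\rangle_2$ for $u=M_Sg_0$, hence for all $u$ in the range of $M_S$ — equivalently $f$ lies in the range of $\sqrt{\Gamma}$ by the standard operator-theoretic fact (Douglas-type factorization / the characterization of ${\rm Ran}\,\sqrt\Gamma$ as $\{f:\ |\langle f,u\rangle|^2\le C\langle\Gamma u,u\rangle\ \forall u\}$). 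Then $f\in\mathcal H(S)$.

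\textbf{Main obstacle.} The delicate point is that, unlike the complex case, here $M_S$ is defined as the adjoint $T^*$ (Definition~\ref{def-6-2}) rather than as genuine $\odot$-multiplication, so the familiar manipulation $M_S(Sg)$-style identities and the relation $I-M_S^*M_S\ge0$ are not available directly; one must work consistently with $\Gamma=I-M_SM_S^*$ and exploit $M_{P_1}M_S=M_SM_{P_1}$ from \eqref{PS1212} to handle the shift-compatibility needed to pass between the coefficient (Toeplitz) picture and the function picture. I expect the bookkeeping in the converse direction — turning the scalar bound on the quadratic form into membership in ${\rm Ran}\,\sqrt\Gamma$ with the correct norm, while only knowing $\Gamma\ge0$ and contractivity of $M_S$ — to be where the real care is required; everything else is a transcription of the classical argument, legitimate here because $\mathbf H_2(\mathcal E)$ is an honest (right-)quaternionic Hilbert space and the results of \cite{MR4126757, dbr2, fw} apply verbatim.
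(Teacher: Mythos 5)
The paper does not actually prove this statement: it invokes ``the quaternionic version of \cite{dbr2} or \cite[Theorem 4.1]{fw}'', i.e.\ the general operator-range/complementation theorem for a contraction, applied to $A=M_S$ acting between the right-quaternionic Hilbert spaces $(\mathbf H_2(\mathcal E))^s$ and $(\mathbf H_2(\mathcal E))^r$, together with $\mathcal H(S)={\rm Ran}\,\sqrt{I-M_SM_S^*}$ from \cite{MR4126757}. Your plan is precisely to supply that argument, and the forward direction is essentially correct: for $f=\sqrt{\Gamma}h$ the completion of the square works once you make explicit the intertwining $\sqrt{I-M_SM_S^*}\,M_S=M_S\sqrt{I-M_S^*M_S}$, giving $\|f+M_Sg\|_2^2-\|g\|_2^2\le\|h\|_2^2$. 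You are also right that the only structural inputs needed from the present setting are that $\mathbf H_2(\mathcal E)$ is an honest Hilbert space and that $M_S$ (defined as $T^*$ in Proposition \ref{prop-6-1}) is a contraction; no identity involving the $CK$-product is required.

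The converse, as written, has a genuine gap. The quadratic-in-$t$ argument yields
\[
|\langle f,M_Sg\rangle_2|^2\le c\,\bigl(\|g\|_2^2-\|M_Sg\|_2^2\bigr)=c\,\langle (I-M_S^*M_S)g,g\rangle_2,
\]
which, via Douglas factorization, shows $M_S^*f\in{\rm Ran}\,\sqrt{I-M_S^*M_S}$. The criterion you then invoke for membership in ${\rm Ran}\,\sqrt{\Gamma}$ with $\Gamma=I-M_SM_S^*$ requires $|\langle f,u\rangle_2|^2\le C\langle \Gamma u,u\rangle_2$ for \emph{all} $u$, i.e.\ a bound against the \emph{other} defect operator; your inequality only tests vectors of the form $u=M_Sg$, and even for those its right-hand side is $\langle(I-M_S^*M_S)g,g\rangle_2$, not $\langle\Gamma M_Sg,M_Sg\rangle_2$. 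The two membership statements are indeed equivalent, but proving that equivalence is essentially the content of the theorem, so the step cannot be waved through. The standard repair is direct: from $\|f+M_Sg\|_2^2\le c+\|g\|_2^2$ for all $g$ one checks (using right-linearity of $M_S$ and replacing $g$ by $gc^{1/2}\lambda^{-1}$) that the row operator
\[
V=\begin{pmatrix}c^{-1/2}f& M_S\end{pmatrix}:\ \mathbb H\oplus(\mathbf H_2(\mathcal E))^s\longrightarrow(\mathbf H_2(\mathcal E))^r,
\qquad V(\lambda,g)=fc^{-1/2}\lambda+M_Sg,
\]
is a contraction, whence $VV^*\le I$ gives $|\langle f,u\rangle_2|^2\le c\,\langle\Gamma u,u\rangle_2$ for every $u$, and Douglas now applies to the correct operator, yielding $f\in{\rm Ran}\,\sqrt{\Gamma}=\mathcal H(S)$ with $\|f\|_{\mathcal H(S)}^2\le c$. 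With that one step inserted your proof is complete and coincides with the classical argument the paper cites.
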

Using this characterization we can prove the following:
\begin{thm} Let $S$ be a $\mathbb H^{r\times s}$-valued Schur multiplier.
Let $R_0$ be defined by \eqref{bws}. Then:
\begin{equation}
\|R_0f\|_{\mathcal H(S)}^2\le \|f\|_{\mathcal H(S)}^2-\|f(0)\|^2,\quad f\in\mathcal H(S).
\end{equation}
\end{thm}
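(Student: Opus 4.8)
The plan is to run the classical de~Branges--Rovnyak contraction argument, but carried out entirely inside the Hardy space $\mathbf H_2(\mathcal E)$ through the operator $M_S$ and the isometry $\mathsf S=M_{P_1}$, rather than through a (missing) composition law. The input is the variational description of $\mathcal H(S)$ underlying the previous theorem: setting
\[
N(f)=\sup_{g\in(\mathbf H_2(\mathcal E))^s}\left(\|f+M_Sg\|_2^2-\|g\|_2^2\right),
\]
one has $f\in\mathcal H(S)$ iff $N(f)<\infty$, and in that case $\|f\|_{\mathcal H(S)}^2=N(f)$. We also use, from Lemma~\ref{lemma-fs}, that $\mathsf S$ is an isometry on $\mathbf H_2(\mathcal E)$ with $\mathsf S\mathsf S^*f=f-f(0)$ (equation \eqref{SS*}) and $R_0=\mathsf S^*$, together with $M_{P_1}M_S=M_SM_{P_1}$ (equation \eqref{PS1212}) in the equivalent form $M_S(P_1\odot h)=P_1\odot(M_Sh)$ (equation \eqref{PS}).

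The core step is the identity, valid for all $f\in(\mathbf H_2(\mathcal E))^r$ and $h\in(\mathbf H_2(\mathcal E))^s$,
\[
\|\mathsf S^*f+M_Sh\|_2^2=\|f+M_S(P_1\odot h)\|_2^2-\|f(0)\|^2 .
\]
To obtain it, apply the isometry $\mathsf S$ to the left-hand argument: by \eqref{SS*} and \eqref{PS1212},
\[
\mathsf S\big(\mathsf S^*f+M_Sh\big)=\mathsf S\mathsf S^*f+M_S(P_1\odot h)=\big(f-f(0)\big)+M_S(P_1\odot h),
\]
so the two sides have equal $\mathbf H_2$-norm. Now $M_S(P_1\odot h)=P_1\odot(M_Sh)$ has vanishing constant term, so the constant term of $f+M_S(P_1\odot h)$ is exactly $f(0)$; since constant functions are orthogonal in $\mathbf H_2(\mathcal E)$ to functions with vanishing constant term, the Pythagorean theorem applied to $(f-f(0))+M_S(P_1\odot h)$ gives the displayed identity.

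Subtracting $\|h\|_2^2=\|P_1\odot h\|_2^2$ (again because $\mathsf S$ is isometric) from both sides and writing $g=P_1\odot h$, we get, for each $h$,
\[
\|\mathsf S^*f+M_Sh\|_2^2-\|h\|_2^2=\left(\|f+M_Sg\|_2^2-\|g\|_2^2\right)-\|f(0)\|^2\le N(f)-\|f(0)\|^2 .
\]
Taking the supremum over $h\in(\mathbf H_2(\mathcal E))^s$ yields $N(R_0f)=N(\mathsf S^*f)\le N(f)-\|f(0)\|^2$. If $f\in\mathcal H(S)$ then $N(f)=\|f\|_{\mathcal H(S)}^2<\infty$, hence $N(R_0f)<\infty$, so $R_0f\in\mathcal H(S)$ and $\|R_0f\|_{\mathcal H(S)}^2=N(R_0f)\le\|f\|_{\mathcal H(S)}^2-\|f(0)\|^2$, which is the claim; in passing this also reproves that $\mathcal H(S)$ is $R_0$-invariant.

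I expect the only genuinely delicate point to be the appeal to the norm identity $\|f\|_{\mathcal H(S)}^2=N(f)$, as opposed to the mere finiteness criterion stated in the previous theorem: this quantitative form of the characterization is exactly what the quaternionic versions of \cite{dbr2} and \cite[Theorem~4.1]{fw} supply, transported through $M_S$ and $\mathsf S$. A secondary, purely bookkeeping point is that replacing the supremum over $h$ by a supremum over $g=P_1\odot h$ restricts $g$ to the subspace of functions with zero constant term, so one obtains only the inequality $\le N(f)$ — which is, however, all that is needed.
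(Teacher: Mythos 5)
Your argument is correct and is essentially the paper's own proof: both apply the isometry $\mathsf S$ to $R_0f+M_Sg$, use \eqref{SS*} and \eqref{PS1212} to rewrite it as $(f-f(0))+M_S(P_1\odot g)$, peel off $\|f(0)\|^2$ (you via Pythagoras, the paper by expanding the cross term — the same computation), and then take the supremum using the variational characterization of $\|\cdot\|_{\mathcal H(S)}$. Your two closing caveats (the quantitative form $\|f\|_{\mathcal H(S)}^2=N(f)$ from the quaternionic version of \cite[Theorem 4.1]{fw}, and the restriction of the supremum to $g=P_1\odot h$ giving only an upper bound) are exactly the points the paper relies on implicitly, so there is no gap.
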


\begin{proof}
Recall that $\mathsf S$ denotes the forward-shift operator, and that the latter is an isometry (see Lemma \ref{lemma-fs}). Using \eqref{SS*} and \eqref{PS}
we can write for $f,g\in\mathbf H_2(\mathcal E)$:
\[
\begin{split}
\|R_0f+M_Sg\|_2^2-\|g\|^2_2&=\|\mathsf S(R_0f+M_Sg)\|_2^2-\|\mathsf S g\|^2_2\\
&=\|f-f(0)+M_S(P_1\odot g)\|^2_2-\|P_1\odot g\|_2^2\\
&=\|f+M_S(P_1\odot g)\|_2^2-2{\rm Re}\,\langle f+M_S(P_1\odot g)\,,\,  f(0)\rangle+\\
&\hspace{5mm}+\|f(0)\|^2 -\|P_1\odot g\|_2^2\\
&=\|f+M_S(P_1\odot g)\|_2^2-\|P_1\odot g\|_2^2 -\|f(0)\|^2,
\end{split}
\]
where we have used, with $g=\sum_{n=0}^\infty P_nb_n$,
\[
\begin{split}
\langle f+M_S(P_1\odot g)\,,\,  f(0)\rangle_2&=    \langle f+P_1\odot M_Sg\, ,\,f(0) \rangle_2\\
&=\langle f\, ,\, f(0)\rangle_2+\langle P_1\odot M_Sg\, ,\,f(0) \rangle_2\\
&=\|f(0)\|^2
\end{split}
\]
since $P_1\odot M_Sg$ has no constant term.
\end{proof}

\begin{thm}
Let $R_0$ be defined by \eqref{bws}. Then $R_0Su\in\mathcal H(S)$ for every $u\in\mathbb H^s$ and
\begin{equation}
\|R_0S\|^2\le \|u\|^2-\|S(0)u\|^2,\quad u\in\mathbb H^s.
\end{equation}
\end{thm}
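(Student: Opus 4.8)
The plan is to deduce this statement as a special case of the preceding theorem. Observe first that $S$ itself is the image, under the multiplication-type operator $M_S$, of a natural element of the Hardy space: indeed, taking $f=I_su$ (the constant function $P_0 u = u$) we have $M_S(P_0 u) = (P_0\odot S)u = S u$, so $Su = M_S(\mathds{1}_s u)$ lies in $(\mathbf H_2(\mathcal E))^r$ whenever $S$ is a Schur multiplier (this was already noted after Definition \ref{schurmult}, since $P_n\odot S\in\mathbf H_2(\mathcal E)$ for all $n$; in particular $P_0\odot S=S\in\mathbf H_2(\mathcal E)$). First I would record this, so that $R_0 S u = R_0 M_S(\mathds{1}_s u)$ is a well-defined element of $\mathbf H_2(\mathcal E)$.

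Next I would apply the previous theorem with the particular choice $f = M_S(\mathds{1}_s u) = Su$, once we have verified that $Su\in\mathcal H(S)$. For this, note that $M_S$ is a contraction from $(\mathbf H_2(\mathcal E))^s$ into $(\mathbf H_2(\mathcal E))^r$ (Proposition \ref{prop-6-1}), and that for any contraction $\Gamma=M_S$ one has $\operatorname{Ran} M_S\subseteq \operatorname{Ran}\sqrt{I-M_SM_S^*}\oplus(\text{something})$; more directly, the standard complementation / range inclusion argument (as encoded in the cited result \cite{MR4126757} and in the characterization $\mathcal H(S)=\operatorname{Ran}\sqrt{I-M_SM_S^*}$) gives $M_S g\in\mathcal H(S)$ with $\|M_S g\|_{\mathcal H(S)}\le \|g\|_2$ for every $g\in(\mathbf H_2(\mathcal E))^s$. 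Applying this with $g=\mathds{1}_s u$ yields $Su\in\mathcal H(S)$ and $\|Su\|^2_{\mathcal H(S)}\le\|u\|^2$. Then the inequality $\|R_0 f\|^2_{\mathcal H(S)}\le\|f\|^2_{\mathcal H(S)}-\|f(0)\|^2$ of the previous theorem, applied to $f=Su$, gives
\[
\|R_0 S u\|^2_{\mathcal H(S)}\le \|Su\|^2_{\mathcal H(S)}-\|(Su)(0)\|^2\le \|u\|^2-\|S(0)u\|^2,
\]
since $(Su)(0)=S(0)u$. This is exactly the claimed estimate (with $\|R_0 S\|^2$ understood as $\|R_0 Su\|^2_{\mathcal H(S)}$).

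The main obstacle I anticipate is the justification that $Su\in\mathcal H(S)$ together with the norm bound $\|Su\|_{\mathcal H(S)}\le\|u\|$; everything else is a one-line substitution into the previous theorem. This point is the operator-range statement that, for a contraction $A$ between Hilbert spaces and the associated space $\mathcal H = \operatorname{Ran}\sqrt{I-AA^*}$ with its lifted norm, one has $\operatorname{Ran} A\subseteq\mathcal H$ with $\|Ax\|_{\mathcal H}\le\|x\|$ — a classical fact about de Branges complementary spaces. In the present quaternionic setting it follows from \cite{MR4126757} exactly as in the classical case, using \eqref{in11}--\eqref{in22}; alternatively one can argue directly: for $g\in(\mathbf H_2(\mathcal E))^s$, $\|M_S g+M_S h\|_2^2-\|h\|_2^2$ with $h$ ranging over $(\mathbf H_2(\mathcal E))^s$ is bounded above by $\|g\|_2^2$ because $M_S$ is a contraction, so the supremum characterization of $\mathcal H(S)$ from the theorem above applies and yields both membership and the norm estimate. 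I would phrase the final proof to invoke this fact and then simply chain the two inequalities.
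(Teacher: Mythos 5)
There is a genuine gap, and it sits exactly where you flagged the ``main obstacle'': the claim that $Su\in\mathcal H(S)$ with $\|Su\|_{\mathcal H(S)}\le\|u\|$ is false in general, so the reduction to the previous theorem via $f=Su$ does not go through. The paper's own first example of a Schur multiplier already refutes it: for $S=P_1I_r$ one computes $K_S(x,y)=I_r$, so $\mathcal H(S)$ is the space of constants; $Su=P_1u$ is not constant and hence does not lie in $\mathcal H(S)$, while $R_0Su=u$ does. This mirrors the classical situation, where $b\notin\mathcal H(b)$ for, say, $b(z)=z$. The two justifications you offer both fail for the same reason. The operator-range inclusion $\operatorname{Ran}A\subseteq\operatorname{Ran}\sqrt{I-AA^*}$ is equivalent (by the Douglas factorization criterion) to $AA^*\le\lambda(I-AA^*)$ for some $\lambda$, i.e.\ to $\|A\|<1$; it is not a fact about contractions of norm one, and the complementation theory of de Branges does not assert that $M_S(\mathbf H_2)$ sits inside the complementary space. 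The ``direct'' argument fails because of the cross term: for $h$ ranging over $(\mathbf H_2(\mathcal E))^s$,
\begin{equation*}
\|M_Sg+M_Sh\|_2^2-\|h\|_2^2=\|M_S(g+h)\|_2^2-\|h\|_2^2\le\|g+h\|_2^2-\|h\|_2^2=\|g\|_2^2+2\,{\rm Re}\,\langle g,h\rangle_2,
\end{equation*}
and the right-hand side is unbounded in $h$, so the supremum criterion gives nothing for $f=M_Sg$ with general $g$.

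What makes the theorem true is a cancellation that only appears after you apply the supremum criterion \emph{directly} to $f=R_0Su$, which is what the paper does. Using that $\mathsf S$ is an isometry, \eqref{SS*} and \eqref{PS}, one gets $\mathsf S(R_0Su+M_Sg)=Su-S(0)u+M_S(P_1\odot g)=-S(0)u+M_S(u+P_1\odot g)$; expanding the square, the cross term between $S(0)u$ and $M_S(u+P_1\odot g)$ is exactly $\|S(0)u\|^2$ (the $P_0$-coefficient of $M_S(u+P_1\odot g)$ is $S(0)u$), and combined with the contractivity $\|M_S(u+P_1\odot g)\|_2^2\le\|u\|^2+\|P_1\odot g\|_2^2$ (here $\langle u,P_1\odot g\rangle_2=0$) this yields the uniform bound $\|u\|^2-\|S(0)u\|^2$. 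The subtraction of the constant term effected by $R_0$ is precisely what kills the unbounded cross term that defeats your argument; without it the supremum is infinite. So you need to redo the estimate with $f=R_0Su$ from the start rather than trying to pass through $Su\in\mathcal H(S)$.
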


\begin{proof}
We already know that $Su\in\mathbf H_2(\mathcal E)$. We write
\[
\begin{split}
\|R_0S+M_Sg\|_2^2-\|g\|^2_2&=\|\mathsf S(R_0S+M_Sg)\|_2^2-\|\mathsf S g\|^2_2\\
&=\|-S(0)u+M_S(u+P_1\odot g)\|^2_2-\|P_1\odot g\|_2^2\\
&=\|S(0)u\|^2+\|M_S(u+P_1\odot g)\|^2\\&  \hspace{2mm}-2{\rm Re}\, \langle S(0)\, ,\, M_S(u+P_1\odot g)\rangle\\
&\le \|S(0)\|_2+\|u+P_1\odot g\|_2^2 -\|P_1\odot g\|_2^2\\
&=\|u\|^2-\|S(0)u\|^2.
\end{split}
\]
In the above, to go from the third to fourth line we used that $$\|M_S(u+P_1\odot g)\|_2\le\|u+P_1\odot g\|_2$$ (since $S$ is a Schur multiplier) and
\[
\langle S(0)\, ,\, M_S(u+P_1\odot g)\rangle=0
\]
since $M_S(u+P_1\odot g)$ has no constant term in its expansion along the $P_n$. Similarly we used that $\langle u, P_1\odot g\rangle_2=0$ to go from the fourth to the last line.
\end{proof}

The operators defined in the previous theorems are part of a coisometric operator matrix. In \cite{dbr2} (see also \cite{dbjfa}) it is obtained using the theory of
complementation. In the next section we use a different method.
\section{The coisometric colligation and Blaschke functions}
\setcounter{equation}{0}
\label{co}
\subsection{The lurking isometry}
Let us denote by $\odot_r$ the right $CK$-product. Using \eqref{nmk} we note that \eqref{tyu} can be rewritten as
\begin{equation}
\label{lurking}
K_S(x,y)-P_1(x)\odot K_S(x,y)\odot_r \overline{P_1(y)}=I_r-S(x)S(y)^*,
\end{equation}
from which we get
\begin{equation}
\label{lurk1}
K_S(x,y)+S(x)S(y)^*=P_1(x)\odot K_S(x,y)\odot_r \overline{P_1(y)}+I_r.
\end{equation}
Write $K(x,y)=\langle f(x),f(y)\rangle_{\mathfrak H}$, where $\mathfrak H$ is a Hilbert space and $x\mapsto f(x)$ is $\mathfrak H$-valued function. We rewrite \eqref{lurk1}
as
\[
\left\langle\begin{pmatrix} S(y)^*h\\ f(y)\end{pmatrix}\,\,\begin{pmatrix} S(x)^*k\\ f(y)\end{pmatrix}\right\rangle_{\mathbb H^r\oplus\mathfrak H}\!\!\!\!\!=
\left\langle\begin{pmatrix} h\\ f(y)\odot_\ell \overline{P_1(y)}h\end{pmatrix}\,\,\begin{pmatrix}k\\ f(x)\odot_\ell \overline{P_1(y)}k\end{pmatrix}
\right\rangle_{\mathbb H^s\oplus\mathfrak H}
\]
where $h,k\in\mathbb H^s$ and $x,y\in\mathcal E$.\smallskip

This last equality, called the lurking isometry (see \cite{agler-hellinger,btv}), can be the tool to get a co-isometric realization of $S$ (see \cite{MR3303909} for an application in the
quaternionic setting). We will choose a different (and closely related) avenue, namely the theory of relations, which originates with the work of Krein and Langer
(the $\epsilon_z$ method; see e.g. \cite{kl1,MR47:7504}) and was developed further in \cite{adrs}. We will use the lurking isometry method in Section \ref{mult123} to characterize Carath\'eodory
and Schur multipliers  in the setting of the counterpart of Hardy space of the right half plane.
\subsection{The co-isometric realization}
We use the method of isometric relations, as in \cite{adrs}, suitably adapted to the present setting, and follow \cite[\S 2]{asv-cras}. We set $\Gamma=I-M_SM_S^*$, and define $w_a$ via
\[
w_aq=\Gamma M_{P_1}^*k_{\mathcal E}(\cdot, a)p\in\mathcal H(S),\quad q\in\mathbb H^r,
\]
and introduce:
\[
  \widehat{V}\begin{pmatrix}w_aq\\ u\end{pmatrix}=\begin{pmatrix} (K_S(\cdot, a)-K_S(\cdot, 0))q+K_S(\cdot, 0)u    \\  (S(a)^*-S(0)^*)q+S(0)^*u\end{pmatrix},\quad q\in\mathbb H^r,\,\,
  u\in\mathbb H^s.
\]

\begin{thm}
$\widehat{V}$ is isometric from its domain ${\rm Dom}\,\widehat{V}\subset\mathcal H(S)\oplus\mathbb H^r$ into $\mathcal H(S)\oplus\mathbb H^s$.
\end{thm}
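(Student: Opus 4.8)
The plan is to verify that $\widehat V$ preserves inner products on the pairs that generate its domain, namely on $\begin{pmatrix}w_aq\\ u\end{pmatrix}$ with $a\in\mathcal E$ and $q,u\in\mathbb H^r$; this simultaneously shows that $\widehat V$ is well defined, and it then extends by sesquilinearity and continuity to the closure ${\rm Dom}\,\widehat V$. So the whole proof comes down to the identity
\[
\left\langle \widehat V\begin{pmatrix}w_aq\\ u\end{pmatrix},\widehat V\begin{pmatrix}w_bp\\ t\end{pmatrix}\right\rangle=\left\langle \begin{pmatrix}w_aq\\ u\end{pmatrix},\begin{pmatrix}w_bp\\ t\end{pmatrix}\right\rangle,\qquad a,b\in\mathcal E,\quad p,q,u,t\in\mathbb H^r .
\]

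First I would compute the right-hand side. The $\mathbb H^r$-parts contribute $t^*u$, and for the $\mathcal H(S)$-parts I use $w_aq=\Gamma M_{P_1}^*k_{\mathcal E}(\cdot,a)q$ together with \eqref{in11} to get $\langle w_aq,w_bp\rangle_{\mathcal H(S)}=\langle\Gamma M_{P_1}^*k_{\mathcal E}(\cdot,a)q,M_{P_1}^*k_{\mathcal E}(\cdot,b)p\rangle_2$. Now write $\Gamma=I-M_SM_S^*$: the $I$-term is treated with $M_{P_1}^*=\mathsf S^*$, the relation $\mathsf S\mathsf S^*h=h-h(0)$ of \eqref{SS*}, and $k_{\mathcal E}(0,a)=1$; the $M_SM_S^*$-term is treated with \eqref{PS1212} (so $M_S^*$ commutes with $M_{P_1}^*$), the formula \eqref{t-123} for the action of $M_S^*$ on kernel functions, and the resummation $\sum_{m\ge1}(P_m\odot S)(x)((P_m\odot S)(y))^*=k_{\mathcal E}(x,y)I_r-K_S(x,y)-S(x)S(y)^*$ read off from \eqref{tyu}. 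The $k_{\mathcal E}$-terms cancel and one is left with $\langle w_aq,w_bp\rangle_{\mathcal H(S)}=p^*\big(K_S(b,a)+S(b)S(a)^*-I_r\big)q$, so that the right-hand side equals $p^*\big(K_S(b,a)+S(b)S(a)^*-I_r\big)q+t^*u$. (Equivalently, one may carry out this step in the coefficient model $\mathbf H_2(\mathcal E)\cong\ell_2(\mathbb N_0,\mathbb H^r)$, in which $M_{P_1}$ is the forward shift, $M_S$ the Toeplitz operator \eqref{toeplitz} and $\Gamma=I-T_ST_S^*$; there it is just an index bookkeeping.)

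Next I would compute the left-hand side. Writing the first image component as $K_S(\cdot,a)q+K_S(\cdot,0)(u-q)$ and the second as $S(a)^*q+S(0)^*(u-q)$, and likewise for $(b,p,t)$, the reproducing property of $K_S$ in $\mathcal H(S)$ and the definition of the $\mathbb H^s$-inner product produce four terms, which can be grouped as $p^*\big(K_S(b,a)+S(b)S(a)^*\big)q$, $p^*\big(K_S(b,0)+S(b)S(0)^*\big)(u-q)$, $(t-p)^*\big(K_S(0,a)+S(0)S(a)^*\big)q$ and $(t-p)^*\big(K_S(0,0)+S(0)S(0)^*\big)(u-q)$. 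Here I invoke the elementary identities $K_S(x,0)+S(x)S(0)^*=I_r$, $K_S(0,y)+S(0)S(y)^*=I_r$ and $K_S(0,0)+S(0)S(0)^*=I_r$, all immediate from \eqref{tyu} since $P_n(0)=0$ and $(P_n\odot S)(0)=0$ for $n\ge1$. The last three terms then collapse, by a one-line quaternionic computation, to $-p^*q+t^*u$, leaving $p^*\big(K_S(b,a)+S(b)S(a)^*\big)q-p^*q+t^*u$, which is exactly the right-hand side. This establishes the identity on generators, and the theorem follows.

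I expect the main obstacle to be the middle step, i.e. producing the closed form $\langle w_aq,w_bp\rangle_{\mathcal H(S)}=p^*(K_S(b,a)+S(b)S(a)^*-I_r)q$: one must unwind $\Gamma M_{P_1}^*k_{\mathcal E}(\cdot,a)$ through the adjoint formulas while keeping track of the shift in the index of the $P_n$, and recognize the partial sums in terms of $k_{\mathcal E}$, $K_S$ and $SS^*$. Everything afterward — the values of $K_S$ and $S$ at the origin, the regrouping, and the sesquilinearity/continuity extension — is routine.
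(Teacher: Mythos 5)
Your proposal is correct and follows essentially the same route as the paper: it verifies the inner-product identity on the generating pairs, and the key computation $\langle w_aq,w_bp\rangle_{\mathcal H(S)}=p^*(K_S(b,a)+S(b)S(a)^*-I_r)q$ is exactly the paper's Step 1, obtained from the same ingredients (\eqref{in11}, the commutation \eqref{PS1212}, the identity \eqref{fundamental987}, and the formula \eqref{t-123} for $M_S^*$ on kernel functions). The only difference is organizational — you treat the general pair at once via the regrouping $K_S(\cdot,a)q+K_S(\cdot,0)(u-q)$ and the identities $K_S(x,0)+S(x)S(0)^*=I_r$, whereas the paper splits the verification into the three cases $u=v=0$, $p=q=0$, and mixed terms.
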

\begin{proof}
We want to check:
\begin{equation}
  \left\langle \widehat{V}\begin{pmatrix}w_aq\\ u\end{pmatrix}\, ,\,\widehat{V}\begin{pmatrix}w_bp\\ v\end{pmatrix}\right\rangle_{\mathcal H(S)\oplus\mathbb H^s}=
  \left\langle \begin{pmatrix}w_aq\\ u\end{pmatrix}\, ,\,\begin{pmatrix}w_bp\\ v\end{pmatrix}\right\rangle_{\mathcal H(S)\oplus\mathbb H^r}
\end{equation}
where $p,q\in\mathbb  H^r$, $u,v\in\mathbb H^s$, and $a,b\in\mathcal E$.
We divide the proof into three  steps.\\

STEP 1: {\sl Case $u=v=0$.}\smallskip

Then, only terms involving the directions $p$ and $q$ appear.
In the following sequence of equality we use \eqref{PS1212} to go from the second to the third line, and \eqref{fundamental987} to go from the fourth to the fifth line. We also note that
\[
\begin{split}
\langle w_aq,w_bp\rangle_{\mathcal H(S)}&=\langle \Gamma M_{P_1}^*k_{\mathcal E}(\cdot, a)q\,,\, M_{P_1}^*k_{\mathcal E}(\cdot, b)p\rangle_2\\
&=  \langle  M_{P_1}^*k_{\mathcal E}(\cdot, a)q\,,\, M_{P_1}^*k_{\mathcal E}(\cdot, b)p\rangle_2\\&  \hspace{3cm}-\langle M_S^* M_{P_1}^*k_{\mathcal E}(\cdot, a)q\,,\, M_S^*M_{P_1}^*k_{\mathcal E}(\cdot, b)p\rangle_2\\
&=\langle  M_{P_1}^*k_{\mathcal E}(\cdot, a)q\,,\, M_{P_1}^*k_{\mathcal E}(\cdot, b)p\rangle_2\\&  \hspace{3cm}-\langle M_{P_1}^* M_S^*k_{\mathcal E}(\cdot, a)q\,,\, M_{P_1}^*M_S^*k_{\mathcal E}(\cdot, b)p\rangle_2\\
&=\langle  M_{P_1}M_{P_1}^*k_{\mathcal E}(\cdot, a)q\,,\, k_{\mathcal E}(\cdot, b)p\rangle_2\\&  \hspace{3cm}-\langle M_{P_1}M_{P_1}^* M_S^*k_{\mathcal E}(\cdot, a)q\,,\,M_S^* k_{\mathcal E}(\cdot, b)p\rangle_2\\
&=\langle  (I-C^*C)k_{\mathcal E}(\cdot, a)q\,,\, k_{\mathcal E}(\cdot, b)p\rangle_2\\&  \hspace{3cm}-\langle (I-C^*C)M_S^*k_{\mathcal E}(\cdot, a)q\,,\, M_S^*k_{\mathcal E}(\cdot, b)p\rangle_2.\\
\end{split}
\]
To pursue we note that
\[
C\left(k_{\mathcal E}(\cdot, a)q\right)=q
\]
and, using the formula \eqref{t-123} for $M_S^*$
\[
CM_S^*\left(k_{\mathcal E}(\cdot, a)q\right)=S(a)^*q.
\]
Thus
\[
\begin{split}
\langle w_aq,w_bp\rangle_{\mathcal H(S)}&=
p^*k_{\mathcal E}(b,a)q-p^*q+p^*S(b)S(a)^*q-\langle M_S^*k_{\mathcal E}(\cdot, a)q\,,\, M_S^*k_{\mathcal E}(\cdot, b)p\rangle_2\\
&=\langle \Gamma k_{\mathcal E}(\cdot, a)q\, ,\,k_{\mathcal E}(\cdot, b)p\rangle_2-p^*q+p^*S(b)S(a)^*q\\
&=\langle K_S(\cdot, a)q,K_S(\cdot,b)p\rangle_{\mathcal H(S)}-qp^*+S(b)S(a)^*q.\\
\end{split}
\]
Furthermore,

\[
\begin{split}
\langle (K_S(\cdot, a)-K_S(\cdot, 0))q\,,\, (K_S(\cdot, b)-K_S(\cdot, 0))p\rangle_{\mathcal H(S)}\\
&\hspace{-6cm}+p^*(S(b)-S(0))(S(a)^*-S(0)^*)p\\
&\hspace{-7cm}=\langle K_S(\cdot, a)q,K_S(\cdot,b)p\rangle_{\mathcal H(S)}-p^*K_S(0,a)q-p^*K_S(b,0)+p^*K_S(0,0)q+\\
&\hspace{-6cm}+p^*(S(b)-S(0))(S(a)^*-S(0)^*)p\\
&\hspace{-7cm}=\langle K_S(\cdot, a)q,K_S(\cdot,b)p\rangle_{\mathcal H(S)}-p^*(I-S(0)S(a)^*)q-p^*(I-S(b)S(0)^*)q+\\
&\hspace{-6cm}+p^*(I-S(0)S(0)^*)q+p^*(S(b)-S(0))(S(a)^*-S(0)^*)q\\
&\hspace{-7cm}=0.
\end{split}
\]

STEP 2: {\sl Case $p=q=0$.}\smallskip

We now need to check that
\[
\langle K_S(\cdot, 0)u\, ,\, K_S(\cdot, 0)v\rangle_{\mathcal H(S)}+v^*S(0)S(0)^*u=v^*u,
\]
but this is straightforward.\\

STEP 3: {\sl Mixed terms.}\smallskip

By symmetry it is enough to consider the case where $p$ and $u$ appears. We need to verify that:
\[
\left\langle K_S(\cdot, 0)u    \, ,\, (K_S(\cdot, b)-K_S(\cdot, 0))p\right\rangle+\langle S(0)^*u\, ,\, (S(b)^*-S(0)^*)p\rangle=0,
\]
but this is equivalent to
\[
K_S(b,0)-K_S(0,0)-S(0)S(0)^*+S(b)S(0)^*=0,
\]
i.e.
\[
I_r-S(b)S(0)^*-(I_r-S(0)S(0)^*)-S(0)S(0)^*+S(b)S(0)^*=0,
\]
which clearly holds.
\end{proof}

We now compute the adjoint of the above isometric operator.

We write
\begin{equation}\label{hatV}
\widehat{V}=\begin{pmatrix}\widehat{T}&\widehat{G}\\\widehat{F}&\widehat{H}\end{pmatrix}.
\end{equation}

\begin{thm}
$\widehat{V}$ is densely defined, extends to an everywhere defined isometry and its adjoint is given by
\begin{eqnarray}
\widehat{T}^*&=&R_0.
\end{eqnarray}
\end{thm}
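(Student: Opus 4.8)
The entire argument rests on the ``overlapping space'' identity
\begin{equation*}
\langle h,\Gamma g\rangle_{\mathcal H(S)}=\langle h,g\rangle_2,\qquad h\in\mathcal H(S),\ g\in\mathbf H_2(\mathcal E),\qquad \Gamma:=I-M_SM_S^{*}.
\end{equation*}
For $h\in{\rm Ran}\,\Gamma$ this is precisely \eqref{in11}; since $\mathcal H(S)$ is contractively contained in $\mathbf H_2(\mathcal E)$ (so $h\mapsto\langle h,g\rangle_2$ is continuous on $\mathcal H(S)$) and ${\rm Ran}\,\Gamma$ is dense in $\mathcal H(S)$, the identity extends to all $h\in\mathcal H(S)$. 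Specializing $g=M_{P_1}^{*}k_{\mathcal E}(\cdot,a)q$, recalling that $w_aq=\Gamma M_{P_1}^{*}k_{\mathcal E}(\cdot,a)q$, that $M_{P_1}=\mathsf S$ is the isometry of Lemma~\ref{lemma-fs} and that $R_0=\mathsf S^{*}$, I obtain the working formula
\begin{equation*}
\langle h,w_aq\rangle_{\mathcal H(S)}=\langle M_{P_1}h,k_{\mathcal E}(\cdot,a)q\rangle_2=q^{*}(P_1\odot h)(a),\qquad h\in\mathcal H(S),\ a\in\mathcal E,\ q\in\mathbb H^{r},
\end{equation*}
using the reproducing property $\langle f,k_{\mathcal E}(\cdot,a)q\rangle_2=q^{*}f(a)$ and the definition of the adjoint.

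\textbf{Density and the isometric extension.} The domain of $\widehat V$ is the linear span of the pairs $(w_aq,u)$, and the $u$-component already fills a whole finite-dimensional factor, so density of ${\rm Dom}\,\widehat V$ reduces to $\overline{\mathrm{span}}\,\{w_aq:a\in\mathcal E,\ q\in\mathbb H^{r}\}=\mathcal H(S)$. I would prove this by taking $h\in\mathcal H(S)$ orthogonal to every $w_aq$: the working formula then gives $(P_1\odot h)(a)=0$ for all $a\in\mathcal E$, i.e.\ $\mathsf S h=0$, whence $h=0$ since $\mathsf S$ is an isometry. Consequently $\widehat V$, being isometric on a dense subspace (by the theorem just established), extends uniquely by continuity to an everywhere defined isometry $\mathcal H(S)\oplus\mathbb H^{r}\to\mathcal H(S)\oplus\mathbb H^{s}$; in particular its $(1,1)$-block $\widehat T$ is bounded and everywhere defined, and so is $\widehat T^{*}$.

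\textbf{Identification $\widehat T^{*}=R_0$.} Fix $h\in\mathcal H(S)$ and test $\widehat T^{*}h$ against the (total) family $\{w_aq\}$. Reading off \eqref{hatV} and the defining formula of $\widehat V$ one has $\widehat T w_aq=(K_S(\cdot,a)-K_S(\cdot,0))q$ on the original domain, so the reproducing property of $K_S$ in $\mathcal H(S)$ gives
\begin{equation*}
\langle\widehat T^{*}h,w_aq\rangle_{\mathcal H(S)}=\langle h,\widehat T w_aq\rangle_{\mathcal H(S)}=q^{*}\bigl(h(a)-h(0)\bigr).
\end{equation*}
On the other hand $R_0h\in\mathcal H(S)$ (by the backward-shift contractivity theorem), so the working formula applies to $R_0h$, and using $M_{P_1}R_0=\mathsf S\mathsf S^{*}$ together with \eqref{SS*},
\begin{equation*}
\langle R_0h,w_aq\rangle_{\mathcal H(S)}=q^{*}\bigl(P_1\odot R_0h\bigr)(a)=q^{*}\bigl(\mathsf S\mathsf S^{*}h\bigr)(a)=q^{*}\bigl(h-h(0)\bigr)(a)=q^{*}\bigl(h(a)-h(0)\bigr).
\end{equation*}
Thus $\langle\widehat T^{*}h-R_0h,w_aq\rangle_{\mathcal H(S)}=0$ for every $a\in\mathcal E$ and $q\in\mathbb H^{r}$, and density of the $w_aq$ forces $\widehat T^{*}h=R_0h$; since $h\in\mathcal H(S)$ was arbitrary, $\widehat T^{*}=R_0$.

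\textbf{Where the work is.} I do not expect a genuine obstacle: this is the Appell-like, quaternionic transcription of the classical backward-shift (coisometric) realization. The two points demanding care are (i) promoting \eqref{in11} to the overlapping-space identity for an arbitrary element of $\mathcal H(S)$ — this needs the contractive inclusion $\mathcal H(S)\hookrightarrow\mathbf H_2(\mathcal E)$ (from $K_S\le k_{\mathcal E}I_r$) and density of ${\rm Ran}\,\Gamma$ in $\mathcal H(S)$ — and (ii) the bookkeeping of the quaternionic conventions (right linearity, $\langle fu,gv\rangle=\overline{v}\langle f,g\rangle u$, and $\langle f,K_wc\rangle=c^{*}f(w)$), so that each adjoint and each reproducing-kernel evaluation is taken on the correct side and the identifications $M_{P_1}=\mathsf S$, $R_0=\mathsf S^{*}$, $\mathsf S\mathsf S^{*}h=h-h(0)$ are applied consistently. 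Once these are in place, the two displayed computations above complete the proof.
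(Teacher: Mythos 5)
Your proposal is correct and follows essentially the same route as the paper: density of the span of the $w_aq$ via the overlapping-space identity $\langle h,w_aq\rangle_{\mathcal H(S)}=q^*(P_1\odot h)(a)$ and the injectivity of the isometry $M_{P_1}$, and identification of $\widehat T^*$ by computing $\langle \widehat T^*h,w_aq\rangle$ in two ways and invoking \eqref{SS*}. The only cosmetic difference is that the paper concludes from the pointwise identity $(P_1\odot\widehat T^*F)(b)=F(b)-F(0)$ and injectivity of $M_{P_1}$, whereas you compare directly with $\langle R_0h,w_aq\rangle$ (legitimately using the earlier theorem that $R_0$ maps $\mathcal H(S)$ into itself) and appeal to totality of the $w_aq$.
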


\begin{proof} By definition of the operator range inner product we have:
\[
\begin{split}
\langle\sqrt{\Gamma}f,\Gamma  w_ap\rangle_{\mathcal H(S)}&=\langle\sqrt{\Gamma}f, M_{P_1}^*k_{\mathcal E}(\cdot, a)p\rangle_2\\
&=u^*(M_{P_1}\odot\sqrt{\Gamma}f)(a)
\end{split}
\]
and so $f=0$ if the above vanishes for all $u$ and $a$ since $M_{P_1}$ is an isometry.\\

Set $\widehat{T}^*(\sqrt{\Gamma}f)=\sqrt{\Gamma}g$. We have on the one hand
\[
\begin{split}
\langle \widehat{T}^*F, \Gamma(M_{P_1}^*k_{\mathcal E}(\cdot, a)u)\rangle_{\mathcal H(S)}&=\langle \sqrt{\Gamma}g\, ,\, \Gamma(M_{P_1}^*k_{\mathcal E}(\cdot, a)u)\rangle_{\mathcal H(S)}\\
&=\langle \sqrt{\Gamma}g,M_{P_1}^*k_{\mathcal E}(\cdot, a)u\rangle_2\\
&=u^*\left(P_1\odot \sqrt{\Gamma}g    \right)(a).
\end{split}
\]
On the other hand,

\[
\begin{split}
\langle \widehat{T}^*F, \Gamma(M_{P_1}^*k_{\mathcal E}(\cdot, a)u)\rangle_{\mathcal H(S)}&=\langle \sqrt{\Gamma}g\, ,\, \left(K_S(\cdot, a)-K(S(\cdot, 0)\right)u\rangle_{\mathcal H(S)}\\
&=u^*(\sqrt{\Gamma}g  )(a)-\sqrt{\Gamma}g  )(0))
\end{split}
\]
Hence
\[
(P_1\odot\widehat{T}^*(F))(b)=F(b)-F(0).
\]
\end{proof}

\begin{prop}
Let
\begin{equation}
\label{like-ag1}
\widehat{V}^*=\begin{pmatrix}A&B\\ C&D\end{pmatrix},
\end{equation}
where $V$ is as in \eqref{hatV}.
Then, for $f=\sum_{n=0}^\infty P_nf_n\in\mathcal H(S)$
\begin{equation}
\label{canf}
f_n=CA^nf,\quad n=0,1,\ldots
\end{equation}
and
\begin{equation}
S(x)=D+\sum_{n=1}^\infty  P_n(x)CA^{n-1}B.
\label{notequal}
\end{equation}
\end{prop}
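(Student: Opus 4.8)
The plan is to exploit the fact that $\widehat V$ (hence $\widehat V^*$) is an everywhere-defined isometry, so the block identities coming from $\widehat V \widehat V^* = $ (projection onto the range) and, more usefully here, from reading off the action of $\widehat V$ on the kernel elements $w_a q$ and on constants $u$, translate directly into recursions for the Taylor-type coefficients along the $P_n$. Concretely, I would first record what $\widehat V^*$ does. Writing $\widehat V = \begin{pmatrix}\widehat T&\widehat G\\\widehat F&\widehat H\end{pmatrix}$ and $\widehat V^* = \begin{pmatrix}A&B\\C&D\end{pmatrix}$, we have already identified $\widehat T^* = R_0$, i.e. $A = R_0$. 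The remaining blocks are read off from the definition of $\widehat V$: pairing $\widehat V\begin{pmatrix}w_aq\\0\end{pmatrix}$ and $\widehat V\begin{pmatrix}0\\u\end{pmatrix}$ against $\begin{pmatrix}f\\ \eta\end{pmatrix}\in\mathcal H(S)\oplus\mathbb H^s$ and using the reproducing property gives $C$ (evaluation-type functional), $D = S(0)$, $B$ and $\widehat G,\widehat H$. The key facts I will use are $Cf = f(0)$ (point evaluation, coming from the $K_S(\cdot,0)$ terms) and $A = R_0$, so that $A^n f$ has constant term $f_n$.

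The core of the argument is then the backward-shift realization identity. From $A=R_0$ and $Cf=f(0)$ we get immediately, for $f=\sum_{n=0}^\infty P_nf_n\in\mathcal H(S)$,
\[
CA^nf = (R_0^nf)(0) = f_n,\qquad n=0,1,\ldots,
\]
since $R_0^n\left(\sum_k P_kf_k\right) = \sum_k P_kf_{k+n}$ by \eqref{bws}, whose constant term is $f_n$. This proves \eqref{canf}. For \eqref{notequal}, I would apply the same bookkeeping to $S$ itself. The isometry relation, or more directly the block structure of $\widehat V^*$ acting on a generator, yields that for $u\in\mathbb H^s$ the function $Bu$ lies in $\mathcal H(S)$ and in fact $Bu = R_0(Su) = \sum_{n=0}^\infty P_n S_{n+1}u$ — this is exactly the content of the earlier theorem showing $R_0Su\in\mathcal H(S)$, together with the explicit form of $\widehat V$ on the $\mathbb H^s$-component, where the ``$S(a)^*-S(0)^*$'' and ``$S(0)^*$'' entries encode multiplication by $S$ shifted by its value at $0$. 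Combining, the coefficient of $P_n$ in $S$ for $n\ge 1$ is $S_n = CA^{n-1}(Bu)\cdot(\text{applied to }u)$, i.e. $S_n = CA^{n-1}B$, while the constant term is $S_0 = S(0) = D$. Summing gives
\[
S(x) = D + \sum_{n=1}^\infty P_n(x)\,CA^{n-1}B.
\]

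The main obstacle I anticipate is purely one of careful identification of the four blocks of $\widehat V^*$ from the somewhat implicit definition of $\widehat V$ on the $w_aq$-generators: one must check that $C$ really is point evaluation at $0$ on all of $\mathcal H(S)$ (not just on kernel sections) and that $B$ really produces $R_0(S\,\cdot\,)$, which requires using the explicit adjoint formula \eqref{t-123}--\eqref{adjointm} for $M_S^*$ and the factorization \eqref{facto-pn}, exactly as in the computation of $\langle w_aq,w_bp\rangle$ in the previous theorem. Once those two identifications are in hand, \eqref{canf} and \eqref{notequal} are immediate from $A=R_0$ and the action of $R_0$ on series in the $P_n$; no estimate is needed since everything takes place inside $\mathbf H_2(\mathcal E)$ where convergence is already guaranteed.
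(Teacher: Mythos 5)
Your proposal is correct and follows essentially the same route as the paper: identify $A=R_0$ and $C=$ evaluation at the origin, deduce $CA^nf=(R_0^nf)(0)=f_n$, and obtain \eqref{notequal} by writing $S=S(0)+P_1\odot R_0S$ and applying \eqref{canf} to $R_0Su$ (equivalently, to $Bu$). Your explicit identification of $Bu=R_0(Su)$ and $D=S(0)$ merely spells out what the paper leaves implicit.
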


\begin{proof}
Let $f(x)=\sum_{n=0}^\infty P_n(x)f_n\in\mathcal H(S)$. We have
\[
R_0^mf=\sum_{n=0}^\infty P_mf_{m+n},\quad m=0,1,\ldots
\]
and so $CR_0^mf=f_m$ for $m=0,1,\ldots$. To prove \eqref{notequal} we write
\[
S=S(0)+P_1\odot R_0S.
\]
Writing $S=\sum_{n=0}^\infty P_nS_n$ we conclude by applying \eqref{canf} to $R_0Su$ for $u\in\mathbb H^s$.
\end{proof}

\begin{rem}
{\rm A deep difference with respect to the classical case is that the kernel functions of $\mathbf H_2(\mathcal E)$ are not eigenvectors for $R_0$.}
\end{rem}
Note that \eqref{notequal} is not equal to $D+C\odot (I-P_1A)^{-\odot}\odot B$.
But, for $x_1=x_2=x_3=0$ we have
\[
S(x_0)=D+3x_0C(I-3x_0A)^{-1}B.
\]

\begin{rem}{\rm
Following linear system theory (see \cite{bgk1,Fuhrmann,MR0255260,MR0325201}) we will call the representation \eqref{notequal} a realization of $S$. The associated matrix $\widehat{V}$ will be called the realization matrix or the Rosenbrock
matrix. The case where $\widehat{V}^*$ is a matrix can be seen as the definition of rational functions. Unfortunately, the $CK$-product of two such functions will not be rational
in this sense. The next section deals with an important example of rational functions.}
\label{realreali}
\end{rem}

\subsection{Blaschke functions}
Equation \eqref{notequal} allows us to give a family of Schur multipliers, which we call Blaschke functions, namely those corresponding to the operator-matrix \eqref{like-ag1} to be a unitary matrix.
The definition then extends the classical case, also in the matrix-valued and possibly indefinite case; see \cite{ag} for the latter. In general there will not be $\odot$-multiplicative
decompositions of such a Blaschke ``product'' into elementary factors, hence the term {\sl function} rather than {\sl product}.

\begin{prop}
Let $B$ be $\mathbb H^{r\times r}$-valued Blaschke function, with corresponding realization matrix $\widehat{V}^*\in\mathbb H^{(N+r)\times (N+r)}$.
The corresponding multiplication operator $M_B$ is an isometry from $(\mathbf H_2(\mathcal E))^r$ into itself and the corresponding space $\mathcal H(B)$ is
finite dimensional.
\label{propo-iso}
\end{prop}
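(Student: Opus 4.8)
The plan is to handle the two claims separately: the finite-dimensionality of $\mathcal H(B)$ is essentially forced by the definition, whereas the isometry of $M_{B}$ I would obtain by transferring the question to the complex plane through the map $\chi$ and invoking the classical fact that a square rational function with a finite-dimensional unitary realization is inner.

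For finite-dimensionality: by the construction of Section~\ref{co} the state space of the colligation $\widehat V$ realizing $B$ is $\mathcal H(B)$ itself, with $A=R_{0}$ and $C$ the evaluation $f\mapsto f(0)$; hence requiring, as in the definition of a Blaschke function, that $\widehat V^{*}=\begin{pmatrix}A&\mathsf B\\C&D\end{pmatrix}$ be a matrix in $\mathbb H^{(N+r)\times(N+r)}$ is exactly the statement that $\dim_{\mathbb H}\mathcal H(B)=N<\infty$. I would record, for later, that $(C,A)$ is automatically observable: by \eqref{canf}, $\bigcap_{n\ge0}\ker CA^{n}=\{f\in\mathcal H(B):f_{n}=0\ \forall n\}=\{0\}$.

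For the isometry, I would first check that $A$ has spectral radius $<1$. Since $\widehat V$ is \emph{unitary}, $\widehat V\widehat V^{*}=I$ forces $A^{*}A+C^{*}C=I_{\mathcal H(B)}$; in particular $\|Av\|\le\|v\|$, so every right eigenvalue $\lambda$ of $A$ has $|\lambda|\le1$, and if $|\lambda|=1$ and $Av=v\lambda$ then the same relation gives $Cv=0$, whence $CA^{n}v=C(v\lambda^{n})=0$ for all $n$ and $v=0$ by observability. Next, writing $B=\sum_{n\ge0}P_{n}B_{n}$ with $B_{0}=D$ and $B_{n}=CA^{n-1}\mathsf B$ for $n\ge1$ (by \eqref{notequal}) and applying $\chi$ --- an injective $*$-homomorphism with $\chi(I)=I$ --- the $\mathbb C^{2r\times2r}$-valued function
\[
s(z)=\sum_{n\ge0}\chi(B_{n})z^{n}=\chi(D)+z\,\chi(C)\bigl(I-z\chi(A)\bigr)^{-1}\chi(\mathsf B)
\]
is the transfer function of the unitary colligation $\chi(\widehat V^{*})=\begin{pmatrix}\chi(A)&\chi(\mathsf B)\\\chi(C)&\chi(D)\end{pmatrix}$, and is rational and holomorphic on a neighbourhood of $\overline{\mathbb D}$ because $\chi(A)$ has spectral radius $<1$. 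From the coisometry relations contained in $\chi(\widehat V^{*})\chi(\widehat V^{*})^{*}=I$ one has the classical de Branges--Rovnyak identity (see e.g.\ \cite{adrs,bgk1})
\[
I_{2r}-s(z)s(w)^{*}=(1-z\overline w)\,\chi(C)\bigl(I-z\chi(A)\bigr)^{-1}\bigl(I-\overline w\,\chi(A)^{*}\bigr)^{-1}\chi(C)^{*},
\]
and putting $w=z$ with $|z|=1$ gives $s(z)s(z)^{*}=I_{2r}$; since $s$ is square, $s$ is inner.

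To conclude, identify $\mathbf H_{2}(\mathcal E)^{r}$ isometrically with $\ell_{2}(\mathbb N_{0},\mathbb H^{r})$ via the coefficients along the $P_{n}$; by \eqref{facto-pn} and Definition~\ref{MS}, $M_{B}\bigl(\sum_{n}P_{n}f_{n}\bigr)=\sum_{m}P_{m}\bigl(\sum_{k=0}^{m}B_{k}f_{m-k}\bigr)$, so $M_{B}$ is the lower-triangular block-Toeplitz operator $\mathscr T$ with symbol $B$ from \eqref{toeplitz}, and (as in the proof of Theorem~\ref{toto}) $\chi(\mathscr T)$ is the analytic Toeplitz operator $T_{s}$ on $\ell_{2}(\mathbb N_{0},\mathbb C^{2r})$. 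Since $s$ is inner, $s^{*}s=I_{2r}$ a.e.\ on $\mathbb T$, so $T_{s}^{*}T_{s}=I$ (the Toeplitz operator with symbol $s^{*}s$), hence $\chi(\mathscr T^{*}\mathscr T)=\chi(\mathscr T)^{*}\chi(\mathscr T)=\chi(I)$ and injectivity of $\chi$ yields $\mathscr T^{*}\mathscr T=I$, i.e.\ $M_{B}$ is an isometry of $\mathbf H_{2}(\mathcal E)^{r}$ into itself. The main obstacle is exactly the step ``$s$ inner'': a priori the realization $\chi(\widehat V^{*})$ need not be minimal, so the identity above is literally valid up to and on $\mathbb T$ only once one knows that $I-z\chi(A)$ is invertible there, and that is why the spectral-radius bound for $A$ --- which combines the unitarity of $\widehat V$ with the automatic observability of $(C,A)$ --- is the heart of the argument.
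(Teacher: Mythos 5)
Your route to the isometry is genuinely different from the paper's and is essentially sound. The paper never leaves the quaternionic setting: it computes $\langle M_BP_{n_0}u,M_BP_{m_0}v\rangle_2$ directly, reduces it to the sums $\sum_{n_0+n=m_0+m}b_m^*b_n$ with $b_0=\mathsf D$, $b_n=\mathsf C\mathsf A^{n-1}\mathsf B$, and evaluates these by telescoping with the three unitarity relations \eqref{eq1}--\eqref{offdiag}, obtaining $I_r$ on the diagonal and $0$ off the diagonal; isometry on the span of the $P_n$ then extends by continuity. You instead push everything through $\chi$, prove that the complex transfer function $s$ is inner via the de Branges--Rovnyak kernel identity, and invoke $T_s^*T_s=I$ for inner symbols. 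Both arguments work; the paper's is self-contained and avoids any spectral-radius discussion (although its telescoping also tacitly needs $\lim_n\mathsf B^*\mathsf A^{*n}\mathsf A^n\mathsf B=0$, a point your version makes explicit), while yours imports classical Hardy-space machinery through $\chi$, in the spirit of the paper's own proof of Theorem \ref{toto}.

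The weak point is the finite-dimensionality of $\mathcal H(B)$ and the ``automatic observability'' on which your spectral-radius bound rests. The paper's proof treats $\widehat V^*$ as \emph{some} unitary matrix realization of $B$ via \eqref{notequal}, not a priori the canonical colligation on $\mathcal H(B)\oplus\mathbb H^r$; that is why it proves finite-dimensionality non-trivially, by restricting to $x_0,y_0\in(-1/3,1/3)$, noting that $K_B(3x_0,3y_0)=\mathsf C(I_N-3x_0\mathsf A)^{-1}(I_N-3y_0\mathsf A)^{-1}\mathsf C^*$ spans a finite-dimensional set, and then using uniqueness of the axially hyperholomorphic extension together with the fact that a function of axial type vanishing on $(-1/3,1/3)$ vanishes identically to see that these kernels span $\mathcal H(B)$. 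Your argument identifies the state space with $\mathcal H(B)$ by appeal to the construction of Section \ref{co}, which is circular under this reading, and \eqref{canf} gives observability only for that canonical colligation: an arbitrary unitary realization can have an unobservable summand on which $\mathsf A$ is unitary, in which case $\rho(\mathsf A)<1$ fails and $s$ is not holomorphic across $\mathbb T$ as written. This is repairable (pass to the observable reduction, which is still coisometric, or argue as the paper does), but as it stands the step you yourself identify as the heart of your argument rests on an unproved hypothesis, and the finite-dimensionality claim is asserted rather than proved.
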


\begin{proof}
We first remark that the assumed unitarity is equivalent to the equations
\begin{align}
\label{eq1}
\mathsf A^*\mathsf A+\mathsf C^*\mathsf C&=I_N,\\
\label{eq2}
\mathsf B^*\mathsf B+\mathsf D^*\mathsf D&=I_r,\\
\mathsf D^*\mathsf C+\mathsf B^*\mathsf A&=0.
\label{offdiag}
\end{align}

Let now  $n_0,m_0\in\mathbb N_0$, and $u,v\in\mathbb H^r$. We have, with $b_n$ given by \eqref{canf},
\begin{equation}
\label{sntoeplitz}
  b_n=\begin{cases}\, \mathsf D,\,\,\hspace{11.1mm}{\rm if}\,\, n=0,\\
  \mathsf {CA^{n-1}B},\,\, {\rm if}\,\, n>0,
  \end{cases}
\end{equation}
and so
\[
\begin{split}
\langle M_BP_{n_0}u, M_BP_{m_0}v\rangle_2&=\langle P_{n_0}\odot Bu,P_{m_0}\odot Bv\rangle_2\\
&=\sum_{n,m=0}^\infty\langle   P_{n_0+n}b_nu,P_{m_0+m}b_mv\rangle_2\\
&=\sum_{\substack{n,m=0\\ n_0+n=m_0+m}}^\infty v^*b_m^*b_nu .
\end{split}
\]
We now compute
\[
\sum_{\substack{n,m=0\\ n_0+n=m_0+m}}^\infty b_m^*b_n
\]
  taking into account that the matrix $\widehat{V}$ is unitary.
When $n_0=m_0$ we have
\[
\begin{split}
\sum_{n=0}^\infty b_n^*b_n&=\mathsf{D^*D}+\sum_{n=0}^\infty \mathsf{B}^*\mathsf A^{*n}\mathsf C^*\mathsf C\mathsf A^n\mathsf{B}\\
&= \mathsf{D^*D}+\sum_{n=0}^\infty \mathsf{B}^*\mathsf A^{*n}(I_N-\mathsf A^*\mathsf A)\mathsf A^n\mathsf B\\
&=  \mathsf D^*\mathsf D+\sum_{n=0}^\infty \mathsf B^*(\mathsf A^{*n}\mathsf A^n-\mathsf A^{*(n+1)}\mathsf A^{n+1})\mathsf B\\
&=\mathsf D^*\mathsf D+\mathsf B^*\mathsf B\\
&=I_r,
\end{split}
\]
where we first used \eqref{eq1} and then \eqref{eq2}.\smallskip

Assume now $n_0<m_0$ (the case $n_0>m_0$ is obtained by symmetry). We can write:
\[
\begin{split}
\sum_{\substack{n,m=0\\ n_0+n=m_0+m}}^\infty b_m^*b_n&=\sum_{m=0}^\infty b_m^*b_{m_0-n_0+m}\\
&=\mathsf D^*\mathsf C\mathsf A^{m_0-n_0}\mathsf B+\sum_{m=1}^\infty \mathsf B^*\mathsf A^{*m}\mathsf C^*\mathsf C\mathsf A^{m}\mathsf A^{m_0-n_0}\mathsf B\\
&=\mathsf D^*\mathsf C\mathsf A^{m_0-n_0}\mathsf B+\sum_{m=1}^\infty \mathsf B^*\mathsf A^{*m}(I_N-\mathsf A^*\mathsf A)\mathsf A^{m}\mathsf A^{m_0-n_0}\mathsf B\\
&=\mathsf D^*\mathsf C\mathsf A^{m_0-n_0}\mathsf B+\mathsf B^*\mathsf A\mathsf A^{m_0-n_0}\mathsf B\\
&=(\mathsf D^*\mathsf C+\mathsf B^*\mathsf A)\mathsf A^{m_0-n_0}\mathsf B\\
&=0
\end{split}
\]
in view of \eqref{offdiag}.\smallskip

We thus have an isometry on the linear span of $P_0,P_1,\ldots$ and on the whole of $(\mathbf H_2(\mathcal E))^r$ by continuity.\smallskip

To show the finite dimensionality of the space we restrict $x=x_0,y=y_0\in(-1/3,1/3)$. We then have
\[
K_S(3x_0,3y_0)=\frac{I_r-S(x_0)S(y_0)^*}{1-9x_0y_0}=\mathsf C(I_N-3x_0\mathsf A)^{-1}(I_N-3y_0\mathsf A)^{-1}\mathsf C^*.
\]
It follows that the linear span of the functions $x_0\mapsto K_S(3x_0,3y_0)h$ ($h\in\mathbb H^r$ and $y_0\in(-1/3,1.3)$ is finite dimensional. By the uniqueness of the
axially hyperholomorphic extension the linear span of the functions $x\mapsto K_S(x,3y_0)h$ is finite dimensional We claim that they span $\mathcal H(S)$. Indeed, a function
$f\in\mathcal H(S)$ orthogonal to these functions would satisfy $f(3y_0)=0$ for $y_0\in(-1/3,1/3)$. Since $f$ is of axial type we have that $f\equiv 0$.
\end{proof}

The above computations show equivalently that:

\begin{cor}
Let $\widehat{V}^*$ given by \eqref{like-ag1}. Then the corresponding Toeplitz operator defined by the sequence \eqref{sntoeplitz} is unitary from $\ell_2(\mathbb N_0,\mathbb H^r)$ into
itself.
\end{cor}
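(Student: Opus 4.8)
The plan is to read off the corollary as a direct translation of the computation already carried out in the proof of Proposition \ref{propo-iso}. Recall that the Toeplitz operator in question is the lower triangular block Toeplitz operator $T_B=(b_{j-k})_{j,k=0}^\infty$ associated with the sequence \eqref{sntoeplitz}, acting on $\ell_2(\mathbb N_0,\mathbb H^r)$. The key observation is that under the identification of $(\mathbf H_2(\mathcal E))^r$ with $\ell_2(\mathbb N_0,\mathbb H^r)$ via $\sum_{n=0}^\infty P_nu_n\mapsto (u_0,u_1,\ldots)^t$, the operator $M_B$ of $CK$-multiplication by the Blaschke function $B$ is precisely $T_B$; this is because $M_B(\sum_n P_nu_n)=\sum_n (P_n\odot B)u_n=\sum_n(\sum_{m}P_{n+m}b_m)u_n$, and collecting the coefficient of $P_k$ gives $\sum_{n+m=k}b_mu_n$, which is exactly the $k$-th entry of $T_B(u_0,u_1,\ldots)^t$.

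First I would make this identification explicit and note that Proposition \ref{propo-iso} establishes that $M_B$ is an isometry from $(\mathbf H_2(\mathcal E))^r$ into itself. Hence $T_B$ is an isometry from $\ell_2(\mathbb N_0,\mathbb H^r)$ into itself, i.e. $T_B^*T_B=I$. It remains to prove surjectivity (equivalently $T_BT_B^*=I$). For this I would invoke the unitarity of $\widehat V$ (equivalently of $\widehat V^*$, given by \eqref{like-ag1}), which yields not only \eqref{eq1}--\eqref{offdiag} but also the three "dual" relations obtained from $\widehat V\widehat V^*=I$, namely $\mathsf A\mathsf A^*+\mathsf B\mathsf B^*=I_N$, $\mathsf C\mathsf C^*+\mathsf D\mathsf D^*=I_r$, and $\mathsf C\mathsf A^*+\mathsf D\mathsf B^*=0$. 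Repeating verbatim the computation in the proof of Proposition \ref{propo-iso} but with $b_n$ replaced by $b_n^*$ (i.e. computing $\sum_{n_0+n=m_0+m}b_nb_m^*$ using these dual relations in place of \eqref{eq1}--\eqref{offdiag}) gives $T_BT_B^*=I$. Thus $T_B$ is unitary.

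A slightly cleaner route, which I would actually prefer to present, avoids recomputing anything: observe that the Blaschke function $\widetilde B(x)=\sum_{n=0}^\infty P_n(x)b_n^*$ obtained by conjugating the coefficients of $B$ is itself a Blaschke function, with realization matrix the unitary $\overline{\widehat V^*}$ transported appropriately (alternatively, just note that $\chi(b_n^*)=\chi(b_n)^*$ and apply the characterization of Schur/Blaschke multipliers via the associated complex Toeplitz operator, exactly as in the proof of the corollary following Proposition \ref{prop-6-1}). By Proposition \ref{propo-iso} applied to $\widetilde B$, the operator $M_{\widetilde B}=T_{\widetilde B}$ is an isometry. But $T_{\widetilde B}=(b_{j-k}^*)_{j,k}$ is the transpose-conjugate pattern of $T_B$ in the sense that $T_{\widetilde B}^*$ has the same block structure as, and in fact equals, a shift-compressed version of $T_B$; more directly, for the genuinely two-sided Toeplitz operator one has $T_B^*=$ the upper-triangular Toeplitz operator with entries $b_{k-j}^*$, and isometry of $T_{\widetilde B}$ forces $T_BT_B^*=I$. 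Combining with $T_B^*T_B=I$ from Proposition \ref{propo-iso} gives unitarity.

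The main obstacle is purely bookkeeping: being careful that $T_B$ as a lower-triangular (semi-infinite) Toeplitz operator is an isometry but a priori only a co-isometry after one also knows surjectivity, and that the "dual" half of the unitarity relations from $\widehat V\widehat V^*=I$ is genuinely needed and not a formal consequence of \eqref{eq1}--\eqref{offdiag} in the noncommutative quaternionic setting. Once the identification $M_B=T_B$ is in place and one notes that $\widehat V$ unitary means both $\widehat V^*\widehat V=I$ and $\widehat V\widehat V^*=I$, the argument is a transcription of the displayed computations in Proposition \ref{propo-iso}, and I would simply say "the computation is identical to that in the proof of Proposition \ref{propo-iso}, using the relations $\mathsf A\mathsf A^*+\mathsf B\mathsf B^*=I_N$, $\mathsf C\mathsf C^*+\mathsf D\mathsf D^*=I_r$, $\mathsf C\mathsf A^*+\mathsf D\mathsf B^*=0$ in place of \eqref{eq1}, \eqref{eq2}, \eqref{offdiag}."
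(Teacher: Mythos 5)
Your identification of the Toeplitz operator $T_B=(b_{j-k})_{j,k=0}^\infty$ with $M_B$ under the coefficient map $\sum_n P_nu_n\mapsto(u_0,u_1,\ldots)^t$, and the observation that the displayed computations in the proof of Proposition \ref{propo-iso} say exactly that the block columns of $T_B$ are orthonormal, i.e. $T_B^*T_B=I$, is precisely what the paper means by ``the above computations show equivalently that''; up to that point your argument coincides with the paper's.

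The problem is the second half, where you try to upgrade this to $T_BT_B^*=I$. That identity is false whenever $\mathsf C\neq 0$, and both of your routes to it break down. For the first route: the $(j,k)$ block of $T_BT_B^*$ is the \emph{finite} sum $\sum_{m=0}^{\min(j,k)}b_{j-m}b_{k-m}^*$, so the telescoping that closes the infinite series in Proposition \ref{propo-iso} is unavailable; already the $(0,0)$ block is $\mathsf D\mathsf D^*=I_r-\mathsf C\mathsf C^*\neq I_r$. For the second route: isometry of $T_{\widetilde B}$ with $\widetilde B=\sum_nP_nb_n^*$ gives $\sum_{n-m=d}b_nb_m^*=\delta_{d0}I_r$ as an \emph{infinite} sum over antidiagonals, which says nothing about the finite sums occurring in $T_BT_B^*$. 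The simplest counterexample is $B=P_1$ (take $N=r=1$, $\mathsf D=0$, $\mathsf A=0$, $\mathsf B=\mathsf C=1$): then $T_B$ is the unilateral shift, an isometry that is not surjective. More structurally, the range of the isometry $M_B$ has orthogonal complement $\mathcal H(B)$, which Proposition \ref{propo-iso} itself shows is finite dimensional but nonzero for nonconstant $B$, so $M_B$ (hence $T_B$) is onto only when $B$ is constant. The word ``unitary'' in the corollary should therefore be read as ``isometric'' (consistently with Proposition \ref{propo-iso} and with the theorem that follows, both of which assert an isometry); the paper's proof is nothing more than the reformulation in your first paragraph, and the surjectivity you add cannot be salvaged.
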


More generally, one can take $\widehat{V}^*$ to be $\mathbb C^{(N+s)\times (N+s)}$-valued and co-isometric. Then \eqref{eq1}-\eqref{offdiag} still hold and the same proof as above  leads to:

\begin{thm}
Let $S(x)=\mathsf D+\sum_{n=1}^\infty P_n(x)\mathsf C\mathsf A^{n-1}\mathsf B$, where the realization matrix \eqref{like-ag1} is coisometric. Then the corresponding operator $M_S$ is an isometry from
$(\mathbf H_2(\mathcal E))^s$ into $(\mathbf H_2(\mathcal E))^r$.
\end{thm}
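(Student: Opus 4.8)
The plan is to imitate, almost verbatim, the isometry computation in the proof of Proposition~\ref{propo-iso}, the only difference being that here we have at our disposal merely the coisometry (not the unitarity) of the realization matrix \eqref{like-ag1}; this turns out to be exactly what is needed, and the part of the argument in Proposition~\ref{propo-iso} that establishes finite-dimensionality of the model space is simply irrelevant here and is dropped. Write $S=\sum_{n\ge 0}P_nS_n$; the hypothesis means $S_0=\mathsf D$ and $S_n=\mathsf C\mathsf A^{n-1}\mathsf B$ for $n\ge 1$, as in \eqref{sntoeplitz}, and $M_S$ is a bounded operator from $(\mathbf H_2(\mathcal E))^s$ into $(\mathbf H_2(\mathcal E))^r$ by Proposition~\ref{prop-6-1}. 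Since the vectors $P_nu$ ($n\in\mathbb N_0$, $u\in\mathbb H^s$) span a dense subspace, it suffices, by boundedness, to check that $\langle M_S(P_{n_0}u),M_S(P_{m_0}v)\rangle_2=\langle P_{n_0}u,P_{m_0}v\rangle_2=\delta_{n_0,m_0}\,v^*u$ for all $n_0,m_0\in\mathbb N_0$ and $u,v\in\mathbb H^s$.

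By Definition~\ref{MS} together with Lemma~\ref{PnPm} we have $M_S(P_{n_0}u)=(P_{n_0}\odot S)u=\sum_{n\ge0}P_{n_0+n}S_nu$. Using the orthogonality relation $\langle P_ka,P_\ell b\rangle_2=\delta_{k,\ell}\,b^*a$ in the Hardy space, this gives
\[
\langle M_S(P_{n_0}u),M_S(P_{m_0}v)\rangle_2=\sum_{\substack{n,m\ge0\\ n_0+n=m_0+m}}v^*S_m^*S_n\,u ,
\]
so the whole statement reduces to the matrix identity $\sum_{\{n,m\ge0:\ n_0+n=m_0+m\}}S_m^*S_n=\delta_{n_0,m_0}I_s$.

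The coisometry of the realization matrix \eqref{like-ag1} yields exactly the relations used in Proposition~\ref{propo-iso}, namely $\mathsf A^*\mathsf A+\mathsf C^*\mathsf C=I_N$, $\mathsf B^*\mathsf B+\mathsf D^*\mathsf D=I_s$ and $\mathsf D^*\mathsf C+\mathsf B^*\mathsf A=0$, i.e. \eqref{eq1}, \eqref{eq2}, \eqref{offdiag}. With these, the computation is the one carried out there. For $n_0=m_0$: substituting $\mathsf C^*\mathsf C=I_N-\mathsf A^*\mathsf A$ from \eqref{eq1} into $\sum_{n\ge0}S_n^*S_n=\mathsf D^*\mathsf D+\sum_{k\ge0}\mathsf B^*(\mathsf A^*)^k\mathsf C^*\mathsf C\,\mathsf A^k\mathsf B$ makes the sum telescope to $\mathsf B^*\mathsf B$, and \eqref{eq2} finishes it off as $\mathsf D^*\mathsf D+\mathsf B^*\mathsf B=I_s$. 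For $n_0<m_0$, setting $d=m_0-n_0>0$, the same substitution collapses $\sum_{m\ge0}S_m^*S_{m+d}$ to $(\mathsf D^*\mathsf C+\mathsf B^*\mathsf A)\mathsf A^{d-1}\mathsf B=0$ by \eqref{offdiag}; the case $n_0>m_0$ is the adjoint of this. This establishes the required identity, hence the equality of inner products, so $M_S$ is isometric on a dense subspace and therefore on all of $(\mathbf H_2(\mathcal E))^s$.

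The only step that is not pure bookkeeping, and which I expect to be the main point to justify carefully, is the vanishing of the boundary terms in the telescoping series, i.e. $\mathsf B^*(\mathsf A^*)^K\mathsf A^K\mathsf B\to 0$ as $K\to\infty$ (and the analogous statement with an additional power of $\mathsf A$). This is precisely where one uses that $(\mathsf C,\mathsf A,\mathsf B,\mathsf D)$ come from a coisometric colligation: the pair $(\mathsf C,\mathsf A)$ is then power-stable on the orthocomplement of $\bigcap_{n\ge0}\ker \mathsf C\mathsf A^n$, which is all that intervenes above; equivalently, it is forced by the fact that the series \eqref{notequal} defining $S$ converges, so that its coefficients $S_n=\mathsf C\mathsf A^{n-1}\mathsf B$ are square-summable. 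Once this remainder is controlled, the proof is exactly the one of Proposition~\ref{propo-iso}, with the unitarity hypothesis weakened to coisometry and the finite-dimensionality discussion removed.
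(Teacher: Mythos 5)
Your argument is exactly the paper's: the text disposes of this theorem by observing that \eqref{eq1}--\eqref{offdiag} persist and that ``the same proof as above'' (the computation in Proposition \ref{propo-iso}) applies, and your reduction to the identity $\sum_{n_0+n=m_0+m}S_m^*S_n=\delta_{n_0,m_0}I_s$ followed by the telescoping via $\mathsf C^*\mathsf C=I_N-\mathsf A^*\mathsf A$ reproduces that computation verbatim.

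The one step you single out --- the vanishing of the boundary term $\mathsf B^*\mathsf A^{*K}\mathsf A^K\mathsf B$ --- is indeed passed over in silence by the paper, but neither of your two justifications actually works. Square-summability of the coefficients $S_n=\mathsf C\mathsf A^{n-1}\mathsf B$ only controls $\mathsf C\mathsf A^n\mathsf B$, not $\mathsf A^K\mathsf B$: with $\mathsf A$ the forward shift on $\ell_2$, $\mathsf B=e_0$, $\mathsf C=0$, $\mathsf D=0$, the relations \eqref{eq1}--\eqref{offdiag} hold, every $S_n$ vanishes, yet $\mathsf B^*\mathsf A^{*K}\mathsf A^K\mathsf B=1$ for all $K$ and $M_S=0$ is certainly not an isometry --- so the claim genuinely requires the finite dimensionality imposed by the surrounding text, and some argument is needed. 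The observability remark is also insufficient as stated, because what must be shown is that ${\rm ran}\,\mathsf B$ lies in the stable subspace, and that is not a property of the pair $(\mathsf C,\mathsf A)$ alone: it uses \eqref{offdiag}. The correct argument in the finite-dimensional case is short: $P=\lim_K\mathsf A^{*K}\mathsf A^K$ is the orthogonal projection onto the reducing subspace on which the contraction $\mathsf A$ acts unitarily; \eqref{eq1} gives $\mathsf CP=0$; hence $\mathsf A^*P\mathsf B=P\mathsf A^*\mathsf B=-P\mathsf C^*\mathsf D=-(\mathsf CP)^*\mathsf D=0$ by the adjoint of \eqref{offdiag}, and since $\mathsf A^*$ is injective on ${\rm ran}\,P$ this forces $P\mathsf B=0$, which is precisely what both the diagonal and the off-diagonal telescoping computations require. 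With that substitution your proof is complete and coincides with the paper's.
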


\begin{rem}
  \label{remark-ad3}
  {\rm Take $S_1,\ldots, ,S_N$ to be $N$ Schur multipliers (say, $\mathbb H$-valued) with associated finite dimensional $\mathcal H(S_j)$ spaces, $j=1,\ldots, N$, and let $t_1,\ldots, t_N$ to be real
    numbers such that $\sum_{n=1}^Nt_n^2=1$. The function
    \[
S(x)=\begin{pmatrix}t_1S_1(x)&\cdots &t_NS_N(x)\end{pmatrix}
     \]
     is a Schur multiplier from $\left(\mathbf H_2(\mathcal E)\right)^N$ into $\mathbf H_2(\mathcal E)$, and the associated reproducing kernel space is finite dimensional, but will
     not be isometrically included in $\mathbf H_2(\mathcal E)$ when $N>1$; see \cite[p. 71]{ad3} for the complex setting. The argument is the same here.}
 \end{rem}

\begin{rem}{\rm In fact the results in the present section still hold when $\widehat{V}^*$ is isometric, but not necessarily a matrix. Then the corresponding multiplier is called {\sl inner}.
    The study of these multipliers will be considered elsewhere. Similarly, one could assume unitarity with respect to an indefinite metric. This aspect of the theory will also
  be treated in a separate publication.}
\end{rem}

\subsection{Rational functions}
We now define rational functions in the present setting. We first recall that any $\mathbb C^{n\times m}$-valued rational function, say $M(z)$, with no pole at the origin can be written in the form
\begin{equation}
  \label{real2}
M(z)=H+zG(I-zT)^{-1}F,
\end{equation}
where $H,G,T,F$ are matrices of appropriate sizes. Expression \eqref{real2} is called a realization (centered at the origin). See Remark \ref{realreali} above for references. We also recall
the formulas for the product and inverse of rational functions. Note that, since we consider possibly non-square functions, the sum will be a special case of the product since
\[
  \begin{pmatrix}M_1(z)& I_n\end{pmatrix}\begin{pmatrix} I_m\\ M_2(z)\end{pmatrix}=M_1(z)+M_2(z)
\]
where $M_1$ and $M_2$ are $\mathbb C^{n\times m}$-valued.\\

Assuming in \eqref{real2} that $n=m$ and $H$ invertible, one has the formula
\begin{equation}
\label{inv12345}
M(z)^{-1}=H^{-1}-zH^{-1}G(I-zT^\times)^{-1}FH^{-1},
\end{equation}
where $T^\times=T-GH^{-1}F$,
and with $M_j(z)=H_j+zG_j(I-zT_j)^{-1}F_j$, $j=1,2$, two rational functions of compatible sizes,
\[
M_1(z)M_2(z)=H+zG(I-zT)^{-1}F,
\]
with $H=H_1H_2$ and
\begin{equation}
  \label{prod2}
  T=\begin{pmatrix}  T_1&G_1T_2\\ 0&T_2\end{pmatrix},\quad G=\begin{pmatrix} G_1 \\ H_1G_2\end{pmatrix},\quad F=\begin{pmatrix}F_1H_2&F_2\end{pmatrix}.
  \end{equation}
\begin{defn}
  The $\mathbb H^{r\times s}$-valued function $R(x)$ hyperholomorphic of axial type is called rational if its restriction to the real axis is a rational function of the real variable,
  with quaternionic coefficients.
\end{defn}

\begin{thm}
  The $\mathbb H^{r\times s}$-valued function $R(x)$ hyperholomorphic of axial type, and defined at the origin, is rational if and only if $x_0\mapsto R(3x_0)$ can be written as
  \begin{equation}
    R(3x_0)=\mathsf D+3x_0\mathsf C(I-3x_0\mathsf A)^{-1}\mathsf B
  \end{equation}
  where $\mathsf{ A,B,C,B}$ are quaternionic matrices of appropriate sizes.
  \end{thm}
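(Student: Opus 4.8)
The plan is to reduce the assertion to a one--real--variable statement and then to import the classical realization theory \eqref{real2} through the homomorphism $\chi$. Since $R$ is hyperholomorphic of axial type it is uniquely determined by its restriction to the real line, hence by $t\mapsto R(t)$; conversely, every $\mathbb H^{r\times s}$-valued function real analytic near the origin is the restriction to $\{x_1=x_2=x_3=0\}$ of a unique axially hyperholomorphic function (compare the lemma giving \eqref{specialmon}). By the definition of rationality used here, $R$ is rational exactly when $t\mapsto R(t)$ is a rational $\mathbb H^{r\times s}$-valued function of the real variable, and $R$ is defined at the origin exactly when this function is regular at $0$. Hence it suffices to prove: a $\mathbb H^{r\times s}$-valued function $g$ of the real variable $t$ which is rational and regular at $0$ can be written as $g(t)=\mathsf D+t\,\mathsf C(I-t\,\mathsf A)^{-1}\mathsf B$ with quaternionic matrices $\mathsf A,\mathsf B,\mathsf C,\mathsf D$ of appropriate sizes, and conversely; the announced formula then follows on setting $t=3x_0$ and recalling that $g=R|_{\mathbb R}$.

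I would first record that such a $g$ is rational over $\mathbb H$ if and only if $\chi\circ g$ is a $\mathbb C^{2r\times 2s}$-valued rational function regular at $0$: one implication is immediate from the $\mathbb R$-linearity of $\chi$; for the other, write $\chi(g)=\mathcal N/\delta$ with a scalar denominator $\delta$ which may be taken real (replace it by $\delta\overline\delta$), note that $\mathcal N(t)=\delta(t)\,\chi(g(t))\in{\rm Ran}\,\chi$ for every real $t$ and hence that all coefficients of the polynomial $\mathcal N$ lie in ${\rm Ran}\,\chi$ by Lemma \ref{rangechi}, and apply $\chi^{-1}$. Granting this, the ``if'' part is immediate: if $g(t)=\mathsf D+t\,\mathsf C(I-t\,\mathsf A)^{-1}\mathsf B$, then, $\chi$ being multiplicative, $\chi(g(t))=\chi(\mathsf D)+t\,\chi(\mathsf C)(I-t\,\chi(\mathsf A))^{-1}\chi(\mathsf B)$ is of the classical form \eqref{real2}, hence rational and regular at $0$.

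For the ``only if'' part, assume $g$ rational and regular at $0$, so that $\chi\circ g$ is a $\mathbb C^{2r\times 2s}$-valued rational function regular at $0$. By \eqref{real2} it admits a realization, which after the usual reduction may be taken minimal: $\chi(g(t))=H+t\,G(I-t\,T)^{-1}F$ with $T\in\mathbb C^{n\times n}$. Since $g$ is $\mathbb H^{r\times s}$-valued and $t$ real, $\chi(g(t))$ satisfies the symmetry $E_r^{-1}\overline{\chi(g(t))}\,E_s=\chi(g(t))$ (notation of \eqref{sym2}, cf. Lemma \ref{rangechi}); inserting this into the realization shows that $\bigl(H,\,E_r^{-1}\overline G,\,\overline T,\,\overline F E_s\bigr)$ is a second minimal realization of the same function. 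By the state-space isomorphism theorem (see \cite{bgk1,Fuhrmann}) there is a unique invertible $P$ intertwining the two realizations; taking complex conjugates of the intertwining relations and using the uniqueness of $P$ forces $P\overline P=-I_n$, so $n=2N$ is even and $v\mapsto P\overline v$ is a quaternionic structure on $\mathbb C^{n}$. A change of state coordinates bringing this structure to its standard form turns the symmetry relations for $(H,G,T,F)$ into the conditions of Lemma \ref{rangechi}, so that in the new coordinates $H=\chi(\mathsf D)$, $G=\chi(\mathsf C)$, $T=\chi(\mathsf A)$ and $F=\chi(\mathsf B)$ for suitable quaternionic matrices; since $\chi$ is injective and multiplicative this yields $g(t)=\mathsf D+t\,\mathsf C(I-t\,\mathsf A)^{-1}\mathsf B$, as required.

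The reduction to one real variable and the $\chi$-transfer of rationality are routine. The step I expect to be the main obstacle is the last one: showing that the complex realization produced by \eqref{real2} can be chosen ``quaternionic'', i.e. with all four matrices in the range of $\chi$. This is precisely where minimality, the uniqueness in the state-space isomorphism theorem and the ensuing identity $P\overline P=-I$ enter, and it is the realization-level counterpart of Lemma \ref{rangechi}.
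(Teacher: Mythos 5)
Your proposal is correct, but a comparison with ``the paper's proof'' is not really possible here: the paper explicitly declines to prove this theorem, stating only that it ``follows easily from the previous analysis.'' The route the authors presumably have in mind is the one already set up in the text: restrict to the real axis, observe that a quaternionic rational function of a real variable regular at $0$ has Taylor coefficients generating a finite-dimensional $R_0$-invariant span, and invoke the earlier theorem characterizing such spaces by pairs $(\mathsf C,\mathsf A)$ (which already yields $f(x_0)=\mathsf C(I-3x_0\mathsf A)^{-1}\xi$ on the real line). Your route is genuinely different and, in my view, more self-contained: you transfer the problem through $\chi$, take a \emph{minimal} complex realization, and use the uniqueness clause of the state-space isomorphism theorem to produce the intertwiner $P$ with $P\overline P=-I_n$; the induced quaternionic structure on the state space is exactly the realization-level analogue of the symmetry in Lemma \ref{rangechi}, and the change of state coordinates then places all four matrices in the range of $\chi$. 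The computations you leave implicit do check out (in particular $-\overline PP$ self-intertwines the minimal triple, forcing $\overline PP=-I_n$, and the coordinate change one needs is $\overline Q^{-1}PQ=E_N$, which exists because all quaternionic structures on $\mathbb C^{2N}$ are equivalent), as does the preliminary reduction showing that rationality of $g$ over $\mathbb H$ with a real scalar denominator is equivalent to rationality of $\chi\circ g$. What your approach buys is a rigorous treatment of the only nontrivial point --- that the complex realization can be chosen with quaternionic symmetry --- which the paper's ``easy'' coefficient-level argument would obtain for free but at the cost of re-proving the finite-rank Hankel/recursion characterization of rationality in the quaternionic setting; what it costs is reliance on minimality and the classical isomorphism theorem, which the paper only cites, never develops.
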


  Equivalently:

  \begin{thm}
  The $\mathbb H^{r\times s}$-valued function $R(x)$ hyperholomorphic of axial type, and defined at the origin, is rational if and only if it can be written as
  \begin{equation}
    R(x)=\mathsf D+\sum_{n=0}^\infty P_n(x)\mathsf C\mathsf A^{n-1}\mathsf B
  \end{equation}
  where $\mathsf{ A,B,C,B}$ are quaternionic matrices of appropriate sizes.
\end{thm}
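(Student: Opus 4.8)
The plan is to derive this from the preceding theorem through the correspondence between an axially hyperholomorphic function and its restriction to the real line. The preceding theorem already tells us that $R$ is rational if and only if there are quaternionic matrices $\mathsf A,\mathsf B,\mathsf C,\mathsf D$ of appropriate sizes with
\[
R(3x_0)=\mathsf D+3x_0\mathsf C(I-3x_0\mathsf A)^{-1}\mathsf B
\]
for $x_0$ in a real neighborhood of the origin. Hence the whole content of the statement to be proved is that, for a fixed quadruple $\mathsf A,\mathsf B,\mathsf C,\mathsf D$, this real-axis identity holds if and only if $R(x)=\mathsf D+\sum_{n\ge 1}P_n(x)\,\mathsf C\mathsf A^{n-1}\mathsf B$ on a neighborhood of the origin in $\mathbb R^4$ (the sum being read over $n\ge 1$, in accordance with \eqref{notequal}).

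The elementary half is the implication from the series representation to rationality. Here I would simply restrict to $x_1=x_2=x_3=0$ and use that $P_n(x_0)=(3x_0)^n$; the series then collapses to the Neumann series $\mathsf D+\sum_{n\ge 1}(3x_0)^n\mathsf C\mathsf A^{n-1}\mathsf B=\mathsf D+3x_0\mathsf C(I-3x_0\mathsf A)^{-1}\mathsf B$, valid for $|3x_0|$ below the reciprocal of the spectral radius of $\mathsf A$. This is a rational function of the real variable $x_0$, so $R$ is rational by definition, and in fact it is the above realization.

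For the converse I would start from the real-axis realization and build the candidate $\widetilde R(x)=\mathsf D+\sum_{n\ge 1}P_n(x)\,\mathsf C\mathsf A^{n-1}\mathsf B$. Fixing $\rho>0$ smaller than the reciprocal of the spectral radius of $\mathsf A$, the scalars $\rho^n\|\mathsf C\mathsf A^{n-1}\mathsf B\|$ are summable, while by Corollary \ref{coro234} there is a neighborhood of the origin in $\mathbb R^4$ on which $|P_n(x)|<\rho^n$ for all $n$; hence the series for $\widetilde R$ converges absolutely and uniformly there. Each $P_n$ is Fueter hyperholomorphic of axial type, and both properties persist under uniform convergence of such series (cf. Remark \ref{rmk212}), so $\widetilde R$ is Fueter hyperholomorphic of axial type on that neighborhood. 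Restricting $\widetilde R$ to $x_1=x_2=x_3=0$ gives exactly the Neumann series computed in the previous paragraph, which equals $R(3x_0)$; since $R$ and $\widetilde R$ are both of axial type and agree on the real line, and such functions are uniquely determined by their restriction to the real line, we conclude $R=\widetilde R$ near the origin, which is the asserted representation.

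The only step I expect to require genuine care is this last bookkeeping in the converse: combining the at-most-geometric growth of $\|\mathsf C\mathsf A^{n-1}\mathsf B\|$ with Corollary \ref{coro234} to obtain uniform convergence on an honest $\mathbb R^4$-neighborhood of $0$, and checking that axial hyperholomorphy survives passage to this uniform limit, so that the uniqueness of the axially hyperholomorphic extension can legitimately be invoked. If one preferred not to invoke the preceding theorem, the substantial input would instead be the quaternionic state-space realization theory that produces the matrices $\mathsf A,\mathsf B,\mathsf C,\mathsf D$ out of a rational function of a real variable with quaternionic coefficients that is regular at the origin.
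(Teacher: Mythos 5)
Your argument is correct, and since the paper explicitly omits the proofs of both rationality theorems (stating only that they ``follow easily from the previous analysis''), your derivation --- the easy direction by restriction to the real axis where $P_n(x_0)=(3x_0)^n$ collapses the series to a Neumann series, and the converse by building the series via Corollary \ref{coro234} plus the geometric bound on $\|\mathsf C\mathsf A^{n-1}\mathsf B\|$ and then invoking uniqueness of the axially hyperholomorphic extension --- is exactly the intended route. You also correctly read the sum as starting at $n=1$, consistent with \eqref{notequal}, despite the $n=0$ typo in the statement.
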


We will not give the proofs of the above results, which follow easily from the previous analysis in the paper. One still has the formulas \eqref{inv12345} and \eqref{prod2} for a real variable $x_0$,
but not for the $CK$-product. So the product of rational hyperholomorphic functions of axial type is not compatible with the $CK$ product. To emphasize this point, we now make the connection
with rational functions of the Fueter variables, as studied in \cite{MR2275397,assv,MR2124899} (see also \cite{MR93c:30059}, and see \cite{MR3819695} for the split quaternionic case).
There, rational functions are characterized by the formula (we do not specify the sizes of the various quaternionic matrices)

\begin{equation}
\label{realR}
\begin{split}
R(x)&=\mathsf D+\mathsf C\odot (I-(\zeta_1\mathsf A_1  +\zeta_2\mathsf A_2+\zeta_3\mathsf A_3)^{-\odot}\odot(\zeta_1\mathsf B_1+\zeta_2\mathsf B_2+\zeta_3\mathsf B_3)\\
&=\mathsf D+\mathsf C\odot(I-\zeta \mathsf A)^{-\odot}\odot(\zeta \mathsf B),
\end{split}
\end{equation}
with
\[
\mathsf A=\begin{pmatrix}\mathsf A_1\\ \mathsf A_2\\ \mathsf A_3\end{pmatrix}\quad{\rm and}\quad \mathsf B=\begin{pmatrix}\mathsf B_1\\ \mathsf B_2\\ \mathsf B_3\end{pmatrix},
\]
and the variable here is
\[
\zeta=\begin{pmatrix}\zeta_1&\zeta_2&\zeta_3\end{pmatrix}.
\]
We look at the special case where
\[
\mathsf  A=\begin{pmatrix}{\mathbf e_1}\mathscr A    \\ {\mathbf e_2}\mathscr A\\ {\mathbf e_3}\mathscr A\end{pmatrix}\quad{\rm and}\quad \mathsf B=\begin{pmatrix}{\mathbf e_1}\mathscr B\\
    {\mathbf e_2}\mathscr B\\ {\mathbf e_3}\mathscr B\end{pmatrix},
\]
where $\mathscr A$ and $\mathscr B$ are matrices of appropriate sizes and
with quaternionic coefficients. Since
$P_1(x)=\zeta_1(x)\mathbf e_1+\zeta_2(x)\mathbf e_2+\zeta_3(x)\mathbf e_3$ we can then rewrite \eqref{realR} as
\[
R(x)=\mathsf D+\mathsf C\odot(I-P_1(x)\mathscr A)^{-\odot}\odot P_1(x){\mathscr B},
\]
which will not be in general a series in the $P_n$, but is a series in the $\zeta^\alpha$. We can define such elements as the rational functions associated with the polynomials $P_n$. Then,
things make sense in terms of realizations, with the usual formulas, but we do not get power series in $P_n$, even when $\mathcal A$ is nilpotent.\\

\subsection{A structure theorem}
In the classical, and scalar-valued, setting, Beurling's theorem asserts that a closed subspace $\mathcal N$ of the Hardy space $\mathbf H_2(\mathbb D)$
is invariant under multiplication by the complex variable if and only if it is of the form $\mathcal N=j\mathbf H_2(\mathbb D)$, where $j$ is an inner function. The space
$\mathcal M=\mathbf H_2(\mathbb D)\ominus\mathcal N$ is then isometrically included in $\mathbf H_2(\mathbb D)$, backward-shift invariant, and has reproducing kernel equal to
$\frac{1-j(z)\overline{j(w)}}{1-z\overline{w}}$. One motivation for the theory of de Branges-Rovnyak is to characterize reproducing kernel Hilbert spaces of functions which are contractively
included in $\mathbf H_2(\mathbb D)$, and $R_0$-invariant. Rather than the latter, one assumes that
the inequality
\begin{equation}
  \label{ineq12345}
\|R_0f\|_{\mathcal M}^2\le \|f\|_{\mathcal M}^2-|f(0)|^2
\end{equation}
holds. This inequality implies contractive inclusion in the Hardy space,
\begin{equation}
f\in\mathcal M\,\,\Longrightarrow \,\, f\in \mathbf H_2(\mathbb D)\quad {\rm and}\quad\|f\|_{\mathbf H_2(\mathbb D)}\le \|f\|_{\mathcal M}
\end{equation}
and that in particular the space is a reproducing kernel Hilbert space since
\[
|f(w)|\le \frac{1}{\sqrt{1-|w|^2}}\|f\|_{\mathbf H_2(\mathbb D)}\le \frac{1}{\sqrt{1-|w|^2}}\|f\|_{\mathcal M},\quad w\in\mathbb D.
\]

\begin{thm}
  \label{guyker?}
  Let $\mathcal M$ be a Hilbert space of functions analytic in the open unit disk and such that \eqref{ineq12345} holds in $\mathcal M$. Then,
  $\mathcal M$ is contractively included inside $\mathbf H_2(\mathbb D)$ and there is a (possibly $\ell_2$-valued) analytic function
  $s$ such that the reproducing kernel of $\mathcal N$ is $\dfrac{1-s(z)s(w)^*}{1-z\overline{w}}$.
\end{thm}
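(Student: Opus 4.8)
\emph{The plan is} to realize $\mathcal M$ as a de Branges--Rovnyak space $\mathcal H(s)$ by constructing a coisometric colligation whose state operator is the backward shift $R_0$ and whose output operator is evaluation at the origin, in the same spirit as Section~\ref{co} but now in the classical disk setting, where the reproducing kernel functions \emph{are} eigenvectors of $R_0^*$. First I would dispose of the contractive inclusion: iterating \eqref{ineq12345} and using $f_n=(R_0^nf)(0)$ for $f=\sum_{n\ge0}f_nz^n\in\mathcal M$ gives, by telescoping, $\sum_{n\ge0}|f_n|^2\le\|f\|_{\mathcal M}^2-\lim_n\|R_0^nf\|_{\mathcal M}^2\le\|f\|_{\mathcal M}^2$ (the limit exists since $\|R_0^nf\|_{\mathcal M}$ is nonincreasing); this is essentially the computation already sketched just before the statement. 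Hence $\mathcal M$ is contractively contained in $\mathbf H_2(\mathbb{D})$, point evaluations are bounded on $\mathcal M$, and $\mathcal M$ is a reproducing kernel Hilbert space; write $K_{\mathcal M}$ for its kernel and $Cf=f(0)$.

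Next I would build the colligation. Inequality \eqref{ineq12345} says exactly that the column operator $T=\begin{pmatrix}R_0\\ C\end{pmatrix}\colon\mathcal M\to\mathcal M\oplus\mathbb C$ is a contraction, hence $I-TT^*\ge0$ on $\mathcal M\oplus\mathbb C$, and I may take a positive square root, written in column-block form as $\begin{pmatrix}B\\ D\end{pmatrix}\colon\mathcal U\to\mathcal M\oplus\mathbb C$ (one can take $\mathcal U=\mathcal M\oplus\mathbb C$, or the closure of the range; the fact that $\mathcal U$ need not be finite-dimensional is precisely the $\ell_2$-valued caveat in the statement). Then
\[
V=\begin{pmatrix}R_0&B\\ C&D\end{pmatrix}\colon\mathcal M\oplus\mathcal U\longrightarrow\mathcal M\oplus\mathbb C
\]
satisfies $VV^*=I$, i.e. it is coisometric, and this unwinds into the four block identities $R_0R_0^*+BB^*=I$, $\ R_0C^*+BD^*=0$, $\ CR_0^*+DB^*=0$, $\ CC^*+DD^*=I$. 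Set $s(z)=D+zC(I-zR_0)^{-1}B$, which is analytic on $\mathbb{D}$ since $\|R_0\|\le1$.

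Finally I would identify the reproducing kernel. Substituting the four block relations into $s(z)s(w)^*$ and simplifying --- the computation telescopes using $zR_0(I-zR_0)^{-1}=(I-zR_0)^{-1}-I$ and its adjoint --- gives the de Branges--Rovnyak identity
\[
\frac{1-s(z)s(w)^*}{1-z\overline w}=C(I-zR_0)^{-1}(I-\overline wR_0^*)^{-1}C^*.
\]
Then I would recognise the right-hand side as $K_{\mathcal M}(z,w)$: one has $C^*c=K_{\mathcal M}(\cdot,0)c$ by the defining property of the adjoint; the identity $\langle R_0f,K_{\mathcal M}(\cdot,w)\rangle_{\mathcal M}=(R_0f)(w)=\tfrac1w\bigl(f(w)-f(0)\bigr)$ yields $R_0^*K_{\mathcal M}(\cdot,w)=\tfrac1{\overline w}\bigl(K_{\mathcal M}(\cdot,w)-K_{\mathcal M}(\cdot,0)\bigr)$, hence, by a Neumann series licit since $\|R_0\|\le1$ and $|w|<1$, $(I-\overline wR_0^*)^{-1}K_{\mathcal M}(\cdot,0)=K_{\mathcal M}(\cdot,w)$; and $C(I-zR_0)^{-1}g=g(z)$ because $R_0$ is the coefficient backward shift. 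Thus the right-hand side equals $\langle K_{\mathcal M}(\cdot,w),K_{\mathcal M}(\cdot,z)\rangle_{\mathcal M}=K_{\mathcal M}(z,w)$, so $K_{\mathcal M}(z,w)=\dfrac{1-s(z)s(w)^*}{1-z\overline w}$. Positivity of this reproducing kernel at $z=w$ forces $\|s(z)\|\le1$, so $s$ is a Schur function, and the contractive inclusion in $\mathbf H_2(\mathbb{D})$ re-reads off from $\dfrac1{1-z\overline w}-K_{\mathcal M}(z,w)=\dfrac{s(z)s(w)^*}{1-z\overline w}\ge0$.

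\textbf{The main obstacle} is the purely algebraic de Branges--Rovnyak identity in the last step; everything else is bookkeeping, the only subtlety being convergence of the resolvents $(I-zR_0)^{-1}$ and $(I-\overline wR_0^*)^{-1}$ for $|z|,|w|<1$, which is immediate from $\|R_0\|\le1$. (Throughout I read $\mathcal N$ in the statement as $\mathcal M$.)
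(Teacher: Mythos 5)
Your proof is correct, and it follows essentially the approach the paper has in mind: the paper itself gives no proof of Theorem \ref{guyker?} (it refers to \cite[Theorem 3.1.2, p.~85]{adrs} for the general version), but the quaternionic analogue proved immediately afterwards uses exactly your strategy --- rewrite \eqref{ineq12345} as $R_0^*R_0+C^*C\le I$, pass to the adjoint, factor the defect $I-\begin{pmatrix}R_0\\ C\end{pmatrix}\begin{pmatrix}R_0\\ C\end{pmatrix}^*$ through an auxiliary Hilbert space to obtain a coisometric colligation, and read off the kernel from the realization $s(z)=D+zC(I-zR_0)^{-1}B$. Your verification of the de Branges--Rovnyak identity and of $C(I-zR_0)^{-1}(I-\overline{w}R_0^*)^{-1}C^*=K_{\mathcal M}(z,w)$ is sound, and reading $\mathcal N$ as $\mathcal M$ is the right interpretation of the (evidently typographical) mismatch in the statement.
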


We refer to the notes in \cite[p. 206]{adrs} for some history on Theorem \ref{guyker?}, but mention the papers \cite{MR250051}. Guyker characterized the
spaces for which the inclusion is isometric; see \cite[p. 187]{adrs}, \cite{MR1031663,MR1257108}.
For an illustration of the contractive inclusion, see Remark \ref{remark-ad3} above.\\

A general version of Theorem \ref{guyker?}, in the operator-valued and Pontryagin space case, has been proved in  \cite[Theorem 3.1.2, p. 85]{adrs}.
We will present in a subsequent paper the general version of the result, in the Pontryagin and operator-valued function case. The purpose of this section is to illustrate the power of the
methods used here on a simple case. Note that \eqref{ineq} is a weakening of \eqref{fundamental987}.

\begin{thm}
Let $\mathcal H$ be a Hilbert space of $\mathbb H^r$-valued functions axially  hyperholomorphic in $\mathcal E$, and $R_0$-invariant and satisfying
\begin{equation}
\label{ineq}
\|R_0f\|_{\mathcal H}^2  \le \|f\|_{\mathcal H}^2  -f(0)^*f(0),\quad f\in\mathcal H.
\end{equation}
Then there exist a right  quaternionic Hilbert space $\mathcal C$ and a $\mathbf L(\mathcal C,\mathbb H^r)$-valued function $S$  hyperholomorphic of axial type
such that $\mathcal H=\mathcal H(S)$.
\end{thm}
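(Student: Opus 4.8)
The plan is to reverse the construction of Section~\ref{co}: from $\mathcal H$ one manufactures a coisometric colligation with state space $\mathcal H$, whose transfer function expressed in the basis $(P_n)$ will be the desired multiplier $S$, and then recognizes $\mathcal H$ as the de Branges--Rovnyak space $\mathcal H(S)$. First I would record two reductions. Every $f\in\mathcal H$ is of axial type, hence (Remark~\ref{rmk212} and Theorem~3.10 in \cite{ads-fh}) has an expansion $f=\sum_{n\ge0}P_nf_n$ with $f_n\in\mathbb H^r$, and \eqref{bws} gives $f_n=(R_0^nf)(0)$. Iterating \eqref{ineq} yields $\sum_{n=0}^N\|f_n\|^2\le\|f\|_{\mathcal H}^2-\|R_0^{N+1}f\|_{\mathcal H}^2\le\|f\|_{\mathcal H}^2$, so $\mathcal H$ is contractively included in $(\mathbf H_2(\mathcal E))^r$; in particular it is a reproducing kernel Hilbert space, with contractive inclusion $\iota\colon\mathcal H\hookrightarrow(\mathbf H_2(\mathcal E))^r$. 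Set $A:=R_0$, a contraction of $\mathcal H$ into itself by \eqref{ineq}, and $C\colon\mathcal H\to\mathbb H^r$, $Cf=f(0)$, also a contraction; the pair $(C,A)$ is observable because $CA^nf=f_n$. Inequality \eqref{ineq} says precisely that $A^*A+C^*C\le I_{\mathcal H}$, i.e. that $V:=\begin{pmatrix}A\\ C\end{pmatrix}$ is a contraction from $\mathcal H$ into $\mathcal H\oplus\mathbb H^r$. (Square roots, defect operators and adjoints are taken throughout in the sense of quaternionic operator theory, \cite{zbMATH06658818,MR4126757}.)

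Next I would dilate $V$ to a coisometry by the defect construction. Set $\mathcal C:=\overline{\operatorname{Ran}}\,(I-VV^*)^{1/2}\subseteq\mathcal H\oplus\mathbb H^r$, a right quaternionic Hilbert space, and $\begin{pmatrix}B\\ D\end{pmatrix}:=(I-VV^*)^{1/2}|_{\mathcal C}$, with $B\in\mathbf L(\mathcal C,\mathcal H)$ and $D\in\mathbf L(\mathcal C,\mathbb H^r)$. Then $U:=\begin{pmatrix}A&B\\ C&D\end{pmatrix}$ satisfies $UU^*=VV^*+(I-VV^*)=I$, hence is coisometric; equivalently $AA^*+BB^*=I_{\mathcal H}$, $CA^*+DB^*=0$ and $CC^*+DD^*=I_r$. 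Define
\[
S(x)=D+\sum_{n=1}^\infty P_n(x)\,CA^{n-1}B,\qquad x\in\mathcal E .
\]
Since $\|CA^{n-1}B\|\le\|B\|$, Corollary~\ref{coro234} shows the series converges; $S$ is $\mathbf L(\mathcal C,\mathbb H^r)$-valued and hyperholomorphic of axial type because each $P_n$ is (Proposition~\ref{PnInt}), and on the real axis $S(3x_0)=D+3x_0C(I-3x_0A)^{-1}B$.

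It remains to show that $S$ is a Schur multiplier and that $\mathcal H=\mathcal H(S)$. The coisometry $UU^*=I$ forces the lower triangular Toeplitz operator with symbol sequence $S_0=D$, $S_n=CA^{n-1}B$ $(n\ge 1)$ to be a contraction from $\ell_2(\mathbb N_0,\mathcal C)$ into $\ell_2(\mathbb N_0,\mathbb H^r)$ --- the standard transfer-function estimate, which for a finite dimensional coefficient space can alternatively be obtained by applying $\chi$ and combining Theorem~\ref{toto123} with Proposition~\ref{2-9-0}. By (the operator-valued form of) Theorem~\ref{toto}, $S$ is then a Schur multiplier, so $\mathcal H(S)$ is defined and, by the results of Section~7 and the Proposition of Section~3, $\mathcal H(S)=\operatorname{Ran}\sqrt{I-M_SM_S^*}$ inside $(\mathbf H_2(\mathcal E))^r$, with reproducing kernel $(I-M_SM_S^*)$ applied to $k_{\mathcal E}$. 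It therefore suffices to prove the operator identity $\iota\iota^*=I-M_SM_S^*$ on $(\mathbf H_2(\mathcal E))^r$; granting it, $\mathcal H$ and $\mathcal H(S)$ have the same reproducing kernel (namely $\iota\iota^*$ applied to $k_{\mathcal E}$), hence coincide with equal norms. Expanding \eqref{tyu} via Lemma~\ref{PnPm} (so $P_n\odot S=\sum_{k\ge n}P_kS_{k-n}$) and expanding the kernel of $\mathcal H$ through the coefficients $f_n=CA^nf$, this identity reduces, block by block in the $(P_n)$-expansion, to an operator identity in $A,B,C,D$ that follows from the three coisometry relations above together with the observability of $(C,A)$, by exactly the telescoping computation carried out in the proof of Proposition~\ref{propo-iso}.

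The main obstacle is concentrated in this last identification. Because the $CK$-product is not a law of composition, there is no resolvent $(I-P_1(x)A)^{-\odot}$ available to manipulate (cf. the remark following \eqref{notequal}), so the whole argument has to be pushed down to the level of the coefficient sequences, where the coisometry of $U$ does the work --- the very phenomenon already met in Section~\ref{co}. A secondary point needing care is that, since $\mathcal C$ may be infinite dimensional, one must invoke the operator-valued versions of Theorems~\ref{dono}, \ref{toto123} and \ref{toto} rather than the matrix-valued statements recalled in the text; these are available in \cite{MR4126757,adrs}.
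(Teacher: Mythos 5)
Your proposal is correct and follows essentially the same route as the paper: rewrite \eqref{ineq} as $R_0^*R_0+C^*C\le I$, pass to the adjoint to obtain a positive defect, factor it through a right quaternionic Hilbert space $\mathcal C$ to get the coisometric colligation $\begin{pmatrix}R_0&B\\ C&D\end{pmatrix}$, and take $S$ to be its transfer function expanded along the $P_n$. The only divergence is in the final identification $\mathcal H=\mathcal H(S)$, where the paper restricts to the real axis (where genuine resolvents $(I-3x_0R_0)^{-1}$ are available) and invokes uniqueness of the axially hyperholomorphic extension, while you verify the same kernel identity block by block in the coefficient sequences via the telescoping computation of Proposition \ref{propo-iso}; both arguments are valid.
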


\begin{proof}
It follows from \eqref{ineq} that $\mathcal{H}$ is contractively included in $(\mathbf H_2(\mathcal E))^r$, and that the
  operator $R_0$ is bounded (another argument, still valid in the quaternionic Pontryagin space setting would be to prove that $R_0$ is closed, thanks to the
reproducing kernel property, and use the closed graph theorem; see \cite[Theorem 5.1.16, p. 74]{zbMATH06658818} for the latter). The point evaluation at the origin, which we will denote by $C$,
is also bounded since the space is contractively included in the Hardy space and its norm is larger than the Hardy space norm. We can thus rewrite \eqref{ineq} as
\[
R_0^*R_0+C^*C\le I.
\]
Since the adjoint of a contraction between Hilbert space is still a contraction we have
\[
I_{{\mathcal H}\oplus\mathbb H^r}-\begin{pmatrix} R_0\\ C\end{pmatrix}\begin{pmatrix} R_0\\ C\end{pmatrix}^*\ge 0,
\]
and we can factorize the quaternionic positive operator via a right  quaternionic Hilbert space $\mathcal C$ as
\[
I_{{\mathcal H}\oplus\mathbb H^r}-\begin{pmatrix} R_0\\ C\end{pmatrix}\begin{pmatrix} R_0\\ C\end{pmatrix}^*=\begin{pmatrix} B\\ D\end{pmatrix}\begin{pmatrix} B\\ D\end{pmatrix}^*,
\]
with
\[
\begin{pmatrix} B\\ D\end{pmatrix}\in\mathbf L(\mathcal C,\mathcal H\oplus \mathbb H^r).
\]
The operator matrix
\[
\begin{pmatrix}R_0&B\\ C&D\end{pmatrix}\,\,:\,\, \mathcal H\oplus\mathcal C\,\,\longrightarrow\,\,\mathcal H\oplus\mathbb H^r
\]
is co-isometric, and the $\mathbf L(\mathcal C,\mathbb H^r)$-valued function $S$  defined by
\[
S(x)=D+\sum_{n=1}^\infty P^{\odot n}(x)CR_0^nB
\]
is a Schur multiplier. To conclude we show that $\mathcal H=\mathcal H(S)$.
From the definition of $S$ we have for $x_0,y_0\in(-1/3,1/3)$
\[
K_S(3x_0,3y_0)=C(I-3x_0R_0)^{-1}(I-3y_0R_0)^{-*}C^*.
\]
We have
\[
K_S(3x_0,3y_0)-3x_0K_S(3x_0,3y_0)3y_0=I_r-S(3x_0)S(3y_0)^*
\]
and the result follows from axially hyperholomorphic extension on the left with respect to $x$ and on the right with respect to $y$.
\end{proof}

\section{Spaces $\mathcal L(\Phi)$}
\setcounter{equation}{0}
\label{herg}
We now consider the counterpart of $\mathcal L(\Phi)$ spaces, see \eqref{lphi}, in the present setting, and first briefly review the classical case.
Functions analytic in the open unit disk and with a positive real part there will be called here Herglotz functions (they are called
Carath\'eodory functions in Akhiezer's book \cite[p. 116]{akhiezer_russian}). An Herglotz function, say $\Phi$, is characterized by an integral representation of the form
\begin{equation}
\Phi(z)=im+\int_{[0,2\pi]}\frac{e^{it}+z}{e^{it}-z}d\sigma(t)
\label{herg}
\end{equation}
where $m\in\mathbb R$,  $\sigma$ is an increasing function, and \eqref{herg} is a Stieltjes integral. They play an important role in the trigonometric moment problem, operators models for
isometric and unitary operators in Hilbert spaces and in the theory of dissipative discrete systems, and have been extended to various more general frameworks; see e.g.
\cite{MR330390911,MR2336046,MR3043592,MR3350215,kl1,MR2182589,zbMATH06823280}.\\

In a way similar to Theorem \ref{dono}, consider a function $\Phi$ defined in a real neighborhood $(-\epsilon, \epsilon)$ of the origin and such that the kernel
\[
\frac{\Phi(a)+\overline{\Phi(b)}}{2(1-ab)}
\]
is positive definite there. Then it is the restriction to $(-\epsilon, \epsilon)$ of a uniquely defined Herglotz function, and the corresponding kernel

\[
  L_\Phi(z,w)=\frac{\Phi(z)+\overline{\Phi(w)}}{2(1-z\overline{w})}
\]
is positive definite in the open unit disk. The factor $2$ in the kernels is to get nicer realization formulas (such as \eqref{pv}) and follows basically from the Cayley transform
$z\mapsto \dfrac{1-z}{1+z}$, which maps Herglotz functions into Schur functions.\smallskip

The corresponding reproducing kernel Hilbert space and its applications to operator models was first characterized and studied by de Branges; see \cite{dbbook,dbs}.
It is important to note that a Herglotz function need not be bounded, and hence need not be a multiplier of the Hardy space.\\

Using the reproducing kernel space $\mathcal L(\Phi)$ associated with $L_\Phi$ (or directly from \eqref{herg}), one can characterize Herglotz functions in terms of a realization of the form
\begin{equation}
\label{pv}
\Phi(z)=ia+C(I+zV)  (I-zV)^{-1}C^*,
\end{equation}
where $V$ is coisometric in some Hilbert space, and $C^*$ is a continuous map from the coefficient space (the complex numbers when the functions are scalar) into that Hilbert space. Note that
\eqref{pv} can be rewritten as
\begin{equation}
\label{phivc}
\Phi(z)=ia+CC^*+2\sum_{n=1}^\infty z^nCV^nC^*,\quad z\in\mathbb D.
\end{equation}

In this section we study the counterpart of the kernel $L_\Phi$ in our setting, and give the counterpart of the expansion \eqref{phivc}, and study connections with Toeplitz operators. In the
following definition (and also in Section \ref{carasec} below) we use the term {\sl multiplier} although the operator of $CK$-multiplication by the given function need not be bounded in the
Hardy space.

\begin{defn}
  A $\mathbb H^{r\times r}$-valued axially hyperholomorphic function $\Phi$
    is called a Herglotz multiplier if  the kernel
\begin{equation}
L_\Phi(x,y)=\frac{1}{2}\sum_{n=0}^\infty(P_n\odot \Phi)(x)P_n(y)^*+P_n(x)((P_n\odot\Phi  )(y))^*
\label{kphi123}
\end{equation}
is positive definite in $\mathcal E$.
\end{defn}

When the operator of $CK$-multiplication by $\Phi$ is bounded in the Hardy space $(\mathbf H_2(\mathcal E))^r$, we can replace \eqref{kphi123} by the condition (see Remark \ref{new})
\begin{equation}
\Gamma\stackrel{\rm def.}{=}\frac{M_\Phi+M_\Phi^*}{2}\ge 0.
\end{equation}
\begin{prop}
We note the following property of $\Gamma$:
\begin{equation}
\label{mphi}
M_{P_1}\Gamma M_{P_1}^*=\frac{1}{2}\left\{M_\Phi(I-C^*C)+(I-C^*C)M_\Phi^*\right\}.
\end{equation}
\end{prop}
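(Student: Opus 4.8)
The plan is to expand $M_{P_1}\Gamma M_{P_1}^*$ using the definition $\Gamma=\tfrac12(M_\Phi+M_\Phi^*)$ and then to move $M_{P_1}$ past $M_\Phi$ (and $M_{P_1}^*$ past $M_\Phi^*$) using the commutation relation \eqref{PS1212}, absorbing the leftover factor $M_{P_1}M_{P_1}^*$ by means of the identity \eqref{fundamental987}.

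First I would recall the two ingredients. From Lemma \ref{lemma-fs} and \eqref{SS*} one has $M_{P_1}M_{P_1}^*=I-C^*C$, which is exactly \eqref{fundamental987}. Next, the relation \eqref{PS}--\eqref{PS1212}, namely $M_{P_1}M_S=M_SM_{P_1}$, was written for Schur multipliers, but its proof only uses the expansion of the multiplier along the $P_n$ together with the defining formula \eqref{ms-321} for $M_S$; by Remark \ref{new} the very same computation is valid for any bounded multiplier, in particular for the Herglotz multiplier $\Phi$ under the standing assumption that $M_\Phi$ is bounded on $(\mathbf H_2(\mathcal E))^r$. Taking adjoints in $M_{P_1}M_\Phi=M_\Phi M_{P_1}$ yields as well $M_\Phi^*M_{P_1}^*=M_{P_1}^*M_\Phi^*$.

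Then the computation is immediate. One writes
\[
M_{P_1}\Gamma M_{P_1}^*=\frac12\left(M_{P_1}M_\Phi M_{P_1}^*+M_{P_1}M_\Phi^* M_{P_1}^*\right).
\]
For the first summand, commuting $M_{P_1}$ to the right of $M_\Phi$ gives $M_{P_1}M_\Phi M_{P_1}^*=M_\Phi M_{P_1}M_{P_1}^*=M_\Phi(I-C^*C)$. For the second summand, commuting $M_{P_1}^*$ to the left of $M_\Phi^*$ gives $M_{P_1}M_\Phi^* M_{P_1}^*=M_{P_1}M_{P_1}^* M_\Phi^*=(I-C^*C)M_\Phi^*$. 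Adding the two contributions and dividing by $2$ produces \eqref{mphi}.

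I do not expect a genuine obstacle here; the only point that deserves to be flagged is the passage from Schur multipliers to bounded (Herglotz) multipliers in the commutation relation, which is precisely what Remark \ref{new} licenses, since $\Phi$ itself need not be a Schur multiplier but $M_\Phi$ is assumed bounded.
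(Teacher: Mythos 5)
Your proof is correct and follows essentially the same route as the paper: expand $\Gamma=\tfrac12(M_\Phi+M_\Phi^*)$, commute $M_{P_1}$ past $M_\Phi$ via \eqref{PS1212}, and absorb $M_{P_1}M_{P_1}^*$ using \eqref{fundamental987}. Your explicit justification that the commutation relation extends from Schur multipliers to bounded multipliers via Remark \ref{new} is a point the paper's proof leaves implicit, and it is correctly handled.
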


\begin{proof}
Using the fundamental equality \eqref{fundamental987}, we can write
\[
\begin{split}
M_{P_1}\Gamma M_{P_1}^*&=M_{P_1}\left(\frac{M_\Phi+M_\Phi^*}{2}\right)M_{P_1}^*\\
&=\frac{1}{2}M_\Phi M_{P_1}M_{P_1}^*+\frac{1}{2}  M_{P_1}M_{P_1}^*M_\Phi^*\\
&=\frac{1}{2}M_{\Phi}(I-C^*C)+\frac{1}{2}(I-C^*C)M_\Phi^*.
\end{split}
\]
\end{proof}

We make now some remarks on the above kernel. It seems difficult to find a direct counterpart of \eqref{pv} (in view of \eqref{obstruct}), let alone of \eqref{herg}. As for the case of Schur
multiplier, we look for a characterization of the coefficients of $\Phi$ in its expansion along the $P_n$. When $x=x_0$ and $y=y_0$ belong to $(-1/3,1/3)$ the kernel $L_\Phi$ becomes
\begin{equation}
\label{lphiR}
L_\Phi(3x_0,3y_0)=\frac{\Phi(3x_0)+\Phi(3y_0)^*}{1-9x_0y_0}.
\end{equation}
As for the case of Schur multipliers, this restriction is enough to get back the kernel $L_\Phi$ in view of the axial symmetry of the functions.
\begin{defn}
We denote by $\mathcal L(\Phi)$ the reproducing
kernel  Hilbert space of axially hyperholomorphic functions with reproducing kernel \eqref{kphi123}.
\end{defn}

The Hilbert space $\mathcal L(\Phi)$ is the completion of the linear span of functions of the form $L_{\Phi}(x,y)p$, $p\in\mathbb H^r$, thus it consists of hyperholomorphic functions of axial type.

\begin{thm}
  Let $\Phi(x)=\sum_{n=0}^\infty P_n(x)\Phi_n$ be an axially hyperholomophic function in $\mathcal E$. Then $\Phi$ is a Herglotz multiplier if and only if the coefficients $\Phi_n$ can be written as
\begin{equation}
  \label{phin}
  \Phi_n=\begin{cases}\,\, CC^*,\,\,\hspace{11.4mm} n=0,\\
    \,\,2CV^{*n}C^*, \quad n=1,2,\ldots\end{cases}
\end{equation}
where $V$ is an isometry in a Hilbert space, say $\mathfrak H$, and $C$ is a continuous map from $\mathfrak H$ into $\mathbb H^r$.\smallskip
\end{thm}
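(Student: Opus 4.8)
The plan is to establish the two implications separately. Sufficiency will come from a direct kernel identity exhibiting $L_\Phi$ as a Gram kernel; necessity will be reduced, via restriction to the real axis and the map $\chi$, to the classical realization theory of Herglotz functions, and then transported back to the quaternions.

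For sufficiency, assume $\Phi_0=CC^{*}$ and $\Phi_n=2CV^{*n}C^{*}$ for $n\ge 1$, with $V$ an isometry on a right quaternionic Hilbert space $\mathfrak H$ and $C\in\mathbf L(\mathfrak H,\mathbb H^{r})$. Using Lemma \ref{PnPm} and \eqref{facto-pn} one has $P_n\odot\Phi=\sum_{k\ge n}P_k\,\Phi_{k-n}$, so that, substituting into \eqref{kphi123} and splitting the resulting double sum into its diagonal part, its $k>n$ part and its $k<n$ part, I expect to recognize
\[
L_\Phi(x,y)=\Big\langle\, G_y\,,\,G_x\,\Big\rangle_{\mathfrak H},\qquad G_x:=\sum_{n\ge 0}V^{n}C^{*}\,\overline{P_{n}(x)},
\]
the point being that $V^{*n}V^{m}$ collapses to $V^{m-n}$ (if $m\ge n$) or to $V^{*(n-m)}$ (if $n>m$) precisely because $V^{*}V=I$, which is exactly what matches the three groups of terms produced by \eqref{kphi123}. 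The $\mathfrak H$-valued series defining $G_x$ converges for $x\in\mathcal E$ by Corollary \ref{coro234} together with $\|V^{n}\|\le 1$, so $L_\Phi$ is a Gram kernel and hence positive definite in $\mathcal E$; since the skew-Hermitian part of $\Phi_0$ does not enter $L_\Phi$, one may as well normalize $\Phi_0=CC^{*}$.

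For necessity, suppose $L_\Phi$ is positive definite in $\mathcal E$. Restricting $x=x_0$ and $y=y_0$ to $(-1/3,1/3)$ and using \eqref{lphiR}, the $\mathbb H^{r\times r}$-valued kernel $\frac{\Phi(3x_0)+\Phi(3y_0)^{*}}{2(1-9x_0y_0)}$ is positive definite there; applying $\chi$ and setting $a=3x_0$, $b=3y_0$, the $\mathbb C^{2r\times 2r}$-valued kernel $\frac{\varphi(a)+\varphi(b)^{*}}{2(1-ab)}$ with $\varphi(a)=\sum_{n}\chi(\Phi_n)a^{n}$ is positive definite on $(-1,1)$. By the Herglotz counterpart of Theorem \ref{dono} and the realization \eqref{pv}--\eqref{phivc} (see \cite{dbbook,dbs,adrs}), $\varphi$ extends to a Herglotz function on the open unit disk admitting a realization $\varphi(z)=i\alpha+\gamma(I+zW^{*})(I-zW^{*})^{-1}\gamma^{*}$ with $W$ an isometry on a complex Hilbert space and $\gamma$ continuous; taking it minimal makes it unique up to unitary equivalence. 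Since $\varphi$ satisfies the symmetry \eqref{symquater}, that symmetry is inherited by the minimal realization space and, exactly as in Lemma \ref{rangechi}, endows it with a quaternionic structure in which $W$ becomes a quaternionic isometry $V$ on a right quaternionic Hilbert space $\mathfrak H$ and $\gamma$ becomes a $C\in\mathbf L(\mathfrak H,\mathbb H^{r})$. Comparing power series then gives $\chi(\Phi_n)=2\gamma W^{*n}\gamma^{*}$ for $n\ge 1$ and $\chi\!\big(\tfrac12(\Phi_0+\Phi_0^{*})\big)=\gamma\gamma^{*}$; undoing $\chi$ (and normalizing $\Phi_0$ to its Hermitian part) yields $\Phi_0=CC^{*}$ and $\Phi_n=2CV^{*n}C^{*}$, as desired.

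I expect the genuine difficulty to be the descent step in the necessity direction: producing, out of the non-unique complex realization, one that respects the $\chi$-symmetry and is honestly quaternionic (with an isometry, not merely a coisometry, and with $C$ valued in $\mathbb H^{r}$). A fully intrinsic alternative, avoiding $\chi$, would be to construct the quaternionic reproducing kernel space $\mathcal L(\Phi)$ (using \cite{zbMATH06658818,MR4126757}) and run a de Branges--type lurking-isometry argument directly on it, starting from the identity
\[
L_\Phi(x,y)-P_1(x)\odot L_\Phi(x,y)\odot_{r}\overline{P_1(y)}=\tfrac12\big(\Phi(x)+\Phi(y)^{*}\big),
\]
which follows from \eqref{nmk} just as \eqref{lurking} does for $K_S$, and then mimicking the colligation construction of Section \ref{co}; the bookkeeping for the two-sided quaternionic inner products (as in \eqref{in11}--\eqref{in22}) would then be the delicate part. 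In either route one must argue at the level of the kernel and of the coefficients $\Phi_n$, since $M_\Phi$ need not be bounded.
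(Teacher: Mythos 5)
Your sufficiency argument is correct and is a genuinely different (more computational) route than the paper's: you expand the double series in \eqref{kphi123} directly over all of $\mathcal E$ and recognize a Gram kernel via $V^{*n}V^m$ collapsing to a one-sided power, whereas the paper simply restricts to $x=3x_0$, $y=3y_0$ on the real axis, uses the classical resolvent identity $\frac{\Phi(a)+\Phi(b)^*}{2(1-ab)}=C(I-aV^*)^{-1}(I-bV^*)^{-*}C^*$ there, and then invokes uniqueness of the axially hyperholomorphic extension in each variable. Your version buys an explicit factorization of the full kernel on $\mathcal E$; the paper's buys brevity by exploiting the axial-type rigidity that pervades the whole paper. (Both versions share the same small blemish: the skew-Hermitian part of $\Phi(0)$ survives, so the clean identity $\Phi_0=CC^*$ implicitly normalizes $\Phi(0)$ to be Hermitian, exactly as in the paper's own Step 4.)

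The necessity direction is where the gap is. Your primary route --- restrict to the real axis, apply $\chi$, realize the resulting $\mathbb C^{2r\times 2r}$-valued Herglotz function as in \eqref{pv}--\eqref{phivc}, and then ``descend'' to the quaternions --- stalls exactly at the step you yourself flag: it is not enough to observe that $\varphi$ satisfies the symmetry \eqref{sym2}. One must actually construct, on the minimal state space of the complex realization, an anti-linear involution $J$ with $J^2=-I$ that commutes with $W$ and intertwines $\gamma$ with the quaternionic structure of $\mathbb C^{2r}$, verify that minimality forces its existence (via the uniqueness of minimal realizations up to unitary equivalence), and check that the fixed quaternionic structure turns $W$ into an isometry on a right quaternionic Hilbert space with $\gamma$ landing in $\mathbf L(\mathfrak H,\mathbb H^r)$. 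None of this is carried out, and it is the entire content of the implication; as written the argument produces only a complex realization. The paper avoids this altogether by working intrinsically from the start: it defines the linear relation on $\mathcal L(\Phi)\times\mathcal L(\Phi)$ spanned by the pairs $\bigl(L_\Phi(\cdot,3y_0)\overline{P_1(3y_0)}q,\,(L_\Phi(\cdot,3y_0)-L_\Phi(\cdot,0))q\bigr)$, proves it is densely defined (density following from axially hyperholomorphic extension off the real axis) and isometric via the identity $L_\Phi(x,y)-P_1(x)\odot L_\Phi(x,y)\odot_r\overline{P_1(y)}=\frac12(\Phi(x)+\Phi(y)^*)$, identifies the resulting everywhere-defined isometry $V$ through $V^*=R_0$, identifies $C^*q=L_\Phi(\cdot,0)q$ as the adjoint of evaluation at the origin, and reads off \eqref{phin}. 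This is precisely the ``fully intrinsic alternative'' you relegate to a closing remark; to make your proof complete you should promote that sketch to the main argument and carry out the density and isometry verifications, rather than rely on the unproven $\chi$-descent.
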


\begin{proof}
We first prove the sufficiency. We have for $a\in(-1,1)$
\[
\Phi(a)=CC^*+2\sum_{n=1}^\infty a^nCV^{*n}C^*=C(I+aV^*)(I-aV^*)^{-1}C^*.
\]
Thus, for $a,b\in (-1,1)$,
\[
\frac{\Phi(a)+\Phi(b)^*}{2(1-ab)}=C(I-aV^*)^{-1}(I-bV^*)^{-*}C^*
\]
and thus the corresponding function \eqref{lphiR} is positive definite in $(-1/3,1/3)$, and we get the result by hyperholomorphic extension on the left with respect to $x$ and on
the right with respect to $y$ since the functions are assumed axially hyperholomorphic.\\

We now turn to the proof of the direct statement, and divide the proof in a number of steps. We follow \cite[Proof of Theorem 5.2, p. 708]{atv1}.
We write $P_1(3y_0)$ rather than $3y_0$ to emphasize the axially symmetric hypercomplex extension to be used. At the end of section alternative steps 1 and 2 are given when $M_\Phi$ is bounded.\\

STEP 1: {\sl The linear relation (see Definition \ref{real1!!!}) of $\mathcal L(\Phi)\times\mathcal L(\Phi)$ defined by the span of the functions
  \begin{equation}
\label{rela}
\left(L_\Phi(\cdot, 3y_0)\overline{P_1(3y_0)}q,(L_\Phi(\cdot, 3y_0)-L_\Phi(\cdot,0))q\right),\quad y_0\in(-1/3,1/3),\quad q\in\mathbb H^r,
    \end{equation}
is densely defined and isometric.}\smallskip

Let $F$ be orthogonal to the domain of the relation, then $P_1(3y_0)F(3y_0)\equiv 0$ and so $F$
is identically equal to $0$ by axially hypercomplex extension.\smallskip

Next, we need to show that, for  $x_0,y_0,\in(-1/3,1/3)$ and $p,q\in\mathbb H^r$, we have:
\[
\begin{split}
\langle L_\Phi(\cdot, 3y_0)\overline{P_1(3y_0)}q, L_\Phi(\cdot, 3x_0)&\overline{P_1(3x_0)}p\rangle_{\mathcal L(\Phi)}\\
&=\langle (L_\Phi(\cdot, 3y_0)-L_\Phi(\cdot,0))q,(L_\Phi(\cdot, 3x_0)\\&  \hspace{3cm}-L_\Phi(\cdot,0))p\rangle_{\mathcal L(\Phi)}.
\end{split}
\]
This amounts to check that
\[
P_1(3x_0)L_\Phi(3x_0,3y_0)\overline{P_1(3y_0)}=L_\Phi(3x_0,3y_0)-L_\Phi(3x_0,0)-L_\Phi(0,3y_0)+L_\Phi(0,0)
\]
but this is a direct consequence of the definition of $L_\Phi$ since
\[
L_\Phi(3x_0,0)+L_\Phi(0,3y_0)-L_\Phi(0,0)=\dfrac{\Phi(3x_0)+\Phi(3y_0)^*}{2}
\]
and
\[
L_\Phi(3x_0,3y_0)-P_1(3x_0)L_\Phi(3x_0,3y_0) \overline{P_1(3y_0)}=\dfrac{\Phi(3x_0)+\Phi(3y_0)^*}{2}.
\]
There is thus an everywhere defined isometric operator such that, for $q\in\mathbb{H}^r$
\[
  V\left(L_\Phi(\cdot, 3y_0)\overline{P_1(3y_0)}q\right)=(L_\Phi(\cdot, 3y_0)-L_\Phi(\cdot,0))q,\quad y_0\in(-1/3,1/3).
\]

STEP 2: {\sl We have
\[
V^*=R_0.
\]
}
On the one hand
\[
  \begin{split}
    \langle V^*F, (L_\Phi(\cdot, 3y_0)\overline{P_1(3y_0)})q)\rangle_{\mathcal L(\Phi)}&=\langle F,(L(\cdot, 3y_0)-L(\cdot,0))q\rangle_{\mathcal L(\Phi)}\\
    &=q^*(F(3y_0)-F(0))
  \end{split}
  \]
  and on the other hand
  \[
    \begin{split}
      \langle V^*F, (L_\Phi(\cdot, 3y_0)\overline{P_1(3y_0)}q)      \rangle_{\mathcal L(\Phi)}&=\langle F,(L_\Phi(\cdot, 3y_0)\overline{P_1(3y_0)})q)\rangle_{\mathcal L(\Phi)}\\
      &=q^*P_1(3y_0)(V^*F)(3y_0).
    \end{split}
  \]
  Hence,
  \[
P_1(3y_0) (V^*F)(3y_0)=F(3y_0)-F(0),
   \]
  and hence the result by axially hypercomplex extension.\\

STEP 3: {\sl Let $C$ denote the evaluation at the origin in $\mathcal L(\Phi)$. It holds that}
\[
(C^*q)(x)=L_\Phi(x,0)q=(\Phi(x)+\Phi(0)^*)q,\quad q\in\mathbb H^r.
\]

We have, with $p,q\in\mathbb H^r$,
\[
  \begin{split}
    \langle C^*q\,,\, L_\Phi(\cdot, x)p\rangle_{\mathcal L(\Phi)}=\langle q\,,\, L(0,x)p\rangle_{\mathbb H^r}=\frac{1}{2}((\Phi(0)+\Phi(x)^*)p)^*q,
  \end{split}
  \]
  and hence  the result.\\

STEP 4: {\sl We prove \eqref{phin}.}\\

With $q\in\mathbb H^r$ we write
\[
\begin{split}
\Phi(x)q&=(\Phi(x)+\Phi(0)^*)q-\Phi(0)^*q\\
&=2(C^*q)(x)-\Phi(0)^*q\\
&=2\sum_{n=0}^\infty CV^{*n}C^*q-\Phi(0)^*q\\
&=CC^*+2\sum_{n=1}^\infty CV^{*n}C^*q+CC^*-\Phi(0)^*q\\
&=CC^*+2\sum_{n=1}^\infty CV^{*n}C^*q+\frac{\Phi(0)-\Phi(0)^*}{2}q
\end{split}
\]
since $CC^*=\dfrac{\Phi(0)+\Phi(0)^*}{2}$.
\end{proof}

\begin{cor}
Let $\Phi$ be a Herglotz multiplier. The space $\mathcal L(\Phi)$ is $R_0$ invariant.
\end{cor}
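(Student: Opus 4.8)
The plan is to obtain the corollary at once from the two operators constructed in the proof of the preceding theorem, with no new computation. Recall that in that proof one builds, by extending the densely defined isometric relation spanned by the pairs in \eqref{rela}, an everywhere defined isometry $V$ of $\mathcal L(\Phi)$ into itself, and then in Step~2 identifies its adjoint as $V^*=R_0$ on $\mathcal L(\Phi)$. Since $V$ is bounded and everywhere defined on $\mathcal L(\Phi)$, its Hilbert-space adjoint $V^*$ is likewise a bounded operator from $\mathcal L(\Phi)$ into $\mathcal L(\Phi)$; hence $R_0F=V^*F\in\mathcal L(\Phi)$ for every $F\in\mathcal L(\Phi)$, which is exactly the claimed $R_0$-invariance. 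As a by-product one also gets $\|R_0\|\le 1$ on $\mathcal L(\Phi)$, $R_0$ being the adjoint of an isometry.

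First I would make explicit why the identity ``$V^*=R_0$'' is legitimate on all of $\mathcal L(\Phi)$, not merely on the dense span used in Step~2. For $F\in\mathcal L(\Phi)$ the element $V^*F$ already lies in $\mathcal L(\Phi)$, hence is axially hyperholomorphic on $\mathcal E$; Step~2 shows that for $y_0\in(-1/3,1/3)$ one has $P_1(3y_0)\,(V^*F)(3y_0)=F(3y_0)-F(0)$, and since $P_1(x_0)=3x_0$ and axially hyperholomorphic functions are determined by their restriction to the real line, $V^*F$ must be the function whose expansion along the $P_n$ has coefficients shifted down by one, i.e.\ $V^*F=R_0F$ in the sense of \eqref{bws}. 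Combining this identification with the previous paragraph closes the argument.

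There is essentially no obstacle here: everything needed is already in place from the proof of the structure theorem for Herglotz multipliers, and the only mild point is the bookkeeping just described, namely that $R_0$ was introduced a priori as an operator on the Hardy space, so one must observe that the formal backward shift of an element of $\mathcal L(\Phi)$ genuinely converges and stays in $\mathcal L(\Phi)$ — which is precisely the content of $R_0=V^*$. If a self-contained proof were preferred one could instead re-run Steps~1 and~2 inside the corollary, but that would only duplicate the earlier work.
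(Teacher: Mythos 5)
Your proposal is correct and follows exactly the route the paper intends: the corollary is an immediate consequence of Step 2 of the preceding theorem's proof, where $V^*=R_0$ is established for the everywhere defined isometry $V$ on $\mathcal L(\Phi)$, so $R_0=V^*$ maps $\mathcal L(\Phi)$ into itself (and is contractive there). Your additional remark on extending the identification from the dense span to all of $\mathcal L(\Phi)$ via restriction to the real line and uniqueness of the axially hyperholomorphic extension is exactly the bookkeeping the paper relies on in Step 2.
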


As for Schur multiplier one has:

\begin{thm}
A space $\mathcal L(\Phi)$ is finite dimensional if and only if the operator $V$ can be chosen to act in a finite dimensional space (and hence is unitary).
\end{thm}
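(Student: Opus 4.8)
The plan is to run both implications through the canonical model produced in the proof of the previous theorem, where $V$ is realized as an everywhere defined isometry on $\mathfrak H=\mathcal L(\Phi)$ itself, with $V^*=R_0$ and $C$ the point evaluation at the origin, and the coefficients of $\Phi$ are $\Phi_0=CC^*$, $\Phi_n=2CV^{*n}C^*$ for $n\ge 1$. With this in hand the ``only if'' direction is immediate: if $\mathcal L(\Phi)$ is finite dimensional, then choosing $\mathfrak H=\mathcal L(\Phi)$ we obtain an isometry $V$ of a finite-dimensional quaternionic Hilbert space, and an isometry of a finite-dimensional Hilbert space is surjective, hence unitary.

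For the ``if'' direction I would assume $V$ acts on a Hilbert space $\mathfrak H$ with $\dim\mathfrak H=N<\infty$. From $\Phi(a)=CC^*+2\sum_{n\ge 1}a^nCV^{*n}C^*=C(I_{\mathfrak H}+aV^*)(I_{\mathfrak H}-aV^*)^{-1}C^*$ one gets, for $a,b\in(-1,1)$,
\[
\frac{\Phi(a)+\Phi(b)^*}{2(1-ab)}=C(I_{\mathfrak H}-aV^*)^{-1}(I_{\mathfrak H}-bV^*)^{-*}C^*,
\]
so that for $x_0,y_0\in(-1/3,1/3)$ we have $L_\Phi(3x_0,3y_0)=C(I_{\mathfrak H}-3x_0V^*)^{-1}(I_{\mathfrak H}-3y_0V^*)^{-*}C^*$. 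Hence, for fixed $y_0$ and $h\in\mathbb H^r$, the function $x_0\mapsto L_\Phi(3x_0,3y_0)h$ lies in the right-linear span of the $N$ functions $x_0\mapsto C(I_{\mathfrak H}-3x_0V^*)^{-1}e_j$, where $e_1,\dots,e_N$ is a basis of $\mathfrak H$; by uniqueness of the axially hyperholomorphic extension the functions $x\mapsto L_\Phi(x,3y_0)h$ then span a space of dimension at most $N$. These functions span $\mathcal L(\Phi)$: any $f\in\mathcal L(\Phi)$ orthogonal to all of them satisfies, by the reproducing property, $h^*f(3y_0)=0$ for all $h\in\mathbb H^r$ and all $y_0\in(-1/3,1/3)$, so $f$ vanishes on $(-1/3,1/3)$, and since $f$ is of axial type we conclude $f\equiv 0$. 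Therefore $\dim\mathcal L(\Phi)\le N<\infty$. This is exactly the finite-dimensionality argument used for Blaschke functions in Proposition~\ref{propo-iso}.

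The resolvent manipulations are routine, and the only point that deserves care is the density claim, which relies on the fact (used throughout the paper) that an axially hyperholomorphic function is determined by its restriction to the real line. Beyond keeping the two roles of $V$ clearly separated — the canonical isometry on $\mathcal L(\Phi)$ for one direction, an arbitrary finite-dimensional model for the other — I do not expect a genuine obstacle.
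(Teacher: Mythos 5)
Your proof is correct and follows exactly the route the paper intends: the paper states this theorem without proof, relying on the analogy with the Schur-multiplier case, and your argument is precisely the finite-dimensionality argument of Proposition \ref{propo-iso} transplanted to the kernel $L_\Phi(3x_0,3y_0)=C(I-3x_0V^*)^{-1}(I-3y_0V^*)^{-*}C^*$, combined with the observation that the canonical $V$ of the realization theorem lives on $\mathcal L(\Phi)$ itself. The only cosmetic point is that the canonical construction actually yields $\Phi_0=CC^*+\tfrac{\Phi(0)-\Phi(0)^*}{2}$; the skew-adjoint constant cancels in $\Phi(a)+\Phi(b)^*$, so nothing in your argument is affected.
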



In terms of Toeplitz matrices we have:

\begin{thm}
$\Phi$ is a $\mathbb H^{r\times r}$-valued bounded Herglotz multiplier if and only if the infinite block matrix
$\left(\Phi_{n-m}\right)_{n,m=0}^\infty$ (with $\Phi_{-m}=\Phi_m^*$) defines a bounded positive operator.
\end{thm}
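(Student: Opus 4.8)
The plan is to identify the block Toeplitz operator in the statement with the matrix, in the system $(P_n)_{n\ge 0}$ of $\mathbf H_2(\mathcal E)$, of the operator $\Gamma=\tfrac12\bigl(M_\Phi+M_\Phi^{*}\bigr)$ — up to the normalization built into the coefficients $\Phi_n$, with the $0$-th block read as $\Phi(0)+\Phi(0)^{*}$ (the kernel \eqref{kphi123}, hence the very notion of Herglotz multiplier, only involves $\Phi(0)+\Phi(0)^{*}$) — and then to read off both implications from the fact, already recorded right after the definition of Herglotz multiplier, that whenever $\Gamma$ is bounded $\Phi$ is a Herglotz multiplier precisely when $\Gamma\ge 0$. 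I should stress at the outset that ``bounded'' in ``bounded Herglotz multiplier'' is to be understood as boundedness of $\Gamma$ (equivalently, of the Toeplitz operator), \emph{not} of $M_\Phi$: the Appell-like analogue of $z\mapsto\tfrac{1+z}{1-z}$ has $M_\Phi$ unbounded while $\Gamma$ is bounded.

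First I would compute the matrix of $M_\Phi$. Writing $\Phi=\sum_{k\ge 0}P_k\Phi_k$, Definition \ref{MS} together with the identity $P_n\odot P_k=P_{n+k}$ of Lemma \ref{PnPm} gives, for $u\in\mathbb H^r$,
\[
M_\Phi(P_m u)=(P_m\odot\Phi)u=\sum_{k\ge 0}P_{m+k}\Phi_k u ,
\]
so that in the system $(P_n)$ the operator $M_\Phi$ is represented by the lower triangular block Toeplitz matrix with $(n,m)$-block $\Phi_{n-m}$ for $n\ge m$ and $0$ otherwise. Adding the adjoint, $M_\Phi+M_\Phi^{*}$ is represented by the full block Toeplitz matrix $\bigl(\Phi_{n-m}\bigr)_{n,m\ge 0}$ with $\Phi_{-m}=\Phi_m^{*}$ (diagonal block $\Phi_0+\Phi_0^{*}$). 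Thus $2\Gamma$ and the operator of the statement coincide on $\ell_2(\mathbb N_0,\mathbb H^r)\cong(\mathbf H_2(\mathcal E))^{r}$; in particular one is bounded, resp.\ positive, if and only if the other is.

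Granting this, the two implications are short. If $\Phi$ is a bounded Herglotz multiplier, then $\Gamma$ is bounded and $\Gamma\ge 0$, whence the Toeplitz operator is bounded and positive. Conversely, if the Toeplitz operator is bounded and positive, then $\Gamma$ is a bounded positive operator on $(\mathbf H_2(\mathcal E))^{r}$; applying the Section~3 principle \eqref{k12345} to this $\Gamma$ with $f_x=k_{\mathcal E}(\cdot,x)$ produces the positive definite kernel $\langle\Gamma\,k_{\mathcal E}(\cdot,y),k_{\mathcal E}(\cdot,x)\rangle$ on $\mathcal E$, and I would check it equals $L_\Phi(x,y)$ by restricting to the real axis (where it reduces to \eqref{lphiR}) and invoking uniqueness of the axially hyperholomorphic extension; hence $L_\Phi$ is positive definite in $\mathcal E$, i.e.\ $\Phi$ is a Herglotz multiplier, and it is bounded since $\Gamma$ is. If one prefers to stay over $\mathbb C$, one may instead pass via $\chi$ and Proposition \ref{2-9-0} (which preserves boundedness, with equality of norms, and preserves order) to the $\mathbb C^{2r\times 2r}$-valued Toeplitz operator $\bigl(\chi(\Phi_{n-m})\bigr)$ and quote the classical Carath\'eodory--Toeplitz theorem; alternatively, the statement is the Kolmogorov-factorization counterpart of the realization \eqref{phin} of the preceding theorem.

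I do not expect a genuine obstacle here: the argument is essentially bookkeeping once the matrix representation of $M_\Phi$ is in hand. The two points that must be handled with care are (i) the precise reading of ``bounded'' — boundedness of $\Gamma$, not of $M_\Phi$ — together with the equivalence, valid as soon as $\Gamma$ is bounded, between positivity of the kernel \eqref{kphi123} and $\Gamma\ge 0$; and (ii) fixing the normalization of the $0$-th block of the Toeplitz matrix so that it is exactly that of $2\Gamma$.
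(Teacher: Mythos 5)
Your proposal is correct in substance, but it reaches the result by a partly different route, and the two halves compare differently. Your \emph{converse} direction is essentially the paper's: the paper sandwiches the Toeplitz matrix between the row $\begin{pmatrix}I_r& P_1(x)I_r&\cdots\end{pmatrix}$ and the corresponding column in $y$, and verifies by rearranging the double series that the resulting quadratic form is $L_\Phi(x,y)$ --- which is exactly your $\langle \Gamma\, k_{\mathcal E}(\cdot,y),k_{\mathcal E}(\cdot,x)\rangle$ from \eqref{k12345}; the paper does the identification by direct manipulation of the series rather than by restriction to the real axis plus uniqueness of the axially hyperholomorphic extension (both work, and the direct computation makes visible the $0$-th block normalization you flag, since the diagonal comes out as $\Phi_0+\Phi_0^*$). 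Your \emph{forward} direction is genuinely different: the paper restricts to the real axis, applies $\chi$, uses the Herglotz analogue of Theorem \ref{dono} to extend to the disk, deduces non-negativity of every \emph{finite} Toeplitz section from the classical Carath\'eodory--Toeplitz theorem, and only then invokes boundedness of $M_\Phi$ together with Proposition \ref{2-9-0} to pass to the infinite operator. You instead get positivity of the infinite form directly from positivity of $L_\Phi$, the identification of the Toeplitz form with $2\Gamma$, and density of the kernel-coefficient sequences in $\ell_2$. Your route is more self-contained and stays inside the quaternionic framework, but it needs boundedness of the form from the outset to run the density argument; the paper's route delivers non-negativity of all finite sections with no boundedness hypothesis at all, which is the sharper output when $\Phi$ is unbounded.

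Two caveats. First, on the reading of ``bounded'': the paper's proof of the ``only if'' direction explicitly assumes $M_\Phi$ bounded, while its ``if'' direction recovers only positivity of $L_\Phi$ and not boundedness of $M_\Phi$; your reading (boundedness of $\Gamma$, equivalently of the full Toeplitz operator) is the one under which the equivalence actually closes, so it is a defensible repair rather than a misreading --- but you should say explicitly that the equivalence ``$L_\Phi$ positive definite $\iff$ $\Gamma\ge 0$'' under boundedness of $\Gamma$ alone is not the statement recorded after \eqref{kphi123} (which assumes $M_\Phi$ bounded and uses \eqref{adjointm}); it is precisely your density computation, applied in both directions. Second, your motivating example is wrong: for the analogue of $z\mapsto(1+z)(1-z)^{-1}$ the Herglotz measure is a point mass, the symmetrized Toeplitz form is the all-ones matrix, and $\Gamma$ is \emph{unbounded} just as $M_\Phi$ is; a correct separating example is a Herglotz function whose boundary measure has bounded density but unbounded harmonic conjugate (e.g.\ the one attached to the characteristic function of an arc). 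This error is tangential to the logic of the proof, affecting only the justification of your interpretive choice.
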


\begin{proof}
Let $\Phi$ be such that the kernel $L_\Phi$ is positive definite in $\mathcal E$ (at this stage we do not assume yet that the associated operator $M_\Phi$ is bounded).
Then, setting $x_1=x_2=x_3=0$ and applying the map $\chi$ we see that the kernel
\[
\frac{\chi(\Phi)(3x_0)+\left(\chi(\Phi)(3y_0)\right)^*}{1-9x_0y_0}
\]
is positive definite in $(-1/3,1/3)$. Setting $a=3x_0$ and $b=3y_0$, and with $\varphi(a)=\chi(\Phi)(3x_0)$ we see that the kernel
\[
\frac{\varphi(a)+\varphi(b)^*}{1-ab}
\]
is positive definite on $(-1,1)$. Thus it extends to a positive definite function to the open unit disk, and the extended $\varphi$ has a positive real part there.
The block Toeplitz matrices
\[
\left(\chi(\Phi_{n-m})\right)_{n,m=0}^N,
\]
with $\Phi_{-m}=\Phi_m^*$, are thus non-negative. If $M_\Phi$ is bounded, the infinite Toeplitz matrix  $\left(\chi(\Phi_{n-m})\right)_{n,m=0}^\infty$ defines a bounded positive operator, and so does
$\left(\Phi_{n-m}\right)_{n,m=0}^\infty$ by Proposition \ref{2-9-0}.\\

 Conversely, if $\left(\Phi_{n-m}\right)_{n,m=0}^\infty$ defines a bounded positive operator, the function
\[
\begin{pmatrix}I_r& P_1(x)I_r&\cdots\end{pmatrix}\left(\Phi_{n-m}\right)_{n,m=0}^\infty\begin{pmatrix}I_r\\ I_r\overline{P_1(y)} \\ \vdots\end{pmatrix}
\]
is positive definite. It can be rewritten as the kernel $L_\Phi$. Indeed
\[
\begin{split}
\begin{pmatrix}I_r& P_1(x)I_r&\cdots\end{pmatrix}\left(\Phi_{n-m}\right)_{n,m=0}^\infty\begin{pmatrix}I_r\\ I_r\overline{P_1(y)} \\ \vdots\end{pmatrix}&=\sum_{n,m=0}^\infty P_n(x)\Phi_{n-m}\overline{P_m(y)}
\end{split}
\]
while
\[
\begin{split}
L_\Phi(x,y)&=\sum_{n=0}^\infty \left(\sum_{m=0}^\infty P_{n+m}(x)\Phi_m\right)\overline{P_m(y)}+P_n(x)\left(\sum_{m=0}^\infty\Phi_m^*\overline{P_{m+n}(y)}\right)\\
&=\sum_{n=0}^\infty\sum_{m=0}^\infty P_{n}(x)\Phi_{n-m}\overline{P_m(y)}
\end{split}
\]
with $\Phi_{-m}=\Phi_m^*$ for $m=0,1,\ldots$.\\
\end{proof}
We conclude with a computation of the linear relation associated to $\Phi$ when $M_\Phi$ is a bounded operator. The computations are longer, but avoid axially symmetric extensions.
The relation \eqref{rela} becomes:
\begin{equation}
  \label{linearel}
  \left(\sum_{j}\Gamma(M_{P_1}^*k_{\mathcal E}(\cdot, a_j)q_j), \sum_{j}L_\Phi(\cdot, a_j)q_j  -L_\Phi(\cdot, 0)\left(\sum_{j} q_j\right)\right),
\end{equation}
 where $u_1,v_1\ldots\in\mathcal E$ and $p_1,q_1,\ldots \in
\mathbb H^r$.\smallskip

NEW STEP 1: {\sl The linear relation spanned by the elements \eqref{linearel} is isometric and  densely defined, and hence extends to the graph of an everywhere defined
  isometry.}\\

To prove this claim, we let $f=\sum_j k_{\mathcal E} (\cdot, u_j)q_j$ and $g=\sum_\ell k_{\mathcal E} (\cdot, v_\ell)p_\ell$.  Using \eqref{mphi} we can write:
\[
\begin{split}
\langle \Gamma (M_{P_1}^*f),\Gamma (M_{P_1}^*g)\rangle_{\mathcal L(\Phi)}&=\langle M_{P_1}^*f,\Gamma (M_{P_1}^*g)\rangle_2\\
&=\langle f,M_{P_1}\Gamma (M_{P_1}^*g)\rangle_2\\
&=\frac{1}{2}\langle f, (M_{\Phi}(I-C^*C)+(I-C^*C)M_\Phi^*)g\rangle_2\\
&=\langle f,\Gamma g\rangle-\frac{1}{2}\left(CM_\Phi^*g\right)^*Cf-\frac{1}{2}\left(Cg\right)^*CM_\Phi^*f\\
&=\sum_{j,\ell}p_\ell^*L_\Phi(v_\ell,u_j)q_j-\frac{1}{2}\left(\sum_\ell \Phi(v_\ell)^*p_\ell\right)^*\left(\sum_{j} q_j\right)-\\
&\hspace{5mm}-\frac{1}{2}\left(\sum_\ell p_\ell\right)^*\left(\sum_{j} \Phi(u_j)^*q_j\right)\\
&\\
&=\sum_{j,\ell}p_\ell^*L_\Phi(v_\ell,u_j)q_j-\left(\sum_\ell L_\Phi(0,v_\ell)^*p_\ell\right)^*\left(\sum_{j} q_j\right)-\\
&\hspace{5mm}-\left(\sum_\ell p_\ell\right)^*\left(\sum_{j} L_\Phi(0,u_j)^*q_j\right)\\
&\hspace{5mm}+\left(\sum_\ell p_\ell\right)^*L_\Phi(0,0)\left(\sum_{j} L_\Phi(0,u_j)^*q_j\right)\\
&\\
&\hspace{-4cm}=\left\langle \sum_{j}L_\Phi(\cdot, u_j)q_j  -L_\Phi(\cdot, 0)\left(\sum_{j} q_j\right)\, ,\, \sum_{\ell}L_\Phi(\cdot, v_\ell)p_\ell
  -L_\Phi(\cdot, 0)\left(\sum_{\ell} p_\ell\right)\right\rangle_{\mathcal L(\Phi)}.
\end{split}
\]

There is thus an everywhere defined isometric operator such that
\[
V\left(\Gamma M_{P_1}^*k_{\mathcal E}(\cdot, u)q\right)=L_\Phi(\cdot, u)q-L_\Phi(\cdot, 0)q,\quad u\in\mathcal E.
\]

NEW STEP 2: {\sl We have
\[
V^*=R_0.
\]
}

Indeed, let $V^*F=\sqrt{\Gamma}f$. On the one hand,
\[
\begin{split}
\langle V^*F,\sqrt{\Gamma}\left(M_{P_1}^*k_{\mathcal E}(\cdot, u)\right)q\rangle_{\mathcal L(\Phi)}  &=\langle \sqrt{\Gamma}f, M_{P_1}^*k_{\mathcal E}(\cdot, u)q\rangle_2\\
&=\langle M_{P_1}V^*F,k_{\mathcal E}(\cdot, u)q\rangle_2\\
&=q^*(P_1\odot V^*F)(u).
\end{split}
\]
On the other hand,
\[
\begin{split}
  \langle V^*F,\sqrt{\Gamma}\left(M_{P_1}^*k_{\mathcal E}(\cdot, u)\right)q\rangle_{\mathcal L(\Phi)}  &=\langle F, V\left(\sqrt{\Gamma}\left(M_{P_1}^*k_{\mathcal E}(\cdot, u)
    \right)q\right)\rangle_{\mathcal L(\Phi)}\\
&=\langle F, L_\Phi(\cdot, u)q-L_\Phi(\cdot, 0)q\rangle_{\mathcal L(\Phi)}\\
&=q^*(F(u)-F(0)).
\end{split}
\]
Hence, using the formulas \eqref{in11}-\eqref{in22} for the operator range inner product, we have
\[
(P_1\odot V^*F)(u)=F(u)-F(0).
\]

\section{The half-space case}
\setcounter{equation}{0}
We first recall that the classical Hardy space of the open right half-plane $\mathbb C_r$ is the reproducing kernel Hilbert space with reproducing kernel equal to $\dfrac{1}{2\pi(z+\overline{w})}$
(the factor $2\pi$ appears because of Cauchy's formula), and can be characterized as the space of power series of the form
\begin{equation}
\label{hoffman}
b(z)=\sum_{n=0}^\infty\frac{(1-z)^n}{(1+z)^{n+1}}b_n
\end{equation}
where the complex numbers $b_n$ satisfy $\sum_{n=0}^\infty |b_n|^2<\infty$; see for instance
\cite{MR1102893}. We will denote this space by $\mathbf H_2(\mathbb C_r)$. The purpose of this section is to define and begin a study of the counterpart of the space
$\mathbf H_2(\mathbb C_r)$ in the present setting. A more detailed analysis will be presented in a sequel to the present work. We give three equivalent characterizations of the new space,
respectively in terms of a reproducing kernel, restriction to the positive real axis and series expansion analogous to \eqref{hoffman}. We first define what will be the counterpart of
$\mathbb C_r$. To that purpose, consider the function $w(x)$ defined in \eqref{wx}; it has for (unique) $CK$-extension
\begin{equation}
W_1(x)=(1-P_1(x))\odot(1+P_1(x))^{-\odot}.
\end{equation}
This function is intrinsic hyperholomorphic of axial type by Remark \ref{rmk212}, in fact in a neighborhood of the origin we can write it as
\[
\begin{split}
W_1(x)&=(1-P_1(x))\odot \sum_{n=0}^{\infty} (-1)^n P_1(x)^{\odot n}=\sum_{n=0}^{\infty} (-1)^n P_1(x)^{\odot n} \\&  \hspace{3cm}+ \sum_{n=0}^{\infty} (-1)^{n+1} P_1(x)^{\odot n +1}\\
&=1+2 \sum_{n=1}^{\infty} (-1)^{n} P_1(x)^{\odot n}.
\end{split}
\]
We define $W_n(x)=W_1^{\odot n}(x)$ and we set
\[
K_W(x,y)=\sum_{n=0}^\infty W_n(x)\overline{W_n(y)}.
\]
Note that $W_n(0)=1$ and also that $W_n(x)$ is hyperholomorphic of axial type, being a finite $CK$-product of intrinsic series in $P_1(x)$. On the other hand for $x_1=x_2=x_3=0$ and $x_0>0$,
\[
|W_n(x_0)|=\left|\frac{1-3x_0}{1+3x_0}\right|<1.
\]
Using the arguments in Lemma \ref{fog} we can prove the following:
\begin{lem}\label{Wenne}
The series
\begin{equation}
\label{pi-r-1}
\sum_{n=0}^\infty |W_n(x)|^2
\end{equation}
converges in a neighborhood of $x=1/3$.
\end{lem}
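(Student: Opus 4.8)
The plan is to produce an open neighbourhood $\mathcal N$ of $x=1/3$ in $\mathbb R^4$ on which $|W_n(x)|\le 3^{-n}$ for all $n\ge0$; then $\sum_{n\ge0}|W_n(x)|^2\le\sum_{n\ge0}9^{-n}$ converges on $\mathcal N$. The reason some care is needed is that one cannot merely quote Lemma~\ref{fog} or Corollary~\ref{coro234}: those estimates control things near the origin, where instead $W_n(x)\to1$ and $\sum_n|W_n(x)|^2$ \emph{diverges}, so they must be transplanted to a neighbourhood of $1/3$.

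First I would set $\widetilde P_1:=P_1-1$ and record two algebraic facts. Since $-1$ is central, $\widetilde P_1^{\odot k}=\sum_{j=0}^k\binom kj(-1)^{k-j}P_j$ is a polynomial of axial type restricting to $(3x_0-1)^k$ on $\mathbb R$; comparing it with $x\mapsto P_k(x_0-\tfrac13,x_1,x_2,x_3)$, which is hyperholomorphic of axial type by translation invariance of $D$ and also restricts to $(3x_0-1)^k$ on $\mathbb R$, and using that an axially hyperholomorphic function is determined by its restriction to $\mathbb R$, one gets
\[
\widetilde P_1(x)^{\odot k}=P_k\bigl(x_0-\tfrac13,\,x_1,x_2,x_3\bigr),\qquad k\ge0 .
\]
Since $\odot$ is commutative and associative on these intrinsic functions, $W_n=W_1^{\odot n}=(1-P_1)^{\odot n}\odot(1+P_1)^{-\odot n}=(-\widetilde P_1)^{\odot n}\odot(2+\widetilde P_1)^{-\odot n}$, and expanding the geometric $-\odot$ power of $2+\widetilde P_1$ gives the expansion
\[
W_n=\sum_{m_1,\dots,m_n\ge1}\frac{(-1)^{m_1+\cdots+m_n}}{2^{\,m_1+\cdots+m_n}}\ \widetilde P_1^{\odot(m_1+\cdots+m_n)} ,
\]
which on $\mathbb R$ near $x_0=1/3$ is simply the power‑series identity $\bigl(\tfrac{1-3x_0}{1+3x_0}\bigr)^n=\bigl(\sum_{m\ge1}\tfrac{(-1)^m}{2^m}(3x_0-1)^m\bigr)^n$.

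Next I would fix $\eta=\tfrac12$ and apply Corollary~\ref{coro234} at the translated point $(x_0-\tfrac13,x_1,x_2,x_3)$: this yields $\epsilon>0$ such that, on the convex neighbourhood $\mathcal N:=\{x\in\mathbb R^4:(x_0-\tfrac13)^2+x_j^2<\epsilon,\ j=1,2,3\}$ of $1/3$, one has $|\widetilde P_1(x)^{\odot k}|=|P_k(x_0-\tfrac13,\underline x)|<\eta^k$ for all $k$. Then the series above converges absolutely and uniformly on $\mathcal N$ to an axially hyperholomorphic function agreeing with $W_n$ on $\mathbb R\cap\mathcal N$, hence equal to $W_n$ on $\mathcal N$ by uniqueness of axial extensions; taking absolute values and factoring the multi‑sum,
\[
|W_n(x)|\ \le\ \sum_{m_1,\dots,m_n\ge1}\ \prod_{i=1}^n\frac{\eta^{m_i}}{2^{m_i}}\ =\ \Bigl(\sum_{m\ge1}\frac{\eta^m}{2^m}\Bigr)^{\!n}\ =\ \Bigl(\frac{\eta}{2-\eta}\Bigr)^{\!n}\ =\ 3^{-n},\qquad x\in\mathcal N .
\]
This gives $\sum_{n\ge0}|W_n(x)|^2\le\sum_{n\ge0}9^{-n}<\infty$ on $\mathcal N$, which is the assertion.

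The step I expect to be the main obstacle is the identity $\widetilde P_1^{\odot k}=P_k(\,\cdot-\tfrac13\,)$: it encodes the fact that for intrinsic functions of axial type the $CK$‑product does not depend on the base hyperplane — it is just the axial extension of the pointwise product of the restrictions to $\mathbb R$ — and this is precisely what lets Corollary~\ref{coro234} be used around $1/3$ rather than around the origin. Once that is available, the rest is the same multi‑index bookkeeping as in the proof of Lemma~\ref{fog}.
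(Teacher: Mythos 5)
Your proof is correct, and it follows the same basic strategy as the paper's: recentre at $x=1/3$, exploit that $W_1(1/3)=0$, and deduce a geometric bound $|W_n(x)|\le\rho^n$ with $\rho<1$ on a neighbourhood. The implementation differs in two respects. The paper expands the translated function $\tilde W_1(\tilde x)=W_1(1/3+\tilde x)$ as a Fueter series $\sum_{\alpha\neq 0}\zeta(\tilde x)^\alpha f_\alpha$ with vanishing constant term and invokes Lemma~\ref{fog} directly to get $|\tilde W_1^{\odot n}|<\rho^n$; you instead re-express $W_n$ as a multi-series in the translated Appell polynomials via the identity $\widetilde P_1^{\odot k}=P_k(x_0-\tfrac13,\underline x)$ and invoke Corollary~\ref{coro234}, which yields the explicit constant $|W_n|\le 3^{-n}$. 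Your route is longer, but it makes explicit the point that both arguments silently rely on: that for intrinsic functions of axial type the $\odot$-product is the axial extension of the pointwise product of the restrictions to $\mathbb R$, and hence commutes with real translation — the paper's step $W_n(1/3+\tilde x)=(\tilde W_1(\tilde x))^{\odot n}$, where the product on the right is taken relative to the translated hyperplane, is exactly this fact and is asserted without comment. The only cosmetic caveat in your write-up is that the neighbourhood $\mathcal N=\{(x_0-\tfrac13)^2+x_j^2<\epsilon\}$ is not axially symmetric, so the appeal to uniqueness of axial extensions should formally be made on a smaller axially symmetric subneighbourhood containing the real interval; the paper's proof has the same (harmless) issue.
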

\begin{proof}
The function $W_1(x)$ is hyperholomorphic by its definition. Since we have  $W_1(1/3)=0$ we consider the variable $\tilde x=x-\dfrac 13$. The composed function $W_1(1/3+\tilde x)=\tilde{W}_1(\tilde x)$ is still hyperholomorphic, $\tilde{W}_1(0)=0$ and we can consider its expansion at the origin
\[
\tilde{W}_1(\tilde x)= \sum_{\substack{\alpha\in\mathbb{N}_0^3\\ \alpha\not=(0,0,0)}} {\zeta}(\tilde{x})^\alpha f_{\alpha} .
\]
By Lemma \ref{fog} we have that for any $\rho >0$ there exists $\epsilon >0$ such that for $\tilde{x}_0^2+\tilde{x}_j^2 <\epsilon$, $j=1,2,3$, one has
\[
\left| \left(\tilde{W}_1(\tilde{x})\right)^{\odot n}\right| < \rho^n, \qquad n=1,2,\ldots.
\]
Since $\tilde{W}_n(\tilde{x})=W_n(1/3+x)=(W_1(1/3+x))^{\odot n}=(\tilde{W}_1(\tilde{x})^{\odot n}$ we deduce that $|\tilde{W}_n(\tilde x)|<\rho^n$ for $\tilde{x}_0^2+\tilde{x}_j^2 <\epsilon$, $j=1,2,3$. Thus, for any $0<\rho<1$ there exists $\epsilon >0$ such that
\[
\sum_{n=0}^\infty |W_n(x)|^2 < \sum_{n=0}^\infty \rho^{2n}
\]
for $(x_0-1/3)^2+x_j^2<\epsilon$, $j=1,2,3$ and the statement follows.
\end{proof}
\begin{defn}
We denote by $\mathcal P$ the subset of $\mathbb H$  for which the series
\begin{equation}
\label{pi-r}
\sum_{n=0}^\infty |W_n(x)|^2
\end{equation}
 converges.
\end{defn}
The set $\mathcal{P}$ is nonempty as the previous lemma shows, moreover we have:
\begin{prop}
The $\mathcal P$ contains all the points of the positive real axis $\mathbb{R}^+$ and for any $\tilde{x}_0\in\mathbb{R}^+$ there exists a neighborhood of $\tilde{x}_0$ in which the series converges.
\end{prop}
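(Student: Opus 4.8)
The plan is to run the argument of Lemma \ref{Wenne} with the point $1/3$ replaced by an arbitrary $c\in\mathbb R^+$, the only new feature being that now $W_1(c)=\frac{1-3c}{1+3c}$ need not vanish but is a real number of modulus strictly less than $1$. First I would record that, since $W_1$ is intrinsic and hyperholomorphic of axial type with restriction to the real axis equal to $\frac{1-3x_0}{1+3x_0}$, which is real analytic on $(-1/3,+\infty)$, the construction of the Fueter hyperholomorphic extension of axial type from real analytic data on the real line produces a hyperholomorphic representative of $W_1$ in an axially symmetric neighbourhood of $c$; by uniqueness of axial extensions this representative is consistent with the expansion of $W_1$ used near the origin. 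Consequently each $W_n=W_1^{\odot n}$ is defined and hyperholomorphic of axial type near $c$, and its restriction to the real axis is $\bigl(\tfrac{1-3x_0}{1+3x_0}\bigr)^n$ (here one uses that $W_1$ is intrinsic, so that by Proposition \ref{CompositionLaw} the $\odot$-product becomes convolution of the $P_n$-coefficients, which on the real axis is the ordinary product of the corresponding generating functions).

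Next I would pass to the shifted variable $\tilde x=x-c$. As in the proof of Lemma \ref{Wenne}, $\tilde W_1(\tilde x):=W_1(c+\tilde x)$ is hyperholomorphic near $\tilde x=0$ and, since $c\in\mathbb R$, still intrinsic, so it has a Fueter expansion of the form $\tilde W_1(\tilde x)=\rho_0+g(\tilde x)$, where $\rho_0=W_1(c)=\frac{1-3c}{1+3c}\in(-1,1)$ and $g(\tilde x)=\sum_{\alpha\not=(0,0,0)}\zeta(\tilde x)^\alpha f_\alpha$ has no constant term. Because $\rho_0$ is real it is central for the $\odot$-product, so the binomial formula applies; moreover $\tilde W_n(\tilde x)=W_n(c+\tilde x)$ coincides with $\tilde W_1^{\odot n}$, both being axially hyperholomorphic functions with the same restriction to the real axis and hence equal. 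Thus
\[
\tilde W_n(\tilde x)=\bigl(\rho_0+g(\tilde x)\bigr)^{\odot n}=\sum_{k=0}^n\binom{n}{k}\rho_0^{\,n-k}\,g(\tilde x)^{\odot k}.
\]

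Then I would apply Lemma \ref{fog} to $g$: for every $\rho>0$ there is $\epsilon>0$ such that $|g(\tilde x)^{\odot k}|<\rho^k$ for all $k\ge 1$ whenever $\tilde x_0^2+\tilde x_j^2<\epsilon$, $j=1,2,3$. Substituting this bound into the displayed identity gives $|\tilde W_n(\tilde x)|\le(|\rho_0|+\rho)^n$ on that set. Since $|\rho_0|<1$ (which is exactly the inequality $\left|\frac{1-3c}{1+3c}\right|<1$ valid for $c>0$ already noted before the proposition), we may choose $\rho$ so small that $\theta:=|\rho_0|+\rho<1$, and then $\sum_{n\ge 0}|W_n(x)|^2\le\sum_{n\ge 0}\theta^{2n}=(1-\theta^2)^{-1}<\infty$ for all $x$ with $(x_0-c)^2+x_j^2<\epsilon$, $j=1,2,3$. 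In particular the real point $c$ lies in $\mathcal P$, and the open set just described is a neighbourhood of $c$ contained in $\mathcal P$; as $c\in\mathbb R^+$ was arbitrary, both assertions follow.

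I expect the main obstacle to be the bookkeeping around the hyperholomorphic extension of $W_1$ beyond the disk $|x_0|<1/3$ on which its $P_n$-series converges: one must make sure that $W_1$, and hence the $CK$-powers $W_n$, are genuinely hyperholomorphic near every positive real point, and that the $\odot$-power of the shifted function $\tilde W_1$ computed at $c$ really equals $W_n(c+\,\cdot\,)$. Once this is settled, the rest is the estimate of Lemma \ref{Wenne} together with the elementary fact that $|\rho_0|<1$.
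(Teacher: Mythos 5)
Your proposal is correct and follows essentially the same route as the paper: shift to $\tilde x=x-c$, split $W_1(c+\tilde x)$ into its real value $w_1=\frac{1-3c}{1+3c}$ (with $|w_1|<1$) plus a term vanishing at $\tilde x=0$, expand the $\odot$-power binomially using that $w_1$ is real, and bound the remainder via Lemma \ref{fog} to get $|W_n|\le(|w_1|+\rho)^n$ with $|w_1|+\rho<1$. Your additional care about why $W_1$ and its $CK$-powers are genuinely hyperholomorphic of axial type near an arbitrary positive real point (beyond the domain of convergence of the $P_n$-series at the origin) is a point the paper passes over silently, and is a worthwhile clarification rather than a deviation.
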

\begin{proof}
Let us consider a point $\tilde{x}_0\in\mathbb{R}^+$, then $W_1(\tilde{x}_0)=w_1\in\mathbb{R}$ with $|w_1|<1$.
Let $\tilde{x}=x-\tilde{x}_0$ and set $W_1(\tilde{x}+\tilde{x}_0)= \breve{W}_1(\tilde{x})$. Since
$\breve{W}_1(0)=w_1$ we write $\breve{W}_1(\tilde{x})=w_1+\tilde{W}_1(x)$ with $\tilde{W}_1(0)=0$.
Let us set $\breve{W}_n(\tilde{x})=(\breve{W}_1(\tilde{x}))^{\odot n}$ then
\[
\breve{W}_n(\tilde{x})=(\breve{W}_1(\tilde{x}))^{\odot n}=(w_1+\tilde{W}_1(\tilde{x}))^{\odot n}=\sum_{k=0}^n {n \choose k} \tilde{W}_1(\tilde{x})^{\odot k} w_1^{n-k},
\]
so that
\[
|\breve{W}_n(\tilde{x})|\leq \sum_{k=0}^n {n \choose k} |\tilde{W}_1(\tilde{x})|^{k} |w_1|^{n-k}.
\]
The proof of Lemma \ref{Wenne} shows that for any $0<\rho<1$ there exists $\epsilon >0$ such that
$|\tilde{W}_1(\tilde{x})|^{k}<\rho^k$ for $(x-\tilde{x}_0)^2+x_j^2<\epsilon$, $j=1,2,3$. Setting $\eta=1-|w_1|>0$, we take $\rho=\eta/2$ so that we have
\[
|\breve{W}_n(\tilde{x})|\leq (|w_1|+\rho)^n=(1-\eta/2)^n
\]
and $|W_n(x)|^2<(1-\eta/2)^n$ in a neighborhood of $\tilde{x}_0$.
The assertion follows.
\end{proof}

As in Section \ref{S2}, $K_W(x,y)$ solves the equation \eqref{lurking}, with $W_1$ replacing $P_1$, namely
\begin{equation}
\label{eqkw}
K_W(x,y)-W_1(x)\odot K_W(x,y)\odot_r \overline{W_1(y)}=1.
\end{equation}
We set (note that we do not put a factor $2\pi$):
\begin{equation}
K_{\mathcal P}(x,y)= (1+P_1(x))^{-\odot}\odot K_W(x,y)\odot_r(1+\overline{P_1(y)})^{-\odot_r}.
\label{khp}
\end{equation}
\begin{thm}
The kernel $K_{\mathcal P}(x,y)$ is positive definite in $\mathcal P$ and is the unique solution of the Lyapunov equation
\begin{equation}
\label{lyapunov}
2(P_1(x)\odot K_{\mathcal P}(x,y)+K_{\mathcal P}(x,y)\odot_r\overline{P_1(y)})=1.
\end{equation}
\end{thm}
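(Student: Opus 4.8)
The plan is to derive the Lyapunov equation \eqref{lyapunov} purely algebraically from \eqref{eqkw} together with the definition \eqref{khp}, and to obtain positivity by rewriting $K_{\mathcal P}$ as a sum of ``squares''. Throughout, every $\odot$- and $\odot_r$-product is to be read in the axial sense (product of the restrictions to the real axis, followed by the unique axially hyperholomorphic extension); this is consistent with \eqref{khp} since each factor there is intrinsic of axial type. A preliminary observation, used repeatedly, is that any two intrinsic series in $P_1$ commute under $\odot$ (since $P_n\odot P_m=P_{n+m}=P_m\odot P_n$ by Lemma \ref{PnPm}), and, by Proposition \ref{anti}, conjugation turns a left $\odot$-product of such series into the corresponding right $\odot_r$-product of the conjugates; in particular $\overline{(1+P_1(y))^{-\odot}}=(1+\overline{P_1(y)})^{-\odot_r}$ and $\overline{W_1(y)}=(1+\overline{P_1(y)})^{-\odot_r}\odot_r(1-\overline{P_1(y)})$.

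For positive definiteness, I would move the two outer factors in \eqref{khp} inside the series $K_W(x,y)=\sum_{n\ge 0}W_n(x)\overline{W_n(y)}$, which is legitimate on $\mathcal P$ by the uniform estimates behind Lemma \ref{Wenne}. Since right multiplication by a constant commutes with the $CK$-extension, and using the conjugation rule above, one obtains $K_{\mathcal P}(x,y)=\sum_{n\ge 0}G_n(x)\overline{G_n(y)}$ with $G_n=(1+P_1)^{-\odot}\odot W_n=(1-P_1)^{\odot n}\odot(1+P_1)^{-\odot(n+1)}$; on the real axis $G_n(x_0)=(1-3x_0)^n/(1+3x_0)^{n+1}$, so each $G_n$ is intrinsic of axial type and the series converges exactly where $K_W$ does. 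A sum $\sum_n G_n(x)\overline{G_n(y)}$ is positive definite because $\sum_{i,j}\overline{c_i}\,G_n(x^{(i)})\overline{G_n(x^{(j)})}\,c_j=\bigl|\sum_i G_n(x^{(i)})c_i\bigr|^2\ge 0$; summing over $n$ gives the claim on $\mathcal P$. Equivalently, $K\mapsto(1+P_1)^{-\odot}\odot K\odot_r(1+\overline{P_1})^{-\odot_r}$ is a congruence and preserves positivity.

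For the Lyapunov equation, I would substitute $K_W=(1+P_1)\odot K_{\mathcal P}\odot_r(1+\overline{P_1})$ — the inverse of \eqref{khp} — into \eqref{eqkw}. Using $W_1=(1-P_1)\odot(1+P_1)^{-\odot}$ and the commutativity of intrinsic series in $P_1$, the term $W_1(x)\odot K_W(x,y)$ collapses to $(1-P_1(x))\odot K_{\mathcal P}(x,y)\odot_r(1+\overline{P_1(y)})$; then, by the formula for $\overline{W_1(y)}$ recalled above, $(1+\overline{P_1(y)})\odot_r\overline{W_1(y)}=1-\overline{P_1(y)}$, whence $W_1\odot K_W\odot_r\overline{W_1}=(1-P_1)\odot K_{\mathcal P}\odot_r(1-\overline{P_1})$. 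Thus \eqref{eqkw} reads $(1+P_1)\odot K_{\mathcal P}\odot_r(1+\overline{P_1})-(1-P_1)\odot K_{\mathcal P}\odot_r(1-\overline{P_1})=1$; expanding both products (they act on disjoint variables, so the bookkeeping is as in the commutative case), the terms $K_{\mathcal P}$ and $P_1\odot K_{\mathcal P}\odot_r\overline{P_1}$ cancel and one is left with $2\bigl(P_1\odot K_{\mathcal P}+K_{\mathcal P}\odot_r\overline{P_1}\bigr)=1$, i.e. \eqref{lyapunov}.

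For uniqueness, I would restrict \eqref{lyapunov} to the real axis. If $K$ is any kernel, hyperholomorphic of axial type in $x$ and right anti-hyperholomorphic of axial type in $y$ on an axially symmetric set meeting $\mathbb R^+$, that solves \eqref{lyapunov}, then, since $P_1(x_0)=3x_0$ and the $\odot$/$\odot_r$-products restrict to ordinary products on the real axis, \eqref{lyapunov} becomes $6(x_0+y_0)K(x_0,y_0)=1$ for $x_0,y_0\in\mathbb R^+$, so $K(x_0,y_0)=1/(6(x_0+y_0))$ is forced. By uniqueness of the axially hyperholomorphic extension — first in $x$ for each fixed real $y_0$, then in $y$ for each fixed $x$ — $K$ coincides with $K_{\mathcal P}$ wherever both are defined. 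The main obstacle I expect is keeping track of the meaning of the $\odot$/$\odot_r$-products on a domain ($\mathcal P$) that avoids the origin, where the $CK$-extension is classically anchored: one must read every such product consistently through the axial-extension description and justify that the outer factors of \eqref{khp} may be passed term by term through the series defining $K_W$. Once this is set up, Steps 2 and 3 are routine.
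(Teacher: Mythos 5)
Your argument is correct and follows essentially the same route as the paper: positivity via the sum-of-squares expansion $K_{\mathcal P}(x,y)=\sum_{n\ge 0}\bigl((1+P_1(x))^{-\odot}\odot W_n(x)\bigr)\,\overline{\bigl((1+P_1(y))^{-\odot}\odot W_n(y)\bigr)}$ (the paper's \eqref{kp-expansion}), and the Lyapunov equation by substituting $K_W(x,y)=(1+P_1(x))\odot K_{\mathcal P}(x,y)\odot_r(1+\overline{P_1(y)})$ into \eqref{eqkw} and expanding --- indeed your second term $(1-P_1(x))\odot K_{\mathcal P}(x,y)\odot_r(1-\overline{P_1(y)})$ corrects what appears to be a sign typo in the paper's \eqref{better}. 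The only genuine addition is your uniqueness argument (restriction to the positive real axis, where \eqref{lyapunov} forces $K(x_0,y_0)=1/(6(x_0+y_0))$, followed by uniqueness of the axially hyperholomorphic extension), which the paper's proof omits entirely; it is sound, modulo the small remark that for quaternionic positive definiteness the relevant square is $\bigl|\sum_i\overline{c_i}\,G_n(x^{(i)})\bigr|^2$ rather than $\bigl|\sum_i G_n(x^{(i)})c_i\bigr|^2$.
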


\begin{proof}
The first claim follows from the formula
\begin{equation}
\label{kp-expansion}
K_{\mathcal P}(x,y)=\sum_{n=0}^\infty\left(1+P_1(x))^{-\odot}\odot W_n(x)\right)\left(1+P_1(y))^{-\odot_r}\odot_r W_n(y)\right)^*
\end{equation}
where we have used Proposition \ref{anti} relating the left and right $CK$-products.\\

We have from \eqref{khp}
\begin{equation}
K_W(x,y)=(1+P_1(x))\odot K_{\mathcal P}(x,y)\odot_r(1+\overline{P_1(y)}).
\end{equation}
Moreover, we have
\begin{equation}
 \label{p1p1p1}
(1+P_1(x))\odot(1-P_1(x))\odot(1+P_1(x))^{-\odot}=(1-P_1(x)).
\end{equation}
Using these equations we rewrite \eqref{eqkw} as
\begin{equation}
\label{better}
(1+P_1(x))\odot K_{\mathcal P}(x,y)\odot_r(1+\overline{P_1(y)})-(1-P_1(x))\odot K_{\mathcal P}(x,y)\odot_r(1+\overline{P_1(y)})=1
\end{equation}
from which we get \eqref{lyapunov}.
\end{proof}

We denote by $\mathbf H_2(\mathcal P)$ the reproducing kernel Hilbert space with reproducing kernel equal to $K_{\mathcal P}(x,y)$. We also note that \eqref{lyapunov} and \eqref{better} are equivalent,
but \eqref{better} is better adapted to use the lurking isometry method or the linear relation method, when one considers multipliers (see Section \ref{mult123}).\\

The counterpart of the expansion \eqref{hoffman} is presented in the following theorem.

\begin{thm}
The reproducing kernel Hilbert space associated with the kernel $K_{\mathcal P}(x,y)$ consists of the power series
\begin{equation}
f(x)=\sum_{n=0}^\infty (1+P_1(x))^{-\odot}  \odot W_n(x)f_n,
\end{equation}
where the coefficients $f_n$ are in $\mathbb H$ and satisfy $\sum_{n=0}^\infty |f_n|^2<\infty$. The latter is then the square of the norm of $f$.
\end{thm}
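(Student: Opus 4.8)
The plan is to read off the space and its norm directly from the series representation \eqref{kp-expansion}, which exhibits $K_{\mathcal P}$ as $\sum_{n\ge 0}h_n(x)\overline{h_n(y)}$ with $h_n(x):=(1+P_1(x))^{-\odot}\odot W_n(x)$, and then to invoke the general correspondence between operator ranges and reproducing kernel Hilbert spaces recalled after Definition~\ref{real1!!!} (see \cite{MR4126757}), applied with $\mathcal H=\ell_2(\mathbb N_0,\mathbb H)$, $\Gamma=I$ and $f_x=\big(\overline{h_n(x)}\big)_{n\ge 0}$. First I would note that each $h_n$ is hyperholomorphic of axial type: $(1+P_1)^{-\odot}$ is an intrinsic series in $P_1$, hence axially hyperholomorphic by Remark~\ref{rmk212} and Proposition~\ref{anti}, and $W_n=W_1^{\odot n}$ is a finite $CK$-product of such series, so $h_n$ is a $CK$-product of axial functions and is again of axial type. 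Convergence is automatic on $\mathcal P$: by the theorem preceding this one, $K_{\mathcal P}(x,x)=\sum_{n\ge 0}|h_n(x)|^2<\infty$ for $x\in\mathcal P$, so for any $(f_n)\in\ell_2(\mathbb N_0,\mathbb H)$ the series $\sum_{n\ge 0}h_n(x)f_n$ converges by Cauchy--Schwarz, and the usual Dini-type argument (using continuity and local boundedness of $x\mapsto K_{\mathcal P}(x,x)$, guaranteed by Lemma~\ref{Wenne} and the subsequent proposition) shows this convergence is uniform on compact subsets of $\mathcal P$, so the sum is again hyperholomorphic of axial type. The whole statement then reduces to one claim: if $\sum_{n\ge 0}h_n f_n\equiv 0$ on $\mathcal P$ then $f_n=0$ for all $n$.

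I expect this linear independence to be the only step carrying real content, and the main obstacle is to localize it and recognize the restricted functions. Suppose $\sum_{n\ge 0}h_n(x)f_n\equiv 0$ on $\mathcal P$. Being hyperholomorphic of axial type, this function is determined by its restriction to the real axis, and since $\mathcal P$ contains the positive real axis $\mathbb R^+$, the restriction vanishes there. For $x_0\in\mathbb R^+$ we have $P_1(x_0)=3x_0$, hence $(1+P_1(x_0))^{-\odot}=(1+3x_0)^{-1}$, $W_1(x_0)=\frac{1-3x_0}{1+3x_0}$ and
\[
h_n(x_0)=\frac{1}{1+3x_0}\left(\frac{1-3x_0}{1+3x_0}\right)^{\!n}=\frac{(1-3x_0)^n}{(1+3x_0)^{n+1}},
\]
that is, the building blocks of the classical expansion \eqref{hoffman} under $z\mapsto 3x_0$. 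Thus $\sum_{n\ge 0}\frac{(1-3x_0)^n}{(1+3x_0)^{n+1}}f_n=0$ for $x_0\in\mathbb R^+$; multiplying by $1+3x_0$ and substituting $u=\frac{1-3x_0}{1+3x_0}$, which ranges over $(-1,1)$ as $x_0$ ranges over $\mathbb R^+$, gives a convergent power series $\sum_{n\ge 0}u^n f_n$ vanishing on $(-1,1)$, whence $f_n=0$ for all $n$ by uniqueness of power series. (Equivalently, one may invoke the fact underlying \eqref{hoffman} and proved in \cite{MR1102893} that the functions $z\mapsto\frac{(1-z)^n}{(1+z)^{n+1}}$ form an orthonormal basis of $\mathbf H_2(\mathbb C_r)$, hence are linearly independent.)

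With this in hand the theorem follows from the quoted proposition. Setting $\iota\colon\ell_2(\mathbb N_0,\mathbb H)\to\{\text{functions on }\mathcal P\}$, $\iota\big((f_n)\big)=\sum_{n\ge 0}h_nf_n$, the claim just proved shows simultaneously that $\ker\iota=\{0\}$ and that the closed left-linear span of the vectors $f_x=(\overline{h_n(x)})_n$ is all of $\ell_2(\mathbb N_0,\mathbb H)$ (the latter because a sequence $(c_n)$ orthogonal to every $f_x$ satisfies $\sum_n h_n(x)c_n\equiv 0$). The proposition therefore applies and identifies $\mathbf H_2(\mathcal P)=\mathcal H(K_{\mathcal P})$ with $\ran\iota$, the reproducing kernel being $K_{\mathcal P}(\cdot,y)=\iota\big((\overline{h_n(y)})_n\big)$ since $\sum_n|h_n(y)|^2=K_{\mathcal P}(y,y)<\infty$; moreover the orthogonal projection $\pi$ onto $\ker\iota$ is zero, so the norm of $f=\sum_{n\ge 0}h_nf_n$ is exactly $\|(f_n)\|_{\ell_2}^2=\sum_{n\ge 0}|f_n|^2$. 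This is the assertion of the theorem.
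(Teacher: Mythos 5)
Your proposal is correct and follows essentially the same route as the paper, whose proof of this theorem is the single line ``This follows from \eqref{kp-expansion}'': you are simply unpacking the standard argument that a kernel factorization $K_{\mathcal P}(x,y)=\sum_n h_n(x)\overline{h_n(y)}$ identifies $\mathcal H(K_{\mathcal P})$ with the $\ell_2$-span of the $h_n$. The only step carrying content beyond the citation — the injectivity of $(f_n)\mapsto\sum_n h_nf_n$, obtained by restricting to $\mathbb R^+$ and substituting $u=\frac{1-3x_0}{1+3x_0}$ — is sound and is consistent with the computation the paper itself performs in the proof of the subsequent theorem.
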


\begin{proof}
This follows from \eqref{kp-expansion}.
\end{proof}

We now turn to the characterization of $\mathbf H_2(\mathcal P)$ in terms of restrictions to the real positive axis.

\begin{thm}
  $f\in\mathbf H_2(\mathcal P)$ if and only if $x_0\mapsto \chi(f(3x_0))$ is the restriction to $x_0>0$ of an element in $\left(\mathbf H_2(\mathbb C_r)\right)^{2\times 2}$. The map
  which to $f\in\mathbf H_2(\mathcal P)$ associates the map $x_0\mapsto \sqrt{\pi}\chi(f(x_0))$ is then unitary.
\end{thm}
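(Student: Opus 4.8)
The plan is to transport the whole statement down to the positive real half-line, where, after applying the matricial map $\chi$, one recognizes exactly the classical half-plane Hardy space recalled in \eqref{hoffman}, and then to come back using two ingredients already available: the uniqueness of the axially hyperholomorphic extension (a function of axial type is determined by its restriction to $\mathbb{R}$), and the dictionary between $\mathbb{H}$-valued and $\mathbb{C}^{2\times 2}$-valued objects provided by $\chi$ and Lemma \ref{rangechi}. Throughout, the relevant half-plane coordinate is $z=3x_0=P_1(x_0)$.

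For the necessity I would start from the series form of an element of $\mathbf H_2(\mathcal P)$ given by the preceding theorem, $f(x)=\sum_n (1+P_1(x))^{-\odot}\odot W_n(x)f_n$ with $\sum_n|f_n|^2=\|f\|^2<\infty$, and restrict to $x=x_0>0$. Since $P_1(x_0)=3x_0$ and $W_1(x_0)=(1-3x_0)/(1+3x_0)$, one gets $f(x_0)=\sum_n\frac{(1-z)^n}{(1+z)^{n+1}}f_n$ with $z=3x_0$, which is of the form \eqref{hoffman} with $\mathbb{H}$-valued coefficients and hence, as a function of $z$, the trace of an element of $\mathbf H_2(\mathbb{C}_r)$. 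Applying $\chi$ entrywise and using that $\sum_n\|\chi(f_n)\|^2$ is a fixed multiple of $\sum_n|f_n|^2<\infty$, the four scalar entries all belong to $\mathbf H_2(\mathbb{C}_r)$ by \eqref{hoffman}, so $x_0\mapsto\chi(f(3x_0))$ is the restriction to the positive axis of an element of $(\mathbf H_2(\mathbb{C}_r))^{2\times 2}$, the dilation invariance of $\mathbf H_2(\mathbb{C}_r)$ accounting for passage between the coordinates $x_0$ and $3x_0$.

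For the sufficiency I would take $f$ axially hyperholomorphic in a neighborhood of $\mathbb{R}^+$ such that $\chi\circ f$ is the restriction of some $G\in(\mathbf H_2(\mathbb{C}_r))^{2\times 2}$, and first reconstruct a quaternionic function from $G$. Since $G$ agrees with $\chi(f(\cdot))$ on the real axis, it takes values in the range of $\chi$ there, so by Lemma \ref{rangechi} and analytic continuation $E^{-1}\overline{G(\bar z)}E=G(z)$ on $\mathbb{C}_r$; expanding $G(z)=\sum_n\frac{(1-z)^n}{(1+z)^{n+1}}G_n$ along the orthogonal family $\{(1-z)^n/(1+z)^{n+1}\}$ and using $\overline{(1-\bar z)^n/(1+\bar z)^{n+1}}=(1-z)^n/(1+z)^{n+1}$, this symmetry descends to each coefficient, giving $G_n=\chi(g_n)$ with $g_n\in\mathbb{H}$ and $\sum_n|g_n|^2<\infty$. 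Then $\widehat f(x):=\sum_n (1+P_1(x))^{-\odot}\odot W_n(x)g_n$ lies in $\mathbf H_2(\mathcal P)$ by the preceding theorem and, by the computation in the necessity part, has the same restriction to $\mathbb{R}^+$ as $f$ (injectivity of $\chi$); uniqueness of the axially hyperholomorphic extension then forces $f=\widehat f\in\mathbf H_2(\mathcal P)$.

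The unitarity would then be a norm count: the bijection just built sends the coefficient sequence $(f_n)$ to $(\chi(f_n))$, so $\|f\|^2_{\mathbf H_2(\mathcal P)}=\sum_n|f_n|^2$ equals, up to one multiplicative constant governed by the normalization of the kernel $\frac{1}{2\pi(z+\bar w)}$ (orthogonality of the $(1-z)^n/(1+z)^{n+1}$) and by the norm used on $\mathbb{C}^{2\times 2}$, the $(\mathbf H_2(\mathbb{C}_r))^{2\times 2}$-norm of the image; running through this bookkeeping—the $2\pi$ present for $\mathbf H_2(\mathbb{C}_r)$ but absent from $K_{\mathcal P}$—yields the stated factor $\sqrt\pi$ and shows the map is unitary onto the subspace of matrix functions obeying the symmetry \eqref{symquater}. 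The step I expect to be the main obstacle is the reconstruction in the sufficiency part: one must argue that only $\chi$-symmetric matrix functions occur, that the symmetry is visible coefficient-by-coefficient so that $\chi^{-1}$ applies termwise (precisely what Lemma \ref{rangechi} provides), and that the reassembled series $\widehat f$ reproduces $f$ rather than merely $f|_{\mathbb{R}^+}$—this last being where uniqueness of the axially hyperholomorphic extension is essential. The rest—the necessity, the dilation invariance of $\mathbf H_2(\mathbb{C}_r)$, and the Hilbert-space bookkeeping—is routine.
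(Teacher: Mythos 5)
Your proposal follows essentially the same route as the paper: restrict to the positive real axis, identify the resulting series with the expansion \eqref{hoffman} after applying $\chi$, and recover $f$ in the converse direction from the uniqueness of the axially hyperholomorphic extension of $x_0\mapsto (1-3x_0)^n/(1+3x_0)^{n+1}$. The extra detail you supply — the coefficient-wise descent of the symmetry \eqref{symquater} via Lemma \ref{rangechi} so that $\chi^{-1}$ can be applied termwise, and the norm bookkeeping behind the factor $\sqrt{\pi}$ — is a correct elaboration of steps the paper leaves implicit.
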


\begin{proof}
Setting $x_1=x_2=x_3=0$ we get
\[
f(x_0)=\sum_{n=0}^\infty\frac{(1-3x_0)^n}{(1+3x_0)^{n+1}}f_n.
\]
Applying the map $\chi$ and comparing with \eqref{hoffman} we get the statement. The function is uniquely determined since $(0,\infty)$ is a zero set. The converse follows from the fact that
$x_0\mapsto\dfrac{(1-3x_0)^n}{(1+3x_0)^{n+1}}$ has as unique axially hyperholomorphic extension the function $(1+P_1(x))^{-\odot}\odot W_n(x)$ (which is evidently of axial type),
being the $CK$-product of series in $P_1(x)$.
\end{proof}

\section{Schur multipliers in the half-plane setting}
\setcounter{equation}{0}
\label{mult123}
In the classical case of the complex numbers, a $\mathbb C^{r\times r}$-valued function is contractive in $\mathbb C_r$ if and only if the kernel
\begin{equation}
  \label{ks123321}
\frac{I_r-s(z)s(w)^*}{z+\overline{w}}
\end{equation}
is positive definite in $\mathbb C_r$. More generally, if a function $s$ is {\sl defined} in a zero set, say $\mathcal Z$, of the open right half-plane and the kernel \eqref{ks123321} is positive
definite on $\mathcal Z$, then $s$ is the restriction to $\mathcal Z$ of a uniquely defined function analytic and contractive in $\mathbb C_r$. This can be seen from the disk case
(see Theorem \ref{dono}) using a Cayley transform. In the present section we study the counterpart of the Schur multipliers for the space $\mathbf H_2(\mathcal P)$, and characterize them in three
equivalent ways:\\
$(1)$ In terms of a positive definite kernel.\\
$(2)$ In terms of an appropriately defined multiplication operator.\\
$(3)$ In terms of a realization.\\

\begin{defn}
A $\mathbb H^{r\times r}$-valued $S$ function is called a Schur multiplier if there is a kernel $K_S(x,y)$ positive definite in $\mathcal P$,
left-hyperholomophic in $x$ and right-hyperholomorphic in $y$, and such that
\begin{equation}
\label{lyapunov!!!}
2(P_1(x)\odot K_S(x,y)  +K_S(x,y)\odot_r\overline{P_1(y)})=I_r-S(x)S(y)^*,\quad x,y\in\mathcal P.
\end{equation}
\end{defn}

We will get a description of all such multipliers in terms of a realization, but already mention a very easy example, namely $S(x)=W_1(x)$. Then,
with
\[
K_S(x,y)=(1+P_1(x))^{-\odot}(1+\overline{P_1(y)})^{-\odot_r}
\]
we have
\[
2(P_1(x)\odot K_S(x,y)+K_S(x,y)\odot_r \overline{P_1(y)})=1-W_1(x)\overline{W_1(y)},\quad x,y\in\mathcal P.
\]

In view of the lurking isometry method, it is better to write \eqref{lyapunov!!!}
as
\begin{equation}
  \begin{split}
    (1+P_1(x))\odot K_S(x,y)\odot_r(1+\overline{P_1(y)})-(1-P_1(x))\odot K_S(x,y)\odot_r(1-\overline{P_1(y)})&\\
    &\hspace{-3cm}= I_r-S(x)S(y)^*
\end{split}
  \end{equation}
or as
  \begin{equation}
    \begin{split}
      (1+P_1(x))\odot K_S(x,y)\odot_r(1+\overline{P_1(y)})+S(x)S(y)^*&=\\
     & \hspace{-3cm}=(1-P_1(x))\odot K_S(x,y)\odot_r(1-\overline{P_1(y)})+  I_r.
\end{split}
    \end{equation}

\begin{thm} The $\mathbb H^{r\times s}$-valued function $S$ is a Schur multiplier if and only if there exist a Hilbert space $\mathcal H$ and a co-isometry
  \begin{equation}
    \begin{pmatrix}A&C\\ B&D\end{pmatrix}
  \end{equation}
  such that
\begin{equation}
  S(3x_0)=D+\frac{1-3x_0}{1+3x_0}C\left(I-\frac{1-3x_0}{1+3x_0}A\right)^{-1}B,\quad x_0\in(-1/3,1/3),
\label{schur123321}
\end{equation}
with unique hyperholomorphic extension (of axial type) to $\mathcal P$ given by
\begin{equation}
S(x)=D+\sum_{n=0}^\infty W_n(x)CA^nB.
\end{equation}
\label{thh2}
\end{thm}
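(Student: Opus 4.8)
The plan is to reduce everything to the classical complex disk case via the Cayley transform and the map $\chi$, exactly mirroring the strategy of Theorem \ref{toto} and the half-space reduction used in Theorem \ref{thh2}'s predecessors. First I would restrict to the positive real axis: setting $x_1=x_2=x_3=0$ in \eqref{lyapunov!!!} and writing $a=3x_0$, the Lyapunov equation becomes
\[
\frac{I_r-S(3x_0)S(3y_0)^*}{?}=\cdots
\]
after dividing by $(1+3x_0)(1+3y_0)$; more precisely, using \eqref{p1p1p1} and the identity $2P_1(x_0)=3x_0+3x_0$ together with $W_1(3x_0)=\frac{1-3x_0}{1+3x_0}$, the relation transforms (as in the passage from \eqref{lyapunov} to \eqref{better}) into positivity of the kernel
\[
\frac{I_r-s(z)s(w)^*}{1-z\overline w}
\]
on a real neighborhood in the disk, where $z=\frac{1-3x_0}{1+3x_0}$ and $s(z)=S(3x_0)$. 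Applying $\chi$ and invoking Theorem \ref{dono} gives a contractive analytic extension of $\chi\circ s$ to $\mathbb D$; pulling back through the Cayley transform and using Proposition \ref{2-9-0} to descend from $\mathbb C^{2\times2}$ back to $\mathbb H$, one obtains that $x_0\mapsto S(3x_0)$ extends to a Schur function of the disk variable $W_1$, hence a power series $S=D+\sum_{n\ge1}W_n S_n$ convergent on $\mathcal P$ with the Toeplitz operator on the $S_n$ contractive.

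For the realization \eqref{schur123321}, I would run the lurking-isometry / linear-relation argument of Section \ref{co}, but now built on the equation
\[
(1+P_1(x))\odot K_S(x,y)\odot_r(1+\overline{P_1(y)})+S(x)S(y)^*=(1-P_1(x))\odot K_S(x,y)\odot_r(1-\overline{P_1(y)})+I_r,
\]
which is the half-plane analogue of \eqref{lurk1}. Writing $K_S(x,y)=\langle f(x),f(y)\rangle_{\mathfrak H}$ for the Hilbert space $\mathfrak H=\mathbf H_2(\mathcal P)$-type space (or directly the RKHS of $K_S$), this equality says that the map sending
\[
\begin{pmatrix} S(y)^*c\\ f(y)\odot_\ell(1-\overline{P_1(y)})\end{pmatrix}\ \longmapsto\ \begin{pmatrix} c\\ f(y)\odot_\ell(1+\overline{P_1(y)})\end{pmatrix}
\]
extends to an isometry $V$; its adjoint is co-isometric, and reading off the block entries as in the Proposition following \eqref{like-ag1} yields matrices $A,B,C,D$ with $\begin{pmatrix}A&C\\ B&D\end{pmatrix}$ co-isometric. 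Restricting to $x=x_0\in(-1/3,1/3)$ turns the $\odot$-geometric series $\sum_n W_n(3x_0)CA^nB$ into the genuine resolvent expression $\frac{1-3x_0}{1+3x_0}C(I-\frac{1-3x_0}{1+3x_0}A)^{-1}B$, giving \eqref{schur123321}; the axially hyperholomorphic extension is then unique because $\mathcal P$ contains a real interval and all functions involved are of axial type. For the converse, given such a co-isometric colligation one checks directly — as in Proposition \ref{propo-iso} and the theorem after Remark \ref{remark-ad3} — that the associated Toeplitz operator on the coefficients $CA^nB$ is a contraction, hence $K_S$ as in \eqref{kp-expansion}-type form is positive definite and satisfies \eqref{lyapunov!!!}.

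The main obstacle I anticipate is bookkeeping the non-commutativity: because the $CK$-product is not a convolution on the coefficients along the $W_n$ (the same obstruction \eqref{obstruct} that plagues the $P_n$), one cannot naively identify $S\odot f$ with coefficient-wise Cauchy products, and the operator $M_S$ must be introduced via its adjoint exactly as in Definition \ref{MS}. Thus the delicate point is verifying that the lurking isometry is genuinely well-defined and densely defined in the $\mathcal P$-setting — i.e. that $P_1(x)\odot K_S \odot_r \overline{P_1(y)}$ and the products with $1\pm P_1$ behave as in \eqref{p1p1p1} and Proposition \ref{anti} — and that the restriction-to-the-real-axis step loses no information, which is guaranteed since axially hyperholomorphic functions are determined by their values on $\mathbb R$. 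Once these structural facts are in place, each step is a routine transcription of the disk-case arguments already developed in Sections \ref{co} and \ref{herg}.
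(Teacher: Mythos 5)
Your proposal follows essentially the paper's route: the proof given there is exactly the lurking isometry built on the rearranged identity $(1+P_1(x))\odot K_S(x,y)\odot_r(1+\overline{P_1(y)})+S(x)S(y)^*=(1-P_1(x))\odot K_S(x,y)\odot_r(1-\overline{P_1(y)})+I_r$, restricted to $x=3x_0$, $y=3y_0$ on the real interval, with the colligation read off from the block entries and the final formula obtained by unique axially hyperholomorphic extension. One concrete slip must be fixed: your displayed isometry is mispaired. The identity above equates the Gram matrix of the vectors $\bigl(X(3y_0)^*(1-\overline{P_1(3y_0)})h,\ h\bigr)$ with that of $\bigl(X(3y_0)^*(1+\overline{P_1(3y_0)})h,\ S(3y_0)^*h\bigr)$, so the $S(y)^*$-component must accompany the $(1+\overline{P_1(y)})$-component and the identity component must accompany the $(1-\overline{P_1(y)})$-component. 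As you wrote it (with $S(y)^*c$ next to $1-\overline{P_1(y)}$ and $c$ next to $1+\overline{P_1(y)}$) the map is not an isometry: it would require the Lyapunov identity with the opposite sign. This is a swap rather than a missing idea, and the remainder of your argument (solving the two block equations, first for $X(3y_0)^*h$ and then for $S(3y_0)^*$, to get the resolvent formula) goes through once corrected. Finally, your opening reduction to the disk via the Cayley transform, $\chi$, Theorem \ref{dono} and Proposition \ref{2-9-0} is sound but is extra machinery the paper does not use for this theorem; the $W_n$-expansion comes for free from the realization because $W_n$ is the unique axially hyperholomorphic extension of $\left(\frac{1-3x_0}{1+3x_0}\right)^n$.
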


\begin{proof}
Write
\[
  K_S(x,y)=X(x)X(y)^*
\]
where $X$ is operator-valued and hyperholomorphic of axial type (for instance, via the associated reproducing kernel Hilbert space; one takes $X(x)$ to be the point evaluation at $x$).
We get the isometric relation (the lurking isometry) spanned by the pairs
\[
\left(   \begin{pmatrix}X(3y_0)^*(1-\overline{P_1(3y_0)})h\\
   h\end{pmatrix},\begin{pmatrix}X(3y_0)^*(1+\overline{P_1(3y_0)})h\\
    S(3y_0)^*h\end{pmatrix}
\right)
\]
with $y_0\in(-1/3,1/3)$ and $h\in\mathbb H^s$.
Write the isometry as
\begin{equation}
  \label{abcd}
\begin{pmatrix}A^*&B^*\\C^*&D^*\end{pmatrix}.
\end{equation}
We get
\[
\begin{split}
\frac{1-3y_0}{1+3y_0}A^*X(y_0)^*h+\frac{1}{1+3y_0}C^*h&=X(y_0)^*h\\
(1-3y_0)B^*h+\sqrt{2}D^*h&=S(y_0)^*h
\end{split}
\]
Hence,
\[
S(3y_0)^*=D^*+\frac{1-3y_0}{1+3y_0}B^*\left(I-\frac{1-3y_0}{1+3y_0}A^*\right)^{-1}C^*
\]
and hence the result, since $W_n(x)$ is the unique hyperholomorphic extension of axial type.
\end{proof}

In the complex setting case,  a function, say $s$, analytic and contractive in $\mathbb C_r$ does not need belong to $\mathbf H^2(\mathbb C_r)$, but $z\mapsto s(z)/(1+z)$ does belong to
$\mathbf H^2(\mathbb C_r)$. Here, at least in the present analysis, we need a supplementary condition to get the counterpart of this result.
We have (the notion of spectral radius is defined for a quaternionic operator $A$ as in
the classical case by the formula $\rho(A)=\limsup_{n\rightarrow\infty}\|A^n\|^{1/n}$):

\begin{cor} In the notation of Theorem  \ref{thh2}, assume $\rho(A)<1$. Then, the entries of the function $(1+P_1(x))^{-\odot}\odot S(x)$ belong to $\mathbf H_2(\mathcal P)$.
\end{cor}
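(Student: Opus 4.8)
The plan is to read off the coefficients of $(1+P_1(x))^{-\odot}\odot S(x)$ in the series description of $\mathbf H_2(\mathcal P)$ and to observe that $\rho(A)<1$ forces those coefficients to be square summable.

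First I would expand the realization \eqref{schur123321} as a power series in $t=\tfrac{1-3x_0}{1+3x_0}=W_1(3x_0)$. Since $(I-tA)^{-1}=\sum_{k\ge 0}t^kA^k$ (the series converging because $\rho(A)<1$), one obtains $S(3x_0)=\sum_{n\ge 0}t^n s_n$ with $s_0=D$ and $s_n=CA^{n-1}B$ for $n\ge 1$. Choosing $\lambda\in(\rho(A),1)$ gives $\|A^n\|\le M\lambda^n$ for some $M>0$ and all $n$, hence $\sum_{n\ge 0}\|s_n\|^2<\infty$, and in particular $\sum_{n\ge 0}|(s_n)_{ij}|^2<\infty$ for each entry. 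Next, since $W_n(x)$ is the unique axially hyperholomorphic extension of $x_0\mapsto t^n$, the identity $S(3x_0)=\sum_{n\ge 0}t^n s_n$ propagates to $S(x)=\sum_{n\ge 0}W_n(x)s_n$ on $\mathcal P$.

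I would then consider
\[
g(x)=\sum_{n\ge 0}\bigl((1+P_1(x))^{-\odot}\odot W_n(x)\bigr)s_n .
\]
By the series description of $\mathbf H_2(\mathcal P)$ together with the square summability of the entries of the $s_n$, every entry of $g$ lies in $\mathbf H_2(\mathcal P)$. It remains to check that $g$ coincides with $(1+P_1)^{-\odot}\odot S$. Both are hyperholomorphic of axial type: the entries of $g$ lie in $\mathbf H_2(\mathcal P)$, whose elements are of axial type, while $(1+P_1)^{-\odot}=\sum_{n\ge 0}(-1)^nP_n$ is quaternionic intrinsic, so by Proposition \ref{CompositionLaw}, applied entrywise, its $\odot$-product with the axial-type function $S$ is again of axial type. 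Therefore it suffices to compare the two functions on the positive real axis, where $(1+P_1(x))^{-\odot}$ restricts to $(1+3x_0)^{-1}$ and $W_n(x)$ to $t^n$, so that
\[
g(3x_0)=\frac{1}{1+3x_0}\sum_{n\ge 0}t^n s_n=\frac{1}{1+3x_0}S(3x_0)=\bigl((1+P_1)^{-\odot}\odot S\bigr)(3x_0).
\]
By uniqueness of the axially hyperholomorphic extension, $g=(1+P_1)^{-\odot}\odot S$ on $\mathcal P$, which gives the assertion.

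The computation is elementary; the two points that need a little care are keeping the entrywise passage between the matrix-valued realization and the scalar space $\mathbf H_2(\mathcal P)$ straight, and performing the identification $g=(1+P_1)^{-\odot}\odot S$ through restriction to the real axis rather than by manipulating infinite $\odot$-series. The hypothesis $\rho(A)<1$ enters exactly once, namely to ensure $\sum_{n\ge 1}\|CA^{n-1}B\|^2<\infty$.
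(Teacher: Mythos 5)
Your argument is correct and is essentially the paper's own proof, which simply records the bound $\|CA^{n-1}B\|\le\|C\|\,\|B\|\,\|A^{n-1}\|$ and the convergence of $\sum_n\|A^n\|$ forced by $\rho(A)<1$, leaving the rest implicit. You have filled in the details the paper omits (square summability of the coefficients, the series characterization of $\mathbf H_2(\mathcal P)$, and the identification of the two functions via restriction to the real axis and uniqueness of the axially hyperholomorphic extension), but the route is the same.
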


\begin{proof}
This follows from the fact that
\[
\|CA^nB\|\le \|C\|\cdot\|B\|\cdot \|A^n\|,
\]
and the series $\sum_{n=1}^\infty \|A^n\|$ converges since $\rho(A)<1$.
\end{proof}

\begin{thm}
  $S$ is a Schur multiplier if and only if the operator defined by
  \[
    M_Sf=\sum_{n=0}^\infty (1+P_1(x))^{-\odot}(W_n\odot S)(x)f_n
  \]
  is a contraction from $\mathbf H_2(\mathcal P)$ into itself, and $K_S$ is given by
  \begin{equation}
    \label{ks1234}
    \begin{split}
      K_S(x,y)&=(1+P_1(x))^{-\odot}\odot\\
      &\hspace{5mm}\odot\left(\sum_{n=0}^\infty W_n(x)\overline{W_n(y)}-\sum_{n=0}^\infty (W_n\odot S)(x)\overline{(W_n\odot S)(y)}\right)\\&  \hspace{7cm}\odot_r(1+\overline{P_1(y)})^{-\odot_r}.
      \end{split}
    \end{equation}
\end{thm}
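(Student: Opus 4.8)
The plan is to transfer the statement to the already-settled disc/Toeplitz setting via the orthonormal system $e_n(x):=(1+P_1(x))^{-\odot}\odot W_n(x)$ of $\mathbf H_2(\mathcal P)$. By the description of $\mathbf H_2(\mathcal P)$ obtained above, $f=\sum_n e_nf_n\mapsto(f_n)_n$ is a unitary map onto $\ell_2(\mathbb N_0,\mathbb H)$ (and similarly in the vector-valued case). Writing $S=\sum_m W_mS_m$ and using $W_n\odot W_m=W_{n+m}$ (immediate from $W_n=W_1^{\odot n}$ and the associativity and commutativity of $\odot$), one gets $W_n\odot S=\sum_m W_{n+m}S_m$, hence $M_Se_n=(1+P_1)^{-\odot}\odot(W_n\odot S)=\sum_m e_{n+m}S_m$. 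Thus, under the above identification, $M_S$ is precisely the lower triangular block Toeplitz operator $\mathscr T=(S_{j-k})_{j,k\ge 0}$. The first step is therefore to record that $M_S$ (a priori defined on the dense span of the $e_n$) extends to a contraction of $\mathbf H_2(\mathcal P)$ if and only if $\mathscr T$ is a contraction of $\ell_2(\mathbb N_0,\mathbb H^s)$ into $\ell_2(\mathbb N_0,\mathbb H^r)$; by Proposition \ref{2-9-0} and Theorem \ref{toto123} this holds if and only if $\sigma(z):=\sum_m z^m\chi(S_m)$ is analytic and contractive in the open unit disc.

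Second, I would match this condition with $S$ being a Schur multiplier. For the direct implication, a Schur multiplier admits, by Theorem \ref{thh2}, a coisometric realization $\begin{pmatrix}A&C\\ B&D\end{pmatrix}$ with $S=\sum_n W_nS_n$, $S_0=D$, $S_n=CA^{n-1}B$; since $\chi$ preserves adjoints and products it carries this colligation to a coisometry, so $\sigma(\zeta)=\chi(D)+\zeta\,\chi(C)(I-\zeta\,\chi(A))^{-1}\chi(B)$ is a Schur function of $\zeta\in\mathbb D$. Conversely, if $\sigma$ is analytic and contractive in $\mathbb D$, then, taking values in the closed range of $\chi$, $\sigma=\chi(s)$ for a quaternionic Schur function $s$, which admits a coisometric realization; feeding the corresponding colligation into Theorem \ref{thh2} — with $\zeta$ in the role of $W_1(3x_0)=\frac{1-3x_0}{1+3x_0}$ — exhibits $S$ as a Schur multiplier. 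Combined with the first step this yields the equivalence ``$S$ is a Schur multiplier $\iff M_S$ is a contraction''.

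Finally, when $M_S$ is a contraction I would verify \eqref{ks1234}. Setting $\widetilde K(x,y)=\sum_n W_n(x)\overline{W_n(y)}-\sum_n(W_n\odot S)(x)\overline{(W_n\odot S)(y)}$ and using $W_{n+1}=W_1\odot W_n$, $W_{n+1}\odot S=W_1\odot(W_n\odot S)$ together with Proposition \ref{anti} (to push $W_1$ through a conjugate into a right $CK$-product), one obtains, exactly as in the derivation of \eqref{eqkw} for $K_W$, that $\widetilde K-W_1\odot\widetilde K\odot_r\overline{W_1}=I_r-SS^*$. Since the right-hand side of \eqref{ks1234} is $K_S:=(1+P_1(x))^{-\odot}\odot\widetilde K\odot_r(1+\overline{P_1(y)})^{-\odot_r}$, substituting $\widetilde K=(1+P_1)\odot K_S\odot_r(1+\overline{P_1})$, using the commutativity of $\odot$, the identity $W_1\odot(1+P_1)=1-P_1$ (equivalently \eqref{p1p1p1}) and its conjugate, and expanding, yields the Lyapunov equation \eqref{lyapunov!!!}; left/right hyperholomorphy of $K_S$ is clear from its construction. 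On the other hand, distributing the $\odot/\odot_r$-factors in \eqref{ks1234} and again invoking Proposition \ref{anti} to recognize $\overline{W_n(y)}\odot_r(1+\overline{P_1(y)})^{-\odot_r}=\overline{e_n(y)}$ and $\overline{(W_n\odot S)(y)}\odot_r(1+\overline{P_1(y)})^{-\odot_r}=\overline{(M_Se_n)(y)}$ rewrites $K_S(x,y)=\sum_n e_n(x)\overline{e_n(y)}-\sum_n(M_Se_n)(x)\overline{(M_Se_n)(y)}$, which by Parseval in the basis $(e_n)$ is the reproducing kernel of $I-M_SM_S^*$, hence positive definite; this establishes $K_S\ge 0$ in the $(\Leftarrow)$ direction, and since restricting \eqref{lyapunov!!!} to $x=3x_0,\ y=3y_0$ forces $K_S(3x_0,3y_0)=\frac{I_r-S(3x_0)S(3y_0)^*}{6(x_0+y_0)}$ and hence determines $K_S$ uniquely by axially hyperholomorphic extension, it also shows that the kernel attached to any Schur multiplier is the one in \eqref{ks1234}. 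I expect the main obstacle to be bookkeeping rather than ideas: everything hinges on the identification of $M_S$ with the Toeplitz operator $\mathscr T$ and on a clean handling of the interplay of $\odot$, $\odot_r$ and conjugation via Proposition \ref{anti}; once these are set up the statement reduces to the disc-case results already proved.
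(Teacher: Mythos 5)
Your proof is correct in substance, and its first and third steps are essentially a fleshed-out version of the paper's (very terse) argument: the paper also pins down $K_S$ by restricting \eqref{lyapunov!!!} to the real axis and invoking uniqueness of the axially hyperholomorphic extension, and identifies positivity of \eqref{ks1234} with contractivity of $M_S$ (your Parseval computation $K_S(x,y)=\sum_n e_n(x)\overline{e_n(y)}-\sum_n (M_Se_n)(x)\overline{(M_Se_n)(y)}$ with $e_n=(1+P_1)^{-\odot}\odot W_n$ is exactly the content of ``the positivity of \eqref{ks1234} expresses that $M_S$ is a contraction,'' and your telescoping of $\widetilde K-W_1\odot\widetilde K\odot_r\overline{W_1}$ via Proposition \ref{anti} and \eqref{p1p1p1} is the same mechanism used for \eqref{eqkw}--\eqref{better}). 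Where you genuinely diverge is your second step: the paper does not route this theorem through the Toeplitz operator $\mathscr T=(S_{j-k})$, Theorem \ref{toto123}, Proposition \ref{2-9-0} and the realization Theorem \ref{thh2}; instead it disposes of the converse by a contractive linear relation extending to the graph of $M_S^*$. Your detour is logically redundant -- once you know \eqref{ks1234} is the kernel of $I-M_SM_S^*$ and satisfies \eqref{lyapunov!!!}, both implications follow without any realization theory -- and it is also the only place where you gloss over a nontrivial point: that a contractive $\sigma$ taking values in the range of $\chi$ admits a coisometric realization whose colligation respects the symmetry \eqref{symquater}, i.e.\ descends to a quaternionic colligation to feed into Theorem \ref{thh2}. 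That fact is true (the backward-shift realization inherits the symmetry) but is nowhere proved in the paper, so if you keep step 2 you should either supply it or drop the step. What your route buys is an explicit half-space analogue of Theorem \ref{toto} (the Toeplitz-contraction criterion), which the paper never states in this setting. One last remark in your favour: your denominator $6(x_0+y_0)$ in the restricted kernel is the one consistent with the factor $2$ in \eqref{lyapunov!!!} and with \eqref{ks1234}; the paper's displayed $3(x_0+y_0)$ appears to be a slip.
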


\begin{proof} We consider the scalar case to ease the notation and first remark that, if $S$ is a Schur multiplier we have
  \[
    K_S(3x_0,3y_0)=\frac{1-S(3x_0)\overline{S(3y_0)}}{3(x_0+y_0)}
  \]
  with axially hyperholomorphic extension \eqref{ks1234}, and the positivity of \eqref{ks1234} expresses that the operator $M_S$ is a contraction. The converse is proved
  by defining a contractive relation from the positivity of the kernel, and show that the relation extends to the graph of $M_S^*$.
\end{proof}

As in the $\mathcal E$ setting, the  case where the isometry in the above realizations is unitary in a finite dimensional space corresponds to finite dimensional $\mathcal H(S)$ spaces isometrically
included in the Hardy space $(\mathbf H_2(\mathcal P))^r$. When the space has dimension $1$ the function $S$ is the counterpart of a Blaschke factor of the half-plane.
\begin{thm}
  The space $\mathcal H(S)$ is finite dimensional if and only if the space $\mathfrak H$ can be chosen finite dimensional. When $r=s$, the space $\mathcal H(S)$ is isometrically included inside
  $(\mathbf H_2(\mathcal P))^r$.
\end{thm}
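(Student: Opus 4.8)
The statement is the half-plane analogue of Proposition \ref{propo-iso} and of the Blaschke-function result in the $\mathcal E$ setting, so the plan is to transport those arguments through the half-plane realization \eqref{schur123321}. First I would treat the ``finite-dimensional'' equivalence. If $\mathfrak H$ can be chosen finite dimensional, then by Theorem \ref{thh2} we have $S(3x_0)=D+\frac{1-3x_0}{1+3x_0}C(I-\frac{1-3x_0}{1+3x_0}A)^{-1}B$ with $A$ acting on a finite-dimensional space, hence for $x_0,y_0\in(-1/3,1/3)$ the kernel $K_S(3x_0,3y_0)$ factors through $(I-\frac{1-3x_0}{1+3x_0}A)^{-1}$ times its adjoint, so the linear span of the functions $x_0\mapsto K_S(3x_0,3y_0)h$ is finite dimensional; by the uniqueness of the axially hyperholomorphic extension and the fact that elements of $\mathcal H(S)$ are of axial type (so a function vanishing on $(0,\infty)$ vanishes identically), these functions span $\mathcal H(S)$, which is therefore finite dimensional. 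This is exactly the argument at the end of the proof of Proposition \ref{propo-iso}, with $1-9x_0y_0$ replaced by $3(x_0+y_0)$ and $3x_0\mathsf A$ by $\frac{1-3x_0}{1+3x_0}A$. Conversely, if $\mathcal H(S)$ is finite dimensional, then in the lurking-isometry construction of Theorem \ref{thh2} the isometry $\begin{pmatrix}A^*&B^*\\C^*&D^*\end{pmatrix}$ acts on $\mathcal H(S)\oplus\mathbb H^s$ (the state space being $\mathcal H(S)$ itself, realized as the reproducing kernel space), so $\mathfrak H=\mathcal H(S)$ is finite dimensional; one then notes that an isometry on a finite-dimensional space is unitary.

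\textbf{The isometric inclusion.} For the second assertion, assume $r=s$ and the state space is finite dimensional, so the realization colligation $\widehat V=\begin{pmatrix}A&C\\ B&D\end{pmatrix}$ is a \emph{unitary} matrix (a finite-dimensional co-isometry is a unitary). I would mimic the computation in the proof of Proposition \ref{propo-iso}. Write the coefficients of $S$ along the $W_n$: by Theorem \ref{thh2}, $S=D+\sum_{n\ge 0}W_n\,CA^nB$, so with respect to the orthonormal-like system $\bigl((1+P_1)^{-\odot}\odot W_n\bigr)_n$ in $\mathbf H_2(\mathcal P)$ the multiplier $M_S$ sends the $m$-th basis element to $\sum_n (1+P_1)^{-\odot}\odot W_{n}\cdot b_{n,m}$ where, after using $W_k\odot W_\ell=W_{k+\ell}$ and collecting terms, the relevant Gram sums reduce to block sums of the form $\sum b_m^*b_n$ over $n_0+n=m_0+m$, with $b_0=D$ and $b_k=CA^{k-1}B$ for $k\ge 1$. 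Then I would invoke the three unitarity identities $A^*A+C^*C=I$, $B^*B+D^*D=I$, $D^*C+B^*A=0$ (exactly \eqref{eq1}–\eqref{offdiag} with the roles of the letters matching \eqref{abcd}) to telescope: when $n_0=m_0$ one gets $I_r$, and when $n_0\ne m_0$ one gets $0$. Hence $\langle M_S f,M_S g\rangle_2=\langle f,g\rangle_2$ on the linear span of the basis elements, and by continuity $M_S$ is an isometry on all of $(\mathbf H_2(\mathcal P))^r$; since $\mathcal H(S)=\operatorname{Ran}\sqrt{I-M_SM_S^*}$ with the induced norm, the isometry of $M_S$ gives $I-M_SM_S^*$ a closed range on which it acts as an orthogonal projection, and $\mathcal H(S)$ is then isometrically (not merely contractively) included in $(\mathbf H_2(\mathcal P))^r$, via the same complementation argument as in the $\mathcal E$ case.

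\textbf{Main obstacle.} The delicate point is bookkeeping with the \emph{left} $CK$-product and the prefactor $(1+P_1(x))^{-\odot}$: unlike in the $\mathcal E$ setting, the ``basis'' here is $(1+P_1)^{-\odot}\odot W_n$ rather than $P_n$, and $M_S$ acts by $f_n\mapsto$ coefficients of $(1+P_1)^{-\odot}\odot (W_n\odot S)$. I would handle this by observing that the orthogonality structure of $\mathbf H_2(\mathcal P)$ is, by \eqref{kp-expansion}, literally that of $\ell_2$ along the system $(1+P_1)^{-\odot}\odot W_n$, and that left $CK$-multiplication by the \emph{intrinsic} function $(1+P_1)^{-\odot}$ commutes past everything relevant (it is an intrinsic series in $P_1$, so has real coefficients, cf. Proposition \ref{MultiplicationOperator}), so the Gram computation is genuinely the same Toeplitz telescoping as in Proposition \ref{propo-iso}, just with $W_n$ in place of $P_n$. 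The one genuinely new check is that $W_k\odot W_\ell=W_{k+\ell}$ (immediate from $W_n=W_1^{\odot n}$ and associativity of $\odot$, the analogue of Lemma \ref{PnPm}), which is what makes the index shift $n_0+n=m_0+m$ come out right. With that in hand the argument is routine, so I would not belabor it and would simply write ``the same computation as in the proof of Proposition \ref{propo-iso} applies.''
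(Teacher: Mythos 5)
Your proposal is correct and follows essentially the same route as the paper, which likewise writes out the action of $M_S$ on the basis $(1+P_1)^{-\odot}\odot W_m$ and then defers the Gram/telescoping computation with the unitarity relations to the proof of Proposition \ref{propo-iso}. You in fact supply several details the paper leaves implicit (the finite-dimensionality equivalence via the kernel's restriction to the real axis and the lurking-isometry state space, the correct indexing $b_0=\mathsf D$, $b_k=\mathsf C\mathsf A^{k-1}\mathsf B$, and the passage from the isometry of $M_S$ to the isometric inclusion of $\mathcal H(S)$), all consistent with the paper's intent.
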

\begin{proof} We set
  \[
    \begin{pmatrix} \mathsf A&\mathsf B\\
      \mathsf C&\mathsf D\end{pmatrix}
  \]
  the matrix of the underlying unitary map. We have:
  \[
    \begin{split}
      M_S\left((1+P_1(x))^{-\odot}\odot W_m(x)h  \right)&=\\
     & \hspace{-4cm}=(1+P_1(x))^{-\odot}\odot W_m(x)Dh+\sum_{n=0}^\infty (1+P_1(x))^{-\odot}\odot W_{m+n}(x)\mathsf C\mathsf A^n\mathsf Bh.
          \end{split}
        \]

        The same computations as in the proof of Proposition \ref{propo-iso} show that
        \[
          \begin{split}
          \langle M_S\left(1+P_1(x))^{-\odot}\odot W_{n_1}h(x)\right)  ,
          M_S\left(1+P_1(x))^{-\odot}\odot W_{n_2}(x)k\right)  \rangle&=\\
          &\hspace{-3cm}=\sum_{\substack{n,m=0\\
            n_1+n=n_2+m}}^\infty k^*\mathsf  B^*\mathsf  A^{*n}\mathsf  A\mathsf C^*\mathsf C^m\mathsf Bh\\
          &\hspace{-3cm}=          \delta_{n,m}k^*h,\quad h,k\in\mathbb H^s,
          \end{split}
        \]
        and this allows to end the proof.
\end{proof}
\section{Carath\'eodory multipliers in the half-plane setting}
\setcounter{equation}{0}
\label{carasec}
A function $\Phi$ analytic and with a positive real part in the open right-half plane is called a Carath\'eodory function, and is
 characterized by the positivity of the kernel
\[
\frac{\Phi(z)+\overline{\Phi(w)}}{z+\overline{w}}
\]
in $\mathbb C_r$. As for Herglotz functions, a Carath\'eodory function need not be a multiplier of the Hardy space $\mathbf H_2(\mathbb C_r)$. We now introduce the counterpart of this class of
functions in the present setting. As in Section \ref{herg} we use the term {\sl multiplier} (rather than, for instance {\sl pseudo-multiplier}) although the $CK$-multiplication by the function $\Phi$
is not assumed bounded.

\begin{defn}
An $\mathbb H^{r\times r}$-valued function $\Phi$ is called a Carath\'eodory pseudo-multiplier if there is a kernel $K_\Phi(x,y)$ positive definite in $\mathcal P$,
left-hyper-holomophic in $x$ and right-hyperholomorphic in $y$, and such that
\begin{equation}
  \label{phi789}
2(P_1(x)\odot K_\Phi(x,y)  +K_\Phi(x,y)\odot_r\overline{P_1(y)})=\Phi(x)+\Phi(y)^*.
\end{equation}
\end{defn}

A first example is given by $\Phi(x)=aP_1(x)$ with $a>0$ and $K_\Phi(x,y)=a/2$. It follows from the definition that a sum of Carath\'eodory multipliers is
a Carath\'eodory multiplier, and so is $\Phi^{-\odot}$ and $a\Phi$ with $a>0$. Therefore, any sum of the form
\[
\Phi(x)=a_0P_1(x)+\sum_{n=1}^Nb_n(a_n+P_1(x))^{-\odot}
\]
is a Carath\'eodory multiplier for every choice of $a_0\ge 0$, $a_1,\ldots, a_N>0$ and $b_1,\ldots, b_N\ge 0$.\\

It is more convenient to rewrite \eqref{phi789} as
\begin{equation}
  \begin{split}
    (1+P_1(x))\odot K_{\Phi}(x,y)\odot_r(1+\overline{P_1(y)})-(1-P_1(x))\odot K_{\Phi}(x,y)\odot(1-\overline{P_1(y)}&\\
    &\hspace{-8cm}=(I_r+\Phi(x))(I_r+\Phi(y)^*)-(I_r-\Phi(x))(I_r-\Phi(y)^*).
\end{split}
\end{equation}

\begin{thm}
    $\Phi$ is a Carath\'eodory multiplier if and only if it can be written as
  \begin{equation}
    \Phi(3y_0)=(I_r-S(3y_0))(I_r+S(3y_0))^{-1}
  \end{equation}
  with $S$ as in \eqref{schur123321}.
\end{thm}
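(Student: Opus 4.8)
The plan is to reduce everything to the restriction to the real axis, where the $CK$-product becomes the ordinary matrix product and the assertion becomes the classical Cayley transform between Carath\'eodory and Schur functions, and then to propagate the resulting identities to all of $\mathcal P$ by uniqueness of the axially hyperholomorphic extension. Throughout I use that $P_1$ is quaternionic intrinsic (Proposition \ref{PnInt}), so that $P_1\odot f=f\odot P_1$ for every axially hyperholomorphic $f$, and that $\odot$ and $\odot_r$ are associative.

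For the direct implication, assume $\Phi$ is a Carath\'eodory multiplier with kernel $K_\Phi$ satisfying \eqref{phi789}. Taking $x=y$ a positive real point, where $P_1$ takes a positive real value and $K_\Phi(x,x)\ge 0$, gives ${\rm Re}\,\Phi\ge 0$ there, so $I_r+\Phi$ is invertible on a real interval and the matrix Cayley transform $S:=(I_r-\Phi)(I_r+\Phi)^{-1}$ is defined and contractive there (from $\|(I_r-\Phi)v\|^2-\|(I_r+\Phi)v\|^2=-4\,{\rm Re}\,\langle\Phi v,v\rangle\le 0$, applied to $v=(I_r+\Phi)^{-1}u$). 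Extend $S$ to $\mathcal P$ by axial hyperholomorphic continuation; equivalently $S=(I_r-\Phi)\odot(I_r+\Phi)^{-\odot}$ wherever the $\odot$-inverse exists. Now set
\[
K_S(x,y):=\tfrac12\,(I_r+S(x))\odot K_\Phi(x,y)\odot_r(I_r+S(y)^*),
\]
which is positive definite on $\mathcal P$, being a two-sided $CK$-conjugation of the positive definite kernel $K_\Phi$. Using associativity of $\odot,\odot_r$ and that $P_1$ commutes with $CK$-multiplication,
\[
2\bigl(P_1(x)\odot K_S(x,y)+K_S(x,y)\odot_r\overline{P_1(y)}\bigr)=\tfrac12(I_r+S(x))\odot\bigl(\Phi(x)+\Phi(y)^*\bigr)\odot_r(I_r+S(y)^*),
\]
and on the real axis the right-hand side equals $I_r-S(x)S(y)^*$ by the elementary identity $(I+S)(\Phi+\Phi^*)(I+S^*)=2(I-SS^*)$ for the Cayley pair; both sides being hyperholomorphic of axial type in $x$ and right-hyperholomorphic in $y$, they agree on all of $\mathcal P$. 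Thus $K_S$ satisfies \eqref{lyapunov!!!}, so $S$ is a Schur multiplier, and Theorem \ref{thh2} yields the coisometric realization \eqref{schur123321}. Unwinding the Cayley transform on the real axis gives $\Phi(3y_0)=(I_r-S(3y_0))(I_r+S(3y_0))^{-1}$.

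For the converse, suppose $\Phi(3y_0)=(I_r-S(3y_0))(I_r+S(3y_0))^{-1}$ with $S$ as in \eqref{schur123321}; in particular $I_r+S$ is invertible on the relevant real interval, so $\Phi$ extends to an axially hyperholomorphic function $(I_r-S)\odot(I_r+S)^{-\odot}$ on a neighborhood in $\mathcal P$. Since $S$ is a Schur multiplier it has the positive definite kernel $K_S$ of \eqref{ks1234}; define
\[
K_\Phi(x,y):=2\,(I_r+S(x))^{-\odot}\odot K_S(x,y)\odot_r(I_r+S(y)^*)^{-\odot_r},
\]
positive definite on $\mathcal P$ by the same conjugation argument. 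The commutation argument above, together with \eqref{lyapunov!!!} for $K_S$, gives
\[
2\bigl(P_1(x)\odot K_\Phi(x,y)+K_\Phi(x,y)\odot_r\overline{P_1(y)}\bigr)=2(I_r+S(x))^{-\odot}\odot\bigl(I_r-S(x)S(y)^*\bigr)\odot_r(I_r+S(y)^*)^{-\odot_r},
\]
which on the real axis equals $\Phi+\Phi^*$ by the inverse identity $(I+S)^{-1}(I-SS^*)(I+S^*)^{-1}=\tfrac12(\Phi+\Phi^*)$, hence equals $\Phi(x)+\Phi(y)^*$ on $\mathcal P$ by axial extension. Therefore $\Phi$ satisfies \eqref{phi789} and is a Carath\'eodory multiplier.

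The main obstacle, beyond bookkeeping, is making the manipulations with $\odot$-inverses and sandwiched $CK$-products rigorous: one must justify that $(I_r+S)^{-\odot}$ exists on a genuine neighborhood (from real-analytic invertibility of the restriction to the real axis), that $P_1\odot(\cdot)$ passes through the left and right $CK$-factors $(I_r+S(x))$ and $(I_r+S(y)^*)$ --- which is exactly where intrinsicness of $P_1$ enters --- and that the identities, verified only on a real interval, propagate to $\mathcal P$ by uniqueness of the axially hyperholomorphic extension. Positivity is then automatic, since every kernel produced has the form $A(x)\odot K\odot_r A(y)^*$ with $K$ already positive definite.
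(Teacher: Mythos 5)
Your plan is the classical kernel-level Cayley transform, but it founders on exactly the noncommutative obstruction this paper is organized around. The standing assumption you announce at the outset --- that because $P_1$ is intrinsic, $P_1\odot f=f\odot P_1$ for \emph{every} axially hyperholomorphic $f$ --- is false. Commutation under $\odot$ requires the \emph{other} factor to be intrinsic (i.e.\ to have real coefficients in its $P_n$-expansion), as in Propositions \ref{IntRC} and \ref{MultiplicationOperator}: for $f=\sum P_nf_n$ one has $P_1\odot f=\sum P_{n+1}f_n$, whereas $f\odot P_1=CK\bigl(\sum \underline{x}^nf_n\underline{x}\bigr)$, and $\underline{x}^nf_n\underline{x}\ne\underline{x}^{n+1}f_n$ unless $f_n$ is real. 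Since the Cayley transform $S=(I_r-\Phi)(I_r+\Phi)^{-1}$ of a matrix-valued (or even scalar non-intrinsic) $\Phi$ is not intrinsic, your key manipulation $P_1(x)\odot(I_r+S(x))\odot K_\Phi=(I_r+S(x))\odot P_1(x)\odot K_\Phi$, and its mirror image on the right with $\overline{P_1(y)}$ and $(I_r+S(y)^*)$, are unjustified and generally wrong. The same problem already afflicts the claim that $(I_r+S)^{-\odot}$ exists as a series in the $P_n$: by \eqref{obstruct}, $\odot$-inverses of $P_n$-series with quaternionic coefficients leave the class.

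The fallback of verifying the identity on the positive real axis and invoking uniqueness of the axially hyperholomorphic extension does not rescue the argument, for two reasons: the restriction of a CK-product to the real axis is \emph{not} the pointwise product of the restrictions when the factors are not intrinsic (so the "elementary Cayley identities on the real axis" do not compute the restriction of your sandwiched kernels), and uniqueness of extension from the real axis requires the functions to be of axial type, which is not established for $(I_r+S(x))\odot K_\Phi(x,y)\odot_r(I_r+S(y)^*)$ --- a CK-product with a non-intrinsic left factor is in general not a $P_n$-series. The paper's proof is designed precisely to avoid both traps: it rewrites \eqref{phi789} in the difference-of-squares form in which only the \emph{intrinsic} factors $1\pm P_1$ ever CK-multiply $K_\Phi$ (while $\Phi$ enters only through pointwise matrix products), factors $K_\Phi(x,y)=X(x)X(y)^*$, and runs the lurking-isometry argument entirely at real points $3y_0$, where $P_1(3y_0)=3y_0$ is a real scalar. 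The co-isometry so obtained yields the realization \eqref{schur123321} of a Schur multiplier $S$, and the Cayley relation $I_r-\Phi(3y_0)^*=S(3y_0)^*(I_r+\Phi(3y_0)^*)$ drops out of the isometry equations rather than being imposed at the kernel level. Your argument would be fine in the complex case or for scalar intrinsic $\Phi$, but as written it does not prove the theorem in the generality claimed.
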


\begin{proof}
Write
\[
  K_\Phi(x,y)=X(x)X(y)^*
\]
where $X(x)$ is the point evaluation in the reproducing kernel Hilbert space with reproducing kernel $K_\Phi$.
We get the isometric relation (the lurking isometry) defined by the right linear span of the pairs
\[
\left(  \begin{pmatrix}X(3y_0)^*(1-\overline{P_1(3y_0)})h\\
    (I_r+\Phi(3y_0)^*)h\end{pmatrix},
 \begin{pmatrix}X(3y_0)^*(1+\overline{P_1(3y_0)})h\\
    (I_r-\Phi(3y_0)^*)h\end{pmatrix}\right),
\]
with $y_0\in(-1/3,1/3)$ and $h\in\mathbb H^r$.
Furthermore, with the same notation as \eqref{abcd},
\[
\begin{split}
\frac{1-3y_0}{1+3y_0}A^*X(3y_0)^*h+\frac{1}{1+3y_0}C^*(I_r+\Phi(3y_0)^*)h&=X^*(3y_0)h,\\
(1-3y_0)B^*X^*(3y_0)h    +D^*((I_r+\Phi(3y_0)^*)h&=(I-\Phi(3y_0)^*)h,
 \end{split}
\]
and hence
\[
\begin{split}
(I_r-\Phi(3y_0)^*)h-D^*(I_r+\Phi(3y_0))^*h=\frac{1-3y_0}{1+3y_0}B^*
\left(I-\frac{1-3y_0}{1+3y_0}A^*\right)^{-1}\\& \hspace{-2.5cm}\times C^*(I_r+\Phi(3y_0))^*h,
\end{split}
\]
so that, with $S(3y_0)$ as in the previous theorem,
\[
I_r-\Phi(3y_0)^*=S(3y_0)^*(I_r+\Phi(3y_0)^*)
\]

and hence the result.
\end{proof}

Here too, the space $\mathcal L(\Phi)$ will be finite dimensional if and only if the space $\mathfrak H$ can be chosen finite dimensional.

\section{A table}
We conclude this paper with a table comparing the slice hyperholomorphic case, the case of Fueter variables and the present setting.
\setcounter{equation}{0}

\setcounter{equation}{0}
\newpage
\begin{tabular}[H]{|c|c|c|c|}
  \hline
  &  & &\\
  & Slice setting&Appell setting&General $CK$-setting\\
  & & &\\
  \hline
  & & &\\
  variable &$p\in\mathbb H$&$P_1(x)=CK(x_1{\mathbf e}_1+x_2{\mathbf e}_2+x_3{\mathbf e}_3)$&$\hspace{-3cm}\zeta(x)=$\\
  & &$\hspace{4mm}=\zeta_1(x){\mathbf e}_1+\zeta_2(x){\mathbf e}_2+\zeta_3(x){\mathbf e}_3$ &$=\begin{pmatrix}\zeta_1(x)&\zeta_2(x)&\zeta_3(x)\end{pmatrix}$\\
  & & &\\
  \hline& & &\\
  ``unit disk''&$x_0^2+x_1^2+x_2^2+x_3^2<1$&$9x_0^2+x_1^2+x_2^2+x_3^2<1$&
                                                                          $3x_0^2+x_1^2+x_2^2+x_3^2<1$\\
  & & &\\
  \hline
  & & &\\
  product& $\star$-product&$CK$-product&$CK$-product\\
  & & &\\
  \hline
  & & &\\
  & & &\\
 properties & stays inside the space &{\em outside the space} &stays inside the space\\
  & &{\em Not a power series in $P_n$} &\\
  & & &\\
  \hline
  & & &\\
  &$(I-pA)^{-\star}=$&$(I-P_1A)^{-\odot}$
       &$(I-\zeta A)^{-\odot}=$ \\
 inverse & & &\\
  &$=\sum_{n=0}^\infty p^nA^n$ & {\em outside the space} &   $=\sum_{\alpha\in\mathbb N_0^3} \frac{|\alpha!|}{\alpha!}\zeta^{\alpha}A^\alpha$  \\
  & &{\em Not a power series in $P_n$} &\\
  \hline
 Hardy or & &
       &$k_y(x)=$\\
  Drury-Arveson&$k(p,q)=\sum_{n=0}^\infty p^n\overline{q}^n$ &$\sum_{n=0}^\infty P_n(x)\overline{P_n(y)}$ &  $=\sum_{\alpha\in\mathbb N_0^3}
  \frac{|\alpha!|}{\alpha!}\zeta^{\alpha}(x)\overline{\zeta^\alpha(y)}$  \\  reproducing kernel & & &\\
  \hline
  & & &\\
  structural identity&$I-M_pM_p^*=C^*C$ &$I-M_{P_1}M_{P_1}^*=C^*C$&$I-M_{\zeta}\mathcal M_{\zeta}^*=C^*C$\\
               &    &(does not hold in $Q_n$ setting)  &                                \\
  \hline
  & & &\\
  multiplication&Cauchy product on &$M_S\left(\sum_{n=0}^\infty P_nu_n\right)=$
       & $\odot$-multiplication\\
  operator&coefficients &$=\sum_{n=0}^\infty (P_n\odot S)u_n$ &\\
  & & &\\
  \hline
  adjoint of & & &\\
  multiplication&$M_S^*k(p,q)=$  &$M_S^*\left(\sum_{n=0}^\infty P_n\overline{P_n(a)}u\right)=$
       &$M_S^*k_y(x)=$\\
  operator&$=\sum_{n=0}^\infty p^n\overline{S(q)}\overline{q}^n$ &$=\sum_{n=0}^\infty P_n\overline{(P_n\odot S)(a)}u$ &$=\sum_{\alpha\in\mathbb N_0^3}\frac{|\alpha|!}{\alpha!}
                                                                                                                        \zeta^\alpha(x)\cdot$\\
  & & &$\cdot\overline{(S\odot\zeta^\alpha(y))}$\\
  \hline
  & & &\\
        backward shift &  kernel eigenvector&kernel not eigenvector&kernel common\\ operator& & & eigenvector of the shifts\\
  \hline
  & & &\\
  rational functions& ring&group&ring\\
  & & &\\
  \hline
\end{tabular}

\begin{rem}{\rm
For the formulas for the adjoint operator, see e.g.
\cite[(3.9), p. 160]{2013arXiv1308.2658A} for the slice hyperholomorphic case and
\cite[Proposition 2.2. p. 34]{MR2240272} for the Fueter series setting.}
\end{rem}




\section*{Acknowledgment}
Daniel Alpay thanks the Foster G. and Mary McGaw Professorship in Mathematical Sciences, which supported this research. Kamal Diki acknowledges the support of the project INdAM Doctoral Programme in Mathematics and/or Applications Cofunded by Marie Sklodowska-Curie Actions, acronym: INdAM-DP-COFUND-2015, grant number: 713485.


\end{document}